\documentclass[11pt,letterpaper]{article}
\usepackage[utf8]{inputenc}
\usepackage{amsmath,amsthm,amssymb,mathtools}
\usepackage[margin=1.05in]{geometry}
\usepackage{microtype,etoolbox}
\usepackage{xcolor}
\definecolor{paperlink}{RGB}{0,0,114}
\usepackage[shortlabels]{enumitem}
\setlist[enumerate]{leftmargin=2.25em,itemsep=.25em,topsep=.35em}
\usepackage{fancyhdr}
\usepackage{float}
\usepackage{tikz}
\usetikzlibrary{arrows.meta,patterns}
\numberwithin{equation}{section}
\mathtoolsset{showonlyrefs}
\allowdisplaybreaks[1]
\newtheorem{theorem}{Theorem}[section]
\newtheorem{proposition}[theorem]{Proposition}
\newtheorem{lemma}[theorem]{Lemma}
\newtheorem{corollary}[theorem]{Corollary}
\theoremstyle{definition}
\newtheorem{definition}[theorem]{Definition}
\theoremstyle{remark}
\newtheorem{remark}[theorem]{Remark}
\newcommand{\R}{\mathbb R}
\newcommand{\N}{\mathbb N}
\newcommand{\Dp}{\Delta_p}
\newcommand{\capc}{\operatorname{cap}}
\newcommand{\capce}{\operatorname{cap}_{\mathrm e}}
\newcommand{\upperH}{\overline H}
\newcommand{\lowerH}{\underline H}
\newcommand{\essliminf}{\operatorname*{ess\,lim\,inf}}
\newcommand{\esssup}{\operatorname*{ess\,sup}}
\newcommand{\essinf}{\operatorname*{ess\,inf}}
\newcommand{\dd}{\,d}
\newcommand{\spacetime}{\R^{n+1}}
\newcommand{\ind}{\mathbf 1}
\newcommand{\supp}{\operatorname{supp}}
\renewcommand{\div}{\operatorname{div}}
\newcommand{\redpot}{\widehat u}
\newcommand{\pc}{p_{\mathrm c}}
\def\vint_#1{\mathchoice%
          {\mathop{\kern 0.2em\vrule width 0.6em height 0.69678ex depth -0.58065ex
                  \kern -0.8em \intop}\nolimits_{\kern -0.4em#1}}%
          {\mathop{\kern 0.1em\vrule width 0.5em height 0.69678ex depth -0.60387ex
                  \kern -0.6em \intop}\nolimits_{#1}}%
          {\mathop{\kern 0.1em\vrule width 0.5em height 0.69678ex
              depth -0.60387ex
                  \kern -0.6em \intop}\nolimits_{#1}}%
          {\mathop{\kern 0.1em\vrule width 0.5em height 0.69678ex depth -0.60387ex
                  \kern -0.6em \intop}\nolimits_{#1}}}
\usepackage[colorlinks=true,linktoc=all,linkcolor=paperlink,citecolor=paperlink,urlcolor=paperlink,bookmarksnumbered=true,pdfstartview=Fit]{hyperref}
\hypersetup{pdftitle={Wiener's criterion and resolutivity for the p-parabolic equation},pdfauthor={Benny Avelin and Mikko Parviainen},pdfsubject={Boundary regularity and Perron resolutivity for the p-parabolic equation},pdfkeywords={Wiener criterion, boundary regularity, nonlinear parabolic capacity, Perron method, obstacle problem}}

\begin{document}
\title{Wiener's criterion and resolutivity\\
for the~$p$-parabolic equation\thanks{2020 Mathematics Subject Classification: Primary 35K20; Secondary 35K65, 31C45. Keywords: Wiener criterion, boundary regularity, nonlinear parabolic capacity, Perron method, obstacle problem.}}
\author{Benny Avelin\thanks{Department of Mathematics, Uppsala University, Box~480, 751~06 Uppsala, Sweden.}
\and Mikko Parviainen\thanks{Department of Mathematics and Statistics, University of Jyv\"askyl\"a, P.O.~Box~35, FI-40014 University of Jyv\"askyl\"a, Finland.}}
\date{}
\maketitle
\begin{abstract}
We characterize boundary regularity for~$u_t=\Dp u$ in arbitrary bounded open subsets of~$\R^{n+1}$ for~$n\ge2$ and~$p\in(2n/(n+1),2)\cup(2,\infty)$. We prove equivalent Wiener criteria in terms of capacitary potentials on prescribed cylinders and in terms of intrinsic capacities. Both require divergence at every positive obstacle height. Independently, we prove Perron resolutivity for continuous data on the full topological boundary of every nonempty bounded open set for all~$n\ge1$ and~$p>1$. As an application of the Wiener criterion, we prove a Petrovski\u{\i}-type criterion in the singular range, which gives regularity of the origin for~$|x|<C(-t)^{1/p}$ for every~$C>0$. For~$2<p<n$, examples show that divergence at every sufficiently large height does not imply regularity.
\end{abstract}
\setcounter{tocdepth}{1}
\begingroup\small
\makeatletter
\patchcmd{\l@section}{1.0em}{.45em}{}{}
\makeatother
\tableofcontents
\endgroup
\clearpage

\section{Introduction}
\label{sec:intro}

We study boundary regularity for the~$p$-parabolic equation
\begin{equation*}
 u_t=\Dp u,\qquad \Dp u=\div(|\nabla u|^{p-2}\nabla u)\,.
\end{equation*}
Let~$\Theta\subset\R^{n+1}$ be bounded and open. A boundary point is regular if the Perron solution attains every continuous boundary datum there. We characterize regularity without geometric assumptions on~$\partial\Theta$, for
\begin{equation*}
 n\ge2,\qquad \pc:=\frac{2n}{n+1},\qquad p\in(\pc,2)\cup(2,\infty)\,.
\end{equation*}
Our first criterion uses values of capacitary potentials on prescribed cylinders. An equivalent intrinsic capacity criterion supplies the estimates used in the proof and the geometric examples. We also prove an independent resolutivity theorem for every~$n\ge1$ and~$p>1$. Resolutivity completes the Perron construction for continuous boundary data by proving that the upper and lower Perron solutions coincide. Together with the Wiener criteria this gives a rather complete story of the Perron method.

\paragraph{The potential criterion.}
For~$z_0=(x_0,t_0)$ and~$r,s>0$, write
\begin{equation}\label{eq:parabolic-cylinders}
 Q^-_{r,s}(z_0)=B_r(x_0)\times(t_0-s,t_0),\qquad Q_{r,s}(z_0)=B_r(x_0)\times(t_0-s,t_0+s)\,.
\end{equation}
Balls without a displayed center are centered at the spatial origin. We call~$(K,Q)$ a parabolic condenser if~$Q=D\times(b,c)$, where~$D\subset\R^n$ is a smooth bounded domain, and~$K\Subset Q$ is compact. For~$a>0$, let~$\redpot^a_{K,Q}$ be the balayage of~$a\ind_K$, with zero initial and lateral values; see Definition~\ref{def:reduction}.

Translate the boundary point to the origin and set~$E=\R^{n+1}\setminus\Theta$. Put
\begin{equation}\label{eq:potential-scales}
 s_j=2^{-j},\qquad R_j=s_j^{\frac{1}{2p}},\qquad j\ge1\,.
\end{equation}
Define
\begin{equation}\label{eq:potential-test-sets}
 K_j=E\cap\overline{Q^-_{R_j,2s_j}(0,-2s_j)},\qquad Q_j=Q_{2R_j,3s_j}(0,-2s_j)\,.
\end{equation}
Since~$K_j$ lies strictly before time zero,~$\redpot^a_{K_j,Q_j}$ is~$p$-parabolic near the origin.

Our first main result is a Wiener criterion in the sense of Landis~\cite{Landis}, stated in terms of capacitary potentials. We present this non-intrinsic form first because the cylinders~$Q_j$ are prescribed independently of the obstacle height~$a$ and the domain~$\Theta$, which enters only through~$K_j$. The equivalent intrinsic capacity criterion in Theorem~\ref{cor:intrinsic-wiener-intro} is closer in form to the classical Wiener criterion.

\begin{theorem}[Wiener criterion in terms of potentials]\label{thm:potential-wiener}
Let~$n\ge2$ and~$p\in(\pc,2)\cup(2,\infty)$. For every bounded open~$\Theta\subset\R^{n+1}$ with~$0\in\partial\Theta$, 
\begin{equation}\label{eq:potential-wiener}
 0\text{ is regular}
 \quad\Longleftrightarrow\quad
 \sum_{j=1}^{\infty}\frac{\redpot^a_{K_j,Q_j}(0,0)}a=\infty
 \quad\text{for every }a>0\,.
\end{equation}
\end{theorem}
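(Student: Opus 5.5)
The plan is to follow the Kilpeläinen–Malý / Landis scheme adapted to the nonlinear parabolic setting. First, reduce regularity of $0$ to barrier-type statements about balayages. By the standard Perron theory, $0$ is regular if and only if, for every $a>0$, the balayage $w$ of $a\ind_{E\cap \overline{Q^-_{r,r^p}}}$ in a fixed large cylinder satisfies $w(z)\to a$ as $z\to0$ in $\Theta$ (equivalently, $\liminf_{\Theta\ni z\to0}\widehat u^a(z)=a$). Because of the non-homogeneity for $p\neq2$, the obstacle height $a$ cannot be scaled away. This is why the criterion must hold for every $a>0$, and why the scales $R_j=s_j^{1/(2p)}$ are chosen independently of $a$. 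The parabolic comparison principle and the elliptic-type Harnack and weak Harnack estimates for $p$-parabolic supersolutions (intrinsic in the degenerate and singular ranges, the latter needing $p>\pc$) are the tools assumed from earlier sections.

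\emph{Sufficiency} (divergence $\Rightarrow$ regular). Fix $a$ and set $u_j=\widehat u^a_{K_j,Q_j}$. The aim is a multi-scale iteration: with $m_k=a-\sup$ of the complement potential near $0$ at scale $k$, one shows
\[
m_{k+1}\le m_k\Bigl(1-c\,\frac{u_k(0,0)}{a}\Bigr)
\]
up to a harmless renormalization of heights. This is done by applying a weak Harnack / pasting lemma to the supersolution $\min(a,\cdot)$ built from $u_k$ and comparing in $Q_k$. Since the cylinders $Q_j$ are comparable with the intrinsic geometry only after a height-dependent rescaling, one uses the intrinsic capacity comparison (the equivalent Theorem~\ref{cor:intrinsic-wiener-intro} direction) to translate $u_k(0,0)/a$ into a capacity ratio at the current height. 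Iterating gives $\prod(1-c\,u_k(0,0)/a)=0$, hence $m_k\to0$, and this yields the barrier. The monotonicity in $a$ enters here: at later steps the effective height is smaller, which is why divergence is needed at every $a$.

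\emph{Necessity} (regular $\Rightarrow$ divergence for all $a$). Argue by contradiction: suppose $\sum_j u_j(0,0)/a<\infty$ for some $a$. Choose $j_0$ with $\sum_{j\ge j_0}u_j(0,0)/a<\varepsilon$. Using countable subadditivity of balayages (the sum of supersolutions in the degenerate/singular case requires a quasi-additivity estimate, $\widehat u^a_{\cup K_j}\lesssim\sum \widehat u^{Ca}_{K_j}$, obtained from the energy/capacity estimates), one obtains $\widehat u^a_{E\cap Q^-_{R_{j_0}}}(0,0)\le Ca\varepsilon<a$. Together with the upper estimate $\widehat u\le a$ near $0$ and lower semicontinuity, this contradicts the characterization of regularity via $\liminf_{z\to0}\widehat u(z)=a$.

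The main obstacle is the sufficiency iteration. The potentials $u_j$ are taken on prescribed cylinders at a fixed height $a$, while the decay argument naturally produces shrinking heights $m_k$ with intrinsic time scales $m_k^{2-p}r^p$. The key step is a comparison lemma showing that $u_j$ at height $a$ controls the balayage at height $m\le a$ on the corresponding intrinsic cylinder, up to constants, which is plausible precisely because $R_j^{p}=s_j^{1/2}\gg s_j$ leaves room in both ranges of $p$. I would try to prove this first via the intrinsic Harnack inequality combined with a rescaling $v(x,t)=u(x,\lambda t)/\lambda^{1/(p-2)}$.
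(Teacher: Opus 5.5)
Your proposal has genuine gaps in both directions. The decisive steps are either left open or would fail as stated.

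\emph{Sufficiency.} The inequality $m_{k+1}\le m_k(1-c\,u_k(0,0)/a)$ relies on a ``harmless renormalization of heights'', and this is exactly what is unavailable for $p\ne2$. Comparing $w-(a-m_k)$ with the next potential gives at best $m_{k+1}\le m_k-\redpot^{m_k}_{K_{k+1},Q_{k+1}}(0,0)$. This is a potential at the \emph{smaller} height $m_k$, and no bound $\redpot^m(0,0)/m\gtrsim\redpot^a(0,0)/a$ holds uniformly in $j$. For $\Theta_M$ in Proposition~\ref{prop:g-lipschitz-critical-irregular}, the comparisons of Subsection~\ref{subsec:potential-proof} give $\redpot^a_{K_j,Q_j}(0,0)\ge\omega_a(j)\ge q_*$ for large $a$ and $j$. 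At small heights $m$, they give $\redpot^m_{K_j,Q_j}(0,0)\le C\omega_{Am}(j)=0$ eventually. So the comparison lemma you intend to prove cannot hold in the form you need.

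More fundamentally, a comparison step needs a lower bound for the current majorant on the \emph{whole} next reference cylinder, as in Lemma~\ref{lem:local-majorant-comparison}. A single value $u_k(0,0)$ gives nothing on a region of that size. Moreover, the prescribed cylinders do not nest: $Q_{k+1}=B_{2R_{k+1}}\times(-\tfrac52s_k,\tfrac12s_k)$ has radius $2R_{k+1}>R_k$ and meets the time range $[-4s_k,-2s_k]$ of $K_k$.

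The paper resolves this by first proving the intrinsic criterion. Lemma~\ref{lem:reception} turns a mass condition into a lower bound on an entire later cylinder $G_{j,q}$. Lemma~\ref{lem:selection} gives $\widetilde Q_{j_{k+1}}\subset G_{j_k}$. The increments are added on one majorant of height $H$, so every test is made at the same height $a$, since $v-b\ge H-b\ge a$ on the next obstacle. The paper then passes to the prescribed potentials through $\redpot^a_{K_j,Q_j}(0,0)\le\redpot^a_{K_j}(0,0)\le C\omega_{Aa}(j)$. This is Theorem~\ref{thm:upper-main} with $N=m=j$ and cutoff $R_j$. The shift $a\to Aa$ is harmless only because the criterion tests every height.

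\emph{Necessity.} The pointwise quasi-additivity $\redpot^a_{\bigcup_jK_j}\lesssim\sum_j\redpot^{Ca}_{K_j}$ is not a routine consequence of energy or capacity estimates; it is the core difficulty. The paper's substitute, Theorem~\ref{thm:upper-main}, needs several ingredients:
\begin{enumerate}[(i)]
\item splitting each piece at an intrinsic radius;
\item reducing the height to $a/A$;
\item the vanishing Lemma~\ref{lem:remainder-waiting} for the outer parts;
\item the difference estimate Theorem~\ref{thm:removal} for the inner parts.
\end{enumerate}
Even so, it bounds the union by $\sum_j\omega_a(j)$, not by the point values $\redpot^a_{K_j,Q_j}(0,0)$.

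For this direction of the theorem, the paper instead uses lower bounds from Lemma~\ref{lem:reception} and domain monotonicity: $\omega_a(j)\le\redpot^a_{K_j,Q_j}(0,0)$ for large $j$ when $p>2$, and $c_0(\omega_a(j)-\delta_j)_+\le\redpot^a_{K_j,Q_j}(0,0)$ with summable $\delta_j$ when $\pc<p<2$. Convergence of your series at one height then gives convergence of the intrinsic series at that height, and Proposition~\ref{prop:necessity} applies.

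Your final step also runs the wrong way. From $\redpot(0,0)<a$, lower semicontinuity gives only $\liminf_{z\to0}\redpot(z)\ge\redpot(0,0)$. To find $z_k\to0$ in $\Theta$ with small values, you need the past representative~\eqref{eq:past-representative} (or, for $p<2$, the small averages of Corollary~\ref{cor:singular-tail}). You also need a barrier equal to the full height on an open neighborhood of the local complement, so that the small-value points cannot lie in $E$. The paper secures this by fixing continuous obstacles on such neighborhoods before taking finite truncations, and concludes with Lemma~\ref{lem:perron-irregularity}. Likewise, the balayage characterization of regularity that you call standard is not available ready-made; it is essentially what these two constructions prove.
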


This answers the well known open question of an explicit Wiener criterion on arbitrary bounded open space--time sets, raised in~\cite[Section~1]{BBGP} and reiterated in~\cite[Section~1]{BB26}.

For~$p\ne2$, normalizing the obstacle height requires a rescaling of time and hence changes the test geometry. The requirement to test every height~$a>0$ parallels the use of a family of barriers in the characterization of boundary regularity; see~\cite[Theorem~3.3]{BBGP}. A single traditional barrier does not suffice in general, as shown in~\cite[Proposition~1.2]{BBG} for the singular range and~\cite[Corollary~1.3]{AP26} for the degenerate range. Proposition~\ref{prop:g-lipschitz-critical-irregular} shows why divergence at every sufficiently large height does not suffice.

\paragraph{Intrinsic capacity criterion.}
Write~$\capc_a(K,Q)$ for the nonlinear parabolic capacity of~\cite{KKKP,MS} with the solution bounded by~$a$. The reference problem has zero initial and lateral data and homogeneous continuation after the top of~$Q$; Definition~\ref{def:parabolic-capacity} gives the precise conventions.

For~$L>4$, radius~$r>0$ and increment~$q>0$, define
\begin{equation}\label{eq:intro-intrinsic-cylinders}
 Q_r^{q,-}=B_r\times(-4q^{2-p}r^p,-2q^{2-p}r^p),\qquad Q_r^q=B_{8Lr}\times(-5q^{2-p}r^p,q^{2-p}r^p)\,.
\end{equation}
In the~$j$th summand below, each~$q>0$ determines the unique~$r>0$ satisfying~$q^{2-p}r^p=s_j$. Thus~$q$ changes the spatial radius while the duration remains fixed. The quantity~$qr^n$ is the intrinsic Riesz mass scale, also given by Barenblatt scaling~\cite[Section~2]{BBG}; see Subsection~\ref{subsec:intrinsic-scaling}.

For~$p>2$, the choice of~$R_j$ makes~$Q_j$ eventually contain all corresponding intrinsic reference cylinders at each fixed height. For~$\pc<p<2$, increments whose reference cylinders do not fit inside~$Q_j$ have a summable upper bound; see Subsection~\ref{subsec:potential-proof}.

\begin{theorem}[Intrinsic Wiener criterion]\label{cor:intrinsic-wiener-intro}
Under the hypotheses of Theorem~\ref{thm:potential-wiener}, there are constants~$L=L(n,p)>4$ and~$C_*=C_*(n,p)>0$ such that the origin is regular if and only if, for every~$a>0$,
\begin{equation*}
 \sum_{j=1}^{\infty}\sup\left(\left\{q\in(0,a]:
 \capc_a\bigl(E\cap\overline{Q_r^{q,-}},Q_r^q\bigr)>C_*qr^n\right\}\cup\{0\}\right)=\infty\,.
\end{equation*}
\end{theorem}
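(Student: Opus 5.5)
The plan is to deduce the intrinsic criterion from Theorem~\ref{thm:potential-wiener}. For each fixed height~$a>0$ I will show that the potential series $\sum_j \redpot^a_{K_j,Q_j}(0,0)/a$ and the intrinsic series
\[
\sum_j q_j(a),\qquad q_j(a):=\sup\bigl(\{q\in(0,a]:\capc_a(E\cap\overline{Q_r^{q,-}},Q_r^q)>C_*qr^n\}\cup\{0\}\bigr),
\]
diverge together, possibly after passing to comparable heights $a/M$ and $Ma$ with $M=M(n,p)$. The quantifier ``for every $a>0$'' absorbs such a change of height. The link between the two series is a two-sided estimate for the value at the origin of the balayage. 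Since~$K_j$ lies before time $-s_j$ and the origin is at time~$0$, a lapse of~$s_j$, this value should be comparable to the largest intrinsic increment~$q$ whose Barenblatt profile at scale $r$, with $q^{2-p}r^p=s_j$, is ``charged'' by~$K_j$.

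\emph{Lower bound (divergence of the intrinsic series implies divergence of the potential series).} Fix $j$ and $q\le a$ with $\capc_a(E\cap\overline{Q^{q,-}_r},Q^q_r)>C_*qr^n$. The Riesz measure $\mu$ of the reduced function then has mass at least $C_*qr^n$, supported in $Q^{q,-}_r\subset \overline{Q^-_{R_j,2s_j}(0,-2s_j)}$ once $r\le R_j$. For $p>2$ this containment holds for all large $j$. For $\pc<p<2$ the increments with $r>R_j$ are handled separately through the summable bound mentioned in Subsection~\ref{subsec:potential-proof}. Comparing with Barenblatt solutions, I will show that a measure of intrinsic mass $\gtrsim qr^n$ placed at time lapse $\sim q^{2-p}r^p$ and spatial spread $r$ forces the solution to be $\gtrsim q$ at the origin. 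Here I would use a weak Harnack or Riesz-measure lower pointwise estimate, plus a comparison between the two reference cylinders $Q_r^q$ and $Q_j$. This gives $\redpot^a_{K_j,Q_j}(0,0)\ge c\,q$ up to an $a$-dependent rescaling of constants. Taking the supremum over $q$ yields $\redpot^a(0,0)\ge c\,q_j(a)$.

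\emph{Upper bound (divergence of the potential series implies divergence of the intrinsic series).} Set $v=\redpot^a_{K_j,Q_j}(0,0)$ and take $q\approx v/M$. If the capacity at increment~$q$ were $\le C_*qr^n$, I would use a Kilpeläinen--Malý type upper pointwise estimate for $p$-parabolic potentials in terms of intrinsic Riesz mass at this scale. The positive-lapse estimate should bound the value at the origin by $C(\text{mass}/r^n)+$ tail terms. This would force $v\le q/2$, a contradiction once $C_*$ and $M$ are chosen. Hence $q_j(a)\gtrsim v$. The tail terms come from mass spread at larger spatial scales within $Q_j$, and I would control them by the dyadic summation used in the proof of Theorem~\ref{thm:potential-wiener}. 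Finally, both directions must cope with the requirement $q\le a$ and with the fact that $\capc_a$ caps the solution at~$a$. I will check that $v\le a$ automatically and that truncation at~$a$ is harmless.

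\emph{Main obstacle.} The hard step is the upper pointwise estimate for the balayage value at the origin purely in terms of the intrinsic capacity at a \emph{single} scale $r$ per $j$. For $p\neq2$ the intrinsic geometry couples the height~$q$ with the spatial scale, and the Riesz mass may sit at a spatial scale different from $r$. I would first try to reduce to one scale using the intrinsic scaling invariance of Subsection~\ref{subsec:intrinsic-scaling} together with the comparison between $Q^q_r$ and $Q_j$. Any mismatch would be absorbed into the summable error for $p<2$ and into the enlargement factor $L$ for $p>2$; this is where $L=L(n,p)$ gets fixed.
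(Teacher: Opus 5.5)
\textbf{The reduction is circular.} In the paper, Theorem~\ref{thm:potential-wiener} has no proof independent of the statement you are proving. Its proof in Subsection~\ref{subsec:potential-proof} is exactly the termwise comparison you propose, $\omega_a(j)\lesssim\redpot^a_{K_j,Q_j}(0,0)\le C\,\omega_{Aa}(j)$, with a summable correction $\delta_j$ when $\pc<p<2$. It then appeals to Theorem~\ref{thm:main}, which is the intrinsic criterion itself. Your proposal therefore rearranges the easy half of the argument and contains nothing that connects either series to regularity.

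The missing content is the Wiener argument itself, in both directions.
\begin{itemize}
\item \emph{Sufficiency.} You need the mass-to-positivity estimate of Lemma~\ref{lem:reception}, and the nested selection of Lemma~\ref{lem:selection}, which keeps a fixed fraction of $\sum_j q_j$. You then need to stack these increments on one superparabolic majorant (Proposition~\ref{prop:saturation}). This gives $H-\redpot^H_{K,Q^*}\le a$ near the origin, hence Perron competitors (Proposition~\ref{prop:sufficiency}).
\item \emph{Necessity.} You need a bound for the potential of the union $E_{N,m}$ that is uniform in the number of cylinders (Theorem~\ref{thm:upper-main}). The obstacle neighborhoods must be fixed before truncation, so that the increasing limit still equals $h$ near the past complement. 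Perron comparison (Lemma~\ref{lem:perron-irregularity}) then finishes. A termwise comparison of single-$j$ potentials cannot replace the union bound, because potentials of unions are not sums of potentials.
\end{itemize}

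\textbf{The termwise upper bound is also set up incorrectly.} You flagged this step as the main obstacle. A failed test at one increment $q$ says nothing about $E$ outside $\overline B_{r_j(q)}$, because $K_{j,q}$ does not contain that part, and admissibility is not monotone in $q$. Here is a counterexample for $p>2$. Put $r=r_j(\eta_{1/8}a)$, with $\eta_{1/8}$ from \eqref{eq:g-singular-ratio}. Suppose $E\cap(\overline B_{R_j}\times[-4s_j,-2s_j])$ is exactly $\overline B_{r/8}(y)\times[-4s_j,-2s_j]$ with $|y|=r/2$. Then:
\begin{itemize}
\item for large $j$, Lemma~\ref{lem:reception} and the comparison in Subsection~\ref{subsec:potential-proof} give $\redpot^a_{K_j,Q_j}(0,0)\ge\eta_{1/8}a$;
\item $K_{j,q}=\varnothing$ for every $q<(3/8)^{p/(p-2)}\eta_{1/8}a$.
\end{itemize}
So once $M$ is large, failure at $q=v/M$ cannot force $v\le q/2$. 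The hypothesis to contradict must be $\omega(j)<q$, i.e.\ failure at \emph{every} $q'\in[q,a]$. In the paper this is what yields, through Lemma~\ref{lem:core-remainder}, a capacity bound for the outer part $F_j$ at every radius.

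\textbf{No Kilpel\"ainen--Mal\'y-type bound is available.} The paper uses no estimate of the form ``mass$/r^n$ plus tail''.
\begin{itemize}
\item For $p>2$, the outer capacity bound grows like $R^\nu s_j^{-\delta}$. The paper lowers the height to $a/A$ and shows, by the growth-norm argument of Lemmas~\ref{lem:growth}--\ref{lem:remainder-waiting}, that the outer potential vanishes at the origin. It then adds the inner part at cost $Cb_j$ using the difference estimate of Theorem~\ref{thm:removal}.
\item For $\pc<p<2$, the paper instead uses the regularized increments of Lemma~\ref{s:regularize-sequence} and the excess iteration of Lemma~\ref{s:iteration}.
\end{itemize}
Your plan to control the tail ``by the dyadic summation used in the proof of Theorem~\ref{thm:potential-wiener}'' is again circular, since that proof takes its upper bound from Theorem~\ref{thm:upper-main}.
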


We write~$\omega_a(j)$ for this supremum, which is zero when no increment satisfies the capacity inequality.

The elliptic Wiener criterion and known parabolic potential estimates~\cite[Theorem~1.2]{KuMi14} naturally lead one to ask whether regularity admits a characterization by a non-intrinsic capacity sum. Remark~\ref{rem:g-nonintrinsic-capacity} shows that, for~$2<p<n$, the positive-power sums obtained by freezing~$q=1$ cannot provide such a characterization.

Divergence must hold at arbitrarily small heights. For~$2<p<n$ and~$M>0$, let
\begin{equation*}
 \Theta_M=(B_1\times(-1,1))\cap\{(x,t):x_n>-M|t|^{\frac{1}{p}}\}\,.
\end{equation*}
The origin is irregular, although~$\sum_j\omega_a(j)=\infty$ at every sufficiently large height. At every sufficiently small height, the test sets are empty on a tail and the series converges; see Proposition~\ref{prop:g-lipschitz-critical-irregular}.

The time-slice identity in Proposition~\ref{lem:capacity-time-slice} gives an exact reformulation in terms of capacity differences; see Remark~\ref{cor:capacity-wiener}.

\paragraph{Resolutivity.}
Independently of the Wiener criteria, we prove resolutivity in the larger range~$n\ge1$,~$p>1$. For~$f\in C(\partial\Theta)$, let~$\upperH_\Theta f$ and~$\lowerH_\Theta f$ be the upper and lower Perron solutions of Definition~\ref{def:perron}. The datum is resolutive when these coincide.

\begin{theorem}\label{thm:resolutivity}
Let $n\ge1$, $1<p<\infty$, and let $\Theta\subset\R^{n+1}$ be nonempty, bounded and open. For every $f\in C(\partial\Theta)$,
\begin{equation*}
 \lowerH_\Theta f=\upperH_\Theta f\quad\text{everywhere in }\Theta.
\end{equation*}
The boundary in these definitions is the full topological boundary, including its top portions.
\end{theorem}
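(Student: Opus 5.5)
The plan follows the standard route for the heat equation and for the $p$-parabolic case on cylinders (Kilpeläinen--Lindqvist, Björn--Björn--Gianazza--Parviainen). We first reduce to a dense class of data, then handle that class by exhausting $\Theta$ with regular sets and using a uniform comparison argument. Two facts are elementary: $\lowerH_\Theta f\le\upperH_\Theta f$ always holds by the comparison principle for super- and subsolutions on $\Theta$, and the operators $f\mapsto\upperH_\Theta f$, $f\mapsto\lowerH_\Theta f$ satisfy $\upperH_\Theta(f+c)=\upperH_\Theta f+c$ and are monotone. From these we get $\upperH_\Theta g-\varepsilon\le\upperH_\Theta f\le\upperH_\Theta g+\varepsilon$ whenever $\|f-g\|_\infty\le\varepsilon$, and the same for $\lowerH_\Theta$. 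Hence the resolutive data form a closed subset of $C(\partial\Theta)$. By Stone--Weierstrass, or simply by Tietze extension followed by mollification, it suffices to treat $f=\varphi|_{\partial\Theta}$ with $\varphi\in C_0^\infty(\spacetime)$.

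For such $\varphi$, we exhaust $\Theta$ from inside by finite unions $\Theta_k$ of space--time boxes, built from dyadic cubes $\overline{I}\times[t_1,t_2]\subset\Theta$, with $\Theta_k\uparrow\Theta$. The key property needed is that every point of the parabolic boundary of each $\Theta_k$ is regular. Lateral and bottom points are regular by the exterior-cylinder barriers available for all $p>1$. The genuine top portions of $\partial\Theta_k$ are not part of the Dirichlet boundary for a box, so we cannot prescribe data there. Since the theorem asks for the full topological boundary including top portions, we must reconcile the two. We therefore solve on $\Theta_k$ the Dirichlet problem with datum $\varphi$ on its parabolic boundary, obtaining $h_k\in C(\overline{\Theta_k})$. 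We then compare $h_k$ with Perron functions on $\Theta$. Take any $u$ in the upper class for $f$ on $\Theta$. Since $\liminf u\ge\varphi$ on $\partial\Theta$, a compactness and lower semicontinuity argument gives $u\ge\varphi-\varepsilon$ on $\partial_p\Theta_k$ for large $k$, after adjusting the exhaustion near $\partial\Theta$. This yields $u\ge h_k-\varepsilon$, and hence $\upperH_\Theta f\ge\limsup_k h_k-\varepsilon$; symmetrically, $\lowerH_\Theta f\le\liminf_k h_k+\varepsilon$.

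Making the previous step valid is the main obstacle. The points of $\partial_p\Theta_k$ lie inside $\Theta$, not on $\partial\Theta$, so the boundary inequality for $u$ is not available there. The fix I would try first is the Poisson modification with the obstacle approach. Instead of the $h_k$ above, we define $v_k$ as the solution of the obstacle-type problem on $\Theta$ obtained by replacing $\varphi$ by its $p$-parabolic Poisson modification on the boxes of $\Theta_k$, iterated over the dyadic boxes in time order. Concretely, we take the upper Perron solution from the obstacle problem with obstacle $\varphi$. Because $\varphi$ is smooth, $\Dp\varphi-\varphi_t$ is bounded, so $\varphi\pm M\,\psi$ with a suitable $\psi$ is a super-/subsolution. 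The function $\psi$ is a bounded solution of $\psi_t-\Dp\psi$ bounded below, for example $\psi(x,t)=t-\inf t$ plus a spatial correction. This gives explicit upper and lower class members $\varphi+M(t-T_0)$ and $\varphi-M(t-T_0)$ that are continuous up to the whole boundary, including top points, so the upper and lower classes are nonempty and bounded.

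We then run a decreasing sequence of Poisson modifications on a countable cover by boxes, starting from the supersolution. Its limit $U$ is a solution lying in the closure of the upper class, so $U\ge\upperH f$. Starting instead from the subsolution and increasing, we get a limit $V\le\lowerH f$. Finally, we show $U=V$. This follows because both are obtained as limits of solutions agreeing with the same continuous function near $\partial\Theta$ up to $O(\varepsilon)$, once the smooth datum is modified to be a solution near each box. Here the comparison on the boxes, including their tops, is valid since solutions on a box are determined by parabolic boundary values. Combining the inequalities gives $\upperH f\le U=V\le\lowerH f$, which proves the theorem.
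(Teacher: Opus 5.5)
There is a genuine gap: the decisive step, $U=V$, is asserted rather than proved, and nothing earlier in the proposal supplies it. Decreasing Poisson modifications of an upper-class function stay in $\mathcal U_f$, so $U\ge\upperH_\Theta f$; run over the whole class, this is exactly how one shows that $\upperH_\Theta f$ is a solution. Likewise $V\le\lowerH_\Theta f$. Hence $U=V$ is essentially the theorem itself for this datum.

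Your justification for $U=V$ fails for three reasons.
\begin{enumerate}
\item The starting functions $\varphi\pm M(t-T_0)$ differ by $2M(t-T_0)$ on $\partial\Theta$, not by $O(\varepsilon)$.
\item Boxes covering $\Theta$ enter every neighbourhood of $\partial\Theta$, so no iterate equals a fixed function on a fixed neighbourhood.
\item Two solutions that are close to $\varphi$ only on shrinking neighbourhoods of $\partial\Theta$ need not coincide. Pointwise boundary control is precisely what is lost at irregular points, and without linearity $U-V$ is not a solution with zero data.
\end{enumerate}

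The box exhaustion does not help either. The comparison you flag is actually fine: compactness and lower semicontinuity give $u>\varphi-\varepsilon$ near $\partial\Theta$, hence on $\partial_p\Theta_k$ for large $k$. But it yields only $\lowerH_\Theta f-\varepsilon\le\liminf_k h_k\le\limsup_k h_k\le\upperH_\Theta f+\varepsilon$, which is the trivial direction. Separately, for $1<p<2$ a smooth $\varphi$ need not have bounded $\Dp\varphi$ (take $\varphi=x_1^2$). So $\varphi\pm M(t-T_0)$ need not be super- or subsolutions.

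What is missing is a mechanism that ties the upper and lower competitors to one common limit, through a boundary condition in an energy sense, together with a stability estimate. The paper first approximates $f$ by $F$ with $F_t-\Dp F\in L^\infty$ via an elliptic resolvent (Lemma~\ref{lem:r-data}). On a fixed cylinder $Q\supset\overline\Theta$ with zero initial and lateral values, it constructs corrections $H_j\ge0$ from upper obstacles and $w_{j,m}\le0$ from lower obstacles. For fixed indices, these vanish near $\partial\Theta$.

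Removing the two obstacles in opposite orders gives supersolutions $U_m=F+H+w_m$ with $U_m\ge F$ near $\partial\Theta$, and subsolutions $V_j=F+H_j+\widetilde w_j$ with $V_j\le F$ near $\partial\Theta$. These are Perron competitors on the full boundary, including top portions. The stability estimates of Lemmas~\ref{lem:r-obstacle} and~\ref{lem:r-two-limits} then show that both families converge to the same $F+H+w$. That identification is what your $U=V$ needs and does not have.
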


Theorem~\ref{thm:resolutivity} solves the resolutivity problem posed in~\cite[Section~8, problem~7]{BBGP} for arbitrary bounded open space--time sets. Its proof is in Section~\ref{sec:r-resolutivity}.

\subsection{Geometric consequences}
As an application of the intrinsic Wiener criterion, we prove a Petrovski\u{\i}-type criterion. In particular, for~$n\ge2$ and~$\pc<p<2$, the origin is regular for the critical domain~$|x|<C(-t)^{\frac{1}{p}}$ for every~$C>0$. Proposition~\ref{prop:g-singular-power} settles the endpoint left open after~\cite[Theorem~1.1]{BBG}. For~$2<p<n$, the same critical domain is irregular. Table~\ref{tab:g-power-classification} compares the classifications; the relation to Petrovski\u{\i}'s heat-equation criterion~\cite{Petrovskii35} is discussed in Subsection~\ref{sec:g-power-section}.

For time-dependent~$\operatorname{Lip}(1,\gamma)$ graphs, regularity holds for~$\gamma>1/p$ when~$2<p<n$, and for~$\gamma\ge1/p$ when~$\pc<p<2$. The domain~$\Theta_M$ in Proposition~\ref{prop:g-lipschitz-critical-irregular} shows that regularity can fail at the degenerate endpoint; see Subsection~\ref{sec:g-parabolic-lipschitz}. Related boundary estimates for intrinsic parabolic non-tangentially accessible (NTA) domains under exterior conditions were obtained in~\cite{A16}.

For stationary cylinders, we compare the intrinsic series with the elliptic Wiener criterion without a thickness assumption on the complement. The soda-can examples recover the degenerate classification of~\cite[Theorem~1.1]{AP26} through upper and lower capacity bounds. The degenerate calculations use~$2<p<n$, and the singular calculations use~$n\ge2$,~$\pc<p<2$. Flat boundaries, exterior cones and logarithmically shrinking tubes are included as brief consequences.

\subsection{Comparison with other Wiener criteria}
For the elliptic~$p$-Laplacian, Maz'ya~\cite{Maz70} proved sufficiency of the Wiener condition in 1970, and Kilpel\"ainen and Mal\'y~\cite{KM} proved necessity in 1994, completing the characterization for general~$p>1$. For~$1<p\le n$, the criterion uses relative elliptic capacity ratios raised to~$1/(p-1)$; see~\cite[Theorem~1.1]{KM}. For the heat equation,~\cite{Landis} gives a series of caloric potentials, while~\cite{Lanconelli,EG} develop capacity criteria using portions of the complement between backward heat-kernel levels. Linear homogeneity permits normalization to unit height.

The Perron method for the~$p$-parabolic equation was developed by Kilpel\"ainen and Lindqvist~\cite{KL96}. The characterization of regularity by a family of barriers is given in~\cite[Theorem~3.3]{BBGP}. For cylindrical domains, I.~I.~Skrypnik established elliptic-capacity Wiener criteria for classes of degenerate and supercritical singular equations under structural and dimension restrictions~\cite{Sk00,Sk04s,Sk04d,Sk04n}. Quantitative boundary estimates and sufficient elliptic-capacity conditions for cylinders with continuous Sobolev boundary data are given in~\cite[Theorem~1.1 and Corollary~1.1]{GLL} for~$\pc<p<2$ and in~\cite[Theorem~1.1 and Corollary~1.1]{GL20} for~$p>2$. Our criteria characterize Perron regularity on arbitrary bounded open space--time sets. We use nonlinear measure capacity; on compact sets in a common reference problem, the variational capacity of~\cite{AKP} is comparable to it for~$p\ge2$.

For linear hypoelliptic diffusion operators,~\cite[Theorem~1.1]{LTU} gives Gaussian-capacity tests under two-sided Gaussian bounds and additional metric hypotheses. The criterion in~\cite[Theorem~1.3]{TU} uses balayages on portions of the complement between fundamental-solution levels.

We exclude~$p=2$ from the Wiener criteria. In the intrinsic test, varying~$q$ then leaves the test sets unchanged, and the resulting series can converge at a regular boundary point of the heat equation. The prescribed condensers remain well-defined, but our estimates do not give a uniform passage to~$p=2$; see Remark~\ref{rem:linear-limit}.

\subsection{Proofs of the main theorems}\label{subsec:intro-proof-overview}
\paragraph{Sufficiency: }
We use divergence of the Wiener series to construct approximate barriers. A mass-to-positivity estimate for local capacitary potentials gives a positive increment from each successful capacity test (Lemma~\ref{lem:reception}). We select nested cylinders while retaining a fixed fraction of these increments (Lemma~\ref{lem:selection}). Successive comparisons then make the potential of the selected union arbitrarily close to its prescribed height near the origin (Proposition~\ref{prop:saturation}). Subtracting this potential from its height gives the approximate barriers used in Subsubsection~\ref{subsec:joint-sufficiency}.

For the mass-to-positivity estimate, let~$a,q,r>0$, put~$s=q^{2-p}r^p$, and let~$K\subset\overline{Q_r^{q,-}}$ be compact. For the constants~$L,C_*$ in Theorem~\ref{cor:intrinsic-wiener-intro}, there are constants~$c\in(0,1]$ and~$\eta\in(0,1/4)$, depending only on~$n,p$, such that
\begin{equation*}
 \capc_a(K,Q_r^q)>C_*qr^n
 \quad\Longrightarrow\quad
 \redpot^a_{K,Q_r^q}\ge cq
 \quad\text{on }B_r\times[-\eta s,\eta s]\,.
\end{equation*}
The constants are independent of~$a$. Writing~$V=\redpot^a_{K,Q_r^q}$, we have~$\capc_a(K,Q_r^q)=\mu_V(K)$, where~$\mu_V$ is the Riesz measure of~$V$.

As in the proof of the Carleson estimate by the first author together with Gianazza and Salsa~\cite[Section~2.4.2]{AGS}, we argue by contradiction. For~$p>2$, weak Harnack and persistence show that a small later value bounds the earlier spatial averages. For~$\pc<p<2$, comparison with homogeneous zero-boundary solutions gives the corresponding bound. Local boundedness then controls~$V$ in an annulus outside the source. A Caccioppoli estimate controls the flux through this annulus, and testing the equation bounds~$\mu_V(K)$; see Lemma~\ref{lem:reception}.

For~$p>2$, this calculation parallels the boundary Riesz-measure estimates in~\cite[Theorem~5.2]{AKN} and~\cite[Lemma~7.1]{A20}. The zero extensions there, across lateral boundary portions with zero Dirichlet data, satisfy~$u_t-\Dp u=-\mu$, where~$\mu\ge0$ is supported on the boundary. Our reflected potential satisfies
\begin{equation*}
 (a-V)_t-\Dp(a-V)=-\mu_V
\end{equation*}
with~$\mu_V$ supported on~$K$. Our estimate applies to arbitrary compact~$K$ in both ranges of~$p$.

Choose an admissible~$q_j>\omega_a(j)/2$ at each index with~$\omega_a(j)>0$. Then~$\sum_jq_j=\infty$. From each finite block of scales, we select a subsequence whose successive reference cylinders lie in the preceding regions of positivity. The selected increments retain a fixed fraction of the original sum, so divergence gives finite chains with arbitrarily large total increment. Selection proceeds backwards for~$p>2$ and forwards for~$p<2$; see Lemma~\ref{lem:selection}. For~$p<2$, cylinders too large for a prescribed neighborhood carry only a summable contribution and can be discarded; see Remark~\ref{s:singular-locality}. Thus the chains can be chosen arbitrarily close to the origin.

Along this Harnack-type chain, the lower bounds accumulate on a single superparabolic majorant. Fix~$H>a$, and let~$v\ge0$ be superparabolic and satisfy~$v\ge H$ on the selected obstacles. If~$v\ge b$ on the next reference cylinder~$Q_r^q$, with~$b\le H-a$, then~$v-b\ge a$ on its obstacle. Comparison gives
\begin{equation*}
 v\ge b+\redpot^a_{K,Q_r^q}\ge b+cq
 \quad\text{on }B_r\times[-\eta s,\eta s]\,.
\end{equation*}
Starting with~$b=0$, we repeat until the accumulated bound reaches~$H-a$. Since this holds for every admissible majorant, Proposition~\ref{prop:saturation} gives, for the selected union~$F$ in a common reference cylinder~$Q^*$,
\begin{equation*}
 0\le H-\redpot^H_{F,Q^*}\le a
 \quad\text{near the origin}.
\end{equation*}

Since~$F\subset E$, both~$\redpot^H_{F,Q^*}$ and~$H-\redpot^H_{F,Q^*}$ are~$p$-parabolic in~$\Theta\cap Q^*$. Thus~$H-\redpot^H_{F,Q^*}$ supplies the local approximate barriers. Here~$a$ controls the error near the origin, while~$H$ controls the boundary datum away from it. Letting~$a\to0$ gives the required Perron competitors. This corresponds to the barrier-family characterization of regularity in~\cite[Theorem~3.3]{BBGP}.

It suffices to use~$a=1/k$, $k\in\N$: monotonicity of~$\omega_a(j)$ in~$a$ makes divergence at these heights equivalent to divergence at every positive height.

\paragraph{Necessity: } We use convergence of the Wiener series at one height to construct an irregularity barrier. Upper estimates control the contribution from the portions of the complement in successive cylinders (Theorem~\ref{thm:upper-main}). Keeping the obstacle neighborhoods fixed, we pass to a bounded superparabolic limit~$U$ that equals a fixed height~$h>0$ near the local complement in the past but satisfies~$U(z_k)<h/2$ along an interior sequence~$z_k\to0$. Perron comparison then proves irregularity (Proposition~\ref{prop:necessity}).

Fix~$R_0>0$. With~$s_j=2^{-j}$, let
\begin{equation*}
 E_{N,m}=\bigcup_{j=N}^{m}
 \left[E\cap\bigl(\overline B_{R_0}\times[-4s_j,-2s_j]\bigr)\right],
 \qquad N\le m,
\end{equation*}
be the portion of the complement in the union of these cylinders, as in~\eqref{eq:finite-tail}. Its whole-space potential, with zero values before the first obstacle time, satisfies
\begin{equation}\label{eq:intro-cumulative-upper}
 \redpot^{h}_{E_{N,m}}(0,0)
 \le C\sum_{j=N}^{m}\omega_a(j),
 \qquad
 h=\begin{cases}
 a/A,&p>2,\\
 a,&\pc<p<2.
 \end{cases}
\end{equation}
The constants~$A>1$ and~$C>0$ depend only on~$n,p$.

For~$p>2$, the height bound handles any term with~$\omega_a(j)=a$. Otherwise, choose~$\omega_a(j)<b_j<a$. By the definition of~$\omega_a(j)$, the capacity test fails for every~$b_j\le q\le a$. We decompose the complement in each cylinder into its portions in~$\overline B_{\rho_j}$ and the annulus~$\overline B_{R_0}\setminus B_{\rho_j}$, where~$\rho_j^p=b_j^{p-2}s_j$. Failure at~$b_j$ bounds the inner capacity. Failures at larger increments, together with the general cylinder-capacity estimate, give outer capacity bounds at every radius (Lemma~\ref{lem:core-remainder}).

After reducing the height, scaling and iteration give an integral growth bound that forces the potential of the union of the annular portions to vanish at the origin (Lemma~\ref{lem:remainder-waiting}). This combines the decay argument of~\cite[Section~7]{AKN} with the memory-effect idea of~\cite[Theorem~6.1]{A20}. By the difference estimate in Theorem~\ref{thm:removal}, each inner part increases the potential at the origin by at most~$Cb_j$, independently of the obstacles already present. Adding the inner parts gives~$\redpot^h_{E_{N,m}}(0,0)\le C\sum_{j=N}^{m}b_j$. Letting~$b_j\downarrow\omega_a(j)$ gives~\eqref{eq:intro-cumulative-upper}.

For~$\pc<p<2$, we regularize increments chosen just above~$\omega_a(j)$ to obtain~$b_j$ with decreasing intrinsic radii, increasing their total sum by at most a fixed factor. Comparison with homogeneous solutions controls the excess above successive levels, whose increases sum to at most~$C\sum_j b_j$. Letting the total approximation error tend to zero gives~\eqref{eq:intro-cumulative-upper}.

In both ranges, we choose continuous obstacles equal to~$h$ on fixed open neighborhoods of these portions of the complement. Keeping these choices fixed as we add further cylinders gives increasing potentials whose limit~$U$ still equals~$h$ on these neighborhoods. For a sufficiently small tail and~$p>2$, the bound~$U(0,0)\le h/4$ and its representation by a past essential lower limit give a sequence approaching the origin from the past with~$U<h/2$. For~$p<2$, small spatial averages on shrinking balls give such a sequence. These points lie in~$\Theta$, since~$U=h$ on the nearby complement in the past; see Proposition~\ref{prop:necessity}.

Choose a continuous boundary datum~$0\le g\le h$, equal to~$h$ near the origin and zero outside a sufficiently small neighborhood. Perron comparison gives
\begin{equation*}
 \lowerH_\Theta g(z_k)\le U(z_k)<h/2,
 \qquad g(0)=h.
\end{equation*}
Thus the lower Perron solution does not attain the boundary datum at the origin, proving irregularity. Comparing intrinsic and prescribed potentials then proves Theorem~\ref{thm:potential-wiener}; see Section~\ref{sec:upper} and Subsection~\ref{subsec:potential-proof}.

\paragraph{Resolutivity: } We first approximate the continuous boundary datum uniformly by restrictions of Lipschitz functions~$F$ with~$F_t-\Dp F\in L^\infty(Q)$, where~$Q$ is a smooth cylinder containing~$\overline\Theta$. Lemma~\ref{lem:r-data} supplies this approximation for every~$p>1$, including the singular range, where smoothness alone need not give bounded~$\Dp F$.

Keeping~$F$ fixed, we first construct a supersolution above~$F$. All correction problems are solved on~$Q$ with zero initial and lateral values. The obstacles force the corrections to vanish near~$\partial\Theta$; on each interior compact set, the obstacle constraints become inactive as the indices increase. For the positive corrections, we take~$H_j\ge0$ solving upper-obstacle problems with prescribed right-hand side~$h=1+\|F_t-\Dp F\|_\infty$. Their equations give
\begin{equation*}
 (F+H_j)_t-\Dp(F+H_j)=F_t-\Dp F+h-\nu_j,\qquad \nu_j\ge0.
\end{equation*}
Here~$\nu_j$ is the obstacle reaction enforcing the upper bound on~$H_j$. As~$j\to\infty$, the upper obstacles become inactive on interior compact sets, and~$H_j\uparrow H$ with
\begin{equation*}
 (F+H)_t-\Dp(F+H)=F_t-\Dp F+h\ge1
 \quad\text{in }\Theta.
\end{equation*}
Although~$H$ need not have zero boundary values,~$H\ge0$ ensures that~$F+H$ belongs to the upper Perron class.

We next add negative corrections to cancel the positive right-hand side~$F_t-\Dp F+h$ and obtain a solution in the limit. With~$H_j$ fixed, we construct~$w_{j,m}\le0$ by lower-obstacle problems with prescribed right-hand side~$-F_t+\Dp F-h$. Each~$w_{j,m}$ vanishes near~$\partial\Theta$, and
\begin{equation*}
 (F+H_j+w_{j,m})_t-\Dp(F+H_j+w_{j,m})=\mu_{j,m}-\nu_j,
\end{equation*}
where~$\mu_{j,m}\ge0$ is the lower-obstacle reaction. Both corrections vanish near the boundary, but the right-hand side has no prescribed sign. To obtain upper and lower Perron competitors, we remove the two obstacles in different orders; see Lemma~\ref{lem:r-obstacle}.

First letting~$j\to\infty$ with~$m$ fixed gives corrections~$w_m\le0$ and functions
\begin{equation*}
 U_m=F+H+w_m,\qquad (U_m)_t-\Dp U_m=\mu_m\ge0.
\end{equation*}
Since~$w_m$ vanishes near~$\partial\Theta$ and~$H\ge0$, these are upper Perron competitors. In the other order, letting~$m\to\infty$ with~$j$ fixed removes the lower obstacle and gives~$\widetilde w_j=\lim_m w_{j,m}$ with
\begin{equation*}
 V_j=F+H_j+\widetilde w_j,\qquad (V_j)_t-\Dp V_j=-\nu_j\le0.
\end{equation*}
Since~$H_j$ vanishes near~$\partial\Theta$ and~$\widetilde w_j\le0$, these are lower Perron competitors. Thus the boundary inequalities use the vanishing of~$w_m$ or~$H_j$ for a fixed index, and require no boundary trace of either limiting correction~$H$ or~$w=\lim_m w_m$. They hold for the canonical representatives on the full topological boundary, including terminal portions.

We compare~$w_{j,m}$ and~$w_m$, which have the same lower obstacle and right-hand side, but are constructed using~$H_j$ and~$H$, respectively. The energy estimate in Lemma~\ref{lem:r-obstacle} has constants independent of~$j,m$. Letting~$m\to\infty$ first and then~$j\to\infty$ gives~$\widetilde w_j\to w$ in~$L^2(Q)$; see Lemma~\ref{lem:r-two-limits}. Thus~$U_m$ and a subsequence of~$V_j$ converge almost everywhere to~$F+H+w$, and the Perron inequalities give
\begin{equation*}
 F+H+w\le\lowerH_\Theta(F|_{\partial\Theta})
 \le\upperH_\Theta(F|_{\partial\Theta})\le F+H+w
 \quad\text{almost everywhere in }\Theta.
\end{equation*}
Continuity makes the two Perron solutions equal everywhere. Uniform stability with respect to the boundary datum then gives resolutivity for every~$f\in C(\partial\Theta)$, for all~$n\ge1$ and~$p>1$; see Section~\ref{sec:r-resolutivity} and Subsection~\ref{sec:r-perron-proof}.

\section{Solutions and the Perron method}
\label{sec:preliminaries}

Let~$n\ge1$ and~$1<p<\infty$. We consider
\begin{equation}\label{eq:pde}
 u_t=\Dp u,\qquad \Dp u=\div A(\nabla u),\qquad A(\xi)=|\xi|^{p-2}\xi\,.
\end{equation}
Here~$\nabla$ and~$\div$ act only on the spatial variables. We also write~$p'=p/(p-1)$. Unless another dependence is stated, positive constants~$C,c$ depend only on~$n,p$ and may change between occurrences; the same convention applies to~$\lesssim$ and~$\gtrsim$. Balls without a displayed center are centered at the spatial origin.

\subsection{Weak solutions}
\begin{definition}[Weak solutions]\label{def:weak-solutions}
Let~$\Theta\subset\R^{n+1}$ be open. A function~$u$ is a weak supersolution of~\eqref{eq:pde} in~$\Theta$ if it belongs to~$L^p(t_1,t_2;W^{1,p}(D))$ on every open cylinder~$D\times(t_1,t_2)\Subset\Theta$ and satisfies
\begin{equation*}
 \iint_\Theta\bigl(-u\varphi_t+A(\nabla u)\cdot \nabla \varphi\bigr)\,dx\,dt\ge0
 \qquad\bigl(0\le\varphi\in C_c^\infty(\Theta)\bigr)\,.
\end{equation*}
A weak subsolution has the same local Sobolev regularity and satisfies the reverse inequality for every nonnegative test function. A weak solution has this regularity and satisfies equality for every~$\varphi\in C_c^\infty(\Theta)$. A continuous weak solution is called~$p$-parabolic.
\end{definition}

Definition~\ref{def:weak-solutions} imposes no time-continuity condition. On a full finite cylinder~$D\times(b,c)$ with~$D$ smooth and bounded, we use bounded functions in~$L^p(b,c;W^{1,p}(D))\cap L^\infty(b,c;L^2(D))$. Zero lateral values mean membership in~$L^p(b,c;W^{1,p}_0(D))$. For evolution problems with bounded forcing, the solutions belong to~$C([b,c];L^2(D))$; see~\cite[Theorem~1.1 and Remarks~1.1(1) and~1.2(2)]{Liu}. If~$W^{1,p}_0(D)$ does not embed continuously into~$L^2(D)$, the construction uses~$W^{1,p}_0(D)\cap L^2(D)$ as the test space, as in Appendix~\ref{sec:r-flux}. We use the~$L^2$ traces for initial values. Time regularization and truncation justify the energy tests; for obstacle potentials, we first test the bounded penalizations and then pass to the limit.

\subsection{Superparabolic functions}
For a spatial box~$D=\prod_{i=1}^n(a_i,b_i)$ and~$t_1<t_2$, we call~$Q=D\times(t_1,t_2)$ a space-time box and write
\begin{equation*}
 \partial_p Q=(\overline D\times\{t_1\})\cup(\partial D\times[t_1,t_2]),
\end{equation*}
for its parabolic boundary. We use the comparison definition of~\cite[Definition~2.2]{BBGP}.

\begin{definition}[Superparabolic and subparabolic functions]\label{def:superparabolic}
Let~$\Theta\subset\R^{n+1}$ be open. A function~$u:\Theta\to(-\infty,\infty]$ is~$p$-superparabolic in~$\Theta$ if
\begin{enumerate}[(i)]
\item $u$ is lower semicontinuous;
\item $u$ is finite on a dense subset of~$\Theta$;
\item for every space-time box~$Q\Subset\Theta$ and every~$p$-parabolic function~$v$ in~$Q$ that is continuous on~$\overline Q$, the inequality~$v\le u$ on~$\partial_pQ$ implies~$v\le u$ in~$Q$.
\end{enumerate}
A function~$v:\Theta\to[-\infty,\infty)$ is~$p$-subparabolic if~$-v$ is~$p$-superparabolic.
\end{definition}

Every superparabolic function used in an energy or measure argument below is locally bounded. Such functions are weak supersolutions by~\cite[Theorem~8]{KLPSh} for~$p>2$ and~\cite[Theorem~2.12]{GKM} for~$1<p<2$; at~$p=2$ this is the classical result for supercaloric functions. Finiteness on a dense subset alone does not imply the weak supersolution property in the degenerate range, as explained in~\cite[Theorems~2--3]{KLPSh}.

Conversely, a locally bounded weak supersolution has the canonical representative
\begin{equation*}
 u_*(x,t)=\essliminf_{(y,\tau)\to(x,t)}u(y,\tau),
\end{equation*}
which is lower semicontinuous and equals~$u$ almost everywhere for every~$p>1$ by~\cite[Theorem~2.3, with time exponent one]{Liao}. The theorem requires local spatial~$p$-energy, local integrability and a local essential lower bound. These hold for the bounded functions used here; no time-continuity assumption is needed. Applying~\cite[Lemma~2.7]{BBGP} to~$u_*$ on a compactly contained box gives comparison with a continuous~$p$-parabolic function almost everywhere. Essential lower-limit regularization gives the inequality at every interior point. We always use this representative. In particular, if~$u\ge F$ almost everywhere on an open set and~$F$ is continuous, then~$u\ge F$ everywhere there. Negation gives the corresponding upper representative and upper-limit statements for subsolutions.

For the bounded superparabolic functions in the Wiener arguments, the representative also satisfies
\begin{equation}\label{eq:past-representative}
 u(x,t)=\essliminf_{(y,\tau)\to(x,t),\ \tau<t}u(y,\tau)
\end{equation}
at every point for all~$p>1$ by~\cite[Theorem~3.1, with time exponent one]{Liao}. See also~\cite[Theorem~5.1]{KLpoint} for~$p>2$ and Subsection~\ref{subsec:capacity-foundations} for~$1<p<2$. The proof of resolutivity for every~$p>1$ uses only the full essential lower-limit representative~$u_*$.

\begin{definition}[Riesz measure]\label{def:riesz-measure}
Let~$u$ be locally bounded and~$p$-superparabolic in an open set~$\Theta\subset\R^{n+1}$. Its Riesz measure is the nonnegative Radon measure~$\mu_u$ determined by
\begin{equation}\label{eq:riesz}
 \iint_\Theta\bigl(-u\varphi_t+A(\nabla u)\cdot \nabla \varphi\bigr)\,dx\,dt
 =\int_\Theta\varphi\,d\mu_u
 \qquad\bigl(\varphi\in C_c^\infty(\Theta)\bigr)\,.
\end{equation}
\end{definition}

\subsection{Upper and lower Perron classes}
All Perron boundary conditions refer to the full topological boundary.

\begin{definition}[Perron classes and solutions]\label{def:perron}
Let~$\Theta\subset\R^{n+1}$ be nonempty, bounded and open, and let~$f\in C(\partial\Theta)$. The upper class~$\mathcal U_f$ consists of~$p$-superparabolic functions~$U$ in~$\Theta$ that are bounded below and satisfy
\begin{equation*}
 \liminf_{\Theta\ni z\to\zeta}U(z)\ge f(\zeta)
 \qquad(\zeta\in\partial\Theta)\,.
\end{equation*}
The lower class~$\mathcal L_f$ consists of~$p$-subparabolic functions~$V$ in~$\Theta$ that are bounded above and satisfy
\begin{equation*}
 \limsup_{\Theta\ni z\to\zeta}V(z)\le f(\zeta)
 \qquad(\zeta\in\partial\Theta)\,.
\end{equation*}
The upper and lower Perron solutions are
\begin{equation*}
 \upperH_\Theta f(z)=\inf_{U\in\mathcal U_f}U(z),\qquad
 \lowerH_\Theta f(z)=\sup_{V\in\mathcal L_f}V(z)\,.
\end{equation*}
These definitions apply at every~$z\in\Theta$.
\end{definition}
These are the classes of~\cite[Definition~2.8]{BBGP}. Constants make them nonempty. Comparison gives~$\lowerH_\Theta f\le\upperH_\Theta f$, and both Perron solutions are bounded continuous weak solutions in~$\Theta$; see~\cite[Theorem~2.6 and the paragraph after Definition~2.8]{BBGP}.

\begin{definition}[Resolutive data and regular boundary points]\label{def:regularity}
Let~$\Theta\subset\R^{n+1}$ be nonempty, bounded and open. A datum~$f\in C(\partial\Theta)$ is resolutive if~$\lowerH_\Theta f=\upperH_\Theta f$ everywhere in~$\Theta$. A point~$\zeta_0\in\partial\Theta$ is regular for~\eqref{eq:pde} in~$\Theta$ if
\begin{equation*}
 \lim_{\Theta\ni z\to\zeta_0}\upperH_\Theta f(z)=f(\zeta_0)
 \qquad\text{for every }f\in C(\partial\Theta)\,.
\end{equation*}
\end{definition}
Sign symmetry gives~$\lowerH_\Theta f=-\upperH_\Theta(-f)$, so the same definition results from using the lower Perron solution; see~\cite[Definition~3.1 and the following observation]{BBGP}.

\section{Potentials and intrinsic capacity}
\label{sec:potential}

We define the reference potentials and their capacities, then use intrinsic scaling to formulate the Wiener terms~$\omega_a(j)$. Successful capacity tests give the positive increments used for sufficiency; failed tests give the upper bounds used for necessity. Capacity approximation and bounded penalization justify the energy estimates and the use of continuous obstacles on fixed neighborhoods of the complement.

Throughout Sections~\ref{sec:potential}--\ref{sec:necessity},~$n\ge2$ and~$p\in(\pc,2)\cup(2,\infty)$, where~$\pc=2n/(n+1)$. The range is specified in each subsection or statement.

\subsection{Balayage and capacities}\label{subsec:capacity-foundations}
The definitions below are shared by the two ranges. We use the cited results of~\cite{KKKP,AKP} for~$p>2$; the singular capacity and representative results are specified at the end of this subsection.

\begin{definition}[Balayage and capacitary potentials]\label{def:reduction}
Let~$D\subset\R^n$ be a smooth bounded domain and let~$b\in\R$. For a bounded nonnegative obstacle~$\psi$ compactly supported in~$D\times(b,\infty)$, its reduced function~$u_\psi$ is the pointwise infimum of all nonnegative~$p$-superparabolic functions~$v$ on~$D\times(b,\infty)$ satisfying~$v\ge\psi$. The balayage~$\redpot_\psi$ of~$\psi$ is the lower semicontinuous regularization of~$u_\psi$,
\begin{equation*}
 \redpot_\psi(z)=\lim_{r\downarrow0}\inf_{\zeta\in B_r^{n+1}(z)}u_\psi(\zeta)\,,
\end{equation*}
where~$B_r^{n+1}(z)$ is a Euclidean space--time ball and~$r$ is small enough that it lies in~$D\times(b,\infty)$. For compact~$K\Subset D\times(b,\infty)$ and~$a>0$, let~$\redpot_{K,D,b}^a$ be the balayage of~$a\ind_K$. If~$Q=D\times(b,c)$ and~$K\Subset Q$, write~$\redpot_{K,Q}^a$ for its restriction to~$Q$. After~$c$ the potential is continued by the homogeneous equation with zero lateral values.
\end{definition}
We use~$\redpot_\psi$ in all capacity and comparison arguments. The reduced function~$u_\psi$ and the balayage~$\redpot_\psi$ are distinguished as in~\cite[Definition~4.1]{KKKP} and~\cite[Definition~2.15]{MS}; the corresponding hatted notation for capacitary potentials is also used in~\cite[Definition~2.5]{AKP}.

The inequality~$u_\psi\ge\psi$ need not be preserved by lower semicontinuous regularization when~$\psi$ is discontinuous. For example, let~$F\Subset D$ be a closed ball, let~$t_1>b$ and~$a>0$, and set~$u=\redpot^a_{F\times\{t_1\},D,b}$. Then~$u=0$ on~$D\times(b,t_1]$, whereas
\begin{equation*}
 \lim_{t\downarrow t_1}u(\cdot,t)=a\ind_F
 \quad\text{in }L^2(D),\qquad
 \mu_u=a\ind_F(x)\,dx\otimes\delta_{t_1}\,.
\end{equation*}
Here~$\delta_{t_1}$ is the unit point mass at~$t_1$. In particular,~$\mu_u(F\times\{t_1\})=a|F|$; see Proposition~\ref{lem:capacity-time-slice}. This positive mass is concentrated on a single time slice and cannot be recovered by integrating the elliptic capacities of the spatial sections with respect to time. We therefore use admissible majorants or continuous outer approximations when comparing compact obstacles.

The representative in~\eqref{eq:past-representative} is determined by earlier times. For~$p>2$, unbounded superparabolic functions may have infinite right limits, as in the friendly giant; see~\cite[Section~3]{KLPSh}.

If~$0\le\psi\le a$, replacing any admissible majorant~$v$ by~$\min\{v,a\}$ preserves superparabolicity and admissibility. Thus the same infimum is obtained from majorants bounded between zero and~$a$. $\redpot^a_{K,Q}$ has zero initial and lateral values, satisfies~$0\le \redpot^a_{K,Q}\le a$, and is~$p$-parabolic outside~$K$; see~\cite[Section~4 and Theorem~5.7]{KKKP}. There is no new source at~$c$, so homogeneous continuation does not change the Riesz mass of~$K$.

Zero initial and lateral values for these reference potentials are understood as in~\cite[Section~3.1]{KKKP}: on the full reference cylinder,
\begin{equation*}
 u\in L^p\bigl(b,\infty;W^{1,p}_0(D)\bigr),\qquad
 \lim_{\delta\downarrow0}\frac1\delta
 \int_b^{b+\delta}\!\int_D u^2\,dx\,dt=0\,.
\end{equation*}
The zero lateral trace is part of this energy-space condition. For the bounded compact obstacles used here, the approximation in Lemma~\ref{lem:penalty} preserves these zero initial and lateral values.

\begin{definition}[Nonlinear parabolic capacity]\label{def:parabolic-capacity}
Let~$D\subset\R^n$ be a smooth bounded domain, let~$b<c$, and put~$Q=D\times(b,c)$. For compact~$K\Subset Q$ and~$a>0$, the nonlinear parabolic capacity at height~$a$ is
\begin{equation}\label{eq:capacity-definition}
 \capc_a(K,Q)=\mu_{\redpot^a_{K,Q}}(K)\,.
\end{equation}
The potential and its homogeneous continuation are those of Definition~\ref{def:reduction}.
\end{definition}
Equivalently,
\begin{equation*}
 \capc_a(K,Q)
 =\sup\bigl\{\nu(K):\nu\ge0,\ \operatorname{supp}\nu\subset K\,,
                 \ 0\le u_\nu\le a\bigr\}\,,
\end{equation*}
where~$\nu$ ranges over nonnegative Radon measures and~$u_\nu$ is the solution with source~$\nu$ and zero initial and lateral values in~$D\times(b,\infty)$. The height bound holds throughout this cylinder. The representation for compact sets, continuity of capacity on decreasing compact sets, and approximation by smooth outer obstacles follow from~\cite[Theorem~5.7, Lemmas~5.8 and~4.2]{KKKP}. At unit height, the notation agrees with the nonlinear parabolic capacity of~\cite[Definition~5.1, Remark~5.2, and Theorem~5.7]{KKKP}:
\begin{equation*}
 \capc_1(K,Q)=\capc\bigl(K,D\times(b,\infty)\bigr)\,.
\end{equation*}
The convention of homogeneous continuation after~$c$ is the one in~\cite[Remark~5.2(2)]{KKKP}. For compact sets in the full continuation cylinder, the variational capacity is comparable to the measure capacity~$\capc$ by~\cite[Theorem~4.9]{AKP}.

When no reference domain is specified, we use whole-space potentials in~$\R^n\times\R$ with zero data before the first source. Bounded compactly supported obstacles have whole-space potentials obtained by exhaustion, as in~\cite[Remark~5.2(3)]{KKKP}. The estimates below are uniform in the exhausting domains.

For~$\pc<p<2$, Definitions~\ref{def:reduction} and~\ref{def:parabolic-capacity} use the same zero initial and lateral energy values and homogeneous continuation. For compact sets in the singular range, these properties are justified in~\cite[Definition~3.1 and Lemma~3.2]{MS}. The scalar~$a$ is the prescribed obstacle height, and all capacities in this argument are taken on compact sets. The intrinsic change of variables in Subsection~\ref{subsec:intrinsic-scaling} multiplies Riesz mass by~$qr^n$.

In this range, bounded superparabolic functions are weak supersolutions by~\cite[Theorem~2.12]{GKM}, and bounded increasing limits remain superparabolic by~\cite[Lemma~2.8]{GKM}. Continuous compactly supported obstacles have continuous potentials by~\cite[Lemma~2.16]{MS}. Compact potentials are~$p$-parabolic off the obstacle by~\cite[Lemma~2.17]{MS}, and their total Riesz mass equals the measure capacity by~\cite[Lemma~3.2]{MS}. Capacity is continuous along decreasing compact sets by~\cite[Lemma~3.3]{MS}.

On a common cylinder, an almost-everywhere limit of uniformly bounded weak supersolutions is a weak supersolution, and the corresponding Riesz measures converge weakly; see~\cite[Lemma~2.2]{MS}.

The canonical superparabolic representative of a bounded weak supersolution agrees almost everywhere with that function and is given by the essential lower limit from earlier times; see~\cite[Lemmas~2.4 and~2.13--2.14]{MS}. In particular, a canonical supersolution which lies above a continuous obstacle almost everywhere lies above it everywhere. The corresponding statements for subsolutions follow by negation. The canonical value on a time-slice obstacle is not identified with its right trace.

In both ranges, capacity has the following monotonicity properties:
\begin{align}
 K_1\subset K_2&\quad\Longrightarrow\quad
 \capc_a(K_1,Q)\le\capc_a(K_2,Q)\,,
 \label{eq:set-monotonicity}\\
 a_1\le a_2&\quad\Longrightarrow\quad
 \capc_{a_1}(K,Q)\le\capc_{a_2}(K,Q)\,,
 \label{eq:cap-height-monotone}\\
 Q_1\subset Q_2,\ K\Subset Q_1&\quad\Longrightarrow\quad
 \capc_a(K,Q_2)\le\capc_a(K,Q_1)\,.
 \label{eq:domain-capacity}
\end{align}
In~\eqref{eq:domain-capacity}, both problems have zero data before their first sources. An admissible measure in the definition of~$\capc_a(K,Q_2)$ (Definition~\ref{def:parabolic-capacity}) gives a smaller or equal solution in the smaller zero-boundary problem. This proves the capacity inequality. The potentials increase with the spatial domain:~$\redpot_{K,Q_1}^a\le \redpot_{K,Q_2}^a$ on~$Q_1$.

\subsection{Intrinsic scaling}\label{subsec:intrinsic-scaling}
Put
\begin{equation}\label{eq:lambda}
 \lambda=n(p-2)+p>0\,.
\end{equation}
The height~$a$ is fixed while the trial increment~$q$ varies. An increment~$q$ at spatial scale~$r$ has time scale~$q^{2-p}r^p$. Under
\begin{equation*}
 x=ry,\qquad t=b+q^{2-p}r^p\tau,\qquad u(x,t)=qv(y,\tau)\,,
\end{equation*}
the equation is unchanged and Riesz mass is multiplied by~$qr^n$. Indeed, both terms of~\eqref{eq:riesz} acquire this factor after change of variables. In particular, the capacitary mass has the same scaling. For~$Q=D\times(b,c)$, the exact relation to the unit-height notation is
\begin{equation*}
 \capc_a(K,Q)
 =a\,\capc\bigl(\{(x,a^{p-2}(t-b)):(x,t)\in K\},D\times(0,\infty)\bigr)\,.
\end{equation*}
Thus normalizing the obstacle height also changes the time geometry. For the Wiener series, the intrinsic duration~$q^{2-p}r^p$ is fixed at~$s_j=2^{-j}$. We compare the capacity at the prescribed height~$a$ with the scale~$qr^n$ of the source mass.

At fixed increment~$q$, the intrinsic spatial radius is proportional to~$s^{\frac{1}{p}}$. For a Barenblatt solution of fixed mass, the radius of its half-maximum superlevel set is proportional to~$s^{\frac{1}{\lambda}}$, with~$\lambda$ defined by~\eqref{eq:lambda}; see~\cite[Section~2]{BBG}. Thus these two families of sets have different spatial scaling in time when~$p\ne2$.

For the singular arguments below, we write
\begin{equation}\label{s:singular-range}
 n\ge2,\qquad \pc<p<2,\qquad \lambda>0\,.
\end{equation}
Since~$p>2n/(n+2)$, the embedding~$W^{1,p}_0(D)\hookrightarrow L^2(D)$ is compact and dense for every bounded ball~$D$.

\subsection{Intrinsic Wiener terms}\label{subsec:capacitary-series}\label{subsec:singular-test}
At each dyadic time scale, the capacity test determines increments for which the potential has a positive lower bound on a later cylinder containing the origin. Fix~$L>4$ and~$C_*>0$ from Lemma~\ref{lem:reception}. For~$\pc<p<2$, that lemma takes~$L=8$ and also fixes~$c_0,\eta$. These constants depend only on~$n,p$ and are fixed before the upper estimates.

\begin{definition}[Intrinsic Wiener terms]\label{def:wiener}\label{def:wiener-degenerate}\label{def:wiener-singular}
Let~$\Theta\subset\R^{n+1}$ be bounded and open, let~$0\in\partial\Theta$, and let~$a>0$. Put
\begin{equation*}
 E=\spacetime\setminus\Theta,\qquad s_j=2^{-j},\qquad j=1,2,\ldots\,.
\end{equation*}
For~$0<q\le a$, define
\begin{equation}\label{eq:intrinsic-radius}
 r_j(q)=\bigl(q^{p-2}s_j\bigr)^{\frac{1}{p}}\,.
\end{equation}
With the intrinsic cylinders of~\eqref{eq:intro-intrinsic-cylinders}, set
\begin{align}
 K_{j,q}&=E\cap\overline{Q_{r_j(q)}^{q,-}}
         =E\cap\bigl(\overline B_{r_j(q)}\times[-4s_j,-2s_j]\bigr),\label{eq:test-set}\\
 Q_{j,q}&=Q_{r_j(q)}^q
         =B_{8Lr_j(q)}\times(-5s_j,s_j).\label{eq:test-cylinder}
\end{align}
The Wiener term at height~$a$ is
\begin{equation}\label{eq:omega}
 \omega_a(j)=\sup\left(\left\{q\in(0,a]:\capc_a(K_{j,q},Q_{j,q})>C_*q\,r_j(q)^n\right\}\cup\{0\}\right)\,.
\end{equation}
An increment~$q\in(0,a]$ is admissible at height~$a$ and index~$j$ if it satisfies the strict capacity inequality in~\eqref{eq:omega}.
\end{definition}

The relation~$q^{2-p}r_j(q)^p=s_j$ fixes the duration while~$q$ changes the spatial geometry. We have~$0\le\omega_a(j)\le a$, and admissibility need not be monotone in~$q$. Capacity monotonicity in the height gives
\begin{equation}\label{eq:height-omega}
 a_1\le a_2\quad\Longrightarrow\quad\omega_{a_1}(j)\le\omega_{a_2}(j)\,.
\end{equation}
For~$p>2$, Lemma~\ref{lem:reception} gives a lower bound~$q$ on
\begin{equation}\label{eq:receiver}
 G_{j,q}=B_{Lr_j(q)}\times[-s_j,s_j/2]\,.
\end{equation}
Figure~\ref{fig:intrinsic-test} shows these cylinders. For~$\pc<p<2$, Lemma~\ref{lem:reception} gives the lower bound~$c_0q$ on~$B_{r_j(q)}\times[-\eta s_j,\eta s_j]$. In that range, small increments can give large radii; Remark~\ref{s:singular-locality} supplies the locality needed in the proof.

The equivalence between regularity and divergence of the intrinsic series is proved in Theorem~\ref{thm:main}.

\begin{figure}[htbp]
\centering
\begin{tikzpicture}[
  x=1cm,y=.82cm,>={Stealth[length=4pt]},
  every node/.style={font=\footnotesize},
  axis/.style={thin,->},
  guide/.style={thin,densely dashed,black!45}
]
% A single smooth boundary separates the domain from its complement.
\def\exterior{(-3.18,-5.12)--
  plot[domain=-5.12:1.18,variable=\t,samples=65]
    ({.16*\t+.013*\t*\t},\t)--(-3.18,1.18)--cycle}
\path[fill=black!8] \exterior;
\begin{scope}
  \clip (-.82,-4) rectangle (.82,-2);
  \path[fill=black!35] \exterior;
\end{scope}
% The full auxiliary cylinders extend across the boundary.
\draw[densely dashed,black!60,line width=.65pt]
  (-3.05,-5) rectangle (3.05,1);
\draw[black!70,line width=.65pt] (-2.12,-1) rectangle (2.12,.5);
\draw[densely dashed,black!60] (-.82,-4) rectangle (.82,-2);
\draw[black,line width=.9pt]
  plot[domain=-5.12:1.18,variable=\t,samples=65]
    ({.16*\t+.013*\t*\t},\t);
\draw[axis] (-3.60,-5.18)--(-3.60,1.48) node[above] {$t$};
\draw[axis] (-3.35,0)--(3.55,0) node[right] {$x$};
\fill (0,0) circle (1.7pt);
\node[above right,fill=white,inner sep=1pt] at (.08,.02) {$(0,0)$};
\foreach \yy/\lab in {-5/{-5s},-4/{-4s},-2/{-2s},-1/{-s},.5/{s/2},1/{s}} {
  \draw[guide] (-3.60,\yy)--(-3.05,\yy);
  \node[left] at (-3.64,\yy) {$\lab$};
}
\node[anchor=north east] at (2.97,.94) {$Q_{j,q}$};
\node[anchor=north west] at (-2.03,.45) {$G_{j,q}$};
\node[anchor=south east,fill=white,inner sep=1pt] at (2.02,-.91)
  {$\redpot^a_{K_{j,q},Q_{j,q}}\ge q$};
\node at (-2.08,-2.72) {$E$};
\node at (2.23,-2.72) {$\Theta$};
\node[anchor=west] (klab) at (.96,-3.40) {$K_{j,q}$};
\draw[thin] (klab.west)--(-.60,-3.40);
\node[anchor=east] (bdry) at (-.95,-4.72) {$\partial\Theta$};
\draw[thin] (bdry.east)--(-.469,-4.8);
\draw[<->] (1.13,-2)--(1.13,-1);
\node[right] at (1.20,-1.5) {time gap};
\draw[<->] (-.82,-4.28)--(.82,-4.28)
  node[midway,below,fill=white,inner sep=1pt] {$2r$};
\draw[<->] (-2.12,-5.53)--(2.12,-5.53)
  node[midway,below] {$2Lr$};
\draw[<->] (-3.05,-6.09)--(3.05,-6.09)
  node[midway,below] {$16Lr$};
\node[anchor=north] at (0,-6.62)
  {$r=r_j(q),\quad s=s_j,\quad r^p=q^{p-2}s$};
\end{tikzpicture}
\caption{The intrinsic test for~$p>2$ near a smooth boundary, shown in a local spatial cross-section. The domain~$\Theta$ is white, its complement~$E$ is pale gray, and~$K_{j,q}$ is dark gray. The reference cylinder~$Q_{j,q}=B_{8Lr}\times(-5s,s)$ and the region~$G_{j,q}=B_{Lr}\times[-s,s/2]$ cross~$\partial\Theta$. If~$\capc_a(K_{j,q},Q_{j,q})>C_*qr^n$, Lemma~\ref{lem:reception} gives~$\redpot^a_{K_{j,q},Q_{j,q}}\ge q$ throughout~$G_{j,q}$. Spatial widths are schematic.}
\label{fig:intrinsic-test}
\end{figure}

\begin{remark}[The limit~$p\to2$]\label{rem:linear-limit}
The prescribed sets in~\eqref{eq:potential-test-sets} remain well-defined at~$p=2$, but Theorem~\ref{thm:potential-wiener} and Remark~\ref{cor:capacity-wiener} are proved only in the ranges~$\pc<p<2$ and~$p>2$. Their proofs use the intrinsic estimates below and do not give a uniform passage to the heat equation. This is natural since the cases $p\neq 2$ behave quite differently from $p=2$, and this parallels what is known for barriers.

The intrinsic test of Definition~\ref{def:wiener} does not give a Wiener characterization at~$p=2$ by passage to the limit. For fixed~$j$ and~$q>0$, the radius~$r_j(q)=q^{\frac{p-2}{p}}s_j^{\frac{1}{p}}$ tends to~$\sqrt{s_j}$ as~$p\to2$. This convergence is not uniform over~$0<q\le a$, whereas the Wiener terms test arbitrarily small increments. At~$p=2$, changing~$q$ no longer changes the test sets.

For example, let
\begin{equation*}
 \Theta=\{(x,t)\in\R^{n+1}:-1<t<0,\ |x|<\sqrt{-t}\}\,.
\end{equation*}
By~\cite[Theorem~1.1\textnormal{(b)--(c)}]{BBG}, the origin is irregular for~$\pc<p<2$ and regular for~$p=2$. If we substitute~$p=2$ into the intrinsic test, then for every~$j\ge3$,
\begin{equation*}
 \overline B_{\sqrt{s_j}}\times[-4s_j,-2s_j]\subset\Theta\,.
\end{equation*}
The test sets are therefore empty, and the resulting series has a zero tail at every height despite regularity for the heat equation. Its classical Wiener criterion uses heat-kernel level sets~\cite{Lanconelli,EG}.

The proof constants also lose uniformity. In Lemma~\ref{s:regularize-sequence}, the factor~$\gamma_p=2^{-\frac{p+1}{2-p}}$ tends to zero as~$p\uparrow2$, and the required constant~$K\ge2/(\alpha\gamma_p)$, with~$\alpha=2^{-1/[2(2-p)]}$, diverges. In the degenerate range, the argument uses finite propagation, which fails for the heat equation.
\end{remark}

\subsection{Capacity estimates for~$p>2$}
\label{subsec:capacity-estimates}
In this subsection assume~$p>2$.
\begin{definition}[Elliptic capacity]\label{def:elliptic-capacity}
For an open set~$G\subset\R^n$ and compact~$H\Subset G$, the elliptic~$p$-capacity of~$H$ relative to~$G$ is
\begin{equation}\label{eq:elliptic-capacity}
 \capce(H,G)=\inf\left\{\int_G|\nabla \varphi|^p\,dx:\varphi\in C_c^\infty(G),\ \varphi\ge1\text{ on }H\right\}\,.
\end{equation}
Taking~$G=\R^n$ gives the homogeneous elliptic capacity.
\end{definition}
See~\cite[Section~5]{AKP} for this convention.

For~$0<b<R$, the relative elliptic capacity is
\begin{equation}\label{eq:radial-relative-capacity}
 \capce(\overline B_b,B_{2R})
 =|\mathbb S^{n-1}|\left(\int_b^{2R}t^{-\frac{n-1}{p-1}}\,dt\right)^{1-p}.
\end{equation}
For $b\ll R$, its size is $b^{n-p}$ if $p<n$, $(\log(2R/b))^{1-p}$ if $p=n$, and $R^{n-p}$ if $p>n$. The condenser scaling $\capce(\overline B_R,B_{2R})=R^{n-p}\capce(\overline B_1,B_2)$ holds in every case. The degenerate examples in Section~\ref{sec:g-examples} use the asymptotic~$b^{n-p}$ only for~$2<p<n$.

If~$0\le u\le a$ is a measure solution with zero boundary data and~$F$ is compact, source comparison gives~$0\le u_{\mu_u|_F}\le u\le a$. Thus~$\mu_u|_F$ is an admissible measure in the definition of~$\capc_a(F,Q)$ (Definition~\ref{def:parabolic-capacity}), and
\begin{equation}\label{eq:measure-restriction}
 \mu_u(F)\le\capc_a(F,Q)\,.
\end{equation}
The comparison for nonnegative source measures is~\cite[Lemma~3.4]{KKKP}. This argument applies in particular to the bounded penalization sources below.

We use the following estimate in Lemma~\ref{lem:remainder-waiting} to make the source growth bound small by reducing the obstacle height.

\begin{lemma}\label{lem:height-capacity}
For~$0<h\le a$ and every compact~$K$ in a common reference problem,
\begin{equation}\label{eq:height-capacity}
 \capc_h(K)
 \le C\left((h/a)^{p-1}+(h/a)^{\frac{1}{p-1}}\right)\capc_a(K)
 \le C(h/a)^{\frac{1}{p-1}}\capc_a(K)\,.
\end{equation}
The constants depend only on~$n,p$. The same statement holds when both capacities are taken relative to the same finite reference cylinder.
\end{lemma}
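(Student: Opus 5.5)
The plan is to reduce the height comparison to a time dilation at unit height, and then to read off how the variational capacity of \cite{AKP} behaves under time dilation. The second inequality in \eqref{eq:height-capacity} is immediate. Since $p>2$ and $h/a\le1$, we have $p-1>1>1/(p-1)$, and therefore $(h/a)^{p-1}\le(h/a)^{1/(p-1)}$. So only the first inequality needs proof.

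\emph{Step 1: reduction to unit height.} Place the common reference problem at $D\times(b,\infty)$. Write $T_\sigma(x,t)=(x,b+\sigma(t-b))$. By the exact scaling identity of Subsection~\ref{subsec:intrinsic-scaling},
\begin{equation*}
 \capc_h(K)=h\,\capc\bigl(T_{h^{p-2}}K\bigr),\qquad
 \capc_a(K)=a\,\capc\bigl(T_{a^{p-2}}K\bigr).
\end{equation*}
Put $K'=T_{a^{p-2}}K$ and $\theta=(h/a)^{p-2}\in(0,1]$. Then $T_{h^{p-2}}K=T_\theta K'$. The maps $T_\sigma$ preserve $D\times(b,\infty)$, so both sets lie in the same reference problem. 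Hence the claim is equivalent to
\begin{equation*}
 \capc(T_\theta K')\le C\bigl(\theta+\theta^{-\frac1{p-1}}\bigr)\capc(K').
\end{equation*}
Indeed, multiplying by $h=(h/a)\,a$ turns the two terms into $(h/a)\theta=(h/a)^{p-1}$ and $(h/a)\theta^{-1/(p-1)}=(h/a)^{1-\frac{p-2}{p-1}}=(h/a)^{\frac1{p-1}}$. These are exactly the exponents in \eqref{eq:height-capacity}.

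\emph{Step 2: time dilation of the variational capacity.} By \cite[Theorem~4.9]{AKP}, $\capc$ on compact sets of the full continuation cylinder is comparable, with constants depending only on $n,p$, to the variational capacity. That capacity is an infimum, over admissible $\varphi\ge1$ on the set, of an energy of the form
\begin{equation*}
 \iint|\nabla\varphi|^p\,dx\,dt+\int\|\varphi_t\|_{W^{-1,p'}(D)}^{p'}\,dt .
\end{equation*}
Take a near-optimal $\varphi$ for $K'$ and test $T_\theta K'$ with $\varphi_\theta=\varphi\circ T_\theta^{-1}$. The gradient term picks up the Jacobian factor $\theta$. The time derivative picks up $\theta^{-1}$, so the dual-norm term scales by $\theta^{-p'}\theta=\theta^{1-p'}=\theta^{-1/(p-1)}$. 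Admissibility is preserved: zero lateral values are untouched, and the initial-time condition at $b$ and the support in $D\times(b,\infty)$ are invariant under $T_\theta$. Combining this with the comparability in both directions gives the bound in Step~1.

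\emph{Expected obstacle.} The main point to check is that the comparability in \cite[Theorem~4.9]{AKP} is stated for exactly this norm, including its conventions on the initial trace, and with constants that are independent of the time geometry. Rescaling time must not take us outside the class where that theorem applies. If the variational norm there is a power combination rather than a sum, I would redo the scaling termwise and use $\theta\le1$ to bound each term by $\max(\theta,\theta^{-1/(p-1)})$, which gives the same estimate.

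For a finite cylinder $Q=D\times(b,c)$, the statement follows from the same argument. By Definition~\ref{def:parabolic-capacity}, $\capc_a(K,Q)$ is computed in $D\times(b,\infty)$ with homogeneous continuation, and $T_\theta$ maps compact subsets of $Q$ into $D\times(b,\infty)$. So the identity in Step~1 applies verbatim, with the continuation cylinder as the common reference problem.
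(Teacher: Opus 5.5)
Your proof is correct, but it takes a different route from the paper. The paper rescales once, with $u(x,t)=a\,v(x,a^{p-2}(t-b))$, so that both capacities sit on the same transformed set $K'$ at heights $1$ and $h/a$. It then quotes the upper height comparison at unit reference height from \cite[Lemma~6.4]{KKKP}.

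You normalize each height to $1$ separately. This turns the height comparison into a comparison of unit-height capacities of $K'$ and of its time compression $T_\theta K'$, with $\theta=(h/a)^{p-2}\le1$. You prove that comparison yourself from the comparability $\capc\approx\mathrm{cap}_{\mathrm{var}}$ of \cite[Theorem~4.9]{AKP}, together with the factors $\theta$ and $\theta^{1-p'}=\theta^{-1/(p-1)}$ picked up by the gradient and time-derivative terms of the variational energy.

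Your route makes the mechanism visible. It shows where the exponents $(h/a)^{p-1}$ and $(h/a)^{1/(p-1)}$ come from, and that for $h\le a$ only the time-derivative term matters, so you land directly on the second inequality. It also uses only a comparability theorem the paper already invokes elsewhere, e.g. in Lemma~\ref{lem:g-rough-product}. The paper's route is a one-line citation of a ready-made height estimate.

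The worry in your ``expected obstacle'' paragraph is harmless. Comparability with constants depending only on $n,p$ must survive the intrinsic scaling $x\mapsto rx$, $t\mapsto r^pt$, under which the measure capacity scales like $r^n$. This forces the time-derivative term to enter as $\int\|v_t\|^{p'}_{W^{-1,p'}}$ alongside $\iint|\nabla v|^p$, so your factors are the right ones.

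One step is missing. The lemma is applied to whole-space capacities in Lemma~\ref{lem:remainder-waiting}, whereas your argument is set in $D\times(b,\infty)$ with $D$ bounded. You should add the exhaustion of Subsection~\ref{subsec:whole-space-potentials}. This is legitimate because your constants come only from the comparability constants and are independent of $D$.
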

\begin{proof}
At unit reference height this is the upper estimate in~\cite[Lemma~6.4]{KKKP}. To compare arbitrary heights on the same set, write~$u(x,t)=a v(x,a^{p-2}(t-b))$ and transform the set and the reference problem by the same time change. Total Riesz mass is multiplied by~$a$. The two obstacle heights become~$1$ and~$h/a$ on the same transformed compact set. Applying the unit-height estimate and scaling back gives~\eqref{eq:height-capacity}. For whole-space capacity, use exhaustion with the same constants, as justified in Subsection~\ref{subsec:whole-space-potentials}.
\end{proof}

\begin{lemma}\label{lem:cylinder-capacity}
Let~$R,s,a>0$ and~$b<t_1<c-s$. Then
\begin{equation}\label{eq:cylinder-capacity}
 \capc_a\bigl(\overline B_R\times[t_1,t_1+s],B_{2R}\times(b,c)\bigr)
 \le C\bigl(aR^n+a^{p-1}sR^{n-p}\bigr)\,.
\end{equation}
The constant depends only on~$n,p$. Estimate~\eqref{eq:cylinder-capacity} also holds for the whole-space capacity of~$\overline B_R\times[t_1,t_1+s]$.
\end{lemma}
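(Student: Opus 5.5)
Since $K=\overline B_R\times[t_1,t_1+s]$ and the reference cylinder are fixed, the plan is to bound the capacity from above by a test (super)function. By Definition~\ref{def:parabolic-capacity}, $\capc_a(K,Q)=\mu_V(K)$ with $V=\redpot^a_{K,Q}$ and $0\le V\le a$. I would estimate this mass by testing the Riesz measure identity \eqref{eq:riesz} with a cutoff. Take $\varphi(x,t)=\eta(x)\chi(t)$, where $\eta\in C_c^\infty(B_{2R})$, $\eta=1$ on $\overline B_R$, $|\nabla\eta|\lesssim R^{-1}$, and $\chi$ is a time cutoff equal to one on $[t_1,t_1+s]$. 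Then
\begin{equation*}
 \mu_V(K)\le\int\varphi\,d\mu_V=\iint\bigl(-V\varphi_t+|\nabla V|^{p-2}\nabla V\cdot\nabla\varphi\bigr)\,dx\,dt .
\end{equation*}

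\textbf{Time term.} Choose $\chi$ to rise from $0$ to $1$ on $[b,t_1]$ and to drop from $1$ to $0$ on a short interval $[t_1+s,t_1+s+\delta]$. The rising part contributes $-\iint V\eta\chi'\le0$. The falling part contributes at most $\frac1\delta\int_{t_1+s}^{t_1+s+\delta}\int_{B_{2R}}V\,dx\,dt\le CaR^n$, since $V\le a$. This gives the term $aR^n$.

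\textbf{Gradient term.} By Hölder's inequality it is bounded by
\begin{equation*}
 CR^{-1}\Bigl(\iint_{\operatorname{supp}\varphi}|\nabla V|^p\Bigr)^{(p-1)/p}\bigl(R^n(s+\delta)\bigr)^{1/p}.
\end{equation*}
A Caccioppoli/energy estimate for the bounded supersolution $V$ on $B_{2R}\times(t_1,t_1+s+\delta)$ should give $\iint|\nabla V|^p\eta^p\le C(a^pR^{n-p}s+a^2R^n)$. To obtain this, test with $(a-V)\eta^p$ (justified by the penalization of Lemma~\ref{lem:penalty}); the measure term then has a favorable sign, because $\mu_V$ lives on $K$, where $V=a$. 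Plugging in yields the terms $a^{p-1}sR^{n-p}$ and $a^{1+(p-2)/p}\cdots$. The cross terms are absorbed by Young's inequality into $aR^n+a^{p-1}sR^{n-p}$, using that $R^{n}$-type and $a^{p-2}sR^{-p}$-type quantities combine homogeneously. Scaling checks this: under $x=Ry$, $t=a^{2-p}R^p\tau$, $u=av$, the mass scales by $aR^n$ and the bound becomes $C(1+\tilde s)$ with $\tilde s=a^{p-2}sR^{-p}$.

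\textbf{Reduction to a normalized estimate.} Via the scaling of Subsection~\ref{subsec:intrinsic-scaling}, it therefore suffices to prove $\capc_1(\overline B_1\times[0,\tilde s],B_2\times(\tilde b,\tilde c))\le C(1+\tilde s)$. By \eqref{eq:domain-capacity} the bound is insensitive to enlarging the time interval $(b,c)$. The constraint $c>t_1+s$ only ensures $K\Subset Q$.

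\textbf{Alternative via subadditivity.} Split $[0,\tilde s]$ into $\lceil\tilde s\rceil$ unit slabs and use countable subadditivity of capacity, which holds for the measure capacity. This would reduce the problem to a single unit cylinder with capacity $\le C$. However, subadditivity of the nonlinear capacity is not obviously available, so I prefer the direct test-function computation above.

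\textbf{Whole space.} The whole-space case follows by exhaustion: all constants are uniform in the domain, and \eqref{eq:domain-capacity} makes the capacity decrease as the domain grows.

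\textbf{Main obstacle.} I expect the main obstacle to be the rigorous Caccioppoli step with the measure term and the terminal time slice. There I would work with the penalized bounded sources and pass to the limit, as the paper indicates.
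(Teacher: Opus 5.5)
Your argument is correct, but it is not the paper's route. The paper gets \eqref{eq:cylinder-capacity} at height $a=1$ by citing \cite[Theorem~1.1 and Lemma~4.8]{MS}. That is a comparison of the parabolic capacity of a space--time cylinder with elliptic capacity, valid because $\R^n\setminus B_{2R}$ is uniformly $p$-fat. The paper then applies the time change $\tau=a^{p-2}(t-b)$ and, for the whole-space case, \eqref{eq:domain-capacity}.

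You instead bound the Riesz mass directly with a cutoff. This uses only $0\le V\le a$, the vanishing of $V$ before $t_1$, and an energy bound. Your Young step works as you say, because the cross term equals $(aR^n)^{(p-1)/p}(a^{p-1}sR^{n-p})^{1/p}$. Your route is self-contained and avoids fatness and elliptic capacity. It yields only the upper bound, which is all that is used here; the cited result is a two-sided comparison.

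Your computation is essentially Step~4 of the proof of Lemma~\ref{lem:penalty} with a product cutoff $\eta(x)\zeta(t)$. There the global bound $\iint|\nabla u_N|^p\le aM_N$ from \eqref{eq:penalty-bounds}, obtained by testing with $u_N$ itself, replaces your Caccioppoli step. It gives $M_N\le CaR^n+CR^{n/p-1}s^{1/p}(aM_N)^{(p-1)/p}$ for outer obstacles slightly larger than $K$. Young's inequality then closes the estimate, and you pass $N\to\infty$, then $\ell\to\infty$, as in Step~5 of Lemma~\ref{lem:reception}.

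If you keep the Caccioppoli version, fix three small points.
\begin{itemize}
\item A bound on $\iint|\nabla V|^p\eta^p$ does not control $|\nabla V|^p$ near $\partial B_{2R}$, where $\nabla\eta\ne0$ but $\eta$ is small. Either use $\varphi=\eta^p\chi$ in the mass identity, so that H\"older pairs $\eta^{p-1}|\nabla V|^{p-1}$ with $|\nabla\eta|$, or use nested cutoffs.
\item The measure term has a favorable sign simply because $a-V\ge0$ and $\mu_V\ge0$ (or $f_N(a-u_N)\ge0$ for the penalizations). Your stated reason, that $V=a$ on $K$, is false: the canonical value of $V$ on the bottom slice $\overline B_R\times\{t_1\}$ is $0$, because $V=0$ before $t_1$. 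Yet $\mu_V$ charges that slice through the time jump to $a$, as in the time-slice example after Definition~\ref{def:reduction}.
\item The capacity is independent of $b$ and $c$ outright, because of zero data before $t_1$ and homogeneous continuation after $c$. By itself, \eqref{eq:domain-capacity} gives only one inequality.
\end{itemize}
You can drop the subadditivity aside; it is not needed.
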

\begin{proof}
The measure capacity in~\cite[Definition~3.1 and Lemma~3.2]{MS} uses zero initial and lateral data and the same homogeneous continuation as Definition~\ref{def:parabolic-capacity}. The complement of~$B_{2R}$ is uniformly~$p$-fat with a parameter depending only on~$n,p$, and~$[t_1,t_1+s]\Subset(b,c)$. Thus~\cite[Theorem~1.1 and Lemma~4.8]{MS}, using the reference-ball case of Lemma~4.8, gives~\eqref{eq:cylinder-capacity} at height~$a=1$, with~$C=C(n,p)$.

For general~$a>0$, set~$\tau=a^{p-2}(t-b)$ and~$u(x,t)=av(x,\tau)$. The transformed time interval has length~$a^{p-2}s$ and remains compactly contained in~$(0,a^{p-2}(c-b))$. Total Riesz mass is multiplied by~$a$, so the unit-height estimate gives
\begin{equation*}
 \capc_a\bigl(\overline B_R\times[t_1,t_1+s],B_{2R}\times(b,c)\bigr)\le Ca\bigl(R^n+a^{p-2}sR^{n-p}\bigr)\,.
\end{equation*}
Finally,~\eqref{eq:domain-capacity} bounds the whole-space capacity by this finite-reference capacity.
\end{proof}

\subsection{Whole-space potentials for~$p>2$}
\label{subsec:whole-space-potentials}
Assume~$p>2$.
We justify finite propagation at a fixed height by a radial supersolution. Suppose all sources lie in~$B_R$ and the potential is bounded by~$a$. Put
\begin{equation*}
 d(t)=R+1+\left(\frac{p-1}{p-2}\right)^{p-1}a^{p-2}(t-b)\,.
\end{equation*}
The function
\begin{equation}\label{eq:rough-travelling-wave}
 W(x,t)=\min\{a,\ a(d(t)-|x|)_+^{\frac{p-1}{p-2}}\},\qquad t\ge b\,,
\end{equation}
is a supersolution outside~$B_R$. In the nonconstant region, with~$r=|x|$, we have
\begin{equation*}
 W_t=\partial_r\bigl(|W_r|^{p-2}W_r\bigr)
 =a^{p-1}\left(\frac{p-1}{p-2}\right)^p(d-r)^{\frac{1}{p-2}}\,.
\end{equation*}
The radial curvature term~$(n-1)|W_r|^{p-2}W_r/r$ is nonpositive. At the junction with the constant part, the radial flux decreases. The exterior region has zero initial data, boundary values at most~$a$, and no source. Comparison gives~$u\le W$ there.

The supports of all approximations of obstacles at height~$a$ therefore stay in one fixed ball on every fixed time interval. A sufficiently large bounded reference domain consequently gives the same solution through that interval. By exhausting~$\R^n$ with bounded domains, we may therefore restrict whole-space sources, use continuity of capacity on decreasing compact sets, and apply Lemma~\ref{lem:height-capacity}. After the final source time, the maximum principle keeps the solution below~$a$, so its source remains an admissible measure in the definition of~$\capc_a$ (Definition~\ref{def:parabolic-capacity}).

\subsection{Bounded penalization}\label{subsec:penalization}\label{sec:s-foundations}
Bounded nonnegative sources permit the energy tests and source restrictions used below. A bounded penalty gives the same existence, comparison and convergence argument in both ranges. For continuity, we use the degenerate regularity theorem when~$p>2$ and a torsion barrier with spatial translations when~$\pc<p<2$. For~$\pc<p<2$, the obstacles used below are smooth functions or finite maxima of such functions.

\begin{samepage}
\begin{lemma}[Bounded penalizations and their limit]\label{lem:penalty}\label{s:penalty}
Let~$n\ge2$, $p\in(\pc,2)\cup(2,\infty)$, let~$D$ be smooth and bounded, and put~$Q=D\times(b,c)$, where~$b<c$. Let~$\psi\in C_c(Q)$ satisfy~$0\le\psi\le a$, where~$a\ge0$. If~$p<2$, assume also that~$D=B_{R_D}$ and~$\psi$ is uniformly Lipschitz in the spatial variable. Extend~$\psi$ by zero to~$D\times(b,\infty)$. For each positive integer~$N$, there is a unique energy solution there of
\begin{equation}\label{eq:penalty}
 (u_N)_t-\Dp u_N=N(\psi-u_N)_+\,,
 \qquad u_N=0\text{ on the parabolic boundary}\,.
\end{equation}
It satisfies~$0\le u_N\le a$ and is continuous up to the initial and lateral boundary. The functions increase with~$N$ and converge locally uniformly to the continuous potential~$\redpot_\psi$. Their Riesz measures converge weakly. Put~$K_0=\supp\psi$ and~$M_N=\iint_{D\times(b,\infty)}N(\psi-u_N)_+\,dx\,dt$. The masses~$M_N$ and spatial energies are bounded independently of~$N$. If~$a>0$, then
\begin{equation}\label{eq:penalty-bounds}
 M_N\le\capc_a(K_0,Q),\qquad
 \int_b^\infty\!\int_D|\nabla u_N|^p\,dx\,dt
 \le a\,\capc_a(K_0,Q)\,.
\end{equation}
The constants used for continuity may depend on~$N$.
\end{lemma}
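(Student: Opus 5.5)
The plan is to treat \eqref{eq:penalty} as a monotone perturbation of the $p$-parabolic operator and argue in four steps: existence and uniqueness, bounds and monotonicity, uniform estimates, and identification of the limit.

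\emph{Existence and uniqueness.} The map $u\mapsto -N(\psi-u)_+$ is nondecreasing in $u$ and globally Lipschitz. Hence $u\mapsto -\Dp u - N(\psi-u)_+$ is a monotone, hemicontinuous, coercive operator from $L^p(b,c';W^{1,p}_0(D))$ to its dual on every finite interval $(b,c')$. The theory of monotone evolution equations (Lions; or~\cite{Liu}, with the test space $W^{1,p}_0(D)\cap L^2(D)$ when needed) gives a unique energy solution with zero initial trace in $C([b,c'];L^2(D))$. Uniqueness and the comparison principle follow from testing the difference of two solutions with $(u-v)_+$, after Steklov averaging in time: the penalty term has the favorable sign. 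Letting $c'\to\infty$ gives a solution on $D\times(b,\infty)$.

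\emph{Bounds and monotonicity in $N$.} Testing with $u_N^-$ gives $u_N\ge0$, and testing with $(u_N-a)_+$ gives $u_N\le a$, because the source vanishes where $u_N\ge a\ge\psi$. Since $N(\psi-u)_+\le (N+1)(\psi-u)_+$, $u_N$ is a subsolution of the $(N+1)$-problem, so comparison gives $u_N\le u_{N+1}$. Continuity is then a local statement for an equation with bounded right-hand side $N(\psi-u_N)_+\le Na$. For $p>2$ I will cite the degenerate regularity theorem with bounded forcing, which gives interior H\"older continuity, and use boundary continuity at the smooth lateral boundary and the zero initial data, with constants depending on $N$. For $p<2$ I will use the torsion barrier: a spatially radial supersolution $C(R_D^{p'}-|x|^{p'})$ handles the lateral boundary. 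Because $\psi$ is spatially Lipschitz, the spatial translations $u_N(\cdot+h,\cdot)$ are solutions with data perturbed by $O(N|h|)$, so they compare with $u_N\pm CN|h|$. This gives spatial Lipschitz bounds, and time continuity then follows from the equation.

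\emph{Uniform bounds \eqref{eq:penalty-bounds}.} Write $\nu_N=N(\psi-u_N)_+\,dx\,dt$, which is supported in $K_0$. The solution $u_N$ is the solution with source $\nu_N$, and $0\le u_N\le a$. Hence $\nu_N$ is admissible in the dual formulation of Definition~\ref{def:parabolic-capacity}, which gives $M_N=\nu_N(K_0)\le\capc_a(K_0,Q)$. For the energy bound, test with $u_N$ over $(b,T)$: this gives $\tfrac12\int u_N^2(T)+\iint|\nabla u_N|^p=\iint u_N\,d\nu_N\le aM_N$. Drop the first term and let $T\to\infty$.

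\emph{The limit.} The sequence $u_N$ increases to some $u\le a$. The energy bounds give weak convergence of the gradients, and Lemma~2.2 of~\cite{MS} (or its degenerate analogue) gives that $u$ is a weak supersolution whose Riesz measures $\nu_N$ converge weakly. I then identify $u$ with $\redpot_\psi$ by two inequalities.

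\emph{$u\le \redpot_\psi$.} Each $u_N$ is dominated by any admissible majorant $v\ge\psi$ with $0\le v\le a$. Indeed, where $u_N>v$ we have $u_N>\psi$, so the penalty vanishes and $u_N$ is a solution there, and comparison on $\{u_N>v\}$ applies. Making this rigorous via the weak supersolution property of $v$ is the main obstacle. I would first compare with the continuous $\psi$-dominating approximants and use the obstacle-problem characterization from~\cite[Section~4]{KKKP} and~\cite[Lemma~2.16]{MS}.

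\emph{$u\ge\redpot_\psi$.} Testing with $(\psi-u_N)_+$ shows $\iint(\psi-u_N)_+^2\le C/N$, so $u\ge\psi$ a.e. Then $u\ge\psi$ everywhere for the canonical representative, because $\psi$ is continuous. Since $u$ is superparabolic, it is admissible, and so $u\ge \redpot_\psi$.

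\emph{Uniform convergence.} Finally, $\redpot_\psi$ is continuous by~\cite[Lemma~2.16]{MS} for $p<2$ and by the KKKP continuity for $p>2$. Dini's theorem for an increasing sequence of continuous functions with a continuous limit then upgrades the convergence to locally uniform.
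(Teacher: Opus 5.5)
Your outline follows the paper's proof step for step: monotone evolution theory for existence, positive-part comparison for $0\le u_N\le a$ and $u_N\le u_{N+1}$, DiBenedetto for $p>2$ and the torsion barrier with translations for $p<2$, admissibility of $N(\psi-u_N)_+\,dx\,dt$ for $M_N\le\capc_a(K_0,Q)$, the energy test with $u_N$, and Dini at the end.

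\textbf{The gap.} The one step you leave open is the comparison $u_N\le v$ for an arbitrary nonnegative superparabolic majorant $v\ge\psi$, which you flag as the main obstacle. The set you propose does not work. Since $u_N-v$ is only upper semicontinuous, $\{u_N>v\}$ need not be open; a superparabolic $v$ can jump up just after a time slice. Moreover, $v$ carries no energy or lateral-trace information, so no weak comparison with $v$ is available.

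The fix, used in the paper, needs no detour through the obstacle problem. Because $u_N$ and $\psi$ are continuous, the set $\{u_N>\psi\}$ is open, and there the penalty vanishes, so $u_N$ is $p$-parabolic. At interior boundary points of this set, $u_N=\psi\le v\le\liminf v$ by lower semicontinuity. On the lateral and initial boundary, $u_N=0\le v$. The comparison principle for sub- and superparabolic functions in bounded open sets, \cite[Theorem~2.4]{BBGP} with no condition at the top, then gives $u_N\le v$ there. Off this set, $u_N\le\psi\le v$ trivially. Taking the infimum over $v$ gives $u_N$ below the reduced function. Since $u_N$ is continuous, it also lies below the regularization $\redpot_\psi$. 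This is precisely why continuity of the penalizations is needed, and why it must be proved by hand in the singular range.

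\textbf{Smaller points.}
\begin{enumerate}[(i)]
\item For $p<2$, the linear growth of $N(\psi-u)_+$ does not fit the standard $(p-1)$-growth hypothesis in $L^{p'}(V^*)$. The paper avoids this by solving with the bounded penalty $N(\psi-u_+)_+$ and identifying it with $N(\psi-u_N)_+$ once $0\le u_N\le a$ is known.
\item Testing with $(\psi-u_N)_+$ is not admissible when $\psi$ is merely continuous, as it is for $p>2$. You do not need that test: $\iint(\psi-u_N)_+=M_N/N\le\capc_a(K_0,Q)/N\to0$ already gives $u\ge\psi$ almost everywhere.
\item Before invoking admissibility of the source, note that $M_N\le N\iint\psi<\infty$. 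The energy identity and Poincar\'e's inequality then put $u_N$ in $L^p(b,\infty;W^{1,p}_0(D))$, which the definition of admissible measure requires.
\item The phrase ``time continuity follows from the equation'' needs one more step. The equation gives only $u_N\in C([b,T];L^2(D))$. The interpolation $\|z\|_\infty\le C_nL^{n/(n+2)}\|z\|_2^{2/(n+2)}$ for spatially $L$-Lipschitz $z$ upgrades this to continuity with values in $C(\overline D)$.
\end{enumerate}
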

\end{samepage}
\begin{proof}
If~$a=0$, then~$\psi=0$ and~$u_N=0$, so assume~$a>0$.

\emph{Step 1: Existence and comparison.} Fix~$T>b$ and put~$V=W^{1,p}_0(D)$, with norm~$\|\nabla z\|_{L^p(D)}$, and~$H=L^2(D)$. Since~$p>2n/(n+2)$, the embedding~$V\hookrightarrow H$ is dense and compact. Define the bounded penalty
\begin{equation*}
 g_N(x,t,z)=N\bigl(\psi(x,t)-z_+\bigr)_+\,.
\end{equation*}
The penalty is nonincreasing in~$z$ and satisfies~$0\le g_N\le Na$ on the whole real line, so its boundedness does not require~$z\ge0$. The evolution operator~$z\mapsto\Dp z+g_N(\cdot,t,z)$ is measurable in~$t$, hemicontinuous and dissipative by monotonicity of the flux and the decrease of~$g_N$. The bounds~$g_Nz\le Na^2$ and~$|g_N|\le Na$ give the required coercivity and growth. Thus classical monotone evolution theory gives a unique solution; see~\cite{Lions}, \cite[Theorem~30.A]{Zeidler}, and, for a modern treatment,~\cite[Theorem~1.1 and Remarks~1.1(1) and~1.2(2)]{Liu}, applied with~$\alpha=p$, $\beta=0$ and~$\rho=\eta=0$. This solution satisfies
\begin{equation*}
 u_N\in L^p(b,T;V)\cap C([b,T];H),\qquad
 (u_N)_t\in L^{p'}(b,T;V^*)\,.
\end{equation*}
This regularity permits the energy tests of~\cite{DiB,KKKP}. Positive-part comparison with~$0$ and~$a$ gives~$0\le u_N\le a$, so the bounded penalty agrees with~$N(\psi-u_N)_+$. Comparison of two penalty parameters gives~$u_N\le u_{N+1}$. Uniqueness makes the solutions consistent as~$T$ increases, with homogeneous continuation after the support of~$\psi$.

\emph{Step 2: Continuity.} For~$p>2$, the source~$f_N=N(\psi-u_N)_+$ is bounded by~$Na$. The coefficient~$-f_N$ is bounded, the boundary data are zero, and the exterior measure-density condition holds for smooth~$D$. Thus~\cite[Theorems~1 and~3]{DiB86} give continuity in the interior and up to the lateral boundary at positive times. Compact support of~$\psi$ leaves an initial cylinder where uniqueness gives~$u_N=0$, hence continuity also at the initial boundary and its corners.

For~$\pc<p<2$, we use the additional ball and spatial Lipschitz assumptions. A torsion barrier controls the solution at the lateral boundary, and spatial translations give a Lipschitz estimate. Put~$d=p/(p-1)$ and
\begin{equation}\label{s:torsion}
 T_N(x)=\frac1d\left(\frac{Na}{n}\right)^{\frac{1}{p-1}}
       (R_D^d-|x|^d).
\end{equation}
Then $-\Dp T_N=Na$ in $D$, $T_N=0$ on $\partial D$, and $T_N$ is Lipschitz on $\overline D$. Weak comparison gives $u_N\le T_N$.

Let $L_\psi$ be a uniform spatial Lipschitz constant for $\psi$, and let $L_N$ be a Lipschitz constant for $T_N$. For a vector $h$, work on $D_h=D\cap(D-h)$, and set
\begin{equation*}
 k=(L_\psi+L_N)|h|.
\end{equation*}
The positive part of $u_N(x+h,t)-u_N(x,t)-k$ has zero initial and lateral trace on this intersection. At a boundary point where $x\in\partial D$, the torsion bound gives $u_N(x+h,t)\le L_N|h|$; on the other part of the boundary, the translated function has zero trace. These statements also follow as Sobolev trace inequalities. On the set where the positive part is nonzero,
\begin{equation*}
 (\psi(x+h,t)-u_N(x+h,t))_+
 \le (\psi(x,t)-u_N(x,t))_+.
\end{equation*}
Subtracting the translated equations and testing with that positive part therefore gives, by monotonicity,
\begin{equation*}
 u_N(x+h,t)-u_N(x,t)\le k
 \quad\hbox{a.e. on }D_h\times(b,T).
\end{equation*}
Interchanging the points gives the reverse bound. Positive-part tests are justified by time regularization in the energy class. Intersections of two balls are Lipschitz domains, and the zero-trace comparisons can equivalently be carried out by interior exhaustion.

It follows that, for almost every $t$, $u_N(\cdot,t)$ has a spatial Lipschitz representative, with a constant independent of $t$. Its zero extension is also Lipschitz, using $u_N\le T_N$ at the boundary. The $C_tL^2_x$ trace extends this representative to every time. A uniform Lipschitz bound in space and convergence in $L^2$ imply convergence uniformly in space: for a Lipschitz function $z$ with constant at most $L$, the elementary ball argument gives
\begin{equation*}
 \|z\|_\infty\le C_n L^{\frac{n}{n+2}}\|z\|_2^{\frac{2}{n+2}}
\end{equation*}
for the differences of these zero extensions (the case~$L=0$ is immediate). Thus~$u_N\in C([b,T];C(\overline D))$. Compact support of~$\psi$ also leaves an initial cylinder on which~$u_N=0$. The constants in this continuity argument may depend on~$N$.

\emph{Step 3: Comparison with the potential.} Every $u_N$ is now a continuous weak supersolution and hence superparabolic. Let $v\ge\psi$ be a nonnegative superparabolic majorant. In the open set $\{u_N>\psi\}$ the penalization vanishes, so $u_N$ is $p$-parabolic. At its internal boundary $u_N=\psi\le v$; at the initial and lateral boundary $u_N=0\le v$. Parabolic comparison, with no condition on the top, yields $u_N\le v$. Outside this open set the same inequality follows from $u_N\le\psi$. Taking the infimum over $v$ and then regularizing, using continuity of $u_N$, gives $u_N\le \redpot_\psi$.

\emph{Step 4: Uniform mass and energy bounds.} Put~$f_N=N(\psi-u_N)_+$ and~$M_N=\iint f_N\,dx\,dt$. Since~$f_N$ is supported in~$K_0$, we have~$M_N\le N\iint\psi\,dx\,dt<\infty$. Testing the energy equation with~$u_N$ gives, for every~$T>b$,
\begin{equation}\label{s:penalty-energy}
 \frac12\|u_N(\cdot,T)\|_{L^2(D)}^2
 +\int_b^T\!\int_D|\nabla u_N|^p\,dx\,dt\le aM_N\,.
\end{equation}
Time regularization justifies this test in the energy class. Letting~$T\to\infty$ and using Poincar\'e's inequality gives~$u_N\in L^p(b,\infty;W^{1,p}_0(D))$ for each fixed~$N$.

Choose~$\chi\in C_c^\infty(Q)$ with~$0\le\chi\le1$ and~$\chi=1$ near~$K_0$. The equation and H\"older's inequality give
\begin{equation*}
 M_N=-\iint u_N\chi_t\,dx\,dt +\iint|\nabla u_N|^{p-2}\nabla u_N\cdot\nabla\chi\,dx\,dt \le a\|\chi_t\|_{L^1} +\|\nabla\chi\|_{L^p}(aM_N)^{\frac{p-1}{p}}\,.
\end{equation*}
Young's inequality gives
\begin{equation*}
 M_N\le C_p\bigl(a\|\chi_t\|_{L^1}
                  +a^{p-1}\|\nabla\chi\|_{L^p}^p\bigr)\,.
\end{equation*}
By~\eqref{s:penalty-energy}, the spatial energy is also bounded independently of~$N$. Each~$u_N$ has zero Sobolev trace, vanishes before the first obstacle time, and is bounded by~$a$. Thus~$f_N\,dx\,dt$ is an admissible measure in the definition of~$\capc_a(K_0,Q)$ (Definition~\ref{def:parabolic-capacity}), proving the mass bound in~\eqref{eq:penalty-bounds}. Equation~\eqref{s:penalty-energy} gives the bound in~\eqref{eq:penalty-bounds}.

\emph{Step 5: Convergence to the potential.} The pointwise limit~$U=\sup_Nu_N\le a$ is lower semicontinuous and superparabolic. Indeed, on a compact space--time box, subtract~$\varepsilon>0$ from a continuous comparison function. Lower semicontinuity, compactness and monotonicity give one index for which the lowered function lies below~$u_N$ on the parabolic boundary. Comparison and then~$\varepsilon\downarrow0$ prove the assertion. The uniform energy bound also preserves membership in~$L^p(b,\infty;W^{1,p}_0(D))$ in the limit.

Dividing the bound for the source mass by~$N$ gives
\begin{equation*}
 \iint(\psi-u_N)_+\,dx\,dt=M_N/N\longrightarrow0\,.
\end{equation*}
Hence~$U\ge\psi$ almost everywhere. Continuity of~$\psi$ and~\eqref{eq:past-representative} give the inequality everywhere. Minimality yields~$\redpot_\psi\le U$, while Step~3 gives~$U\le\redpot_\psi$. Thus~$U=\redpot_\psi$.

The results for continuous obstacles in~\cite[Section~4, before Lemma~4.2]{KKKP} for~$p>2$ and~\cite[Theorem~2.11]{GKM} for~$\pc<p<2$ give continuity of~$\redpot_\psi$. Dini's theorem therefore gives uniform convergence on compact subsets. Weak convergence of the Riesz measures follows from~\cite[Theorem~3.2]{KKKP} and~\cite[Lemma~2.2]{MS} in the respective ranges.

For compact potentials, bounded outer obstacle approximations in a fixed compact neighborhood admit one common cutoff~$\chi$, so the same uniform energy argument applies.
\end{proof}

\begin{remark}[Continuity in the singular range]\label{s:continuity-repair}
The continuity result of~\cite{DiB86} applies in the degenerate range; its Remark~0.10 excludes the singular range. The torsion and translation argument above supplies continuity for the singular penalizations, with constants which may depend on~$N$.
\end{remark}

For~$p>2$, whole-space applications of Lemma~\ref{lem:penalty} may be made first in a ball containing the supports in~\eqref{eq:rough-travelling-wave}. In particular, the~$L^1$ time-continuity and integrations by parts used below are justified on a common finite time horizon.

\begin{lemma}[Restriction of a bounded source]\label{s:source-restriction}
Let $u$ be a nonnegative bounded energy solution in a smooth ambient reference cylinder. Assume that $u\le a$ and that its source~$f$ is bounded and nonnegative. Suppose $K\Subset Q$ is compact and $Q$ is a smaller smooth cylinder inside the ambient one. Then
\begin{equation}\label{s:source-cap}
 \iint_K f\,dx\,dt\le \capc_{a}(K,Q).
\end{equation}
\end{lemma}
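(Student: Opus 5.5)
The plan is to restrict the source to~$K$, solve the zero-data problem in the smaller cylinder~$Q$ with that restricted source, and check that its solution stays between~$0$ and~$a$. Then~$f\ind_K\,dx\,dt$ is an admissible measure in the supremum characterization of~$\capc_a(K,Q)$ following Definition~\ref{def:parabolic-capacity}, which gives~\eqref{s:source-cap}.

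Write~$Q=D\times(b,c)$ and~$\widetilde Q=\widetilde D\times(\tilde b,\tilde c)$ for the ambient cylinder, with~$Q\subset\widetilde Q$. Let~$g=f\ind_K$, which is bounded, nonnegative and supported in~$K$. As in Step~1 of Lemma~\ref{lem:penalty}, monotone evolution theory gives a unique energy solution~$w$ on~$D\times(b,\infty)$ of~$w_t-\Dp w=g$ with zero initial and lateral values; the source is bounded, so no penalty is needed. Testing against~$w_-$ gives~$w\ge0$.

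The main step is the upper bound~$w\le a$, which I would get by two comparisons.

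\emph{Source comparison in the ambient cylinder.} Let~$\tilde w$ be the zero-data solution in~$\widetilde D\times(\tilde b,\infty)$ with source~$g$. Since~$0\le g\le f$ and~$u\ge0$ on the ambient parabolic boundary, subtracting the equations and testing with~$(\tilde w-u)_+$ gives~$\tilde w\le u\le a$. This is the source comparison of~\cite[Lemma~3.4]{KKKP}, here with bounded sources, so time regularization justifies the test.

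\emph{Domain comparison.} On~$D\times(b,\infty)$, both~$w$ and~$\tilde w$ solve the equation with the same source~$g$. On the lateral boundary~$w=0\le\tilde w$. Initially~$w=0\le\tilde w$, because~$\tilde w\ge0$ and~$g$ vanishes before~$b$. Testing with~$(w-\tilde w)_+$, which lies in~$L^p(W^{1,p}_0(D))$ since~$\tilde w\ge0$, gives~$w\le\tilde w$. This is the monotonicity argument used for~\eqref{eq:domain-capacity}.

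Combining the two comparisons, $0\le w\le\tilde w\le u\le a$. After time~$c$ the source vanishes, so~$w$ coincides with the homogeneous continuation prescribed in Definition~\ref{def:parabolic-capacity}. Hence the measure~$g\,dx\,dt$ is supported in~$K$ and admissible, and
\begin{equation*}
 \iint_K f\,dx\,dt=\int g\,dx\,dt\le\capc_a(K,Q)\,.
\end{equation*}
For~$p>2$ with a whole-space ambient solution, I would first work in a ball containing the supports from~\eqref{eq:rough-travelling-wave} on the relevant time horizon, so that the same argument applies.

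The main obstacle is justifying the positive-part tests in the energy class when~$\tilde b<b$. I would handle this by Steklov averaging and by noting that~$w$ vanishes identically near~$t=b$, because~$K\Subset Q$ keeps the source away from the initial time.
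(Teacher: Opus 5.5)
Your proposal is correct and is essentially the paper's proof. You solve in $Q$, continued homogeneously, with source $f\ind_K$ and zero initial and lateral data. You bound this solution between $0$ and $a$ by comparison with $u$, and then read off that $f\ind_K\,dx\,dt$ is admissible in the capacity.

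The only difference is that the paper compares directly with $u$ on $Q$, since $(w-u)_+\le w_+$ already has zero initial and lateral trace there. This makes your intermediate ambient solution $\tilde w$ unnecessary. In either version, the bound $w\le a$ beyond the time horizon of $u$ comes from the maximum principle for the source-free continuation.
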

\begin{proof}
Solve in~$Q$ with the bounded source~$f\mathbf1_K$ and zero initial and lateral data by the same monotone evolution theorem. Comparison with~$u$, whose initial and lateral values are nonnegative, bounds the new solution between zero and~$a$. Continue it homogeneously after the top of~$Q$. Its height stays at most~$a$, and testing and Poincar\'e's inequality give the global zero-trace energy condition. Thus~$f\mathbf1_K\,dx\,dt$ is an admissible measure in the definition of~$\capc_a(K,Q)$ (Definition~\ref{def:parabolic-capacity}), and its total mass gives~\eqref{s:source-cap}.
\end{proof}

\section{Lower bounds for capacitary potentials}
\label{sec:lower}

Let~$\Theta\subset\R^{n+1}$ be bounded and open, let~$0\in\partial\Theta$, and put~$E=\R^{n+1}\setminus\Theta$. We prove a mass-to-positivity estimate for local capacitary potentials, with constants independent of the obstacle height~$a$ (Lemma~\ref{lem:reception}).

We next select nested cylinders while retaining a fixed fraction of the positive increments (Lemma~\ref{lem:selection}). Successive comparisons add these increments on a single superparabolic majorant. Divergence at height~$a$ therefore gives a finite-union potential at height~$H>a$ that is at least~$H-a$ near the origin (Proposition~\ref{prop:saturation}). Its reflection supplies the approximate barriers used in Subsubsection~\ref{subsec:joint-sufficiency}.

\begin{lemma}[Comparison on a finite cylinder]\label{lem:local-majorant-comparison}
Let~$Q=D\times(b,c)$ be a smooth bounded cylinder, let~$K\Subset Q$ be compact and let~$a>0$. If~$w\ge0$ is superparabolic in~$Q$ and~$w\ge a$ pointwise on~$K$, then
\begin{equation}\label{eq:local-majorant-comparison}
 \redpot^a_{K,Q}\le w\qquad\text{in }Q\,.
\end{equation}
\end{lemma}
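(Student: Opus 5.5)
The plan is to reduce the comparison to admissible majorants of the balayage on the full continuation cylinder $D\times(b,\infty)$. Definition~\ref{def:reduction} takes the infimum over superparabolic functions on $D\times(b,\infty)$, but $w$ is only given on $Q=D\times(b,c)$. So I will not extend $w$ past $c$; instead I compare $w$ with the continuous penalized approximations of Lemma~\ref{lem:penalty}, which live on $D\times(b,\infty)$ and whose restrictions to $Q$ are all that matter.

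First I replace the compact obstacle by continuous outer obstacles. Fix $\varepsilon>0$ and put $w_\varepsilon=w+\varepsilon$. Since $w_\varepsilon$ is lower semicontinuous and $w_\varepsilon\ge a+\varepsilon>a$ on $K$, the set $\{w_\varepsilon>a\}$ is open and contains $K$. Choose continuous obstacles $\psi_k\in C_c(Q)$ with $0\le\psi_k\le a$, $\psi_k=a$ on $K$, supports shrinking to $K$ and contained in $\{w_\varepsilon>a\}$. In the singular range I also take them spatially Lipschitz, and $D$ a ball (or use exhaustion by balls). By the outer approximation in \cite[Lemma~4.2]{KKKP} (respectively \cite{MS} for $p<2$), $\redpot_{\psi_k}\downarrow\redpot^a_{K,Q}$ pointwise, or at least the regularized limit agrees. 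It therefore suffices to prove $\redpot_{\psi_k}\le w_\varepsilon$ in $Q$ and then let $k\to\infty$ and $\varepsilon\to0$.

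For fixed $k$ I follow Step~3 of Lemma~\ref{lem:penalty}. On $Q$, the penalized solution $u_N$ is $p$-parabolic in the open set $\{u_N>\psi_k\}$. On the remaining set, $u_N\le\psi_k\le w_\varepsilon$, because $\psi_k\le a<w_\varepsilon$ on $\supp\psi_k$ and $\psi_k=0\le w_\varepsilon$ elsewhere. On the initial and lateral boundary, $u_N$ is continuous with value $0<w_\varepsilon$. Parabolic comparison on $\{u_N>\psi_k\}\cap D\times(b,t)$ for every $t<c$, with no condition at the top, then gives $u_N\le w_\varepsilon$ in $Q$. The strict margin $\varepsilon$ together with lower semicontinuity handles the boundary approach needed for the comparison principle (in the sense of Definition~\ref{def:superparabolic}, via an exhaustion by boxes/open sets and a compactness argument as in Step~5). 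Letting $N\to\infty$ and using $u_N\uparrow\redpot_{\psi_k}$ gives $\redpot_{\psi_k}\le w_\varepsilon$ on $Q$.

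The main obstacle is the passage from $\psi_k$ to $K$. I need $\redpot^a_{K,Q}\le\lim_k\redpot_{\psi_k}$, which holds by monotonicity because $\psi_k\ge a\ind_K$ gives $u_{a\ind_K}\le u_{\psi_k}$, and hence the same after regularization. This direction is the only one required, so the argument needs just this monotonicity and no convergence of the approximations. The comparison step on a general open set is also delicate. I would justify it by the standard fact that comparison for superparabolic functions against continuous $p$-parabolic functions extends from boxes to arbitrary bounded open sets with boundary data in the $\liminf$ sense, as in \cite[Section~2]{BBGP}.
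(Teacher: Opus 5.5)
Your argument is correct in the degenerate range, and in the singular range when $D$ is a ball. That covers the paper's own applications, where the reference cylinders are ball cylinders such as $Q_{j,q}$. The route differs from the paper's, although the key tool is the same.

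The paper compares $w$ directly with the compact potential on $Q\setminus K$, using \cite[Theorem~2.4]{BBGP} with terminal time $c$. This uses three properties of $\redpot^a_{K,Q}$:
\begin{enumerate}[(i)]
\item it is $p$-parabolic off $K$;
\item it is bounded by $a$, so at points of $K$ one has $\limsup\redpot\le a\le w\le\liminf w$;
\item it has continuous zero limits on the lateral and bottom boundary, by \cite[Lemma~2.17]{MS}.
\end{enumerate}
You instead choose, for each $\varepsilon$, one continuous obstacle $\psi$ with $a\ind_K\le\psi\le a$ and support in the open set $\{w+\varepsilon>a\}$. Then $w+\varepsilon\ge\psi$ on all of $Q$. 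You apply the same comparison theorem to the penalizations $u_N$ of Lemma~\ref{lem:penalty} on the non-contact set $\{u_N>\psi\}$. You then conclude from the trivial monotonicity $\redpot^a_{K,Q}\le\redpot_\psi=\sup_N u_N$. As you observe, no convergence $\redpot_{\psi_k}\to\redpot^a_{K,Q}$ is needed, and a single $\psi$ per $\varepsilon$ suffices.

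What your route buys is independence from the structure of the compact potential, namely its $p$-parabolicity off $K$ and its boundary behaviour. It uses only properties of penalized solutions established in the paper, and the contact set causes no difficulty because $u_N=\psi\le w+\varepsilon$ there. The paper's route is shorter and needs no approximation layer.

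Your argument falls short of the stated generality in one place: the singular range with a non-ball $D$. For $\pc<p<2$, Lemma~\ref{lem:penalty} proves continuity of $u_N$ up to the lateral boundary only when $D$ is a ball, via the torsion barrier and translations. Your suggested exhaustion by balls does not repair this, since an increasing union of balls is a ball. Approximating a general smooth $D$ from inside would also need the approximating domains to be balls, as well as a monotone convergence statement for the potentials. To cover general smooth $D$, replace the penalization by the continuity of continuous-obstacle potentials up to the parabolic boundary when the lateral complement is uniformly fat \cite[Lemmas~2.16--2.17]{MS}. Alternatively, argue directly with the compact potential, as the paper does.
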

\begin{proof}
The potential~$\redpot=\redpot^a_{K,Q}$ is defined on~$D\times(b,\infty)$. It is~$p$-parabolic on~$Q\setminus K$ and satisfies~$0\le \redpot\le a$. At each boundary point~$z\in K$ of~$Q\setminus K$,
\begin{equation*}
 \limsup_{Q\setminus K\ni y\to z}\redpot(y)
 \le a\le w(z)
 \le\liminf_{Q\setminus K\ni y\to z}w(y)\,.
\end{equation*}
The last inequality is lower semicontinuity of~$w$.

Since~$D$ is smooth, its complement is uniformly~$p$-fat. Apply~\cite[Lemma~2.17]{MS} to the bounded compactly supported obstacle~$a\ind_K$. In both ranges of~$p$, it gives continuity up to the parabolic boundary, zero lateral Sobolev trace, and vanishing in an initial cylinder. Hence~$\redpot$ has continuous zero limits on the lateral and bottom boundary of~$Q$, including their corners.

Since~$w\ge0$, the boundary-limit inequality holds on these portions too. Apply~\cite[Theorem~2.4]{BBGP} in~$Q\setminus K$ with terminal time~$c$. The upper limits of~$\redpot$ are finite and the lower limits of~$w$ are nonnegative. No condition is required at the top time~$c$. This gives comparison off~$K$, while~$\redpot\le a\le w$ gives it on~$K$.
\end{proof}

\subsection{Homogeneous estimates}\label{subsec:homogeneous-lower}
Homogeneous upper bounds control the flux through a source-free annulus. Weak Harnack and positivity estimates convert a small later value into bounds for earlier spatial averages.

\paragraph{The range~$p>2$.}
We use the weak Harnack estimate from~\cite[Theorem~1.1]{KuusiH}. If~$v\ge0$ is a weak supersolution in~$B_{8R}\times(t_0,t_0+T_0)$, then for almost every admissible Lebesgue time~$t$ we have
\begin{equation}\label{eq:weak-harnack}
 \vint_{B_R}v(x,t)\,dx
 \le
 \left(\frac{C_1R^p}{t_0+T_0-t}\right)^{\frac{1}{p-2}}
 +C_2\essinf_{B_{4R}\times(t+\theta/2,t+\theta)}v\,,
\end{equation}
where
\begin{equation*}
 \theta=\min\left\{t_0+T_0-t,\,
 C_1R^p\left(\vint_{B_R}v(x,t)\,dx\right)^{2-p}\right\}\,.
\end{equation*}
The constants~$C_1,C_2$ depend only on~$n,p$. The formula for~$\theta$ is used when the average is positive; if it is zero,~\eqref{eq:weak-harnack} is immediate.

For local upper bounds we use~\cite[Theorem~3.1(i)]{KP}. If~$v\ge0$ is~$p$-parabolic in~$B_{2R}\times(t_0,t_0+T)$, then, for~$0<\tau\le T$,
\begin{equation*}
 \esssup_{B_R\times(t_0+\tau/2,t_0+\tau)}v
 \le C\left(\frac{R^p}{\tau}\right)^{\frac{1}{p-2}}
 +\frac{C\tau}{R^p}
 \left(\esssup_{t_0<t<t_0+\tau}\vint_{B_{2R}}v(x,t)\,dx\right)^{p-1}\,.
\end{equation*}
Thus a bound for the local spatial averages gives a uniform bound on a smaller cylinder, with constants depending on the fixed enlargement of the spatial cylinders and the gaps between the time intervals. We apply this estimate only where the potential is~$p$-parabolic.

\paragraph{The range~$\pc<p<2$.}
The local smoothing estimate of~\cite{BIV} bounds the supremum of a homogeneous comparison solution in terms of its initial mass.

\begin{lemma}[Local smoothing estimate]\label{s:BIV-upper}
Let~$\pc<p<2$ and let $z\ge0$ be $p$-parabolic in $B_R\times(t_0,t_0+T)$ and have an $L^1$ initial trace there. For every $0<\theta<1$,
\begin{equation}\label{s:BIV-upper-eq}
 \sup_{B_{\theta R}}z(\cdot,t_0+T)
 \le C\left[
 T^{-\frac{n}{\lambda}}\left(\int_{B_R}z(x,t_0)\,dx\right)^{\frac{p}{\lambda}}
 +\left(\frac{T}{R^p}\right)^{\frac{1}{2-p}}
 \right],
\end{equation}
whenever the final slice is defined by continuous continuation or by the limit from earlier times. The constant depends only on $n,p,\theta$, not on the lateral values of $z$.
\end{lemma}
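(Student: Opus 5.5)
We plan to obtain Lemma~\ref{s:BIV-upper} from the local $L^1$--$L^\infty$ smoothing estimate of~\cite{BIV} for nonnegative local solutions in the supercritical singular range~$\pc<p<2$, combined with an $L^1$-Harnack-type control of the spatial mass. Since the statement is invariant under the intrinsic scaling of Subsection~\ref{subsec:intrinsic-scaling}, we first translate to $t_0=0$ and rescale $x=Ry$, $t=R^p\tau$ (with $u$ unchanged). This reduces to $R=1$ with general $T$. Both terms on the right of~\eqref{s:BIV-upper-eq} scale correctly: the first because $p/\lambda\cdot n + n/\lambda\cdot p$ balances with $\lambda=n(p-2)+p$, the second trivially. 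So it suffices to treat $R=1$.

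The first step is the $L^1_{\mathrm{loc}}$ Harnack inequality for singular equations, valid for every $p\in(1,2)$ (DiBenedetto's backward-in-time $L^1$ estimate). It bounds the mass on an intermediate ball at each later time:
\begin{equation*}
 \sup_{0<t<T}\int_{B_{(1+\theta)/2}}z(x,t)\,dx\le C\Bigl(\int_{B_1}z(x,0)\,dx+T^{\frac1{2-p}}\Bigr).
\end{equation*}
Its proof tests the equation with $\zeta^p$ for a spatial cutoff and uses a Hölder/Young argument. The lateral values never enter, since the cutoff is compactly supported.

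The second step is the local $L^r$--$L^\infty$ estimate. This is where $p>\pc$ is used, since it makes $\lambda>0$ and allows $r=1$. It gives
\begin{equation*}
 \sup_{B_\theta\times(T/2,T)}z\le C\,T^{-\frac n\lambda}\Bigl(\sup_{0<t<T}\int_{B_{(1+\theta)/2}}z(\cdot,t)\,dx\Bigr)^{\frac p\lambda}+C\,T^{\frac1{2-p}}.
 \end{equation*}
Inserting the first bound and using $(A+B)^{p/\lambda}\le C(A^{p/\lambda}+B^{p/\lambda})$ gives the first term of~\eqref{s:BIV-upper-eq}. For the second term, note that $T^{-n/\lambda}\,T^{p/(\lambda(2-p))}=T^{1/(2-p)}$, since $-n(2-p)+p=\lambda$.

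The main obstacle is the bookkeeping at the endpoint time $t_0+T$. The estimates above control essential suprema on open cylinders $B_\theta\times(T/2,T)$. We pass to the final slice using the representative that the statement prescribes, namely continuous continuation or the limit from earlier times, as in~\eqref{eq:past-representative}. Its value at $t_0+T$ is then a limit, or essential lower limit, of values in $B_\theta\times(T/2,T)$, so it inherits the bound. We would also check that the dependence on $\theta$ enters only through the cutoffs, and that the $L^1$ initial trace hypothesis justifies letting the mass at time $0$ be the trace limit.
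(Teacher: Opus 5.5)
Your argument is correct, but it does not follow the paper's route.

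The paper cites the local smoothing theorem \cite[Theorem~2.1, integrability exponent~$1$]{BIV} directly. That estimate already has the initial mass $\int_{B_{2\rho}(y)}z(x,t_0)\,dx$ on the right. Applying it at each center $y\in B_{\theta R}$ with $\rho=(1-\theta)R/2$, and bounding that mass by the mass over $B_R$, gives the result pointwise. No covering is needed, and there is no intermediate control of the mass at positive times. The remaining work in the paper is to check, via \cite[Corollary~2.1]{BIV}, that continuous weak solutions are local strong solutions in the sense of that paper, and to handle the $L^1$ initial trace by starting from later Lebesgue times.

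You instead rebuild that theorem from two DiBenedetto-type estimates for local weak solutions. The first is the $L^1_{\mathrm{loc}}$ Harnack inequality, valid for all $1<p<2$, which carries the initial mass forward in time at the cost of $C(T/\rho^\lambda)^{1/(2-p)}$. The second is the $L^1$--$L^\infty$ estimate in terms of $\sup_t\int z\,dx$, which is exactly where $\lambda>0$, i.e.\ $p>\pc$, enters. The identity $T^{-n/\lambda}T^{p/(\lambda(2-p))}=T^{1/(2-p)}$ then absorbs the extra term into the second term of~\eqref{s:BIV-upper-eq}.

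Your route makes clear which step needs the supercritical range and avoids the notion of local strong solution. The price is a finite covering of $B_{(1+\theta)R/2}$ by balls $B_\rho(y_i)$ with $B_{2\rho}(y_i)\subset B_R$ and $\rho\simeq(1-\theta)R$. The $L^1$ Harnack inequality doubles the radius, so it cannot be applied to $B_{(1+\theta)R/2}$ in one piece. This covering is where the $\theta$-dependence of the constant arises in your version; you only allude to it.

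Your one-line account of the $L^1$ Harnack proof also undersells it. Testing with $\zeta^p$ leaves the term $\rho^{-1}\iint|\nabla z|^{p-1}$. DiBenedetto controls it by a weighted energy estimate and an iteration over nested radii, not by H\"older and Young alone. This does not affect your argument, since you cite the result.
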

\begin{proof}
Apply \cite[Theorem~2.1, with integrability exponent $1$]{BIV} in balls of radius $(1-\theta)R/2$ centered in $B_{\theta R}$. Their initial masses are bounded by the mass over $B_R$. In that theorem the smoothing exponent is $1/\lambda$ and the second term is exactly the second term in \eqref{s:BIV-upper-eq}. Continuous weak solutions are among its local strong solutions by \cite[Corollary~2.1]{BIV}. If the initial value is attained in~$L^1(B_R)$, apply the estimate first from a later Lebesgue time and let that time decrease to~$t_0$. The solutions to which we apply the lemma can be continued beyond the final slice, or the estimate passes to that slice by continuity.
\end{proof}

\begin{lemma}[Positive lower bound from initial mass]\label{s:BIV-lower}
Let~$\pc<p<2$. There are constants $m_0,c_1>0$, depending only on $n,p$, such that the zero-boundary solution~$z$ in~$B_{64}\times(0,\infty)$ with bounded nonnegative initial data supported in $B_2$ satisfies
\begin{equation*}
 \inf_{B_2}z(\cdot,t)\ge 2c_1
 \quad\text{for }5/4\le t\le5,
\end{equation*}
provided its initial mass is at least $m_0$.
\end{lemma}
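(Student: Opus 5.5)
The plan is to prove the bound by contradiction and compactness, relying on the Barenblatt profile for intuition and on the smoothing estimate of Lemma~\ref{s:BIV-upper} together with an $L^1$ mass-conservation (or near-conservation) argument in the singular supercritical range $\pc<p<2$. The first step is to quantify mass retention. Since the initial datum $z_0$ is supported in $B_2$ and the lateral boundary $\partial B_{64}$ is far away, the mass $\int_{B_{64}}z(\cdot,t)\,dx$ can decrease only through flux at $\partial B_{64}$. To control that flux, I would use a cutoff $\chi$ equal to one on $B_{32}$ and supported in $B_{48}$, test the equation with it, and bound $\int|\nabla z|^{p-1}|\nabla\chi|$ by a Caccioppoli estimate. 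An alternative is an $L^1$-contraction comparison with the whole-space solution, which conserves mass for $p>\pc$. Either route should show that for $t\le5$ a fixed fraction of the mass, say at least $m_0/2$, remains in $B_{32}$, at least for large enough $m_0$. The upper bound of Lemma~\ref{s:BIV-upper}, with $t_0=0$, gives $\sup z(\cdot,t)\le C(t^{-n/\lambda}M^{p/\lambda}+t^{1/(2-p)})$, so the solution is controlled and the flux terms are bounded by powers of $M$.

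The second step is to turn mass into a pointwise lower bound. I would use the intrinsic Harnack inequality for the singular range (DiBenedetto--Gianazza--Vespri), or equivalently the expansion of positivity, at a time $t_1\in(0,5/4)$. Mass at least $m_0/2$ in $B_{32}$, combined with the sup bound, gives a measure-theoretic positivity: $z(\cdot,t_1)\ge\delta$ on a set of measure at least $c$, with $\delta$ depending on $m_0$. Expansion of positivity then propagates this to $\inf_{B_2}z\ge c\delta$ on a later time interval. A cleaner alternative is comparison from below: the Barenblatt solution with small mass $m$, suitably time-shifted, has support equal to all of $\R^n$ for $p<2$. Since it is not zero on $\partial B_{64}$, I would instead compare with an explicit subsolution built from it minus a constant, truncated to vanish at $\partial B_{64}$, placed under $z$ at a positive time on a small ball where $z$ is already large.

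I expect the main obstacle to be the uniformity of the lower bound over the whole window $5/4\le t\le5$ with constants depending only on $n,p$. In the singular range, solutions with zero lateral data extinguish in finite time, with extinction time growing with the mass. I would therefore choose $m_0$ large so that the extinction time in $B_{64}$ far exceeds $5$. The lower bound on extinction time comes from the $L^1$ or $L^2$ decay ODE: $\frac{d}{dt}\|z\|_2^2\ge -C\|z\|_2^{p}$ via the Sobolev embedding, valid because $p>\pc$. Along with the time-uniform Harnack constants, this needs care. To keep the constants clean, I would first normalize $m_0$ by scaling, then run the expansion of positivity from a time $t_1=1$.
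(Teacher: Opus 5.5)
Your strategy (local mass retention, then the smoothing bound to get measure-theoretic positivity, then expansion of positivity or a Barenblatt-type subsolution) is essentially the machinery behind the positivity theorem that the paper simply quotes. \cite[Theorems~2.2 and~2.4]{BIV} give $z(x,t)\ge c(t/2^p)^{1/(2-p)}$ on $B_2$ for $0<t<c'2^\lambda m^{2-p}$. The paper then only chooses $m_0$ with $c'2^\lambda m_0^{2-p}>5$, and takes $c_1$ below half the minimum of this bound on $[5/4,5]$.

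Reproving that theorem is legitimate, but the step you identify as the main obstacle is resolved by an inequality that points the wrong way. Sobolev gives $\|\nabla z\|_p\ge c\|z\|_2$, hence $\frac{d}{dt}\|z\|_2^2=-2\|\nabla z\|_p^p\le-c\|z\|_2^p$. This bounds the extinction time from \emph{above}. There is no estimate $\|\nabla z\|_p^p\le C\|z\|_2^p$, so this ODE gives no lower bound on the extinction time, let alone positivity on $B_2$. The lower bound on the positivity time has to come from local mass retention, e.g.\ an $L^1_{\mathrm{loc}}$ Harnack estimate of DiBenedetto's form $\int_{B_2}z(\cdot,0)\,dx\le\gamma\int_{B_4}z(\cdot,t)\,dx+\gamma(t/2^\lambda)^{1/(2-p)}$. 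This is exactly where the threshold $t\lesssim m^{2-p}$ comes from.

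Your own retention argument also does not close as set up. Suppose you bound $z$ in the annulus by the global smoothing bound $Ct^{-n/\lambda}M^{p/\lambda}$ and feed it into a Caccioppoli estimate from $t=0$ via $\int z^p\le(\sup z)^{p-1}\int z$. This requires $\int_0^5t^{-n(p-1)/\lambda}\,dt<\infty$, i.e.\ $n<p$, which fails. The fix is to apply Lemma~\ref{s:BIV-upper} on small balls in the annulus that miss $B_2$. Their initial mass is zero, so $z\le C(t/R^p)^{1/(2-p)}$ there, the flux loss is $O(1)$, and the retained mass is at least $M-C$. Comparison with the whole-space solution only gives $z\le u_{\R^n}$ and cannot show retention.

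Two further repairs are needed for constants depending only on $n,p$. First, with retained mass $\sim M$ and $\sup z\lesssim M^{p/\lambda}$, Chebyshev gives only $|\{z\ge\delta\}|\gtrsim M^{1-p/\lambda}$. This tends to zero as $M\to\infty$, because $\lambda<p$ when $p<2$. So you must first reduce to mass exactly $m_0$, by comparison with the solution starting from $(m_0/M)z_0\le z_0$. Rescaling cannot do this, since $B_2$, $B_{64}$ and $[5/4,5]$ are fixed.

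Second, the singular expansion of positivity of DiBenedetto--Gianazza--Vespri needs $B_{16\rho}(y)$ inside the domain at each doubling. Positivity located only somewhere in $B_{32}$ therefore cannot be expanded to cover $B_2$ while staying inside $B_{64}$. Instead, retain mass in a small ball such as $B_4$ and pigeonhole a ball of fixed density. Then, for each target $t\in[5/4,5]$, start the iteration at $t-\tau$ with $\tau$ fixed, so that the final positivity window contains $t$.

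With these repairs the route works, but it amounts to a full reproof of the cited positivity estimate.
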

\begin{proof}
For initial mass $m$ in $B_2$, \cite[Theorems~2.2 and~2.4]{BIV} give the lower estimate
\begin{equation*}
 z(x,t)\ge c\left(\frac{t}{2^p}\right)^{\frac{1}{2-p}},
 \qquad x\in B_2,\quad 0<t<c'2^\lambda m^{2-p}.
\end{equation*}
The distance to the boundary of $B_{64}$ satisfies the spatial separation requirement in those results. Choose $m_0$ so that the upper endpoint of the time interval is strictly larger than $5$, and choose $c_1$ below half the minimum of the displayed lower bound on $[5/4,5]$. This also ensures that the solution has not extinguished on this interval. Approximation extends the statement from smooth initial data to bounded $L^1\cap L^2$ initial data.
\end{proof}

\subsection{Preservation and decay of lower bounds}\label{subsec:preservation-lower}
An initial positive lower bound is preserved on a smaller ball for a short intrinsic time when~$p>2$. For~$p<2$, an explicit supersolution bounds its decrease.

\begin{samepage}
\begin{lemma}[Preservation and decay of an initial lower bound]\label{lem:persistence}\label{s:plateau-loss}
Let~$p>1$, $p\ne2$, and let~$v\ge0$ in~$B_R\times(t_1,t_2)$ have initial right trace at least~$b>0$ on~$B_R$.
\begin{enumerate}[(i)]
\item If~$p>2$ and~$v$ is superparabolic, there is~$\kappa=\kappa(n,p)>0$ such that
\begin{equation}\label{eq:persistence}
 v\ge b\quad\text{on }B_{R/2}\times
 \bigl(t_1,\min\{t_2,t_1+\kappa b^{2-p}R^p\}\bigr)\,.
\end{equation}
For a bounded weak supersolution, an~$L^2$ or almost-everywhere initial trace is sufficient.
\item If~$1<p<2$ and~$v$ is a bounded weak supersolution, there is~$C=C(n,p)$ such that
\begin{equation}\label{s:plateau-loss-eq}
 v(x,t)\ge b-C\left(\frac{t-t_1}{R^p}\right)^{\frac{1}{2-p}},
 \qquad |x|\le R/2,\quad t_1<t<t_2\,.
\end{equation}
This estimate holds for the canonical representative; a continuous trace, or the energy~$L^2$ trace, suffices.
\end{enumerate}
\end{lemma}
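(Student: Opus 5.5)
Both parts will be proved by comparison with an explicit subsolution that starts below the initial trace, vanishes (or is nonpositive) on the lateral boundary of~$B_R$, and is evaluated on~$B_{R/2}$. By translation take~$t_1=0$ and center~$0$. By scaling~$x=Ry$,~$t=b^{2-p}R^p\tau$,~$v=bw$, which preserves the equation for both ranges of~$p$, we reduce to~$R=1$,~$b=1$. The claimed time windows~$\kappa b^{2-p}R^p$ and the decay term~$C((t-t_1)/R^p)^{1/(2-p)}$ are exactly invariant under this scaling: in (ii) the term becomes~$b\,C\tau^{1/(2-p)}$.

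\emph{Part (i),~$p>2$.} I will use a Barenblatt-type or separable subsolution supported in~$B_1$. A convenient choice is~$\Phi(x,t)=(1-|x|^2)_+^{\beta}$ minus a time-growing correction; alternatively, take the explicit travelling wave of~\eqref{eq:rough-travelling-wave} reflected: $\underline w(x,t)=\min\{1,(d(t)-|x|)_+^{\frac{p-1}{p-2}}\cdots\}$ with a shrinking front. Concretely, I take~$\underline w=\bigl(1-(|x|/\rho(t))^2\bigr)_+^{\gamma}$ for suitable~$\gamma$ and~$\rho$, rescaled so that~$\underline w\le 1$ initially. I then check~$\underline w_t-\Dp\underline w\le0$ for~$t\le\kappa$, with~$\rho$ decreasing from~$1$. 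This does not give~$v\ge1$ on~$B_{1/2}$, only~$v\ge c$.

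To get the full value~$b$ rather than a fraction, I will instead use the De~Giorgi-type expansion-of-positivity lemma in its ``no loss'' form: \cite{DiB}'s Lemma on time propagation of positivity with measure information~$|\{v(\cdot,0)\ge b\}\cap B_1|=|B_1|$ gives~$v\ge(1-\epsilon)b$. Applying this with~$b$ replaced by~$(1+\epsilon)^{-1}$-adjusted levels does not reach equality either. The correct route is comparison with~$\min\{b,\text{a supersolution-free subsolution}\}$. Since~$p>2$ has finite propagation, the explicit subsolution~$\underline w(x,t)=b\min\{1,(\ (3/4)-|x|-Mt)_+\cdots\}$, namely an inward travelling wave~$b\bigl(1-(\,|x|-\tfrac12-\ell(t))_+/c\bigr)$ built from the profile~$(\cdot)^{\frac{p-1}{p-2}}$, equals~$b$ on~$B_{1/2}$ for~$t\le\kappa$, vanishes near~$\partial B_1$, and is a subsolution. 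The outward curvature term has the right sign only for inward fronts; I will verify it on the annulus~$1/2<|x|<1$, where the curvature is bounded by~$2(n-1)$. Lower semicontinuity handles the superparabolic case, and comparison \cite[Theorem~2.4]{BBGP} applies on~$B_1\times(0,\kappa)$ using the initial trace and~$v\ge0\ge\underline w$ laterally. For weak supersolutions with~$L^2$ traces, I will compare via energy positive-part tests of~$(\underline w-v)_+$ instead.

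\emph{Part (ii),~$1<p<2$.} Here~$b-C t^{1/(2-p)}$ suggests subtracting the extinction profile. I take~$\underline w(x,t)=b-\Psi(x,t)$, where~$\Psi$ is a supersolution with~$\Psi(\cdot,0)=0$ on~$B_{1/2}$ and~$\Psi\ge b$ on~$\partial B_1$. A separable candidate is~$\Psi=\bigl(\Lambda t\bigr)^{1/(2-p)}\phi(x)+b\,\eta(x)$, but the clean choice is~$\Psi(x,t)=C_0\bigl(t+\epsilon\bigr)^{1/(2-p)}g(x)$ with~$g$ large near~$\partial B_1$. Since~$b$ is an arbitrary level, I instead first compare~$v$ with~$\min\{v,b\}$, which is a supersolution with trace~$b$. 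I then use the explicit function~$\underline w=b-\bigl(A|x|^{p'}\,+\,Bt\bigr)^{\cdots}$ chosen so that~$\underline w\le 0$ on~$\partial B_1$ whenever~$b\le C t^{1/(2-p)}$; when~$b$ exceeds that, the radial profile~$-\Dp$ of~$|x|^{p'}$ is constant, as in~\eqref{s:torsion}. Matching the exponents gives exactly~$t^{1/(2-p)}$. The main obstacle is designing this subsolution so that its lateral values are~$\le0$ for all~$t$ while the loss on~$B_{1/2}$ is only~$Ct^{1/(2-p)}$. I will split into~$t\le b^{2-p}$, using the torsion-type subsolution~$b-b\,\zeta(|x|)-\Lambda b^{p-1}t\,$ with~$\zeta$ smooth,~$\zeta=0$ on~$B_{1/2}$,~$\zeta\ge1$ on~$\partial B_1$; and~$t>b^{2-p}$, where the bound is trivial by~$v\ge0$. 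In the first regime,~$b^{p-1}t\le t^{1/(2-p)}$ fails; the loss~$b^{p-1}t\le b$ must be compared carefully. I will therefore choose~$\zeta$ so that the singular flux scaling yields the loss~$(t)^{1/(2-p)}$ directly via the ansatz~$b-\bigl(b^{2-p}\zeta+\Lambda t\bigr)^{1/(2-p)}$, checking the sign of~$\underline w_t-\Dp\underline w$ by direct radial computation.
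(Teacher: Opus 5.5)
\textbf{Part (i).} After some detours, you arrive at essentially the paper's argument. It is a finite-propagation layer with the travelling-wave exponent $\frac{p-1}{p-2}$, invading from the lateral boundary at a speed of order $b^{p-2}$, so that the plateau at height $b$ survives on $B_{R/2}$ for a time of order $\kappa b^{2-p}R^p$. The paper uses the self-similar layer $b f((R'-r)/\ell(t))$ with $f(z)=(1-z^2)^{\frac{p-1}{p-2}}$ and $\ell(t)^p=K_0b^{p-2}(t-t_1)$. It shows this layer dominates $b-\min\{v,b\}$ on $B_{R'}$ and then lets $R'\uparrow R$.

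Two corrections are needed. First, write the plateau as $b$ minus a supersolution, e.g.\ $\underline w=b\bigl(1-((|x|-\rho(t))_+/c)^{\frac{p-1}{p-2}}\bigr)_+$ with $\rho\ge R/2$ decreasing at a rate at least $C(n,p)b^{p-2}$ (in your normalization, $c$ fixed). The form $b\min\{1,\cdot\}$ you wrote first has a concave corner where the profile crosses $1$. The flux jump there gives $\underline w_t-\Delta_p\underline w$ a positive singular part, so it is not a subsolution. Second, an $L^2$ or a.e.\ right trace does not supply the pointwise initial $\liminf$ needed for the pointwise comparison principle. So truncate to $\min\{v,b\}$ and compare by energy tests on $B_{R'}$, as you already propose for weak supersolutions.

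\textbf{Part (ii) has a genuine gap:} none of your concrete candidates yields the rate $(t-t_1)^{1/(2-p)}$. Normalize $t_1=0$, $R=1$.

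The torsion-type function loses $\Lambda b^{p-1}t$. This exceeds $t^{1/(2-p)}$ precisely in the regime $t<b^{2-p}$ where you need it.

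The final ansatz is not a subsolution. Put $S=b^{2-p}\zeta+\Lambda t$ and $\Psi=S^{1/(2-p)}$. Then $\Psi_t=\frac{\Lambda}{2-p}\Psi^{p-1}$, and $\Psi_t\ge\Delta_p\Psi$ is equivalent, where $S>0$, to
\[
 \Lambda S^p\ge(2-p)^{2-p}\Bigl[S\,\Delta_pS+\frac{(p-1)^2}{2-p}|\nabla S|^p\Bigr],
\]
with $\nabla S=b^{2-p}\nabla\zeta$ independent of $t$. At $t=0$, the term $b^{(2-p)p}|\nabla\zeta|^p$ would have to be controlled by $\Lambda b^{(2-p)p}\zeta^p$. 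This fails near the edge of $\{\zeta=0\}$, where $|\nabla\zeta|/\zeta$ is unbounded. By continuity, the inequality fails at such points for all small $t>0$.

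No choice of $\zeta$ rescues this. If $\Psi$ were a supersolution for small $t>0$, letting $t\downarrow0$ would give $\Delta_p\Psi_0\le\frac{\Lambda}{2-p}\Psi_0^{p-1}$ for $\Psi_0=b\zeta^{1/(2-p)}$. The strong maximum principle for this homogeneous absorption admits no dead cores, so $\Psi_0\equiv0$, contradicting $\zeta\ge1$ on $\partial B_1$. Taking $\zeta>0$ on $B_{1/2}$ instead produces a loss of size $b\min\zeta^{1/(2-p)}$ already at $t=t_1$.

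\textbf{The missing idea} is in the separable candidate you wrote down and then dropped: use a profile that blows up at the lateral boundary. With $d=p/(p-1)$, $\beta=p/(2-p)$ and $\phi(y)=(1-|y|^d)^{-\beta}$, a radial computation gives $\Delta_p\phi=(\beta d)^{p-1}\phi\,[n(1-|y|^d)+(\beta-1)d|y|^d]\le C_\phi\phi$. The identities $\frac1{2-p}-1=\frac{p-1}{2-p}$ and $\beta=(p-1)\beta+p$ then make
\[
 \Psi=C(t-t_1+\epsilon)^{\frac1{2-p}}R^{-\beta}\phi(x/R)
\]
a supersolution once $C^{2-p}\ge(2-p)C_\phi$.

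Since $\phi\to\infty$ at $|y|=1$, for each fixed $\epsilon$ one has $\Psi\ge b$ on $\partial B_{R'}$ once $R'$ is close to $R$, whatever $b$ is. So the arbitrariness of $b$, which made you abandon this choice, is harmless, and neither truncation nor a case split is needed. Compare the subsolution $b-v$ with $\Psi$ on $B_{R'}$ by positive-part testing: its initial trace is $\le0\le\Psi$, and its lateral values are $\le b\le\Psi$ because $v\ge0$. Then let $R'\uparrow R$ and $\epsilon\downarrow0$. Finally use $\sup_{B_{1/2}}\phi<\infty$ and $R^{-\beta}(t-t_1)^{\frac1{2-p}}=((t-t_1)/R^p)^{\frac1{2-p}}$.
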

\end{samepage}
\begin{proof}
\emph{The range~$p>2$.} Set~$v_b=\min\{v,b\}$ and~$u=b-v_b$. The bounded superparabolic function~$v_b$ is a weak supersolution locally, so~$u$ is a bounded weak subsolution with~$0\le u\le b$. The assumed right trace gives~$u(\cdot,t)\to0$ in~$L^2(B_{R'})$ as~$t\downarrow t_1$, through admissible times, for every~$R'<R$. For an almost-everywhere right trace this follows by bounded convergence; an~$L^2$ trace gives the same conclusion by truncation.

Fix~$0<R'<R$. We construct a boundary layer in~$B_{R'}$ whose support remains outside~$B_{R'/2}$. Put
\begin{equation*}
 \gamma=\frac{p-1}{p-2},\qquad
 f(z)=(1-z^2)^\gamma\quad(0\le z\le1),\qquad
 \ell(t)=\bigl(K_0b^{p-2}(t-t_1)\bigr)^{\frac{1}{p}}\,.
\end{equation*}
For~$R'-\ell(t)<r<R'$, set
\begin{equation*}
 B(r,t)=b f\left(\frac{R'-r}{\ell(t)}\right)\,,
\end{equation*}
and set~$B=0$ for~$0\le r\le R'-\ell(t)$. We consider only times when~$\ell(t)\le R'/2$. On~$r=R'$ we have~$B=b$, and its initial interior trace is zero.

We choose~$K_0$ so that~$B_t\ge\Dp B$. Write~$z=(R'-r)/\ell$ and~$g(z)=-f'(z)=2\gamma z(1-z^2)^{\gamma-1}$. In the nonzero region,
\begin{equation*}
 B_t=\frac{b\ell'}\ell\,zg(z),\qquad
 \Dp B=\frac{b^{p-1}}{\ell^p}
       \left[-(g^{p-1})'+(n-1)\frac\ell r g^{p-1}\right]\,.
\end{equation*}
The bracket in the second formula equals
\begin{equation*}
 (2\gamma)^{p-1}z^{p-2}(1-z^2)^{\gamma-1} \times \left[p\gamma z^2-(p-1) +(n-1)\frac\ell r z(1-z^2)\right]\,.
\end{equation*}
Since~$\ell/r\le1$, the expression in square brackets is negative for~$z$ in a fixed neighborhood of zero. On the part where it is positive,~$z$ is bounded away from zero. Consequently,
\begin{equation*}
 \left[-(g^{p-1})'+(n-1)\frac\ell r g^{p-1}\right]_+
 \le C(n,p)zg(z),\qquad 0<z<1\,.
\end{equation*}
The common factor~$(1-z^2)^{\gamma-1}$ cancels when comparing the two sides, so the bound remains valid up to~$z=1$. Since~$\ell^{p-1}\ell'=K_0b^{p-2}/p$, a sufficiently large~$K_0=K_0(n,p)$ gives~$B_t-\Dp B\ge0$.

At the inner front~$z=1$, both~$B$ and its spatial flux vanish. Extension by zero therefore creates no spatial interface measure. The moving front creates no time atom because~$B$ is continuous across it. Thus~$B$ is a weak supersolution.

For~$\eta>0$, compare on~$B_{R'}\times(t_1+\eta,T)$, where~$T<t_2$ and~$\ell(T)<R'/2$. The energy of~$u$ on this cylinder is local in the original domain. Since~$u\le b=B$ on the lateral boundary in the Sobolev trace sense,~$(u-B)_+$ has zero lateral trace. Subtracting the weak inequalities and testing with~$(u-B)_+$, using time regularization, gives for admissible~$T$
\begin{equation*}
 \begin{split}
 &\frac12\int_{B_{R'}}(u-B)_+^2(x,T)\,dx
 +\int_{t_1+\eta}^T\!\int_{B_{R'}}
   \bigl(A(\nabla u)-A(\nabla B)\bigr)\cdot \nabla (u-B)_+\,dx\,dt\\
 &\hspace{3em}\le\frac12\int_{B_{R'}}
          (u-B)_+^2(x,t_1+\eta)\,dx\,.
 \end{split}
\end{equation*}
Uniform monotonicity makes the diffusion term nonnegative. Since~$B\ge0$, the initial term is bounded by~$\frac12\|u(\cdot,t_1+\eta)\|_{L^2(B_{R'})}^2$, which tends to zero by the right trace. Letting~$\eta\downarrow0$ gives~$u\le B$ almost everywhere. The canonical representative of~$v_b$ gives~$v_b\ge b-B$ pointwise, since~$b-B$ is continuous at positive times. Hence~$v\ge b$ on~$B_{R'/2}$ until~$\ell=R'/2$. With~$\kappa=1/(2^pK_0)$, letting~$R'\uparrow R$ gives~\eqref{eq:persistence}.

\emph{The range~$1<p<2$.} Set
\begin{equation*}
 d=\frac p{p-1},\qquad \beta=\frac p{2-p},\qquad
 \delta=\frac1{2-p},\qquad
 \phi(y)=(1-|y|^d)^{-\beta}\quad(|y|<1).
\end{equation*}
Writing $k=(\beta+1)(p-1)$, differentiation gives $k+1=\beta$ and
\begin{equation}\label{s:plateau-radial}
 \Dp\phi=(\beta d)^{p-1}\phi
 \bigl[n(1-|y|^d)+kd|y|^d\bigr]\le C_\phi\phi.
\end{equation}
Indeed $r^{n-1}(\phi')^{p-1}=(\beta d)^{p-1}r^n(1-r^d)^{-k}$, whose derivative gives the displayed formula. At $r=0$ the vector flux is a continuously differentiable multiple of $y$, so there is no point source at the origin.

The exponent identities $\delta-1=(p-1)\delta$ and $\beta=(p-1)\beta+p$ show that
\begin{equation*}
 B(x,t)=C(t-t_1)^\delta R^{-\beta}\phi(x/R)
\end{equation*}
is a supersolution if $C^{2-p}\delta\ge C_\phi$. Its initial trace is zero in the interior, and it diverges at the lateral boundary. The function $b-v$ is a subsolution, with nonpositive initial trace and upper bound $b$ on the lateral boundary. To compare, first replace $t-t_1$ in $B$ by $t-t_1+\epsilon$, and work in a ball $B_{R'}$ with $R'<R$ so close to $R$ that $B\ge b$ on its boundary throughout the time interval of comparison. The initial inequality holds because $b-v\le0$ and $B\ge0$. Let $R'\uparrow R$ and then $\epsilon\downarrow0$. This gives $b-v\le B$. Since $\phi$ is bounded on $B_{1/2}$, \eqref{s:plateau-loss-eq} follows. Weak comparisons use positive-part testing and the initial trace inequalities~$b-v\le0\le B$.
\end{proof}

\subsection{Lower bounds from capacity}\label{subsec:capacity-lower}\label{subsec:degenerate-lower}\label{sec:s-lower}\label{sec:reception}
We argue by contraposition, using the same idea that replaces a direct Harnack-chain argument in the proof of the Carleson estimate in~\cite[Section~2.4.2]{AGS}. For~$p>2$, weak Harnack and persistence show that large earlier spatial averages would force a large later value. Thus a small later value bounds those averages. For~$p<2$, comparison with homogeneous zero-boundary solutions gives the corresponding bound. Local boundedness and a Caccioppoli estimate control the flux through a source-free annulus. Testing the equation then bounds~$\capc_a(K,Q_r^q)$. Contraposition gives a positive lower bound, which Lemma~\ref{lem:persistence} propagates to a neighborhood of the origin.

\begin{samepage}
\begin{lemma}[A lower bound from capacity]\label{lem:reception}\label{s:reception}
Let~$n\ge2$ and~$p\in(\pc,2)\cup(2,\infty)$. There are constants~$L=L(n,p)>4$ and~$C_*=C_*(n,p)>0$ with the following property; for~$p<2$ take~$L=8$. Let~$r,q>0$, put~$s=q^{2-p}r^p$, and let
\begin{equation*}
 Q=Q_r^q,\qquad
 K\subset\overline{Q_r^{q,-}}
\end{equation*}
be compact. For~$a>0$, set~$\redpot=\redpot^a_{K,Q}$ and~$M=\capc_a(K,Q)$.
\begin{enumerate}[(i)]
\item If~$p>2$, then
\begin{equation}\label{eq:reception}
 M>C_*qr^n
 \quad\Longrightarrow\quad
 \redpot\ge q\quad\text{on }B_{Lr}\times[-s,s/2]\,.
\end{equation}
\item If~$\pc<p<2$, there are constants~$0<c_0\le1$ and~$0<\eta<1/4$, depending only on~$n,p$, such that
\begin{equation}\label{s:reception-eq}
 M>C_*qr^n
 \quad\Longrightarrow\quad
 \redpot\ge c_0q\quad\text{on }B_r\times[-\eta s,\eta s]\,.
\end{equation}
\end{enumerate}
The constants are independent of~$a,K,r,q$.
\end{lemma}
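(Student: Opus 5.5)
We argue by contraposition, as in the Carleson-estimate argument of~\cite[Section~2.4.2]{AGS}. By intrinsic scaling (Subsection~\ref{subsec:intrinsic-scaling}) we normalize~$r=1$, $q=1$, $s=1$. Then~$K\subset\overline B_1\times[-4,-2]$, $Q=B_{8L}\times(-5,1)$, and the height becomes~$a/q$, which is arbitrary. The claim to prove is: if~$\redpot(z_0)<1$ at some point~$z_0$ of the target region (respectively~$<c_0$ for~$p<2$), then~$M\le C_*$. By Lemma~\ref{lem:penalty} we may replace~$\redpot$ by the bounded penalizations~$u_N$ of a continuous outer obstacle approximation. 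Continuity of capacity on decreasing compact sets then recovers~$M$ from their masses~$M_N$, so all energy tests below are justified.

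\emph{Step 1: small later value gives small earlier averages.} For~$p>2$, write~$z_0=(x_0,t_0)$ with~$|x_0|<L$ and~$t_0\in[-1,1/2]$. Apply the weak Harnack inequality~\eqref{eq:weak-harnack} to~$\redpot\ge0$, which is a supersolution in all of~$Q$, on balls of radius comparable to~$L$ containing~$x_0$ and~$B_{4L}$. If~$\vint_{B_{2L}}\redpot(\cdot,t)\,dx$ were large at some time~$t\in[-2,-1]$, then~$\theta$ would be small. The infimum over~$B_{4L}\times(t+\theta/2,t+\theta)$ would be large, and Lemma~\ref{lem:persistence}(i) would carry this bound forward to time~$t_0$, contradicting~$\redpot(z_0)<1$. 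The choice of~$L$ ensures that~$z_0$ lies in the persistence region. This yields~$\sup_{t\in[-2,1]}\vint_{B_{2L}}\redpot(\cdot,t)\,dx\le C$. For earlier times, where the source is active, we instead use the bound~$\redpot\le a$ together with a direct argument on the annulus, which is free of sources.

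For~$p<2$ we use forward comparison instead. Suppose~$\int_{B_2}\redpot(\cdot,t)\,dx\ge m$ at a time~$t\in[-2,-1]$. Compare~$\redpot$ from that time on with the zero-boundary solution in~$B_{64}$ with initial datum~$\min\{\redpot,\cdot\}\ind_{B_2}$. After rescaling by a fixed factor, Lemma~\ref{s:BIV-lower} gives~$\redpot\ge 2c_1$ on~$B_2$ at later times, and then Lemma~\ref{s:plateau-loss}(ii) keeps~$\redpot\ge c_0$ on~$B_1\times[-\eta,\eta]$. So if the conclusion fails, the earlier masses are bounded.

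\emph{Step 2: flux bound.} In the annulus~$A=(B_{2L}\setminus B_2)\times(-5,1)$ the potential is~$p$-parabolic, so there is no source there. Local boundedness applies: the estimate of~\cite[Theorem~3.1(i)]{KP} for~$p>2$, and Lemma~\ref{s:BIV-upper} for~$p<2$. Combined with the average bound from Step~1, it gives~$\redpot\le C$ on a slightly smaller annulus~$A'$. A Caccioppoli inequality then bounds~$\iint_{A''}|\nabla\redpot|^p\le C$.

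\emph{Step 3: mass bound.} Take a cutoff~$\chi$ that equals~$1$ near~$\overline B_2\times[-4,-2]$, is supported in~$B_{2L}\times(-5,1)$, and has~$\nabla\chi$ supported in~$A''$. Testing~\eqref{eq:riesz} with~$\chi$ gives
\begin{equation*}
 M=\mu_{\redpot}(K)\le\iint|\redpot\,\chi_t|+\iint_{A''}|\nabla\redpot|^{p-1}|\nabla\chi|\le C.
\end{equation*}
The time-derivative term needs~$\redpot$ to be bounded where~$\chi_t\ne0$. Away from the source slab this is controlled by Step~1 and Step~2. Inside the slab we choose~$\chi$ independent of time near~$K$, so~$\chi_t$ is supported where the solution is source-free. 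Setting~$C_*$ equal to this~$C$ gives the contrapositive.

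\emph{Main obstacle.} The main difficulty is Step~1: making the bounds independent of~$a$ and valid uniformly at all relevant times. For~$p>2$, the first term in~\eqref{eq:weak-harnack} and the waiting time~$\theta$ must be reconciled with the persistence time~$\kappa b^{2-p}$. This fixes~$L$ large, so that the target cylinder lies inside the intrinsic Harnack region. For~$p<2$, the extinction and decay in Lemma~\ref{s:plateau-loss} force us to take~$\eta$ small.
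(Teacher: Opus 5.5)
Your architecture is the paper's: contraposition, then weak Harnack plus persistence for $p>2$ and forward comparison with zero-boundary solutions for $p<2$, then local boundedness and Caccioppoli on a source-free annulus, then testing the equation with a cutoff. However, Step~1 as you set it up does not supply what Steps~2--3 need, and your patch breaks the uniformity in~$a$.

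\emph{Where the gap is.} Your $\chi$ equals $1$ near $\overline B_2\times[-4,-2]$ and is spatially supported in $B_{2L}$. So $\nabla\chi\neq0$ on the annulus throughout the source interval. The flux term $\iint|\nabla\redpot|^{p-1}|\nabla\chi|$ therefore needs an $a$-independent bound for $\redpot$ on the annulus at source-active times. For $p>2$, the local bound of~\cite[Theorem~3.1(i)]{KP} needs the spatial averages over the whole time interval of each annular cylinder. Your Step~1 produces them only for $t\in[-2,-1]$.

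Using $\redpot\le a$ instead makes $C_*$ depend on $a$, which the lemma explicitly excludes. After normalization the height $a/q$ is arbitrary, and in the applications $q$ ranges over $(0,a]$. The unspecified ``direct argument on the annulus'' cannot replace this. In the degenerate range there is no universal local bound. For $K=\overline B_1\times[-4,-2]$, intrinsic scaling shows $\redpot$ is of order $a$ on the annulus at source times when $a$ is large. Only the contrapositive hypothesis excludes this.

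\emph{The missing observation and the fix.} The estimate~\eqref{eq:weak-harnack} holds for nonnegative weak supersolutions, hence also at source-active times. Writing $u$ for the penalized approximation, the paper proceeds as follows.
\begin{enumerate}[(a)]
\item It places the small-value hypothesis at the single time $-1$, namely $\inf_{B_{2L}}u(\cdot,-1)\le1$.
\item It runs the contradiction for every $t\in I=[-9/2,-3/2]$. This interval covers the source and keeps $-1-t\ge1/2$, so the first term of~\eqref{eq:weak-harnack} stays bounded. Your window, with final times as early as $t_0=-1$, does not control this gap.
\item It takes the lower bound at the fixed level $b=2$ on $B_{4L}$, since a larger $b$ would shorten the persistence time. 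Then~\eqref{eq:L-choice} makes the bound last until $-1$.
\item This yields $\sup_{t\in I}\int_{B_L}u\le C$. The whole target region is reached afterwards from $u(\cdot,-1)\ge1$ on $B_{2L}$, by persistence with $b=1$, $R=2L$.
\end{enumerate}

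For $p<2$ the same extension works: comparison from any $t\in I$ is legitimate because $u$ is a supersolution. Alternatively, Lemma~\ref{s:BIV-upper} started before the first source time, where $u=0$, gives a universal annulus bound. Either way, you need to say so.

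\emph{A further point in the approximation step.} Continuity of capacity only yields $\limsup_\ell M_\ell\le M$, but you need the opposite inequality. That comes from weak convergence of the Riesz measures with a common compact support. You also need local uniform convergence near $z_0$ to transfer the small value to the approximations.
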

\end{samepage}
\begin{proof}
\emph{Step 1: Normalization and approximation.} Scale~$x=ry$, $t=s\tau$ and divide the potential by~$q$. The equation is unchanged and the Riesz mass is divided by~$qr^n$. It suffices to prove the estimate for~$r=s=q=1$. We write~$a$ for the normalized obstacle height, which remains arbitrary.

For~$p>2$, choose~$L>4$ so large that
\begin{equation}\label{eq:L-choice}
 \kappa 2^{2-p}(4L)^p\ge4\,,
 \qquad \kappa(2L)^p\ge2\,,
\end{equation}
where~$\kappa$ is from Lemma~\ref{lem:persistence}. For~$p<2$, set~$L=8$.

Choose decreasing smooth outer obstacles~$0\le\psi_\ell\le a$, equal to~$a$ on~$K$, with supports in decreasing compact neighborhoods whose intersection is~$K$. Keep these neighborhoods in
\begin{equation*}
 \overline B_{9/8}\times[-33/8,-15/8]\,.
\end{equation*}
Let~$u_{\ell,N}$ be their penalizations from Lemma~\ref{lem:penalty} in~$B_{8L}\times(-5,\infty)$, with zero initial and lateral values, and put
\begin{equation*}
 f_{\ell,N}=N(\psi_\ell-u_{\ell,N})_+,\qquad
 M_{\ell,N}=\iint f_{\ell,N}\,dx\,dt\,.
\end{equation*}
They satisfy~$0\le u_{\ell,N}\le a$, are continuous, and vanish before~$-33/8$. All sources have one fixed compact support. Until Step~5, write~$u=u_{\ell,N}$ and~$m=M_{\ell,N}$.

\emph{Step 2: Bounds for earlier spatial averages.} We derive these bounds from a small-value assumption at a later time. In both cases use the earlier interval~$I=[-9/2,-3/2]$.

For~$p>2$, suppose~$\inf_{B_{2L}}u(\cdot,-1)\le1$. For almost every~$t\in I$, apply~\eqref{eq:weak-harnack} with~$R=L$ and final time~$-1$. The first term is bounded because~$-1-t\ge1/2$. If the average over~$B_L$ is sufficiently large, the second term forces
\begin{equation*}
 u\ge2\quad\text{on }B_{4L}\times(t+\theta/2,t+\theta),
 \qquad t+\theta\le-1\,.
\end{equation*}
Increase the threshold for the average so that~$\theta<-1-t$. Choose an initial time strictly inside this interval. Continuity gives an initial right trace at least~$2$ on~$B_{4L}$. The elapsed time from that initial time to~$-1$ is less than~$7/2$. Lemma~\ref{lem:persistence}, with~$b=2$, $R=4L$ and the first inequality in~\eqref{eq:L-choice}, therefore preserves the lower bound~$u\ge2$ on~$B_{2L}$ through time~$-1$, a contradiction. Thus
\begin{equation}\label{eq:receiver-average}
 \sup_{t\in I}\int_{B_L}u(x,t)\,dx\le C(n,p)\,.
\end{equation}
Continuity extends the bound from admissible Lebesgue times to every~$t\in I$. In particular, the same bound holds over~$B_2$.

For~$\pc<p<2$, let~$c_1$ be the constant in Lemma~\ref{s:BIV-lower}, decreased so that~$c_1\le1$, and suppose
\begin{equation}\label{s:small-receiver}
 \inf_{B_2}u(\cdot,-\eta)<c_1,\qquad 0<\eta<1/4\,.
\end{equation}
For~$t\in I$, start the homogeneous zero-boundary solution in~$B_{64}$ with initial value~$u(\cdot,t)\ind_{B_2}$. It lies below~$u$ by comparison. If its initial mass exceeds~$m_0$ from Lemma~\ref{s:BIV-lower}, its value at elapsed time~$-\eta-t$ is at least~$2c_1$ on~$B_2$. This elapsed time lies in~$[5/4,5]$, contradicting~\eqref{s:small-receiver}. Hence
\begin{equation}\label{s:reception-average}
 \sup_{t\in I}\int_{B_2}u(x,t)\,dx\le m_0\,.
\end{equation}
These bounds are independent of~$\ell,N,a$, and the singular bound is uniform for~$0<\eta<1/4$.

\emph{Step 3: The source mass.} Under either small-value assumption, Step~2 bounds the spatial averages on~$B_2$. The annulus~$\{9/8<|x|<2\}$ contains no source. Cover a neighborhood of~$\{5/4\le|x|\le7/4\}$ by finitely many balls compactly contained in this annulus. The homogeneous upper estimate in Subsection~\ref{subsec:homogeneous-lower}, using~\eqref{eq:receiver-average} when~$p>2$ and Lemma~\ref{s:BIV-upper} with~\eqref{s:reception-average} when~$p<2$, gives a uniform bound for~$u$ in source-free cylinders covering
\begin{equation*}
 \{5/4\le|x|\le7/4\}\times[-33/8,-13/8]\,.
\end{equation*}
Choose the time intervals of these cylinders slightly earlier and within~$(-9/2,-3/2)$. Because their spatial and temporal separations are fixed, the bound depends only on~$n,p$. Testing with~$u\xi^p$ on slightly larger source-free cylinders gives the Caccioppoli estimate
\begin{equation}\label{eq:annular-energy}
 \iint_{\{5/4<|x|<7/4\}\times(-33/8,-13/8)}
       |\nabla u|^p\,dx\,dt\le C(n,p)\,.
\end{equation}
Indeed, the energy is bounded by integrals of~$u^p|\nabla\xi|^p$ and~$u^2|\partial_t(\xi^p)|$, and both are controlled by the local supremum just obtained.

Choose~$\varphi$ equal to one on~$B_{5/4}$ and supported in~$B_{7/4}$. Choose~$\zeta$ equal to one on~$[-33/8,-15/8]$, rising in~$(-9/2,-17/4)$ and falling in~$(-7/4,-13/8)$. Then~$\varphi\zeta=1$ on the source support, so
\begin{equation*}
 m=-\iint u\varphi\zeta'\,dx\,dt
       +\iint|\nabla u|^{p-2}\nabla u\cdot\nabla\varphi\,\zeta\,dx\,dt
   \le C(n,p)\,.
\end{equation*}
Since~$u=0$ and~$\nabla u=0$ for~$t<-33/8$, the term where~$\zeta$ rises and the initial part of the flux integral vanish. Step~2 bounds the other time term, and H\"older's inequality together with~\eqref{eq:annular-energy} bounds the remaining flux term. In particular, for~$p>2$ we have proved
\begin{equation}\label{eq:small-receiver}
 \inf_{B_{2L}}u(\cdot,-1)\le1
 \quad\Longrightarrow\quad m\le C(n,p)\,.
\end{equation}
For~$p<2$, the same mass bound follows from~\eqref{s:small-receiver}, uniformly in~$\eta$.

\emph{Step 4: Propagation of a positive lower bound.} Choose~$C_*$ larger than the mass bound in Step~3. If~$m>C_*$ and~$p>2$, contraposition gives~$u(\cdot,-1)\ge1$ on~$B_{2L}$. Lemma~\ref{lem:persistence}, with~$b=1$, $R=2L$ and the second inequality in~\eqref{eq:L-choice}, gives~$u\ge1$ on~$B_L\times[-1,1/2]$.

If~$m>C_*$ and~$\pc<p<2$, contraposition gives~$u(\cdot,-\eta)\ge c_1$ on~$B_2$. The singular estimate of Lemma~\ref{lem:persistence}, with~$R=2$, gives
\begin{equation*}
 u\ge c_1-C\left(\frac{2\eta}{2^p}\right)^{\frac{1}{2-p}}
 \quad\text{on }B_1\times[-\eta,\eta]\,.
\end{equation*}
Fix~$\eta>0$ so small that the loss is at most~$c_1/2$, and put~$c_0=c_1/2$. The mass threshold was uniform in~$\eta$. Write~$d=1$ and~$G=B_L\times[-1,1/2]$ for~$p>2$, and~$d=c_0$ and~$G=B_1\times[-\eta,\eta]$ for~$p<2$. We have shown that~$m>C_*$ implies~$u\ge d$ on~$G$.

\emph{Step 5: Passage to the compact potential.} For fixed~$\ell$, Lemma~\ref{lem:penalty} gives local uniform convergence~$u_{\ell,N}\to\redpot_{\psi_\ell}$ and weak convergence of the source measures. Since every source has one fixed compact support,
\begin{equation*}
 M_{\ell,N}\longrightarrow
 M_\ell:=\mu_{\redpot_{\psi_\ell}}(Q)\,.
\end{equation*}
If~$M_\ell>C_*$, the same strict inequality holds for all sufficiently large~$N$, so~$\redpot_{\psi_\ell}\ge d$ on~$G$.

The potentials~$\redpot_{\psi_\ell}$ dominate~$\redpot=\redpot^a_{K,Q}$. If~$v$ is a nonnegative superparabolic majorant of~$a\ind_K$, lower semicontinuity makes~$\{v>a-\epsilon\}$ an open neighborhood of~$K$ for every~$\epsilon>0$. For all sufficiently large~$\ell$, we have~$\psi_\ell\le v+\epsilon$, hence~$\redpot_{\psi_\ell}\le v+\epsilon$. Taking the infimum over~$v$, regularizing, and sending~$\epsilon\downarrow0$ identifies the lower semicontinuous regularization of the decreasing limit of~$\redpot_{\psi_\ell}$ with~$\redpot$.

The bounded-supersolution convergence results in Subsection~\ref{subsec:capacity-foundations} give weak convergence of the Riesz measures. Their common compact support gives
\begin{equation*}
 M_\ell\longrightarrow\mu_{\redpot}(K)=M\,.
\end{equation*}
The region~$G$ is separated from that support. Interior compactness of bounded homogeneous solutions gives local uniform convergence there. If~$M>C_*$, then~$M_\ell>C_*$ for all sufficiently large~$\ell$, and the lower bound passes to~$\redpot$. No pointwise obstacle value on a time slice is used. Scaling back proves~\eqref{eq:reception} and~\eqref{s:reception-eq}.
\end{proof}

\subsection{Selection of nested cylinders}\label{sec:sufficiency}
The intrinsic radii need not decrease with the indices. If~$p>2$, the increments corresponding to earlier radii that are too small decay geometrically, so we select the indices backwards. If~$p<2$, the increments corresponding to later radii that are too large decay geometrically, so we select the indices forwards. In either case, the selected increments retain a fixed fraction of the sum.

\begin{lemma}[Selection of nested cylinders]\label{lem:selection}\label{s:forward-selection}
Let~$p>1$, $p\ne2$, and suppose a finite family satisfies~$0<q_j\le a$,~$s_j=2^{-j}$, and~$q_j^{2-p}r_j^p=s_j$. Given~$\Lambda>1$, there is an integer~$m=m(p,\Lambda)$ such that one can select~$j_1<\cdots<j_N$ with
\begin{equation}\label{eq:selection-nesting}
 j_{k+1}-j_k\ge m\,,
 \qquad r_{j_{k+1}}\le\Lambda^{-1}r_{j_k}\,,
\end{equation}
and
\begin{equation}\label{eq:selection-gain}
 \sum_{k=1}^Nq_{j_k}\ge\frac1{2m}\sum_jq_j\,.
\end{equation}
The integer~$m$ may be increased in advance by any prescribed amount.
\end{lemma}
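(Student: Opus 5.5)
The plan is a pigeonhole reduction to a single residue class modulo $m$, followed by a greedy selection inside that class. The selection runs backwards for $p>2$ and forwards for $p<2$. Every rejected increment is charged to the most recently accepted one and controlled by a geometric series. The only input is the algebraic identity $r_j^p=q_j^{p-2}s_j$ with $s_j=2^{-j}$. For indices $j<j'$ it shows that the nesting condition $r_{j'}\le\Lambda^{-1}r_j$ is equivalent to
\begin{equation*}
 q_{j'}^{p-2}\,2^{-(j'-j)}\le \Lambda^{-p}\,q_j^{p-2}.
\end{equation*}

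\emph{Step 1: reduction to a residue class.} Fix $m$, to be chosen depending on $p$ and $\Lambda$. Among the $m$ residue classes of indices modulo $m$, at least one carries at least $\frac1m\sum_jq_j$. I restrict to that class. Any two distinct indices in it differ by a positive multiple of $m$, so the spacing condition $j_{k+1}-j_k\ge m$ in \eqref{eq:selection-nesting} holds automatically. Increasing $m$ in advance costs nothing, because the remaining steps only need $m$ to be large.

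\emph{Step 2: the case $p>2$.} Here the nesting condition reads
\begin{equation*}
 q_j\ge \Lambda^{\frac{p}{p-2}}\,2^{-\frac{j'-j}{p-2}}\,q_{j'}.
\end{equation*}
It can fail only when the earlier increment is geometrically small compared with the later one. So I select backwards.
\begin{enumerate}[(a)]
\item Accept the largest index of the class.
\item Moving to smaller indices of the class, accept $j$ exactly when $r_j\ge\Lambda r_{j_{\mathrm{acc}}}$, where $j_{\mathrm{acc}}$ is the most recently accepted index. Otherwise reject $j$.
\item A rejected $j$ satisfies $q_j<\Lambda^{p/(p-2)}2^{-(j_{\mathrm{acc}}-j)/(p-2)}q_{j_{\mathrm{acc}}}$, with $j_{\mathrm{acc}}-j\in\{m,2m,\dots\}$.
\end{enumerate}
Summing over the rejected indices charged to one accepted index gives
\begin{equation*}
 \sum_{\text{rejected}} q_j\le \Lambda^{\frac{p}{p-2}}\sum_{k\ge1}2^{-\frac{km}{p-2}}\,q_{j_{\mathrm{acc}}}\le q_{j_{\mathrm{acc}}},
\end{equation*}
once $m$ is large enough depending on $p$ and $\Lambda$. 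Hence the class sum is at most twice the accepted sum, which gives \eqref{eq:selection-gain} with constant $\frac1{2m}$. Consecutive accepted indices satisfy the radius condition by construction, and listing them in increasing order gives $j_1<\cdots<j_N$.

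\emph{Step 3: the case $p<2$.} Here $r_j^p=q_j^{-(2-p)}s_j$, and the nesting condition becomes
\begin{equation*}
 q_{j'}\ge\Lambda^{\frac{p}{2-p}}\,2^{-\frac{j'-j}{2-p}}\,q_j.
\end{equation*}
Now a failure means the later increment is geometrically small. I therefore run the same greedy procedure forwards:
\begin{enumerate}[(a)]
\item Accept the smallest index of the class.
\item Accept a later index $j'$ exactly when $r_{j'}\le\Lambda^{-1}r_{j_{\mathrm{acc}}}$, where $j_{\mathrm{acc}}$ is the most recently accepted index.
\item Charge each rejected $j'$ to that $j_{\mathrm{acc}}$.
\end{enumerate}
The same geometric sum, with exponent $1/(2-p)$, bounds the rejected mass by $q_{j_{\mathrm{acc}}}$ for $m$ large.

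I expect the only delicate point to be the charging in Steps 2 and 3. Each rejected index must be compared with the most recently accepted one, not with a fixed reference index, so that the rejected indices charged to a given accepted index form a geometric tail in the gaps $km$. The hypothesis $q_j\le a$ is not needed for this combinatorial argument.
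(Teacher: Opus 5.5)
Your proposal is correct and follows the paper's proof essentially step for step. Both restrict to a residue class modulo~$m$ carrying at least~$1/m$ of the sum, select greedily backwards for~$p>2$ and forwards for~$p<2$, and charge each discarded index to the adjacent selected index in the direction of selection. The discarded terms then form a geometric series bounded by the selected term once~$\Lambda^{p/|p-2|}\bigl(2^{m/|p-2|}-1\bigr)^{-1}\le1$.
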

\begin{proof}
If the index set is empty, take the empty selection. Assume it is nonempty.

\emph{The range~$p>2$.} Starting from the last index in a residue class, we choose earlier indices and assign each discarded term to the next selected one. Choose~$m$ large enough that
\begin{equation*}
 \frac{\Lambda^{\frac{p}{p-2}}}{2^{\frac{m}{p-2}}-1}\le1\,.
\end{equation*}
Choose a residue class modulo~$m$ for which the sum is at least~$1/m$ of the total. Select its last index~$j$. Next select the largest earlier index~$i$ satisfying~$r_i\ge\Lambda r_j$, and repeat from~$i$. Stop when no earlier index satisfies this inequality.

Assign each discarded index~$i$ to the least selected index~$j>i$. By the rule of selection,~$r_i<\Lambda r_j$. Therefore
\begin{equation*}
 q_i^{p-2}2^{-i}<\Lambda^pq_j^{p-2}2^{-j}\,,
 \qquad
 q_i<\Lambda^{\frac{p}{p-2}}2^{-\frac{j-i}{p-2}}q_j\,.
\end{equation*}
The differences~$j-i$ are positive multiples of~$m$. Summing the geometric series and using the choice of~$m$, we find that the discarded terms assigned to~$j$ have sum at most~$q_j$. Hence the selected terms account for at least half the sum in this residue class. This proves~\eqref{eq:selection-gain}; the construction also gives~\eqref{eq:selection-nesting}.

\emph{The range~$1<p<2$.} Choose $m$ large enough that
\begin{equation*}
 \frac{\Lambda^{\frac{p}{2-p}}}{2^{\frac{m}{2-p}}-1}\le1.
\end{equation*}
One residue class modulo $m$ carries at least $1/m$ of the sum. In that class start with its first index $i$. Select the next index $j$ whose radius is at most $r_i/\Lambda$, and repeat forwards. Assign the skipped indices to the preceding selected one. For a skipped $k$ assigned to $i$,
\begin{equation*}
 r_k>r_i/\Lambda
 \quad\Longrightarrow\quad
 q_k<\Lambda^{\frac{p}{2-p}}2^{-\frac{k-i}{2-p}}q_i.
\end{equation*}
Since the positive differences $k-i$ are multiples of $m$, the sum of $q_k$ over the skipped indices assigned to $i$ is at most $q_i$. The sum of~$q_j$ over the selected indices is therefore at least half the sum over the residue class. This proves both conclusions.
\end{proof}

For~$p>2$, take admissible increments~$q_j$ in~\eqref{eq:omega}, fix~$\Lambda=8$, and choose~$m\ge3$ in Lemma~\ref{lem:selection}. Write~$\widetilde Q_j=Q_{j,q_j}$ and~$G_j=G_{j,q_j}$ for the regions in~\eqref{eq:test-cylinder} and~\eqref{eq:receiver}. Successive selected indices satisfy
\begin{equation}\label{eq:condenser-nesting}
 \widetilde Q_{j_{k+1}}\subset G_{j_k}\,.
\end{equation}
Indeed,~$8Lr_{j_{k+1}}\le Lr_{j_k}$,~$5s_{j_{k+1}}\le s_{j_k}$, and~$s_{j_{k+1}}\le s_{j_k}/2$. Thus~$\widetilde Q_{j_{k+1}}\subset G_{j_k}$, where the lower bound for the preceding potential holds. We can therefore compare with the next potential on~$\widetilde Q_{j_{k+1}}$. Figure~\ref{fig:nested-reception} shows these inclusions.

\begin{figure}[H]
\centering
\begin{tikzpicture}[x=1cm,y=1cm,font=\footnotesize,
  condenser/.style={draw=black!60,densely dashed,line width=.65pt},
  receiver/.style={draw=black!70,line width=.65pt},
  guide/.style={draw=black!55,line width=.4pt}]
  % One smooth boundary separates the connected domain and complement.
  \path[use as bounding box] (-3.25,-4.85) rectangle (7.55,2.5);
  \fill[black!8] (-2.7,-4.7)
    --({.16*(-4.7)+.013*(-4.7)*(-4.7)},-4.7)
    -- plot[domain=-4.7:2,variable=\t,samples=65]
      ({.16*\t+.013*\t*\t},\t)
    --(-2.7,2)--cycle;
  \node at (-1.75,-4.27) {$E$};
  \node at (1.58,-4.27) {$\Theta$};
  % The compact obstacles are portions of this same complement.
  \begin{scope}
    \clip (-2.7,-4.7)
      --({.16*(-4.7)+.013*(-4.7)*(-4.7)},-4.7)
      -- plot[domain=-4.7:2,variable=\t,samples=65]
        ({.16*\t+.013*\t*\t},\t)
      --(-2.7,2)--cycle;
    \fill[black!35]
      (-.85,-3.9) rectangle (.85,-2.8);
    \fill[black!35]
      (-.5,-1.36) rectangle (.5,-.94);
    \fill[black!35]
      (-.26,-.42) rectangle (.26,-.23);
  \end{scope}
  \draw[black!70,line width=.5pt]
    (-.85,-3.9)--({.16*(-3.9)+.013*(-3.9)*(-3.9)},-3.9)
    -- plot[domain=-3.9:-2.8,variable=\t,samples=20]
      ({.16*\t+.013*\t*\t},\t)
    --(-.85,-2.8)--cycle;
  \draw[black!70,line width=.5pt]
    (-.5,-1.36)--({.16*(-1.36)+.013*(-1.36)*(-1.36)},-1.36)
    -- plot[domain=-1.36:-.94,variable=\t,samples=12]
      ({.16*\t+.013*\t*\t},\t)
    --(-.5,-.94)--cycle;
  \draw[black!70,line width=.5pt]
    (-.26,-.42)--({.16*(-.42)+.013*(-.42)*(-.42)},-.42)
    -- plot[domain=-.42:-.23,variable=\t,samples=10]
      ({.16*\t+.013*\t*\t},\t)
    --(-.26,-.23)--cycle;
  % The selected scales are enlarged successively to display the inclusions.
  \draw[receiver] (-2.3,-1.9) rectangle (2.3,1.25);
  \draw[receiver] (-1.55,-.72) rectangle (1.55,.62);
  \draw[receiver] (-.7,-.15) rectangle (.7,.15);
  \draw[condenser] (-2.7,-4.7) rectangle (2.7,2);
  \draw[condenser] (-2,-1.65) rectangle (2,.98);
  \draw[condenser] (-1.2,-.53) rectangle (1.2,.42);
  \draw[black,line width=.9pt]
    plot[domain=-4.7:2,variable=\t,samples=65]
      ({.16*\t+.013*\t*\t},\t);
  \node[above right,inner sep=2pt] at (.372,2)
    {$\partial\Theta$};
  \draw[->,line width=.5pt] (-3.02,-4.7)--(-3.02,2.3)
    node[above] {$t$};
  \draw[dotted,black!55] (-3.02,0)--(2.76,0);
  \node[left] at (-3.02,0) {$0$};
  \fill (0,0) circle (1.7pt);
  \node[anchor=north west,inner sep=3pt] at (-2.7,2)
    {$\widetilde Q_{j_k}$};
  \node[anchor=south west,inner sep=3pt] at (-2,-1.65)
    {$\widetilde Q_{j_{k+1}}$};
  \draw[guide] (-.36,-3.34)--(.68,-3.34);
  \node[right,inner sep=2pt] at (.68,-3.34) {$K_{j_k,q_{j_k}}$};
  \draw[guide] (-.17,-1.13)--(.48,-1.13);
  \node[right,inner sep=2pt] at (.48,-1.13)
    {$K_{j_{k+1},q_{j_{k+1}}}$};
  \draw[guide] (-.05,-.33)--(2.55,-.96)--(3.12,-.96);
  \node[anchor=west,inner sep=2pt] at (3.12,-.96)
    {$K_{j_{k+2},q_{j_{k+2}}}$};
  % Callouts keep the lower-bound labels legible at the smallest scale.
  \draw[guide] (2.3,1.13)--(3.12,1.5);
  \node[anchor=west,align=left,inner sep=2pt] at (3.12,1.5)
    {$G_{j_k}$:\ $v\ge b$};
  \draw[guide] (1.55,.53)--(2.65,.66)--(3.12,.66);
  \node[anchor=west,align=left,inner sep=2pt] at (3.12,.66)
    {$G_{j_{k+1}}$:\ $v\ge b+q_{j_{k+1}}$};
  \draw[guide] (.7,.13)--(2.65,-.18)--(3.12,-.18);
  \node[anchor=west,align=left,inner sep=2pt] at (3.12,-.18)
    {$G_{j_{k+2}}$:\\[-1pt]
     $v\ge b+q_{j_{k+1}}+q_{j_{k+2}}$};
  \node[anchor=north west,align=left,text width=4.15cm,inner sep=0pt]
    at (3.14,-2.0)
    {Dashed, from outside in:\\[2pt]
     $\widetilde Q_{j_k}$, $\widetilde Q_{j_{k+1}}$, $\widetilde Q_{j_{k+2}}$.\\[6pt]
     Solid rectangles: regions~$G_j$.\\[3pt]
     Dark gray: tested pieces of $E$.};
\end{tikzpicture}
\caption{Selected cylinders for~$p>2$ near a smooth boundary, with smaller scales enlarged. The dashed cylinders~$\widetilde Q_j$ and solid regions~$G_j$ cross~$\partial\Theta$; the dark sets are the compact test sets~$K_{j,q_j}\subset E$. Selection gives~$\widetilde Q_{j_{k+1}}\subset G_{j_k}$. The increments~$q_j$ are admissible, and~$v\ge0$ is a superparabolic majorant with~$v\ge H$ on the selected obstacles. If~$v\ge b$ on~$G_{j_k}$ and~$H-b\ge a$, Lemmas~\ref{lem:local-majorant-comparison} and~\ref{lem:reception} give~$v\ge b+q_{j_{k+1}}$ on~$G_{j_{k+1}}$. Proposition~\ref{prop:saturation} adds these increments until their sum exceeds~$H-a$.}
\label{fig:nested-reception}
\end{figure}

\begin{remark}[Summability of $q_j$ for $r_j\ge R$]\label{s:singular-locality}
For~$p<2$, the bound~$q_j\le a$ allows large intrinsic radii. Nevertheless, for every~$R>0$,
\begin{equation*}
 r_j\ge R\quad\Longrightarrow\quad
 q_j\le\left(\frac{s_j}{R^p}\right)^{\frac{1}{2-p}}.
\end{equation*}
These upper bounds form a summable geometric series. If~$\sum_jq_j=\infty$, the sum remains divergent after discarding all reference cylinders too large for any prescribed local spatial neighborhood. Also,~$s_j\to0$ independently of the radii. These two facts allow the following finite selections to be made in any prescribed space--time neighborhood.
\end{remark}

\subsection{Potentials of finite unions}\label{subsec:finite-reductions}
Lemma~\ref{lem:reception} converts an admissible capacity bound into a lower bound on a later cylinder. If~$\sum_j\omega_a(j)=\infty$, Lemma~\ref{lem:selection} gives finite selections with arbitrarily large sums of these lower bounds, with each reference cylinder contained in the preceding region where the lower bound holds. If~$v$ is a superparabolic majorant at height~$H>a$ and~$v\ge b$ on the next reference cylinder, with~$b\le H-a$, then~$v-b$ majorizes the corresponding obstacle at height~$a$. Lemma~\ref{lem:local-majorant-comparison} therefore adds the next lower bound. The following proposition gives the finite unions used in Proposition~\ref{prop:sufficiency}.

\begin{proposition}[Lower bound for potentials of finite unions]\label{prop:saturation}\label{s:singular-saturation}
Suppose~$\sum_j\omega_a(j)=\infty$ for a fixed height~$a>0$. Let~$H>a$ and let~$Q^*$ be any smooth cylinder containing the origin in its interior. In every sufficiently late tail there is a finite union~$K$ of sets~$K_{j,q}\subset E$ from~\eqref{eq:test-set}, with~$K\Subset Q^*\cap\{t<0\}$, such that
\begin{equation}\label{eq:saturation}
 \redpot_{K,Q^*}^{H}\ge H-a
 \quad\text{in an open neighborhood of the origin}\,.
\end{equation}
The origin is separated from~$K$, so this potential is continuous there.
\end{proposition}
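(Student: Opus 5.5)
Choose admissible increments with large total sum, select a nested chain by Lemma~\ref{lem:selection}, and then add the lower bounds of Lemma~\ref{lem:reception} along the chain on a single arbitrary majorant. Let $\delta=\min\{1,c_0\}$, with $c_0=1$ for $p>2$. By~\eqref{eq:omega}, at every index with $\omega_a(j)>0$ there is an admissible $q_j>\omega_a(j)/2$, so $\sum_j q_j=\infty$ over any tail. For $p<2$, Remark~\ref{s:singular-locality} lets us discard indices whose reference cylinders $Q_{j,q_j}$ do not fit in a prescribed neighborhood; the sum remains divergent. For $p>2$, the radii satisfy $r_j(q_j)\le (a^{p-2}s_j)^{1/p}\to0$, so late indices automatically lie in $Q^*$. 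Fix a finite block $J$ of indices in a late tail with $\sum_{j\in J}q_j\ge 2m(H-a)/\delta+2ma$. Apply Lemma~\ref{lem:selection} with $\Lambda=8$ (for $p<2$, with $\Lambda$ and $m$ enlarged so that each $Q_{j_{k+1},q_{j_{k+1}}}$ lies in the positivity region $B_{r_{j_k}}\times[-\eta s_{j_k},\eta s_{j_k}]$). This produces $j_1<\dots<j_N$ whose selected increments have sum at least $(H-a)/\delta+a$ and satisfy the nesting~\eqref{eq:condenser-nesting}. Let $K=\bigcup_k K_{j_k,q_{j_k}}$. This set is compact, contained in $E\cap\{t\le -2s_{j_N}\}$, and lies in $Q^*$ once the tail is late enough.

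Now let $v\ge0$ be superparabolic in $Q^*$ with $v\ge H$ on $K$. We claim by induction that $v\ge b_k:=\min\{H-a,\,\delta\sum_{i\le k}q_{j_i}\}$ on the positivity region $G_k$ of the $k$th cylinder. Suppose $v\ge b_{k-1}$ on $G_{k-1}\supset \widetilde Q_{j_k}$, with $b_0=0$. If $b_{k-1}=H-a$ there is nothing to prove. Otherwise $v-b_{k-1}\ge0$ is superparabolic on $\widetilde Q_{j_k}$ and is at least $H-b_{k-1}\ge a$ on $K_{j_k,q_{j_k}}$. Lemma~\ref{lem:local-majorant-comparison} then gives $v-b_{k-1}\ge \redpot^a_{K_{j_k,q_{j_k}},\widetilde Q_{j_k}}$, and Lemma~\ref{lem:reception} gives $v\ge b_{k-1}+\delta q_{j_k}$ on $G_k$. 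After $N$ steps, $v\ge H-a$ on $G_N$, which is a neighborhood of the origin. For $p>2$, $G_N=B_{Lr}\times[-s,s/2]$ contains the origin in its interior only relative to the time interval $[-s,s/2]$, so $0$ is interior; for $p<2$, $B_r\times[-\eta s,\eta s]$ likewise contains the origin in its interior.

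Finally, pass from majorants to the balayage. Every admissible $v$ in the definition of $u_{H\ind_K}$ satisfies $v\ge H-a$ on the open set $\operatorname{int}G_N$. Hence the reduced function does as well, and so does its lower semicontinuous regularization, since the regularization of an infimum bounded below on an open set keeps that bound. This gives~\eqref{eq:saturation}. Continuity near the origin follows because $K\subset\{t\le-2s_{j_N}\}$ is at positive distance from $0$ and the potential is $p$-parabolic off $K$. The main obstacle is the comparison in the inductive step. Lemma~\ref{lem:local-majorant-comparison} needs $v-b$ to be superparabolic and nonnegative on all of $\widetilde Q_{j_k}$, which is exactly where the nesting $\widetilde Q_{j_{k+1}}\subset G_{j_k}$ is required. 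In the singular range this nesting must be checked against the shorter positivity window $\eta s$, and it is arranged by the choice of $\Lambda$ and $m$.
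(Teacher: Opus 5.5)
Your proposal is correct and follows essentially the same route as the paper's proof. You choose admissible $q_j>\omega_a(j)/2$, discard the oversized singular cylinders, select a nested chain by Lemma~\ref{lem:selection}, and accumulate the lower bounds of Lemma~\ref{lem:reception} on a single majorant via Lemma~\ref{lem:local-majorant-comparison}. The only difference is cosmetic: your truncated levels $b_k=\min\{H-a,\delta\sum_{i\le k}q_{j_i}\}$ replace the paper's rule of stopping at the first partial sum exceeding $H-a$, and your ``nothing to prove'' case uses $G_k\subset\widetilde Q_{j_k}\subset G_{k-1}$.
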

\begin{proof}
Choose an admissible~$q_j>\omega_a(j)/2$ whenever~$\omega_a(j)>0$, and put~$\widetilde K_j=K_{j,q_j}$, $\widetilde Q_j=Q_{j,q_j}$ and~$r_j=r_j(q_j)$. These choices are from the sets defining the suprema; no interval property of admissibility is assumed. Set
\begin{equation*}
 (d_j,G_j)=
 \begin{cases}
 (q_j,\ B_{Lr_j}\times[-s_j,s_j/2]),&p>2,\\
 (c_0q_j,\ B_{r_j}\times[-\eta s_j,\eta s_j]),&\pc<p<2.
 \end{cases}
\end{equation*}
Lemma~\ref{lem:reception} gives
\begin{equation*}
 \redpot^a_{\widetilde K_j,\widetilde Q_j}\ge d_j\quad\text{on }G_j\,.
\end{equation*}
In both ranges,~$\sum_jd_j=\infty$ and~$0<d_j\le a$. For~$p>2$, all reference cylinders lie in~$Q^*$ for sufficiently large~$j$, since~$q_j\le a$. For~$p<2$, Remark~\ref{s:singular-locality} lets us discard the reference cylinders not contained in~$Q^*$ while preserving divergence.

Apply Lemma~\ref{lem:selection} with~$\Lambda=8$ and~$m\ge3$ when~$p>2$, and with~$\Lambda=128$ and~$m$ so large that~$5\cdot2^{-m}<\eta$ when~$p<2$. In the first case,~\eqref{eq:condenser-nesting} gives~$\widetilde Q_{j_{k+1}}\subset G_{j_k}$. In the second case,
\begin{equation}\label{s:strict-nesting}
 \overline{\widetilde Q_{j_{k+1}}}
 \subset B_{r_{j_k}}\times(-\eta s_{j_k},\eta s_{j_k})
 \subset G_{j_k}\,.
\end{equation}
Thus every sufficiently late tail has finite selections with arbitrarily large~$\sum d_j$ and with~$\widetilde Q_{j_{k+1}}\subset G_{j_k}$.

Take such a selection and stop when its partial sum first exceeds~$H-a$. Let~$K$ be the union of the selected compact obstacles. It is a compact subset of~$E\cap Q^*\cap\{t<0\}$.

Consider the nonnegative superparabolic majorants of~$H\ind_K$ in the continuation cylinder defining~$\redpot^H_{K,Q^*}$. Replacing each majorant~$v$ by~$\min\{v,H\}$ preserves admissibility and superparabolicity, so we may restrict to~$0\le v\le H$. Restrict such a majorant to~$Q^*$. Initially~$v\ge0$. Suppose the preceding comparisons give~$v\ge b$ on the next reference cylinder, where~$b\le H-a$. Then~$v-b$ is nonnegative and superparabolic there. On its compact obstacle,~$v-b\ge H-b\ge a$. Lemma~\ref{lem:local-majorant-comparison} therefore gives
\begin{equation*}
 v\ge b+\redpot^a_{\widetilde K_j,\widetilde Q_j}\ge b+d_j
 \quad\text{on }G_j\,.
\end{equation*}
For the first cylinder use~$b=0$; the nesting permits each subsequent comparison. At the last step,~$b+d_j\le H$ because~$b\le H-a$ and~$d_j\le a$. Every admissible majorant is thus bounded below by the same sum exceeding~$H-a$ on the last~$G_j$, whose interior contains the origin. Taking the infimum over the majorants and then regularizing preserves this lower bound on that open neighborhood. This proves~\eqref{eq:saturation}.
\end{proof}

\section{Upper bounds for capacitary potentials}
\label{sec:upper}

We bound the potential of a finite union of complementary portions in successive cylinders by the sum of their Wiener terms, uniformly in the number of cylinders.

We keep continuous obstacle neighborhoods fixed throughout the estimates. For a sufficiently small convergent tail, the increasing limit is a bounded superparabolic function that equals its prescribed height near the local complement in the past and has small interior values approaching the origin. This is the irregularity barrier used in Subsubsection~\ref{subsec:intrinsic-necessity}.

\subsection{The cumulative upper estimate}\label{sec:discussion}
Let~$n\ge2$, $p\in(\pc,2)\cup(2,\infty)$, and let~$\Theta\subset\R^{n+1}$ be bounded and open with~$0\in\partial\Theta$. Put~$E=\R^{n+1}\setminus\Theta$ and~$s_j=2^{-j}$. For~$R_0>0$, define
\begin{align}
 E_j&=E\cap\bigl(\overline B_{R_0}\times[-4s_j,-2s_j]\bigr),\label{s:finite-complement}\\
 E_{N,m}&=\bigcup_{j=N}^m E_j\,.\label{eq:finite-tail}
\end{align}
A potential without a displayed reference is the whole-space potential, obtained by exhaustion with zero values before the first obstacle time. For~$p<2$, we prove the estimates in bounded reference balls with constants independent of the ball and then pass to the exhaustion limit.

\begin{samepage}
\begin{theorem}[Cumulative upper estimate for capacitary potentials]\label{thm:upper-main}\label{s:upper-theorem}
There is~$C>0$ and, for~$p>2$, there is~$A>1$, depending only on~$n,p$, such that, for all~$a,R_0>0$ and finite~$N\le m$,
\begin{align}
 \redpot_{E_{N,m}}^{a/A}(0,0)
 &\le \min\left\{\frac aA,\ C\sum_{j=N}^m\omega_a(j)\right\},
 &&p>2,\label{eq:cumulative-upper}\\
 \redpot_{E_{N,m}}^a(0,0)
 &\le \min\left\{a,\ C\sum_{j=N}^m\omega_a(j)\right\},
 &&\pc<p<2\,.\label{s:upper-theorem-eq}
\end{align}
For~$\pc<p<2$, estimate~\eqref{s:upper-theorem-eq} also holds for~$\redpot^a_{E_{N,m},Q}(0,0)$ in every smooth bounded reference cylinder~$Q$ with~$E_{N,m}\cup\{(0,0)\}\Subset Q$, with zero initial and lateral data.
\end{theorem}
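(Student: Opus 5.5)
The trivial bounds $\redpot^{a/A}\le a/A$ and $\redpot^a\le a$ hold because the constant is an admissible majorant. So the task is the sum bound, which I will prove in the form the introduction outlines: first $\redpot^h_{E_{N,m}}(0,0)\le C\sum_{j=N}^m b_j$ for suitable $b_j$ slightly above $\omega_a(j)$, then let $b_j\downarrow\omega_a(j)$. Any term with $\omega_a(j)=a$ is absorbed by the height bound once $C\ge A^{-1}$ (for $p>2$) or $C\ge1$ (for $p<2$). Every other $j$ has an increment $b_j\in(\omega_a(j),a)$. By the definition of $\omega_a(j)$, for every $q\in[b_j,a]$ the capacity test fails:
\begin{equation*}
 \capc_a(K_{j,q},Q_{j,q})\le C_*q\,r_j(q)^n .
\end{equation*}

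\emph{Decomposition for $p>2$.} Put $\rho_j=r_j(b_j)$, so $\rho_j^p=b_j^{p-2}s_j$, and split $E_j$ into an inner part $E_j^{\rm in}=E_j\cap\overline B_{\rho_j}$ and an annular part $E_j^{\rm out}=E_j\cap(\overline B_{R_0}\setminus B_{\rho_j})$. Failure at $q=b_j$ bounds $\capc_a(E_j^{\rm in},Q_{j,b_j})\le C_*b_j\rho_j^n$. Failures at all larger $q\le a$ bound the capacity of $E_j\cap\overline B_{r_j(q)}$ at every radius $r_j(q)\in[\rho_j,r_j(a)]$. Beyond $r_j(a)$, Lemma~\ref{lem:cylinder-capacity} bounds the capacity by $C(aR^n+a^{p-1}s_jR^{n-p})$. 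Together these give capacity bounds for the annular portions on dyadic shells at every radius (the role of Lemma~\ref{lem:core-remainder}). Next I lower the height from $a$ to $h=a/A$. Lemma~\ref{lem:height-capacity} gives a gain $(h/a)^{1/(p-1)}$ in capacity, and finite propagation from~\eqref{eq:rough-travelling-wave} keeps the supports of the height-$h$ potentials controlled. Scaling and iteration over time levels $s_j$ then give an integral growth bound. That bound forces the potential of $\bigcup_j E_j^{\rm out}$ to be small at the origin, indeed $\le C\sum b_j$ or zero in the limit. This is the decay/memory argument (Lemma~\ref{lem:remainder-waiting}), and I expect it to be the main obstacle. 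The difficulty is that sources at the far annuli at earlier times must be shown not to reach the origin at height $h$, uniformly in $m-N$. I would first try comparison with a Barenblatt-type supersolution whose mass is the capacity sum, using the time gap $\ge s_j$ before $t=0$.

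\emph{Adding the inner parts.} For the inner parts I use a difference estimate (Theorem~\ref{thm:removal}): adding a set $F$ to an existing obstacle $G$ raises the height-$h$ potential at the origin by at most $C b_j$, provided $\capc_a(F,Q_{j,b_j})\le C_*b_j\rho_j^n$. Concretely, $\redpot^h_{G\cup F}-\redpot^h_G$ is a nonnegative function dominated by a solution whose source is restricted to $F$; the restriction is via \eqref{eq:measure-restriction} and penalization, Lemma~\ref{s:source-restriction}. That dominating solution is estimated at the later time $0$, after a gap $\ge 2s_j$, by the intrinsic smoothing scale $b_j$, using the Harnack/upper bounds of Subsection~\ref{subsec:homogeneous-lower}. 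Summing over $j=N,\dots,m$ then gives $\redpot^h_{E_{N,m}}(0,0)\le C\sum b_j$.

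\emph{The range $\pc<p<2$.} Here I work in bounded reference balls with uniform constants and take $h=a$. I first regularize the $b_j$ so that the radii $r_j(b_j)$ decrease, at the cost of a bounded factor in $\sum b_j$; this is the analogue of the forward selection in Lemma~\ref{lem:selection}. Then I add the $E_j$ one at a time in time order. The potential before $E_j$ is at some level; after adding $E_j$, the excess above that level is a subsolution-type difference with source mass at most $C_*b_j r_j(b_j)^n$, controlled by failed tests at all $q\ge b_j$. Lemma~\ref{s:BIV-upper} with $T\sim s_j$ gives an excess at most $C[s_j^{-n/\lambda}(b_jr_j^n)^{p/\lambda}+\dots]\le Cb_j$ on the later region containing the origin, since $s_j=b_j^{2-p}r_j^p$ makes the first term exactly $b_j$. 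Comparison with homogeneous zero-boundary solutions propagates each excess to $t=0$ without growth. Summing the excesses and letting the approximation error tend to zero gives \eqref{s:upper-theorem-eq}, both in every admissible $Q$ and, after exhaustion, in whole space.
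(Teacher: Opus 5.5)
\paragraph{The singular range.}
Your treatment of $\pc<p<2$ has a genuine gap. You never split $E_j$, and you assert that adding all of $E_j$ creates source mass at most $C_*b_jr_j(b_j)^n$, ``controlled by failed tests at all $q\ge b_j$''. For $\pc<p<2$ the radius $r_j(q)=(q^{p-2}s_j)^{1/p}$ \emph{decreases} in $q$. Those failed tests therefore only bound the capacity of $E_j\cap\overline B_{r_j(b_j)}$; you have carried over the monotonicity of the degenerate range. Outside that ball, tests with $q<\omega_a(j)$ may pass, and a passing test gives no upper bound. The remaining portion of $E_j$ can have capacity of order $aR_0^n$ while $\omega_a(j)$ is tiny. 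A comparison with zero-boundary homogeneous solutions in the reference cylinder sees all of this mass.

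The missing idea is localization, and it dictates the paper's construction:
\begin{enumerate}[(i)]
\item The increments are regularized by Lemma~\ref{s:regularize-sequence}, with the ratio condition $\alpha b_j\le b_{j+1}\le\alpha^{-1}b_j$. After disposing of large sums by $\redpot^a\le a$, one has $\sum_id_i\le\varepsilon a$, so that $\omega_a(j)<q_j=\gamma_pb_{j-1}<a$.
\item $E_j$ is split at the \emph{larger} radius $\widehat r_j=2\widetilde r_{j-1}=r_j(q_j)$. Only the inner part is charged to the failed test.
\item The outer neighborhoods are placed in $\{|x|>3\widetilde r_{j-1}/2\}$, so they miss every iteration ball $B_{\widetilde r_i}\times I_i$ with overlapping time interval (Lemma~\ref{s:local-budget}).
\item The one-step estimate, Lemma~\ref{s:excess}, compares $u$ on $B_R\times(t_0,t_0+T)$ with the homogeneous solution having lateral values $H\ge u$ (not zero) and initial value $\max\{u,c\}$. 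Since Lemma~\ref{s:BIV-upper} is independent of lateral data, everything outside the ball, including the outer sources, costs only the term $(T/R^p)^{1/(2-p)}=2^{1/(2-p)}b_j$ for $R=\widetilde r_j$, $T=2s_j$.
\end{enumerate}
Nor can the excesses be summed by superposition, or Lemma~\ref{s:BIV-upper} be applied to a difference. The $L^1$ contraction only bounds $\int(u-v)_+$ by the local source mass, and one raises a single sequence of levels $c_j$ on nested balls (Lemma~\ref{s:iteration}). The identity you noticed, $(b_{j-1}\widetilde r_{j-1}^n)^{p/\lambda}(2s_j)^{-n/\lambda}=b_{j-1}$, is then what gives $c_j-c_{j-1}\le C(b_{j-1}+b_j)$.

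\paragraph{The degenerate range.}
For $p>2$ your outline is the paper's: split at $r_j(b_j)$, capacity bounds for the remainder at every radius, height $h=a/A$, vanishing of the remainder potential, then Theorem~\ref{thm:removal} and telescoping. But it stands only if you invoke Lemma~\ref{lem:remainder-waiting} and Theorem~\ref{thm:removal} as proved; the mechanisms you sketch for them would fail.

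A Barenblatt comparison with mass equal to the outer capacity gives no smallness. That capacity can be of order $aR_0^n$ even when $\omega_a(j)=0$. A Barenblatt profile of mass $M$ at time $s_j$ has height $M^{p/\lambda}s_j^{-n/\lambda}$ and support radius comparable to $M^{(p-2)/\lambda}s_j^{1/\lambda}\gg s_j^{1/p}$. The paper instead propagates the weighted norm $\mathcal N(u(t))=\sup_{R>0}R^{-\nu}\int_{B_R}u(x,t)\,dx$, with $\nu=\lambda/(p-2)$. It uses the flux estimate with sources (Lemma~\ref{lem:forced-flux}), the short-time bound of Lemma~\ref{lem:growth}, and the dyadic grouping of Lemma~\ref{lem:forced-waiting}. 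The smallness hypothesis there, $CB_0A^{-1/(p-1)}\le\epsilon_0$, is exactly what the height reduction buys. The conclusion is exact vanishing at the origin, not a bound by $C\sum b_j$.

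For the inner parts, your claim that $\redpot^h_{G\cup F}-\redpot^h_G$ is dominated by a solution with source restricted to $F$ is a superposition principle that fails for $\Dp$. The upper bounds of Subsection~\ref{subsec:homogeneous-lower} also apply only to $p$-parabolic functions. The paper works directly with the difference $w$ of penalized solutions. It satisfies $w_t-\div(A(\nabla v)-A(\nabla u))\le r_F$, with $r_F$ the penalization source of the added obstacle, and has $L^1$ norm at most $\capc_h(F)$. It then obeys the $L^1$--$L^\infty$ smoothing of Lemma~\ref{lem:difference-smoothing}, which rests only on uniform monotonicity of $A$ and is therefore independent of the retained obstacle.
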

\end{samepage}

For~$p>2$, the nontrivial case has~$\omega_a(j)<a$ at every retained index. Choose~$\omega_a(j)<b_j<a$ and split~$E_j=C_j\cup F_j$ at the corresponding intrinsic radius, as in~\eqref{eq:core-remainder}. Lemma~\ref{lem:core-remainder} bounds the capacity of~$C_j$ and gives a capacity bound for~$F_j$ at every spatial radius. Lemma~\ref{lem:growth-neighborhood} preserves the outer capacity bound~\eqref{eq:remainder-capacity}, up to a fixed factor, when~$F_j$ is enlarged. Reducing the height to~$h=a/A$ makes the coefficient in the source growth bound small enough for Lemma~\ref{lem:forced-waiting}. Lemma~\ref{lem:remainder-waiting} thus gives a remainder potential that vanishes at~$(0,0)$. Theorem~\ref{thm:removal} then bounds the cost of adding the obstacles near~$C_j$ by~$C\sum_j b_j$, independently of the obstacles already present.

For~$\pc<p<2$, Lemma~\ref{s:regularize-sequence} first replaces the increments by a summable majorant~$b_j$ with controlled consecutive ratios. Its intrinsic radii~$\widetilde r_j$ decrease geometrically. We split~$E_j=C_j\cup F_j$ at the larger radius~$2\widetilde r_{j-1}$. The neighborhoods of~$F_j$ stay outside the iteration balls during every overlapping time interval. Lemma~\ref{s:local-budget} therefore converts the capacity bounds for the neighborhoods of~$C_j$ into local source bounds of size~$Cb_j\widetilde r_j^n$. Lemma~\ref{s:iteration} applies the one-step estimate of Lemma~\ref{s:excess}: the source mass on one interval bounds the initial excess for the next, while the level increases sum to at most~$C\sum_j b_j$. This gives the pointwise bound without reducing the obstacle height.

In both ranges, the neighborhoods are fixed before finite truncations, and the estimates are uniform in the number of cylinders. We combine the estimates in Subsection~\ref{subsec:upper-proof} to prove Theorem~\ref{thm:upper-main}. Corollary~\ref{cor:singular-tail} gives the fixed neighborhoods and small interior values used in the singular necessity argument.

Lemma~\ref{lem:reception} has already fixed the test constants~$L,C_*$ and, when~$p<2$, also~$c_0,\eta$. The upper estimates then determine~$A$ for~$p>2$ and the smallness threshold for the singular iteration. These choices depend only on~$n,p$, before the height, tail, and finite truncation are chosen; Remark~\ref{s:singular-constants} gives their order.

\subsection{Comparison estimates}\label{subsec:upper-comparison}
For~$p>2$, uniform monotonicity gives smoothing for the difference of two potentials, independently of the retained obstacle. For~$\pc<p<2$, we instead compare a solution with a homogeneous evolution above a prescribed level and bound the remaining excess by the source mass.

\subsubsection{Zero-trace interpolation for~$p>2$}\label{subsec:degenerate-upper}
Throughout this subsubsection,~$p>2$.

On~$\R^n$, we use bounded functions in~$L^p(I;W^{1,p}(\R^n))\cap L^\infty(I;L^2(\R^n))$ for every finite time interval~$I$. Here~$W^{1,p}_0(\R^n)=W^{1,p}(\R^n)$. Time regularization and approximation justify global tests with powers, approximations of the sign function, and truncations. For obstacle potentials and penalizations, the finite-propagation argument in Section~\ref{sec:potential} also permits bounded reference domains over each finite time interval.

For $2<p<n$, the spatial interpolation inequalities used below follow from the usual Sobolev inequality and H\"older's inequality. The following zero-trace version supplies the same powers when $p\ge n$. It is applied both to the difference iteration and to the truncated flux estimate.

\begin{samepage}
\begin{lemma}\label{lem:interpolation}
Let $p\ge n\ge2$. For $1\le\gamma\le p$ and $z\in C_c^\infty(\R^n)$,
\begin{equation}\label{eq:zero-trace-interpolation}
 \int_{\R^n}|z|^{p(1+\frac{\gamma}{n})}
 \le C(n,p)
 \left(\int_{\R^n}|z|^\gamma\right)^{\frac{p}{n}}
 \int_{\R^n}|\nabla z|^p.
\end{equation}
The constant is uniform for $\gamma\in[1,p]$. The inequality extends by approximation to zero-trace functions whenever the quantities on the right are finite, and its constant is independent of the spatial reference domain.
\end{lemma}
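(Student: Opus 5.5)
The inequality is a Gagliardo--Nirenberg interpolation. When~$p<n$ it follows from the ordinary Sobolev inequality, but when~$p\ge n$ the $W^{1,p}$ Sobolev embedding is unavailable at~$p=n$. I would therefore derive it from the $L^1$ Sobolev (Gagliardo--Nirenberg--Sobolev) inequality applied to a power of~$|z|$, followed by H\"older interpolation and absorption. Write~$r=p(1+\gamma/n)$.

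The exponents are forced by the two scalings~$z\mapsto\lambda z$ and~$z\mapsto z(\mu\,\cdot)$: both sides scale the same way. Equivalently, the claim is
\begin{equation*}
 \|z\|_r\le C\|\nabla z\|_p^{\theta}\|z\|_\gamma^{1-\theta},\qquad \theta=\frac{n}{n+\gamma}=\frac pr.
\end{equation*}
This observation is the bookkeeping device used to confirm that the exponents produced in the next two steps close up.

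\emph{Step 1 (Sobolev for a power).} Take~$\beta=r(n-1)/n$, so that~$\||z|^\beta\|_{n/(n-1)}=\|z\|_r^\beta$. Since~$r\ge p+p/n\ge n+1$, we have~$\beta>1$, and~$|z|^\beta$ is~$C^1$ with compact support, so the $L^1$ Sobolev inequality applies. Together with H\"older's inequality with exponents~$p',p$ it gives
\begin{equation*}
 \|z\|_r^\beta\le C_n\beta\int|z|^{\beta-1}|\nabla z|
 \le C_n\beta\,\|z\|_{(\beta-1)p'}^{\beta-1}\|\nabla z\|_p .
\end{equation*}

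\emph{Step 2 (interpolation).} Put~$\sigma=(\beta-1)p'$. I would check that~$\gamma\le\sigma\le r$ for~$p\ge n$ and~$1\le\gamma\le p$; this check is the step I expect to be the main obstacle.
\begin{enumerate}[(i)]
\item The upper bound~$\sigma\le r$ reduces, after clearing denominators, to~$r(n-1)\le np-n$. I have not verified this inequality in the stated range and am not confident it holds there.
\item The lower bound~$\gamma\le\sigma$ should follow from~$r\ge p\ge\gamma$ and~$p'>1$.
\end{enumerate}
If the upper bound fails, I would modify the first step: choose~$\beta$ so that~$\sigma=r$ exactly. Then~$\||z|^\beta\|_{n/(n-1)}$ is the norm of~$z$ in~$L^{\beta n/(n-1)}$, and one interpolates that exponent between~$\gamma$ and~$r$ instead.

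Assuming~$\gamma\le\sigma\le r$, H\"older gives~$\|z\|_\sigma\le\|z\|_\gamma^{1-\vartheta}\|z\|_r^{\vartheta}$ with~$1/\sigma=(1-\vartheta)/\gamma+\vartheta/r$. Substituting into Step~1 yields
\begin{equation*}
 \|z\|_r^{\beta-(\beta-1)\vartheta}\le C\beta\,\|z\|_\gamma^{(\beta-1)(1-\vartheta)}\|\nabla z\|_p .
\end{equation*}
The power of~$\|z\|_r$ on the left is positive, and~$\|z\|_r<\infty$ for~$z\in C_c^\infty$, so we may divide and take roots. By the scaling computation above, the resulting exponents must be exactly~$\theta=n/(n+\gamma)$ on~$\|\nabla z\|_p$ and~$1-\theta$ on~$\|z\|_\gamma$. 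Raising to the power~$r$ then gives~\eqref{eq:zero-trace-interpolation}.

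\emph{Step 3 (uniformity and extension).} Every constant above is an explicit continuous function of~$\gamma$ and~$\beta=\beta(\gamma)$ built from~$C_n$ and powers. The exponents stay away from degenerate values for~$\gamma\in[1,p]$, so the constant is bounded uniformly on that compact interval.

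For a zero-trace function with finite right-hand side, I would approximate by~$C_c^\infty$ functions that converge in~$W^{1,p}$ and in~$L^\gamma$; truncation followed by mollification does this. The right-hand side converges, and Fatou's lemma on an a.e.\ convergent subsequence passes the left-hand side to the limit. Since the inequality is proved on all of~$\R^n$ and zero-trace functions extend by zero, the constant does not depend on the spatial reference domain.
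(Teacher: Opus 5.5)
Your Steps~1 and~3 are the paper's. Step~2 is the paper's single three-factor H\"older inequality, split into a two-factor H\"older step and an $L^\gamma$--$L^r$ interpolation, so the route is right. But the check $\gamma\le\sigma\le r$, which you leave open, is the heart of the lemma, and as written it is not established.

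\begin{itemize}
\item Your reduction of (i) contains an algebra error. Multiplying $(\beta-1)p'\le r$ by $n(p-1)$ gives $r(n-p)\le np$, which is trivial for $p\ge n$. Your inequality $r(n-1)\le np-n$ is false already for $n=p=2$, $\gamma=1$, where $r=3$, even though $\sigma=1\le r$ there.
\item Your argument for (ii) also fails. The facts $r\ge p\ge\gamma$ and $p'>1$ only give $\sigma>\beta-1$. For $n=p=\gamma=2$ one has $\beta-1=1<\gamma$ while $\sigma=\gamma$ exactly, so there is no slack for a crude bound.
\item The fallback is unnecessary, and as described it does not close. For $p\ge n$ the new exponent $\beta n/(n-1)$ exceeds $r$, so it does not lie between $\gamma$ and $r$. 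Moreover, interpolation only bounds intermediate norms from above.
\end{itemize}

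The missing observation is the identity recorded in the paper. Since $r/p=1+\gamma/n$ and $\beta/r=(n-1)/n$,
\[
 \beta-1=\Bigl(\beta-\frac rp\Bigr)+\frac\gamma n,\qquad
 \beta-\frac rp=r\Bigl(\frac{n-1}{n}-\frac1p\Bigr)\ge0,\qquad
 \frac{\beta-1}{\sigma}=\frac1{p'}=\frac{\beta-r/p}{r}+\frac{\gamma/n}{\gamma}.
\]
Hence $1/\sigma$ is the convex combination of $1/r$ and $1/\gamma$ with weight $\vartheta=(\beta-r/p)/(\beta-1)\in[0,1)$ on $1/r$, which gives $\gamma\le\sigma<r$. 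With this $\vartheta$ you have $(\beta-1)\vartheta=\beta-r/p$ and $(\beta-1)(1-\vartheta)=\gamma/n$. Your substitution then yields $\|z\|_r^{r/p}\le C_n\beta\,\|\nabla z\|_p\|z\|_\gamma^{\gamma/n}$ directly. Raising to the power $p$ gives \eqref{eq:zero-trace-interpolation}, with no need for the scaling argument. This is exactly the paper's H\"older step applied to $|z|^{\beta-r/p}\,|z|^{\gamma/n}\,|\nabla z|$ with exponents $r/(\beta-r/p)$, $n$ and $p$; when $n=p=2$ the first factor is absent.
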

\end{samepage}
The proof is in Appendix~\ref{app:interpolation-proof}.

\begin{remark}[The case $p=n$]\label{rem:interpolation-endpoint}
At $p=n$, inequality~\eqref{eq:zero-trace-interpolation} supplies the finite powers used in the estimates below. It does not give an $L^\infty$ embedding for $W^{1,n}$.
\end{remark}

\subsubsection{Differences of obstacle potentials for~$p>2$}
\label{sec:removal}

Throughout this subsubsection,~$p>2$. We estimate the difference of two potentials on the entire spatial reference domain. Its flux depends on both potentials, but uniform monotonicity bounds the diffusion term below by a positive multiple of the~$p$th power of the gradient of their difference. The zero lateral trace permits global tests without spatial cutoff terms. The resulting estimate is independent of the retained potential, even though the difference need not be~$p$-parabolic.

\begin{lemma}\label{lem:difference-smoothing}
Let~$D$ be a smooth bounded domain or~$\R^n$, and let~$t_*<T_1$. Suppose
\begin{equation*}
 u,v\in L^p(t_*,T_1;W^{1,p}(D))\cap L^\infty(t_*,T_1;L^2(D)),
\end{equation*}
and~$w=v-u\ge0$ is bounded and belongs to~$L^p(t_*,T_1;W^{1,p}_0(D))$. Assume, in the weak sense,
\begin{equation*}
 w_t-\div\bigl(A(\nabla v)-A(\nabla u)\bigr)\le0\,,
\end{equation*}
and~$\esssup_{t_*<t<T_1}\int_D w(x,t)\,dx\le M$. Then, for almost every~$T\in(t_*,T_1)$,
\begin{equation}\label{eq:difference-smoothing}
 \|w(\cdot,T)\|_\infty
 \le C M^{\frac{p}{\lambda}}(T-t_*)^{-\frac{n}{\lambda}}\,.
\end{equation}
If~$u,v$ have finite representatives satisfying~\eqref{eq:past-representative}, then~\eqref{eq:difference-smoothing} holds pointwise for~$v-u$ at every~$(x,T)\in D\times(t_*,T_1)$.
\end{lemma}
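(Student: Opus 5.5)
We will prove \eqref{eq:difference-smoothing} by a De Giorgi-type iteration on superlevel sets of~$w$ over backward-shrinking time intervals. Because~$w$ has zero lateral trace, no spatial cutoff is needed, and the usual~$L^1$--$L^\infty$ smoothing argument for the homogeneous degenerate equation goes through. The only structural input is uniform monotonicity of~$A$ for~$p\ge2$:
\begin{equation*}
 \bigl(A(\nabla v)-A(\nabla u)\bigr)\cdot\nabla(v-u)\ge c_p|\nabla(v-u)|^p .
\end{equation*}
This makes the difference behave like a subsolution of the~$p$-Laplace equation, even though its flux depends on both~$u$ and~$v$.

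\emph{Step 1: energy estimate for truncations.} For levels~$k>0$ and times~$t_*\le\tau<\tau'<T$, test the inequality for~$w$ with~$(w-k)_+\zeta(t)$. Here~$\zeta$ is a Lipschitz time cutoff vanishing before~$\tau$ and equal to one after~$\tau'$. The test is justified by Steklov averaging and the zero trace of~$(w-k)_+$. Monotonicity then gives
\begin{equation*}
 \sup_{\tau'<t<T}\int_D(w-k)_+^2\,dx+c\int_{\tau'}^T\!\!\int_D|\nabla(w-k)_+|^p
 \le\frac{C}{\tau'-\tau}\int_\tau^T\!\!\int_D(w-k)_+^2 .
\end{equation*}
A variant tested with~$\min\{(w-k)_+,\epsilon\}/\epsilon$ and~$\epsilon\to0$ shows that~$\int_D(w-k)_+$ is nonincreasing in time up to the cutoff. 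We will use this to keep the~$L^1$ bound~$M$ at every level.

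\emph{Step 2: iteration.} Apply the Gagliardo--Nirenberg inequality in zero-trace form: the Sobolev/H\"older version for~$p<n$, and Lemma~\ref{lem:interpolation} for~$p\ge n$. This gives the parabolic embedding
\begin{equation*}
 \iint(w-k)_+^{p(1+\gamma/n)}\le C\Bigl(\sup_t\int(w-k)_+^\gamma\Bigr)^{p/n}\iint|\nabla(w-k)_+|^p .
\end{equation*}
Use~$\gamma=2$, and combine it with Chebyshev between consecutive levels~$k_i=k(1-2^{-i})$ and times~$\tau_i=T-(T-t_*)(1/2+2^{-i-1})$. This yields a recursion~$Y_{i+1}\le Cb^i k^{-\alpha}(T-t_*)^{-\beta}Y_i^{1+\delta}$. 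The fast geometric convergence lemma gives~$w\le k$ on~$D\times(T-(T-t_*)/2,T)$ once~$Y_0$ is small. We will bound~$Y_0$ by~$\iint w^2\le\|w\|_\infty M(T-t_*)$.

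This produces~$\|w\|_\infty$ on the right-hand side, and that is the step I expect to be the main obstacle. There are two ways to handle it. The first is the standard interpolation trick: on the nested intervals~$[t_*+(T-t_*)\sigma,T]$, one gets a bound of the form~$\sup w\le\frac12\sup w+CM^{p/\lambda}(T-t_*)^{-n/\lambda}$ and absorbs the first term by a Giaquinta-type lemma. This needs~$w$ bounded, which is assumed. The second, possibly cleaner, is to iterate directly in~$L^1$ using the~$L^1$ form of Step 1 with~$\gamma=1$, which Lemma~\ref{lem:interpolation} allows. The exponent bookkeeping must produce exactly~$p/\lambda$ and~$n/\lambda$ with~$\lambda=n(p-2)+p$. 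I will verify this by checking dimensional consistency under the scaling~$x\mapsto rx$,~$t\mapsto q^{2-p}r^pt$,~$w\mapsto qw$, under which~$M\mapsto qr^nM$. The constants are independent of~$D$, since only zero-trace inequalities on~$\R^n$ are used.

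\emph{Step 3: pointwise statement.} The estimate holds for almost every~$T$, almost everywhere in~$x$. If~$u,v$ have representatives satisfying~\eqref{eq:past-representative}, apply the bound on a time interval~$(T-\epsilon,T)$ with~$t_*$ fixed. There~$v$ is essentially below~$u+C M^{p/\lambda}(T-\epsilon-t_*)^{-n/\lambda}$. We then want to pass this to the point~$(x,T)$ through past essential lower limits. For~$v$ this is immediate, since its representative equals a past essential lower limit, which is bounded by the essential bound. For~$u$ it is not immediate, because~\eqref{eq:past-representative} is a liminf and we need a matching lower estimate for~$u$. So we will apply the past-limit representation to~$v$ and the lower bound of~$u$ by its past essential lower limit, which holds with equality by~\eqref{eq:past-representative}. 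If this is insufficient, we use the past essential lower limit of~$v-u$ together with continuity of the right-hand side in~$T$. Letting~$\epsilon\to0$ then gives the pointwise bound.
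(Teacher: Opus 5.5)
Your argument is correct, but it takes a different route from the paper.

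\textbf{Your route.} You run one De Giorgi iteration on the truncations $(w-k)_+$: a Caccioppoli estimate from uniform monotonicity, and the parabolic embedding with $\gamma=2$, which Lemma~\ref{lem:interpolation} allows since $2\le p$. You start it from the crude bound $\iint w^2\le\tau S M$ and then absorb. This closes. Write $\tau=T-t_*$ and $S_\sigma=\esssup_{D\times(t_*+\sigma\tau,T)}w$. For $\sigma<\sigma'$ the iteration gives $S_{\sigma'}\le C\bigl((\sigma'-\sigma)\tau\bigr)^{-\frac{n+p}{\lambda+p}}(\tau S_\sigma M)^{\frac{p}{\lambda+p}}$. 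Since $p/(\lambda+p)<1$, Young's inequality turns this into $S_{\sigma'}\le\frac12 S_\sigma+C(\sigma'-\sigma)^{-\frac{n+p}{\lambda}}M^{\frac p\lambda}\tau^{-\frac n\lambda}$. The iteration lemma then yields exactly~\eqref{eq:difference-smoothing}.

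\textbf{The paper's route.} The paper avoids absorption altogether. It first proves the Nash-type inequality $Y'+cM^{-\frac{\lambda+p}{n}}Y^{1+\frac{\lambda}{n}}\le0$ for $Y(t)=\int_D w^2$, inserting $\|w\|_1\le M$ directly into the interpolation. This gives $Y(t)\le CM^{1+\frac p\lambda}(t-t_*)^{-\frac n\lambda}$. A scaled Moser iteration on the powers $w^{\ell_k}$ then gives $L^2\to L^\infty$ smoothing, and the two estimates are concatenated at the midpoint.

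\textbf{Trade-offs.} Your route needs only one family of test functions and one embedding exponent. The paper's Moser step uses $\gamma=p\ell_k/(\ell_k+p-2)$ varying with $k$, and hence the uniformity in $\gamma$ of Lemma~\ref{lem:interpolation}. In exchange, the paper uses the boundedness of $w$ only to justify the tests. Yours uses $S_0<\infty$ in the absorption step, which the hypothesis supplies.

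\textbf{Small points.}
\begin{enumerate}[(i)]
\item The monotonicity of $\int(w-k)_+$ is unnecessary, since $(w-k)_+\le w$ gives $\sup_t\int(w-k)_+\le M$ directly.
\item Drop the ``direct $L^1$'' alternative. With $\gamma=1$ alone the supremum factor is just $M$, and the recursion for $Y_i$ becomes linear, so it does not close as stated.
\item Step~3 has no real difficulty. From $v\le u+c_\epsilon$ a.e.\ on $D\times(T-\epsilon,T)$, monotonicity of the past essential lower limit and~\eqref{eq:past-representative} give $v(x,T)=\essliminf v\le\essliminf(u+c_\epsilon)=u(x,T)+c_\epsilon$. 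This is the paper's argument.
\end{enumerate}
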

\begin{proof}
We first prove an~$L^2$ bound in terms of the mass, then use iteration to obtain the~$L^\infty$ bound. If~$M=0$, then~$w=0$, so assume~$M>0$. The diffusion term is controlled by the uniform monotonicity inequality
\begin{equation}\label{eq:monotonicity}
 \bigl(A(\xi)-A(\eta)\bigr)\cdot(\xi-\eta)
 \ge 2^{2-p}|\xi-\eta|^p\,.
\end{equation}
Since~$\nabla w=\nabla v-\nabla u$, testing the difference inequality with~$w^{\ell-1}$ and applying~\eqref{eq:monotonicity} gives, for~$\ell\ge2$, with time regularization understood,
\begin{equation}\label{eq:difference-power}
 \frac{d}{dt}\int_D w^\ell
 +c_p\frac{\ell(\ell-1)}{(\ell+p-2)^p}
       \int_D\left|\nabla w^{\frac{\ell+p-2}{p}}\right|^p\le0\,.
\end{equation}
The factor~$p^p$ is included in~$c_p>0$.

Let~$Y(t)=\int_D w^2$. Extend~$w$ by zero if~$D$ is bounded. Interpolation using Lemma~\ref{lem:interpolation} with $\gamma=1$ when $p\ge n$, and the usual Sobolev interpolation when $p<n$, gives
\begin{equation*}
 \|w\|_2\le C\|\nabla w\|_p^\vartheta\|w\|_1^{1-\vartheta}\,,
 \qquad \vartheta=\frac{np}{2(np+p-n)}\,.
\end{equation*}
Use~$\|w\|_1\le M$ in this inequality to bound~$\|\nabla w\|_p$ from below in terms of~$Y$. The case~$\ell=2$ of~\eqref{eq:difference-power} then yields
\begin{equation*}
 Y'+cM^{-\frac{\lambda+p}{n}}Y^{1+\frac{\lambda}{n}}\le0\,.
\end{equation*}
Integrating this inequality and discarding the finite initial value gives
\begin{equation}\label{eq:difference-L2}
 Y(t)\le C M^{1+\frac{p}{\lambda}}(t-t_*)^{-\frac{n}{\lambda}}\,.
\end{equation}

We next prove the~$L^2$-to-$L^\infty$ estimate
\begin{equation}\label{eq:L2-Linfty}
 \|w(\cdot,t)\|_\infty
 \le C(t-s)^{-\frac{n}{\lambda+p}}
                  \|w(\cdot,s)\|_2^{\frac{2p}{\lambda+p}},\qquad t>s\,.
\end{equation}
Put~$\tau=t-s$ and~$L=\|w(s)\|_2$. If~$L=0$, the energy inequality gives~$w=0$ at later times. Otherwise choose~$a,r>0$ so that~$\tau=a^{2-p}r^p$ and~$L=ar^{\frac{n}{2}}$. Define, on~$r^{-1}D\times(0,1)$,
\begin{equation*}
 \widetilde w(y,\sigma)=a^{-1}w(ry,s+\tau\sigma)\,.
\end{equation*}
Scale~$u,v$ in the same way. Homogeneity of~$A$ preserves the difference inequality. The normalized initial~$L^2$ norm is one. We prove a uniform bound on the normalized unit interval and write~$w$ for~$\widetilde w$ during the iteration. Set
\begin{equation*}
 \kappa=1+\frac pn,\qquad \ell_0=2\,,
 \qquad \ell_{k+1}=\kappa\ell_k+p-2\,.
\end{equation*}
Choose increasing time levels tending to the midpoint of the unit interval, with successive distances comparable to~$2^{-k}$. At each step, restrict to the interval between the new time level and the final time. Apply~\eqref{eq:difference-power} with the associated time cutoff. On the smaller interval, it bounds both~$\sup\int w^{\ell_k}$ and~$\iint|\nabla w^{\frac{\ell_k+p-2}{p}}|^p\,dx\,dt$ by~$C2^k\ell_k^{p-2}$ times the space-time integral of~$w^{\ell_k}$ over the preceding interval. We then raise the integrability exponent by applying the zero-trace interpolation inequality to~$w^{\frac{\ell_k+p-2}{p}}$. For~$p\ge n$, use Lemma~\ref{lem:interpolation} with~$\gamma=p\ell_k/(\ell_k+p-2)\in[1,p]$; for~$p<n$, use Sobolev interpolation. This gives
\begin{equation*}
 \iint w^{\kappa\ell_k+p-2}\,dx\,dt
 \le C\left(\sup_t\int w^{\ell_k}\right)^{\frac{p}{n}}
             \iint\left|\nabla w^{\frac{\ell_k+p-2}{p}}\right|^p\,dx\,dt\,.
\end{equation*}
Thus, if~$E_k$ denotes the space-time integral of~$w^{\ell_k}$ at the~$k$th step, the choice of~$\ell_{k+1}$ gives
\begin{equation*}
 E_{k+1}\le
 \bigl(C2^k\ell_k^{p-2}\bigr)^\kappa E_k^\kappa\,.
\end{equation*}
Now
\begin{equation*}
 \ell_k=\kappa^k\left(2+\frac{n(p-2)}p\right)
                         -\frac{n(p-2)}p\,.
\end{equation*}
The initial integral satisfies~$E_0\le1$ by the~$\ell=2$ energy inequality on the normalized unit interval. The product obtained after taking the~$\ell_k$th roots converges, since~$\sum_k(k+\log\ell_k)/\ell_k<\infty$. The normalized supremum is therefore bounded by~$C(n,p)$. Scaling back gives~$\|w(t)\|_\infty\le Ca$, where~$a=\tau^{-\frac{n}{\lambda+p}}L^{\frac{2p}{\lambda+p}}$. This is~\eqref{eq:L2-Linfty}. The constants are independent of~$D$ because every Sobolev inequality was applied after zero extension.

To complete the proof, use~\eqref{eq:L2-Linfty} between the midpoint of~$(t_*,T)$ and~$T$, and bound the~$L^2$ norm at that midpoint by~\eqref{eq:difference-L2}. The exponent of~$M$ is~$(1+p/\lambda)p/(\lambda+p)=p/\lambda$, and the exponent of~$T-t_*$ is~$-n/(\lambda+p)-np/[\lambda(\lambda+p)]=-n/\lambda$. This proves~\eqref{eq:difference-smoothing}. For the pointwise assertion, write~\eqref{eq:difference-smoothing} as~$v\le u+CM^{p/\lambda}(t-t_*)^{-n/\lambda}$ almost everywhere and take the past essential lower limit of each side separately. The last term is continuous for~$t>t_*$.
\end{proof}

\begin{theorem}[Difference estimate for obstacle potentials]\label{thm:removal}
Let~$E,F$ be compact in a common reference problem with zero initial and lateral data, let~$h>0$, and suppose~$F\subset\{t\le t_*\}$. Then, for~$T>t_*$,
\begin{equation}\label{eq:removal}
 0\le \redpot_{E\cup F}^{h}(x,T)-\redpot_E^h(x,T)
 \le \min\left\{h,\ C\capc_h(F)^{\frac{p}{\lambda}}
                                     (T-t_*)^{-\frac{n}{\lambda}}\right\}\,.
\end{equation}
Also, for almost every~$t$,
\begin{equation}\label{eq:removal-L1}
 \int\bigl(\redpot_{E\cup F}^h-\redpot_E^h\bigr)(x,t)\,dx\le\capc_h(F)\,.
\end{equation}
The same estimates hold for continuous obstacles bounded by~$h$: the capacity of a compact support of the added obstacle can be used on the right. The constant is independent of the retained obstacle, which may continue to act after~$t_*$.
\end{theorem}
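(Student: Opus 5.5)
The plan is to reduce Theorem~\ref{thm:removal} to Lemma~\ref{lem:difference-smoothing}, applied to $w=v-u$ with $u$ an approximation of $\redpot_E^h$ and $v$ an approximation of $\redpot_{E\cup F}^h$. I will work with bounded penalizations (Lemma~\ref{lem:penalty}) for smooth outer obstacles $\psi^E\le\psi^{E\cup F}=\max\{\psi^E,\psi^F\}$, all bounded by $h$. Here $\psi^F$ is supported in a thin neighborhood of $F$, inside $\{t\le t_*+\delta\}$. Both penalized problems use the same $N$ and the same reference domain. For the whole-space case I use the finite-propagation bound \eqref{eq:rough-travelling-wave} to pass to a fixed ball on a fixed time horizon.

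\textbf{Lower bound.} Comparison of penalizations, using that the penalty is monotone in the obstacle, gives $u_N\le v_N$. Hence $0\le w_N\le h$, and the $\min\{h,\cdot\}$ part is immediate.

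\textbf{The $L^1$ bound \eqref{eq:removal-L1}.} Subtracting the two equations gives
\begin{equation*}
 (w_N)_t-\div\bigl(A(\nabla v_N)-A(\nabla u_N)\bigr)=f^v_N-f^u_N .
\end{equation*}
I test with a smooth approximation of $\operatorname{sgn}_+(w_N)=\ind_{\{w_N>0\}}$, which is legitimate because $w_N\in L^p(W^{1,p}_0)$. The diffusion term is nonnegative by monotonicity. On $\{w_N>0\}$ we have $v_N>u_N$, so
\begin{equation*}
 f^v_N-f^u_N\le N(\psi^{E\cup F}-v_N)_+-N(\psi^E-v_N)_+ .
\end{equation*}
The right side vanishes wherever $\psi^{E\cup F}=\psi^E$. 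It is therefore supported where $\psi^F>\psi^E$, and there it is bounded by the source $N(\psi^F-v_N)_+$. Since $v_N\le h$, restricting this source to $\supp\psi^F$ gives an admissible measure by Lemma~\ref{s:source-restriction} (equivalently \eqref{eq:measure-restriction}). Hence
\begin{equation*}
 \int w_N(\cdot,t)\,dx\le \capc_h(\supp\psi^F)\qquad\text{for all }t .
\end{equation*}

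\textbf{After $t_*+\delta$.} For $t>t_*+\delta$ the added source vanishes, and on $\{w_N>0\}$ the retained penalty satisfies $f^v_N\le f^u_N$, because $v_N>u_N$ and the obstacle is the same there. So $w_N$ satisfies the differential inequality of Lemma~\ref{lem:difference-smoothing} on $(t_*+\delta,T_1)$, in the sense needed after testing with $w_N^{\ell-1}$, which is the only place that inequality is used. That lemma with $M=\capc_h(\supp\psi^F)$ gives
\begin{equation*}
 \|w_N(\cdot,T)\|_\infty\le C M^{p/\lambda}(T-t_*-\delta)^{-n/\lambda}.
\end{equation*}
The constant is independent of $\psi^E$, hence of the retained obstacle.

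\textbf{Passage to the limit.} I let $N\to\infty$, which gives locally uniform convergence to the continuous potentials. Then I shrink the neighborhoods of $E$ and $F$, using decreasing limits as in Step~5 of Lemma~\ref{lem:reception} together with continuity of capacity on decreasing compact sets, so that $\capc_h(\supp\psi^F)\to\capc_h(F)$. Finally $\delta\downarrow0$. The pointwise statement at every $(x,T)$ follows from the past-representative clause of Lemma~\ref{lem:difference-smoothing}.

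\textbf{Main obstacle.} I expect the hardest step to be the sign analysis of the penalty difference, which is what keeps both the $L^1$ mass bound and the post-$t_*$ subsolution property independent of $E$. The key point is the inequality $(\psi^{E\cup F}-v)_+-(\psi^E-u)_+\le(\psi^F-v)_+$ on $\{v>u\}$. I would verify it pointwise, distinguishing the cases $\psi^F\le\psi^E$ and $\psi^F>\psi^E$.
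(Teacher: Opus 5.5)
Your proposal is correct and follows the paper's proof. It penalizes with a common $N$ to get $u_N\le v_N\le h$, bounds $\int w_N(\cdot,t)\,dx$ by the capacity of $\supp\psi^F$ via a sign test, applies Lemma~\ref{lem:difference-smoothing} after the added obstacle ends, and passes to compact sets by continuity of capacity and past essential lower limits. The only variation is in the mass bound: the paper uses a third penalization $z_N$ for the added obstacle alone, bounding the excess source by its source $N(\phi-z_N)_+$, whereas you keep $N(\psi^F-v_N)_+$ and bound its mass by source restriction for $v_N$; both give the same constant, independent of the retained obstacle.
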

\begin{proof}
The retained obstacle may remain active after the added obstacle ends. After that time, the penalization source for the union is no larger than the source for the retained obstacle. Hence the nonnegative difference of the penalized potentials satisfies an unforced subsolution inequality. We bound its~$L^1$ norm by the capacity of the added obstacle and apply Lemma~\ref{lem:difference-smoothing}.

Take continuous compactly supported obstacles~$\psi,\phi\in[0,h]$. Let~$u_N,z_N,v_N$ be their penalized solutions for~$\psi,\phi$, and~$\max\{\psi,\phi\}$, respectively. Comparison gives~$u_N,z_N\le v_N\le h$. Denote the nonnegative sources by~$r_E,r_F,r_{E\cup F}$. Since~$u_N,z_N\le v_N$, we have pointwise
\begin{equation*}
 r_{E\cup F} =N\max\{(\psi-v_N)_+,(\phi-v_N)_+\} \le N(\psi-u_N)_++N(\phi-z_N)_+=r_E+r_F\,.
\end{equation*}
Outside~$\supp\phi$, we have the stronger inequality~$r_{E\cup F}-r_E\le0$. Thus~$w_N=v_N-u_N\ge0$ satisfies
\begin{equation*}
 (w_N)_t-\div\bigl(A(\nabla v_N)-A(\nabla u_N)\bigr)\le r_F\,.
\end{equation*}
After the obstacle~$\phi$ ends,~$r_F=0$, so~$w_N$ satisfies the unforced inequality in Lemma~\ref{lem:difference-smoothing}.

Testing with an approximation of the positive sign function and using~\eqref{eq:monotonicity} proves
\begin{equation*}
 \int w_N(x,t)\,dx\le\iint r_F\,dx\,dt\,.
\end{equation*}
We have~$0\le z_N\le h$, and~$r_F$ is supported in~$\supp\phi$. Hence~$\iint r_F\,dx\,dt\le\capc_h(\supp\phi)$ by the measure-based definition. This proves the required~$L^1$ bound for~$w_N$ at every time where its trace is defined. Applying Lemma~\ref{lem:difference-smoothing} after the last source time gives the capacity-dependent bound in~\eqref{eq:removal}; comparison also gives~$w_N\le h$. Passing to the continuous potentials uses Lemma~\ref{lem:penalty}.

For compact~$E,F$, choose decreasing compact neighborhoods and decreasing continuous obstacles converging to their indicators at height~$h$. The maximum obstacles approximate~$E\cup F$. The last source time is at most~$t_*+\eta$ for an approximation supported in such a temporal neighborhood. Continuity of capacity on decreasing compact sets, weak convergence of the Riesz measures, and almost-everywhere convergence of the potentials give the estimates with~$\eta\downarrow0$. Finally, regularize the order
\begin{equation*}
 \redpot_{E\cup F}^h\le \redpot_E^h+
 C\capc_h(F)^{\frac{p}{\lambda}}(t-t_*)^{-\frac{n}{\lambda}}\,.
\end{equation*}
Take the past essential lower limit of each side separately. The two potentials agree with their canonical representatives by~\eqref{eq:past-representative}, and the last term is continuous for~$t>t_*$. Hence the same order holds at every point with~$t>t_*$. The bound by~$h$ follows from~$0\le \redpot_E^h\le \redpot_{E\cup F}^h\le h$.
\end{proof}

\begin{remark}\label{rem:measure-union}
For a compact set~$K$, write~$\mu_K^h=\mu_{\redpot_K^h}$, with all potentials taken in the common reference problem of Theorem~\ref{thm:removal}. The same penalization calculation proves the measure inequality
\begin{equation*}
 \mu_{E\cup F}^h\le\mu_E^h+\mu_F^h\,.
\end{equation*}
For fixed continuous obstacles, pass the pointwise source inequality to the weak measure limits as~$N\to\infty$. Then pass through common decreasing outer approximations of~$E,F$ and~$E\cup F$. Both passages preserve the inequality against every nonnegative test function. For the criterion we use Theorem~\ref{thm:removal} with continuous obstacles.
\end{remark}

\subsubsection{Upper levels and excess mass for~$\pc<p<2$}\label{s:excess-section}
Throughout this subsubsection,~$\pc<p<2$. For a level~$c$, the integral of~$(u-c)_+$ measures the excess mass above~$c$. We compare~$u$ with a homogeneous solution whose initial value is~$\max\{u,c\}$. Smoothing bounds this comparison solution by a higher level~$c_+$ on the final interior ball. An~$L^1$ comparison then bounds the excess of~$u$ above~$c_+$ by the source mass.

\begin{lemma}[One-step excess estimate]\label{s:excess}
Fix $0<\theta<1$. Let $0\le u\le H$ be a bounded energy solution of
\begin{equation*}
 u_t-\Dp u=f\ge0
 \quad\text{in }B_R\times(t_0,t_0+T),
\end{equation*}
where $f$ is bounded. For a constant $c\ge0$, set
\begin{equation*}
 E_0=\int_{B_R}(u(x,t_0)-c)_+\,dx,
 \qquad M=\iint_{B_R\times(t_0,t_0+T)}f\,dx\,dt.
\end{equation*}
There is $C=C(n,p,\theta)$ such that, on putting
\begin{equation}\label{s:level-increment}
 c_+=c+C\left[
 T^{-\frac{n}{\lambda}}E_0^{\frac{p}{\lambda}}
 +\left(\frac{T}{R^p}\right)^{\frac{1}{2-p}}\right],
\end{equation}
we have
\begin{equation}\label{s:excess-eq}
 \int_{B_{\theta R}}(u(x,t_0+T)-c_+)_+\,dx\le M.
\end{equation}
The estimate uses the $L^2$ energy traces and holds at every time at which the solutions have their continuous representative, in particular for the penalizations of Lemma~\ref{lem:penalty}.
\end{lemma}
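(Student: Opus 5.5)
The plan follows the description before the lemma. We introduce a homogeneous comparison solution started from $\max\{u,c\}$, bound it by $c_+$ via Lemma~\ref{s:BIV-upper}, and then bound the excess of $u$ over it in $L^1$ by the source mass.

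\emph{Comparison solution.} Let $z$ be $p$-parabolic in $B_R\times(t_0,t_0+T)$ with initial value $z(\cdot,t_0)=\max\{u(\cdot,t_0),c\}$ and lateral values equal to those of $\max\{u,c\}$. Since $u\le H$, we may equivalently use the lateral datum $H$ or the trace of $u$; only the interior ball matters, because Lemma~\ref{s:BIV-upper} is independent of lateral values. Then $z\ge c$ by comparison with the constant $c$. The function $z-c$ is again $p$-parabolic, since constants may be subtracted, and it is nonnegative with initial mass
\begin{equation*}
 \int_{B_R}(z(x,t_0)-c)\,dx=E_0.
\end{equation*}
Lemma~\ref{s:BIV-upper} applied to $z-c$ therefore gives
\begin{equation*}
 \sup_{B_{\theta R}}(z(\cdot,t_0+T)-c)\le C\bigl[T^{-n/\lambda}E_0^{p/\lambda}+(T/R^p)^{1/(2-p)}\bigr].
\end{equation*}
Hence $z(\cdot,t_0+T)\le c_+$ on $B_{\theta R}$ for the $c_+$ in \eqref{s:level-increment}.

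\emph{$L^1$ comparison.} Since $u\le z$ initially and laterally (choosing the lateral datum of $z$ equal to that of $\max\{u,c\}$), the difference $w=(u-z)_+$ has zero initial and lateral trace. Subtracting the equations gives
\begin{equation*}
 (u-z)_t-\div\bigl(A(\nabla u)-A(\nabla z)\bigr)=f.
\end{equation*}
We test with $S_\epsilon(u-z)$, a smooth approximation of $\mathbf 1_{\{u>z\}}$, using Steklov time regularization. Monotonicity of $A$ makes the diffusion term nonnegative, with the zero lateral trace removing boundary terms. Letting $\epsilon\to0$ yields
\begin{equation*}
 \int_{B_R}(u-z)_+(x,t_0+T)\,dx\le\iint f\,\mathbf 1_{\{u>z\}}\le M.
\end{equation*}
On $B_{\theta R}$ we have $(u-c_+)_+\le(u-z)_+$ because $z\le c_+$ there, and \eqref{s:excess-eq} follows.

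\emph{Expected main obstacle.} The technical points are the following.
\begin{enumerate}[(i)]
\item Existence of $z$ with $L^1$ initial data and the prescribed bounded lateral values in the energy class. For this we use the monotone theory (as in Lemma~\ref{lem:penalty}, Step~1), taking the lateral trace of $\max\{u,c\}$, which lies in $W^{1,p}$ and is bounded.
\item Verifying that $z$ is a local strong solution in the sense of \cite{BIV}, so that Lemma~\ref{s:BIV-upper} applies with the initial trace attained in $L^1$. Continuity of $z$ holds in the interior.
\item Passing the inequality to the final time $t_0+T$ for the continuous representative, using the $C([t_0,t_0+T];L^2)$ regularity of both $u$ and $z$.
\end{enumerate}
The constant depends only on $n,p,\theta$ through Lemma~\ref{s:BIV-upper}, and it does not depend on $H$.
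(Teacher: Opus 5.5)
Your proposal is the paper's proof. It builds a homogeneous comparison solution started from $\max\{u(\cdot,t_0),c\}$, bounds it by $c_+$ on $B_{\theta R}$ via Lemma~\ref{s:BIV-upper} applied to $z-c$, and then runs the $L^1$ sign-function comparison of $(u-z)_+$ against $M$.

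The paper fixes the lateral datum to be $H$, solving for $v-H$ with zero lateral values after disposing of the trivial case $c\ge H$. This avoids using $\max\{u,c\}$ as a lifting, whose time derivative need not be admissible in the energy theory. Your aside allowing the lateral trace of $u$ itself would not work: it destroys $z\ge c$, and with it the nonnegativity of $z-c$ required in Lemma~\ref{s:BIV-upper}.
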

\begin{proof}
If $c\ge H$, the conclusion is immediate. Otherwise let $v$ be the homogeneous solution in the same cylinder with lateral values $H$ and initial value $\max\{u(\cdot,t_0),c\}$. To construct it, solve the evolution for $v-H$ with zero lateral values and its specified bounded $L^2$ initial value. No compatibility of that value with the lateral trace at the initial corner is needed for the energy evolution. Monotone evolution and comparison give
\begin{equation*}
 c\le v\le H.
\end{equation*}
The function $v-c$ is nonnegative and $p$-parabolic, and its initial mass is $E_0$. Lemma~\ref{s:BIV-upper} yields $v\le c_+$ on the final interior ball.

The positive part of $u-v$ has zero initial and lateral trace. Subtract the equations and test with a Lipschitz approximation $S_\epsilon$ to the positive sign function. Since
\begin{equation*}
 (A(\nabla u)-A(\nabla v))\cdot(\nabla u-\nabla v)\ge0,
\end{equation*}
the diffusion term containing $S_\epsilon'$ is nonnegative. The initial primitive vanishes, so letting $\epsilon\downarrow0$ gives
\begin{equation*}
 \int_{B_R}(u-v)_+(x,t_0+T)\,dx\le M.
\end{equation*}
The time derivatives of~$u,v$ belong to~$L^{p'}(t_0,t_0+T;W^{-1,p'}(B_R))$, which justifies the tests by time regularization. On the final interior ball,
$(u-c_+)_+\le(u-v)_+$. This proves~\eqref{s:excess-eq}. The estimate for~$v$ determines~$c_+$, while~$M$ bounds the excess carried to the next interval.
\end{proof}

\subsection{Propagation of mass bounds}\label{subsec:upper-mass}
The upper estimates require control over successive time intervals. For~$p>2$, a local flux estimate preserves a bound for the mass in every ball centered at the origin. For~$\pc<p<2$, the excess estimate carries the source mass from one interval to the next on geometrically decreasing balls.

\subsubsection{Flux estimates with sources for~$p>2$}
\label{sec:flux}

Throughout this subsubsection,~$p>2$. We split a solution with a bounded nonnegative source into a~$p$-parabolic replacement with the same initial and lateral data and a difference with zero data. The gradient of the difference is estimated by testing its level truncations on the full spatial domain. Combining this estimate with the homogeneous flux bound will control the mass transported across a spatial cutoff.

As in Subsubsection~\ref{sec:removal}, the spatial domain~$D$ in the difference estimate is smooth and bounded or is~$\R^n$. Zero lateral trace means the Sobolev zero trace used in the energy tests and zero extension; on~$\R^n$ no lateral boundary is present.

\begin{lemma}[Gradient estimate for a difference]
\label{lem:forced-difference}
Let~$u,v$ be bounded functions in~$L^p(I;W^{1,p}(D))\cap L^\infty(I;L^2(D))$, where~$|I|=T$. Suppose~$w=u-v\ge0$ has zero initial and lateral trace and
\begin{equation*}
 w_t-\div\bigl(A(\nabla u)-A(\nabla v)\bigr)=f,\qquad f\ge0\,.
\end{equation*}
Assume that~$f$ is bounded, and put~$M=\iint f\,dx\,dt$. Then
\begin{equation}\label{eq:forced-difference-flux}
 \iint_{D\times I}|\nabla w|^{p-1}\,dx\,dt
 \le C T^{\frac{1}{\lambda}}M^{1+\frac{p-2}{\lambda}}\,.
\end{equation}
The constant is independent of~$D$ and of the gradients of~$u,v$.
\end{lemma}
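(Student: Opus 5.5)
This is the standard level-truncation argument (as for measure data, Boccardo--Gallouët style), carried out on the full domain using the zero trace of~$w$. Uniform monotonicity~\eqref{eq:monotonicity} will give a lower bound by~$|\nabla w|^p$ that does not involve~$\nabla u$ or~$\nabla v$. We then combine an~$L^1$ bound with a gradient bound on each level band, and optimize over levels using Sobolev/interpolation and the scaling exponent~$\lambda$.

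\emph{Step 1: level-band energy.} For~$k\ge0$ put~$T_k(w)=\min\{w,k\}$, and for~$k>0$ let~$S_k(w)=\min\{(w-k)_+,k\}/k$; both are admissible test functions (time regularization). Testing with~$T_k(w)$ and using~\eqref{eq:monotonicity} gives
\begin{equation*}
 \sup_t\int_D\Phi_k(w)\,dx+c\iint_{\{w<k\}}|\nabla w|^p\le kM,
\end{equation*}
where~$\Phi_k(w)=\int_0^wT_k\ge k(w-k)_+$. Testing with the sign approximation gives~$\sup_t\int w\le M$. Testing with~$\ind_{\{k<w<2k\}}$-type truncations gives~$\iint_{\{k\le w<2k\}}|\nabla w|^p\le CkM$.

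\emph{Step 2: measure of level sets.} Apply the zero-trace Gagliardo--Nirenberg interpolation to~$T_k(w)$ on~$D\times I$, using Sobolev/Hölder if~$p<n$ and Lemma~\ref{lem:interpolation} with~$\gamma=1$ if~$p\ge n$, after zero extension so the constants do not depend on~$D$. Together with~$\sup_t\int T_k(w)\le M$ and~$\iint|\nabla T_k w|^p\le CkM$, this gives
\begin{equation*}
 k^{p(1+1/n)}|\{w>k\}|\le\iint T_k(w)^{p(1+1/n)}\le CM^{p/n}kM.
\end{equation*}
Hence~$|\{w>k\}|\le CM^{1+p/n}k^{1-p-p/n}$, and also trivially~$|\{w>k\}|\le M T/k$.

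\emph{Step 3: dyadic summation and choice of threshold.} Split~$\iint|\nabla w|^{p-1}$ into~$\{w<k_0\}$ and the dyadic bands~$\{2^ik_0\le w<2^{i+1}k_0\}$. On each region apply Hölder:
\begin{equation*}
 \iint_{\text{band}}|\nabla w|^{p-1}\le\Bigl(\iint_{\text{band}}|\nabla w|^p\Bigr)^{\frac{p-1}{p}}|\{w>2^ik_0\}|^{\frac1p}.
\end{equation*}
On~$\{w<k_0\}$ the measure factor is at most~$|D\times I|$. That is not acceptable, since the bound must be independent of~$D$, so this set needs more care. On it I would use~$|\nabla w|^{p-1}\le|\nabla w|^p/ k_0^{\epsilon}\cdot$\dots; more cleanly, the gradient vanishes a.e. where~$w=0$, and~$\{w>\delta\}$ has finite measure by the same level bound. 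I would therefore sum dyadic bands downward as well, over~$2^{-i}k_0$, using~$|\{w>k\}|\le MT/k$ for small~$k$. For small~$k$ the band term is~$(kM)^{(p-1)/p}(MT/k)^{1/p}=M\,k^{(p-2)/p}T^{1/p}$, which is geometrically summable downward because~$p>2$. For large~$k$ the band term is~$(kM)^{(p-1)/p}(M^{1+p/n}k^{1-p-p/n})^{1/p}=M^{1+1/n}k^{-1/n}$, which is summable upward. Balancing the two at~$k_0$, i.e.\ $k_0^{(p-2)/p+1/n}=M^{1/n}T^{-1/p}$, gives~$k_0^{\lambda/(np)}=M^{1/n}T^{-1/p}$. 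The total is then~$CM^{1+1/n}k_0^{-1/n}=CM^{1+1/n-p/\lambda}T^{n/(\lambda)\cdot 1/n}$. Checking exponents: $1+\tfrac1n-\tfrac p\lambda=1+\tfrac{\lambda-np}{n\lambda}=1+\tfrac{p-2}{\lambda}$ and the time power is~$T^{1/\lambda}$, matching~\eqref{eq:forced-difference-flux}.

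The main obstacle is Step~2 in the case~$p\ge n$ and the uniformity in~$D$. Every Sobolev step must be done on zero extensions, and the band estimates need the level-set bound in exactly the form above. If Lemma~\ref{lem:interpolation} with~$\gamma=1$ only gives the exponent~$p(1+1/n)$ for~$z=T_k(w)$ (it does), the rest is bookkeeping. The band energy bound~$\iint_{\{k\le w<2k\}}|\nabla w|^p\le CkM$ requires a test such as~$\min\{(w-k)_+,k\}$, which is fine since~$f\ge0$ is bounded and~$w$ has zero initial and lateral trace.
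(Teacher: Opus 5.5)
Your proposal is correct and follows the paper's proof essentially step for step. The shared steps are the $L^1$ bound from the sign test and the truncation energy $\iint_{\{w<k\}}|\nabla w|^p\le CkM$. They continue with the two level-set bounds from $\sup_t\int w\le M$ and from zero-trace interpolation of $T_kw$, then H\"older on dyadic bands, and finally geometric summation below and above $k_0=M^{p/\lambda}T^{-n/\lambda}$, using $\nabla w=0$ a.e.\ on $\{w=0\}$. Only your final exponent check is garbled: the power of $M$ is $1+\frac1n-\frac{p}{n\lambda}=1+\frac{\lambda-p}{n\lambda}=1+\frac{p-2}{\lambda}$, not $1+\frac1n-\frac p\lambda$, though your stated conclusion is right.
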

The proof is in Appendix~\ref{app:forced-difference-proof}.

\paragraph{Homogeneous flux.}
Let~$v\ge0$ be bounded and~$p$-parabolic on~$B_{4R}\times I$, where~$I=(t_0,t_0+T)$, and put~$S_v=\esssup_{t\in I}\int_{B_{4R}}v(x,t)\,dx$. We use the estimate
\begin{equation}\label{eq:homogeneous-flux}
 \iint_{B_{2R}\times I}|\nabla v|^{p-1}\,dx\,dt
 \le CT^{\frac{1}{\lambda}}S_v^{1+\frac{p-2}{\lambda}}
 \quad\text{if}\quad TS_v^{p-2}\le cR^\lambda\,.
\end{equation}
This is the Euclidean form of~\cite[Theorem~3.1(iii), equation~(3.4)]{KP}. The cited estimate uses ordinary time integrals and spatial averages. Substitute~$S_v/|B_{4R}|$ for its bound on the average and multiply the left side by~$|B_{2R}|$. The identities~$|B_{2R}|/|B_{4R}|=2^{-n}$ and~$(2R)^n/|B_{4R}|=(2^n|B_1|)^{-1}$ give~\eqref{eq:homogeneous-flux}. Its time restriction becomes~$TS_v^{p-2}\le(2R)^p|B_{4R}|^{p-2}=C(n,p)R^\lambda$, which follows from~$TS_v^{p-2}\le cR^\lambda$. The endpoint~$T$ follows by monotone convergence from shorter intervals. Nonzero initial and lateral values are allowed.

\begin{lemma}[Local flux estimate with a source]\label{lem:forced-flux}
Let~$u\ge0$ be a bounded energy solution of~$u_t-\Dp u=f\ge0$ on~$B_{4R}\times I$, with bounded~$f$ and~$|I|=T$. Set
\begin{equation*}
 S=\esssup_{t\in I}\int_{B_{4R}}u(x,t)\,dx
                  +\iint_{B_{4R}\times I}f\,dx\,dt\,.
\end{equation*}
If~$TS^{p-2}\le cR^\lambda$, then
\begin{equation}\label{eq:forced-flux}
 \iint_{B_{2R}\times I}|\nabla u|^{p-1}\,dx\,dt
 \le CT^{\frac{1}{\lambda}}S^{1+\frac{p-2}{\lambda}}\,.
\end{equation}
\end{lemma}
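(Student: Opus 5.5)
The plan is to split $u$ on $B_{4R}\times I$ into a $p$-parabolic replacement and a forced difference, as the introduction to this subsubsection suggests. Let $v$ be the $p$-parabolic function in $B_{4R}\times I$ with the same initial and lateral data as $u$. It is constructed by monotone evolution for $v-u$ with zero initial and lateral values, after subtracting a fixed extension of the data. Then $w=u-v$ has zero initial and lateral trace and satisfies
\begin{equation*}
 w_t-\div\bigl(A(\nabla u)-A(\nabla v)\bigr)=f\ge0
\end{equation*}
in $B_{4R}\times I$. Testing with an approximation of the negative part gives $w\ge0$, and hence $0\le v\le u$. Positivity of $v$ is comparison with $0$, since the data of $u$ are nonnegative.

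For $w$ I would apply Lemma~\ref{lem:forced-difference} with $D=B_{4R}$, which gives
\begin{equation*}
 \iint|\nabla w|^{p-1}\,dx\,dt\le CT^{\frac1\lambda}M^{1+\frac{p-2}\lambda},
 \qquad M=\iint f\,dx\,dt\le S.
\end{equation*}
For $v$, the bound $v\le u$ gives $S_v\le\esssup_t\int_{B_{4R}}u\le S$. The time restriction $TS_v^{p-2}\le TS^{p-2}\le cR^\lambda$ therefore holds, and~\eqref{eq:homogeneous-flux} yields $\iint_{B_{2R}\times I}|\nabla v|^{p-1}\le CT^{1/\lambda}S^{1+(p-2)/\lambda}$. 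I would then use $|\nabla u|^{p-1}\le 2^{p-2}(|\nabla v|^{p-1}+|\nabla w|^{p-1})$, restrict the $w$-integral to $B_{2R}$, and add the two bounds. This gives~\eqref{eq:forced-flux}, using that $s\mapsto s^{1+(p-2)/\lambda}$ is increasing.

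The main obstacle is constructing $v$ with the correct data and justifying the comparison. The lateral trace of $u$ is only a Sobolev trace on $\partial B_{4R}$ and the initial value is only an $L^2$ trace, so I would build $v$ on slightly smaller balls and later initial times. Those slightly smaller cylinders would be chosen so that the energy space of $u$ is available there, and then I would pass to the limit, or equivalently prove the estimate on $B_{4R'}$ with $R'<R$ and let $R'\uparrow R$ by monotone convergence. If $u$ is only locally in the energy class on the open cylinder, the estimate on such subcylinders suffices, since every constant is uniform. I expect the application of~\eqref{eq:homogeneous-flux} to be routine once $0\le v\le u$ is in hand. The one point to check is that it permits nonzero initial and lateral values, which the paragraph preceding the lemma confirms.
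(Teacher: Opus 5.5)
Your proposal is correct and is essentially the paper's proof. The paper also builds the $p$-parabolic replacement $v=u+z$ by solving for $z$ with zero initial and lateral values, using $u$ itself as the lifting. It then obtains $0\le v\le u$ by positive-part testing, with interior continuity of $v$ from~\cite{DiB86}, and combines~\eqref{eq:homogeneous-flux} for $v$ with Lemma~\ref{lem:forced-difference} for $w=u-v$.

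The paper works directly on $B_{4R}\times I$, since the equation gives $u_t\in L^{p'}(I;(W^{1,p}_0(B_{4R}))^*)$, so your shrinking-cylinder fallback is unnecessary. If you did use it, letting $R'\uparrow R$ needs a little slack in the constant $c$ of the smallness condition.
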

\begin{proof}
We use the homogeneous comparison construction of~\cite[Section~2.2, equation~(2.5)]{KuMiRiesz}. Write~$I=(t_0,t_0+T)$ and put~$V=W^{1,p}_0(B_{4R})$. The equation gives~$u_t\in L^{p'}(I;V^*)$. Using~$u$ as a lifting, solve
\begin{equation*}
 z_t-\div A(\nabla z+\nabla u)=-u_t,
 \qquad z\in L^p(I;V)\cap C(\overline I;L^2(B_{4R})),
 \qquad z(t_0)=0\,.
\end{equation*}
Existence follows from~\cite[Theorem~1.1 and Remark~1.2(2)]{Liu}: monotonicity of~$A$ and Young's inequality give coercivity and growth, with the integrable term~$C\|\nabla u(\cdot,t)\|_{L^p}^p$. Then~$v=u+z$ has the same initial and lateral traces as~$u$, and
\begin{equation*}
 v_t-\Dp v=0,\qquad u_t-\Dp u=f\ge0\,.
\end{equation*}
The common data are nonnegative, so positive-part testing gives~$0\le v\le u$. Thus~$v$ is bounded, and interior continuity follows from~\cite[Theorem~1]{DiB86}. Hence~$S_v\le S$, and~\eqref{eq:homogeneous-flux} bounds the integral of~$|\nabla v|^{p-1}$ on~$B_{2R}\times I$ by~$CT^{\frac{1}{\lambda}}S^{1+\frac{p-2}{\lambda}}$.

The difference~$w=u-v$ has zero initial and lateral trace and satisfies
\begin{equation*}
 w_t-\div\bigl(A(\nabla u)-A(\nabla v)\bigr)=f\,.
\end{equation*}
Lemma~\ref{lem:forced-difference} bounds~$\nabla w$ on the entire replacement cylinder, with~$M=\iint f\,dx\,dt\le S$. Restricting that bound to~$B_{2R}\times I$ and using~$\nabla u=\nabla v+\nabla w$ together with~$|\nabla u|^{p-1}\le C_p(|\nabla v|^{p-1}+|\nabla w|^{p-1})$ proves~\eqref{eq:forced-flux}.
\end{proof}

\subsubsection{Growth bounds and vanishing at the origin for~$p>2$}
\label{sec:waiting}

Throughout this subsubsection,~$p>2$. We bound the mass of~$u$ in balls centered at the origin. If this mass tends to zero faster than the volume, continuity gives~$u(0,t)=0$. The flux estimate from Subsubsection~\ref{sec:flux} preserves the bound for a short time. We group dyadic time strips so that the increase in the bound as~$t\uparrow0$ compensates for the constant in each application. If the sources end before zero,~$u$ is~$p$-parabolic near~$(0,0)$, and continuity also gives~$u(0,0)=0$.

Set
\begin{equation}\label{eq:nu-delta}
 \nu=\frac{\lambda}{p-2}=n+\frac p{p-2}\,,
 \qquad \delta=\frac1{p-2}\,.
\end{equation}
They satisfy
\begin{equation}\label{eq:critical-identities}
 \nu(p-2)=\lambda\,,
 \qquad \nu\left(1+\frac{p-2}{\lambda}\right)=\nu+1\,.
\end{equation}
For~$u\ge0$, define
\begin{equation}\label{eq:growth-norm}
 \mathcal N(u(t))=\sup_{R>0}R^{-\nu}\int_{B_R}u(x,t)\,dx\,.
\end{equation}
The balls in~$\mathcal N(u(t))$ are centered at the origin.

\begin{lemma}[Growth bound for a short time]
\label{lem:growth}
Let~$u\ge0$ be a bounded whole-space energy solution of~$u_t-\Dp u=f\ge0$ on~$I=(t_0,t_0+T)$, with bounded compactly supported forcing, finite mass, and~$L^1$-continuous time traces. Put
\begin{equation*}
 \mathcal F_I=\sup_{R>0}R^{-\nu}\iint_{B_R\times I}f\,dx\,dt\,,
 \qquad B=\mathcal N(u(t_0))+\mathcal F_I<\infty\,.
\end{equation*}
There are~$c_0,C_0>0$, depending only on~$n,p$, such that
\begin{equation}\label{eq:growth-short}
 TB^{p-2}\le c_0
 \quad\Longrightarrow\quad
 \sup_{t\in I}\mathcal N(u(t))\le C_0B\,.
\end{equation}
\end{lemma}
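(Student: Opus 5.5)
The plan is a continuity (bootstrap) argument in time. The growth norm $\mathcal N(u(t))$ is controlled by tracking the mass in every ball $B_R$ centered at the origin through Lemma~\ref{lem:forced-flux}. Fix $R>0$ and a cutoff $\varphi_R\in C_c^\infty(B_{2R})$ with $\varphi_R=1$ on $B_R$ and $|\nabla\varphi_R|\le C/R$. Testing the equation with $\varphi_R$ gives, for $t\in I$,
\begin{equation*}
 \int_{B_R}u(x,t)\,dx\le\int_{B_{2R}}u(x,t_0)\,dx+\iint_{B_{2R}\times(t_0,t)}f\,dx\,dt+\frac CR\iint_{B_{2R}\times(t_0,t)}|\nabla u|^{p-1}\,dx\,dt .
\end{equation*}
The first two terms are at most $(2R)^\nu B$ by the definitions of $\mathcal N$ and $\mathcal F_I$. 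The $L^1$-continuity of the traces justifies the test at every $t$.

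For the flux term I apply Lemma~\ref{lem:forced-flux} on $B_{4R}\times(t_0,t)$. Let $B^*=\sup_{t\in I}\mathcal N(u(t))$, which is finite because $u$ has finite mass and is bounded. Then
\begin{equation*}
 S\le (4R)^\nu(B^*+B).
\end{equation*}
The hypothesis $TS^{p-2}\le cR^\lambda$ becomes $T(B^*+B)^{p-2}(4R)^{\nu(p-2)}\le cR^\lambda$. By \eqref{eq:critical-identities}, $\nu(p-2)=\lambda$, so the powers of $R$ cancel and the condition reads $T(B^*+B)^{p-2}\le c'$, uniformly in $R$. This uniformity is exactly why the exponent $\nu$ was chosen. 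The flux bound is $CT^{1/\lambda}S^{1+(p-2)/\lambda}\le CT^{1/\lambda}R^{\nu+1}(B^*+B)^{1+(p-2)/\lambda}$, again using \eqref{eq:critical-identities}. After dividing by $R$ and then by $R^\nu$, I get
\begin{equation*}
 B^*\le 2^\nu B+C\bigl(T(B^*+B)^{p-2}\bigr)^{1/\lambda}(B^*+B).
\end{equation*}
If $T(B^*+B)^{p-2}$ is small, the last term is absorbed and gives $B^*\le C_0B$.

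The main obstacle is closing this circularly, since the smallness hypothesis involves $B^*$ itself. I would run a continuity argument on the time horizon. Let $\tau^*$ be the supremum of the $\tau\le T$ such that $\sup_{(t_0,t_0+\tau)}\mathcal N(u(t))\le 2C_0B$, where $C_0=2^{\nu+1}$. For $\tau$ below this threshold, the smallness condition holds with $B^*+B\le(2C_0+1)B$, provided $c_0$ is small enough. The displayed inequality then improves the bound to $C_0B$, strictly less than $2C_0B$.

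To extend past $\tau^*$, I need some continuity of $t\mapsto\mathcal N(u(t))$. This is delicate because $\mathcal N$ is a supremum over all $R$. Two facts handle it. For large $R$, finite mass makes $R^{-\nu}\int u$ small. For small $R$, boundedness of $u$ gives $R^{-\nu}\int_{B_R}u\le CR^{n-\nu}\|u\|_\infty$, which tends to zero because $\nu>n$. So the supremum is effectively taken over a compact range of $R$, where $L^1$-continuity gives continuity in $t$. Alternatively, and more robustly, I would apply the one-step estimate on short subintervals from each time, with the a priori bound $2C_0B$ as input. Either route shows $\tau^*=T$, which proves \eqref{eq:growth-short}.
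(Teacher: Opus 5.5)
Your overall scheme matches the paper's: the cutoff test, Lemma~\ref{lem:forced-flux} on $B_{4R}$, cancelling the powers of $R$ via $\nu(p-2)=\lambda$, and a continuity argument in time. Your inequality for $B^*$ is also correct as an a priori estimate. The gap is in the continuity argument, which is where the actual content of the lemma lies.

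Since $\nu=n+p/(p-2)>n$, the bound $R^{-\nu}\int_{B_R}u\,dx\le CR^{n-\nu}\|u\|_\infty$ tends to infinity, not zero, as $R\downarrow0$. Boundedness and finite mass therefore give no control at small radii. Indeed $\mathcal N(u(t))=\infty$ as soon as $u(\cdot,t)$ is continuous and positive at the origin, and ruling this out for $t>t_0$ is precisely what the lemma asserts. Consequently your claim $B^*<\infty$ is unjustified. Moreover, $t\mapsto\mathcal N(u(t))$ is a supremum of $L^1$-continuous functions, so it is only lower semicontinuous. Your bootstrap can therefore neither start at $t_0$ nor be extended past $\tau^*$, since both steps need an upper bound for $\mathcal N(u(t))$ at slightly later times. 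The alternative with short subintervals has the same circularity. The smallness condition on a subinterval involves the supremum of $\mathcal N$ over that subinterval, which could a priori be infinite for every positive length.

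The missing device is a truncation in the radius. For $\rho>0$ put $\mathcal N_\rho(u(t))=\sup_{R\ge\rho}R^{-\nu}\int_{B_R}u(x,t)\,dx$. This is bounded by $\rho^{-\nu}\|u(t)\|_1$ and satisfies $|\mathcal N_\rho(u(t))-\mathcal N_\rho(u(s))|\le\rho^{-\nu}\|u(t)-u(s)\|_1$, so it is finite and continuous in time. Your estimate at a radius $R\ge\rho$ evaluates $u$ at later times only on $B_{4R}$, a ball of radius at least $\rho$. The initial data and the forcing are controlled by $B$ itself. Hence the estimate holds with $B^*$ replaced by $\mathcal M_\rho(t)=\max_{t_0\le\tau\le t}\mathcal N_\rho(u(\tau))$. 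This is a continuous function with $\mathcal M_\rho(t_0)\le B$. The first-crossing argument at level $KB$ then closes, with $K$ and $c_0$ independent of $\rho$. Letting $\rho\downarrow0$ gives $\sup_{t\in I}\mathcal N(u(t))\le KB$, which is how the paper proves the lemma.
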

\begin{proof}
We first bound the mass in~$B_R$ with a spatial cutoff, then use continuity in time to show that the bound remains valid. Choose~$\eta$ equal to one on~$B_R$, supported in~$B_{2R}$, with~$|\nabla \eta|\le C/R$. Integrating the equation between~$t_0$ and~$t$ gives
\begin{equation}\label{eq:cutoff-growth}
 \int_{B_R}u(x,t)\,dx \le{}\int_{B_{2R}}u(x,t_0)\,dx +\iint_{B_{2R}\times(t_0,t)}f\,dx\,ds +\frac CR\iint_{B_{2R}\times(t_0,t)}|\nabla u|^{p-1}\,dx\,ds\,.
\end{equation}
The first two terms are bounded by~$CBR^\nu$. We restrict the supremum in~$\mathcal N$ to~$R\ge\rho$ so that it is finite and continuous in time. Fix~$\rho>0$ and set
\begin{equation*}
 \mathcal N_\rho(u(t))
 =\sup_{R\ge\rho}R^{-\nu}\int_{B_R}u(x,t)\,dx\,,
 \qquad
 \mathcal M_\rho(t)=\max_{t_0\le\tau\le t}\mathcal N_\rho(u(\tau))\,.
\end{equation*}
These are finite and continuous functions of time, since
\begin{equation*}
 |\mathcal N_\rho(u(t))-\mathcal N_\rho(u(s))|
 \le\rho^{-\nu}\|u(t)-u(s)\|_1\,.
\end{equation*}
For~$R\ge\rho$, the quantity~$S$ in Lemma~\ref{lem:forced-flux}, on~$B_{4R}\times(t_0,t)$, is at most~$C R^\nu(\mathcal M_\rho(t)+\mathcal F_I)$. Since~$\nu(p-2)=\lambda$, the restriction~$TS^{p-2}\le cR^\lambda$ in that lemma follows from
\begin{equation*}
 T(\mathcal M_\rho(t)+\mathcal F_I)^{p-2}\le c\,.
\end{equation*}
Under this condition, Lemma~\ref{lem:forced-flux} bounds the last term in~\eqref{eq:cutoff-growth}. Divide by~$R^\nu$. The second identity in~\eqref{eq:critical-identities} cancels the remaining power of~$R$. Taking the suprema over~$R\ge\rho$ and~$t_0\le\tau\le t$ gives
\begin{equation}\label{eq:growth-bootstrap}
 \mathcal M_\rho(t)\le C_1B+
 C_2T^{\frac{1}{\lambda}}
       (\mathcal M_\rho(t)+\mathcal F_I)^{1+\frac{1}{\nu}}\,.
\end{equation}

If~$B=0$, both the initial data and the forcing vanish, so comparison gives~$u=0$. Suppose~$B>0$, and choose~$K>4C_1+2$. We show that~$\mathcal M_\rho$ stays below~$KB$. If it first reaches~$KB$ at time~$t$, then~$\mathcal N_\rho(u(\tau))\le KB$ for~$t_0\le\tau\le t$. Since~$\mathcal F_I\le B$, taking~$TB^{p-2}$ sufficiently small, depending only on~$n,p,K$, ensures that the condition for Lemma~\ref{lem:forced-flux} holds up to~$t$. The right side of~\eqref{eq:growth-bootstrap}, divided by~$B$, is then at most
\begin{equation*}
 C_1+C_2(TB^{p-2})^{\frac{1}{\lambda}}(K+1)^{1+\frac{1}{\nu}}<K\,.
\end{equation*}
This contradicts~$\mathcal M_\rho(t)=KB$. Thus~$\mathcal M_\rho\le KB$, with a bound independent of~$\rho$. Letting~$\rho\downarrow0$ proves~\eqref{eq:growth-short}, with~$C_0=K$ and a suitable~$c_0$.
\end{proof}

\begin{lemma}[Vanishing at the spatial origin]\label{lem:forced-waiting}
There is~$\epsilon_0=\epsilon_0(n,p)>0$ with the following property. Let~$u\ge0$ be a bounded whole-space energy solution with zero data before a finite first source time, and with a bounded nonnegative compactly supported source~$f$. Suppose, for~$s_j=2^{-j}$,~$j\in\mathbb Z$,
\begin{equation}\label{eq:source-growth}
 \iint_{B_R\times[-4s_j,-2s_j]}f\,dx\,dt
 \le\epsilon R^\nu s_j^{-\delta}
 \qquad\text{for every }R>0\text{ and }j\in\mathbb Z\,,
\end{equation}
where~$0<\epsilon\le\epsilon_0$. Then
\begin{equation}\label{eq:waiting-growth}
 \mathcal N(u(t))\le C\epsilon(-t)^{-\delta},\qquad t<0\,,
\end{equation}
and~$u(0,t)=0$ for every~$t<0$. If the sources end strictly before zero, then~$u(0,0)=0$ as well.
\end{lemma}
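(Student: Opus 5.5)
The plan is to iterate Lemma~\ref{lem:growth} over dyadic time strips and propagate the bound~$\mathcal N(u(t))\le C\epsilon(-t)^{-\delta}$ forward in time. The scaling $\nu(p-2)=\lambda$ and $\delta(p-2)=1$ makes the hypothesis~\eqref{eq:source-growth} scale-invariant: under the intrinsic change of variables $x=\rho y$, $t=\rho^{p}k^{2-p}\tau$ with the height chosen so that $s_j$ maps to unit time, the quantity $\epsilon R^\nu s_j^{-\delta}$ is exactly the invariant growth class. So it suffices to control one strip with constants independent of~$j$.

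\emph{Step 1: one strip.} Fix~$j$. On $I_j=[-4s_j,-2s_j]$, or more conveniently on a subdivision of a strip $[-2s_{j-1},-2s_j]$ into a fixed number $N_0=N_0(n,p)$ of subintervals of length~$T$, apply Lemma~\ref{lem:growth}. Suppose at the start time $\mathcal N(u(t_0))\le A\epsilon s_j^{-\delta}$. The source contribution on a subinterval is at most~$\mathcal F_I\le 2\epsilon s_j^{-\delta}$, since the strips $[-4s_i,-2s_i]$ overlap boundedly. Hence $B\le (A+2)\epsilon s_j^{-\delta}$. The smallness condition becomes
\begin{equation*}
 TB^{p-2}\le s_j\,((A+2)\epsilon)^{p-2}s_j^{-1}=((A+2)\epsilon)^{p-2}\le c_0 ,
\end{equation*}
which holds once $\epsilon_0$ is small relative to~$A$. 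We then get $\sup_I\mathcal N\le C_0(A+2)\epsilon s_j^{-\delta}$ on the whole strip, with no subdivision needed.

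\emph{Step 2: closing the induction.} This is the main obstacle, because the constant~$C_0$ multiplies at each step. The loss must be absorbed by the growth of the admissible bound $(-t)^{-\delta}$. Moving from strip~$j$ to strip~$j+m$, the target bound increases by the factor~$2^{m\delta}$. I would group~$m$ consecutive dyadic strips, with $m=m(n,p)$ fixed, and apply Lemma~\ref{lem:growth} once on the grouped interval of length about $4s_j$, with $B$ built from the initial value plus the summed sources, which is at most~$C\epsilon s_{j+m}^{-\delta}$. The smallness condition is checked with the length~$4s_j$ against $(C\epsilon)^{p-2}s_{j+m}^{-1}$, so the factor $2^{m}$ is absorbed into~$\epsilon_0$.

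The induction hypothesis is $\mathcal N(u(t))\le A\epsilon(-t)^{-\delta}$. The output on the next group is at most
\begin{equation*}
 C_0\bigl(A\epsilon s_j^{-\delta}+C_m\epsilon s_{j+m}^{-\delta}\bigr)\le C_0\bigl(A2^{-m\delta}+C_m\bigr)\epsilon\, s_{j+m}^{-\delta}.
\end{equation*}
I would first choose $m$ with $C_0 2^{-m\delta}\le1/2$, then $A=2C_0C_m$. With $A$ fixed, $\epsilon_0$ is chosen last. The induction starts before the first source time, where $u=0$; since there are finitely many sources, one starts at $j$ very negative. Induction over the groups yields~\eqref{eq:waiting-growth} for all~$t<0$, possibly with one extra factor at intermediate times.

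\emph{Step 3: vanishing.} For fixed~$t<0$, estimate~\eqref{eq:waiting-growth} gives $\int_{B_R}u(x,t)\,dx\le C\epsilon(-t)^{-\delta}R^\nu$ with $\nu>n$, so the averages over $B_R$ tend to zero as $R\to0$. Here $u$ is continuous at $(0,t)$: where there is no source this follows from interior continuity of the $p$-parabolic function, and near source points we use the continuity of the penalized solutions, or alternatively the past representative~\eqref{eq:past-representative} together with Lebesgue points. Hence $u(0,t)=0$.

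If the sources end before zero, $u$ is $p$-parabolic and continuous in a neighborhood of $(0,0)$. Continuity together with $u(0,t)=0$ for $t<0$ then gives $u(0,0)=0$.
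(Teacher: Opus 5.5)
Your proposal is correct and follows the paper's proof. You group $m$ dyadic strips so that the factor $2^{-m\delta}$ on the carried-over initial bound absorbs $C_0$. You fix $m$ first, then the constant $A$ (the paper's $K\ge 2C_0C$), and finally $\epsilon_0$, so that $TB^{p-2}\le c_0$ holds despite the loss $2^{m}$. You also accept a factor $2^{m\delta}$ at intermediate times and conclude vanishing from the decay of the averages $R^{\nu-n}\to0$ together with continuity, exactly as the paper does.

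Two remarks. The group source sum is a geometric series, so $C_m$ is in fact independent of $m$. Continuity at $(0,t)$ holds for any bounded energy solution with bounded source by interior regularity, so you do not need to appeal to the penalized solutions.
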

\begin{proof}
We group~$m$ successive dyadic time intervals and apply Lemma~\ref{lem:growth} to each group. We choose~$m$ below so that the estimate at one endpoint gives the required bound at the next. Write the successive endpoints as~$-S_k$, where~$S_{k+1}=\vartheta S_k$ and~$\vartheta=2^{-m}$. The time intervals in~\eqref{eq:source-growth} partition the negative time axis up to their endpoints; for a bounded source the endpoints have zero source measure. A geometric sum gives
\begin{equation}\label{eq:group-source}
 \mathcal F_{(-S_k,-S_{k+1})}
 \le C\epsilon S_{k+1}^{-\delta}\,.
\end{equation}
The constant is independent of~$m$.

Assume inductively that~$\mathcal N(u(-S_k))\le K\epsilon S_k^{-\delta}$. For the next interval, the quantity~$B$ in Lemma~\ref{lem:growth} is at most
\begin{equation*}
 (K\vartheta^\delta+C)\epsilon S_{k+1}^{-\delta}\,.
\end{equation*}
Choose~$m$ large enough that~$C_0\vartheta^\delta\le1/4$, and then choose~$K\ge2C_0C$. Then~$C_0B\le K\epsilon S_{k+1}^{-\delta}$, which is the bound needed for the next step. To check~$TB^{p-2}\le c_0$ in Lemma~\ref{lem:growth}, use~$\delta(p-2)=1$ to obtain
\begin{equation*}
 (S_k-S_{k+1})B^{p-2}
 \le\vartheta^{-1}(K\vartheta^\delta+C)^{p-2}\epsilon^{p-2}\,,
\end{equation*}
whose right-hand side is at most~$c_0$ if~$\epsilon_0$ is sufficiently small. All choices depend only on~$n,p$. The induction starts at a dyadic endpoint before the first source, where~$u=0$. Lemma~\ref{lem:growth} also bounds~$\mathcal N(u(t))$ between successive endpoints. On each such interval,~$-t$ is comparable to~$S_{k+1}$ with factor~$\vartheta^{-1}$. This gives~\eqref{eq:waiting-growth}, with~$\vartheta^{-\delta}$ included in~$C$.

To show that~$u(0,t)=0$, fix~$t<0$. By~\eqref{eq:waiting-growth},
\begin{equation*}
 \vint_{B_R}u(x,t)\,dx
 \le C\epsilon(-t)^{-\delta}R^{\nu-n}
 =C\epsilon(-t)^{-\delta}R^{\frac{p}{p-2}}\longrightarrow0\,.
\end{equation*}
Since~$u$ is continuous,~$u(0,t)=0$. If~$f$ ends before zero,~$u$ is~$p$-parabolic near~$(0,0)$, and continuity gives~$u(0,0)=0$.
\end{proof}

We apply Lemma~\ref{lem:forced-waiting} to the sources~$N(\psi-u_N)_+$ in~\eqref{eq:penalty}. The constants are independent of~$N$.

\subsubsection{Iteration on nested balls for~$\pc<p<2$}\label{subsec:upper-iteration}
Throughout this subsubsection,~$\pc<p<2$. We apply Lemma~\ref{s:excess} on nested balls. The source mass from one interval bounds the initial excess for the next. The choice~$\widetilde r_j^p=b_j^{p-2}s_j$ makes each increase of level comparable to~$b_{j-1}+b_j$, so summability controls the total increase.

\begin{lemma}[Iteration on nested balls]\label{s:iteration}
Let $s_j=2^{-j}$ and let $b_j>0$, $j\ge N-1$, be summable and satisfy
\begin{equation}\label{s:b-ratio}
 \alpha b_j\le b_{j+1}\le\alpha^{-1}b_j,
 \qquad \alpha=2^{-\frac{1}{2(2-p)}}.
\end{equation}
Put
\begin{equation}\label{s:iteration-scales}
 \widetilde r_j^p=b_j^{p-2}s_j,\qquad
 t_j=-2s_j,\qquad I_j=(t_{j-1},t_j).
\end{equation}
Let $u$ be a bounded nonnegative energy solution with bounded nonnegative source $f$, in an ambient cylinder containing all $B_{\widetilde r_j}\times I_j$. Suppose $u=0$ at time $t_{N-2}$ and
\begin{equation}\label{s:mass-budget}
 M_j:=\iint_{B_{\widetilde r_j}\times I_j}f\,dx\,dt\le C_0b_j\widetilde r_j^n,
 \qquad j\ge N-1.
\end{equation}
There are nondecreasing constants $c_j\ge0$, independent of the particular solution, such that
\begin{align}
 \int_{B_{\widetilde r_{j+1}}}(u(x,t_j)-c_j)_+\,dx&\le M_j,
       \label{s:iteration-excess}\\
 c_j&\le C\sum_{i\ge N-1}b_i. \label{s:iteration-level}
\end{align}
Here $C$ depends only on $n,p,C_0$. If $f$ ends strictly before zero and $u$ is continuous near $(0,0)$, then
\begin{equation}\label{s:iteration-point}
 u(0,0)\le C\sum_{i\ge N-1}b_i.
\end{equation}
\end{lemma}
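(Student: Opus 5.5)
We induct on $j$, applying Lemma~\ref{s:excess} once per interval $I_{j+1}=(t_j,t_{j+1})$ with $\theta=\tfrac12$, on the ball $B_R$ with $R=\widetilde r_{j+1}$. The interval has length $T_{j+1}=2s_{j+1}=s_j$. The output is the excess of $u(\cdot,t_{j+1})$ over a new level, measured on $B_{\widetilde r_{j+1}/2}$, and it is bounded by the source mass $M_{j+1}$ on $B_{\widetilde r_{j+1}}\times I_{j+1}$.

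For this output to serve as the input of the next step, we need $\widetilde r_{j+2}\le\widetilde r_{j+1}/2$. From \eqref{s:b-ratio},
\begin{equation*}
 \widetilde r_{j+1}^p/\widetilde r_j^p=(b_{j+1}/b_j)^{p-2}\tfrac12\le\alpha^{p-2}/2=2^{-1/2},
\end{equation*}
so the radii decrease geometrically with ratio $2^{-1/(2p)}$. Since this ratio exceeds $\tfrac12$, we should not apply the excess estimate to consecutive radii directly. There are two options: take $\theta$ close to one, namely $\theta=2^{-1/(2p)}$, so that $\widetilde r_{j+2}\le\theta\widetilde r_{j+1}$ (the constant then depends on $n,p$ only); or allow the constant in Lemma~\ref{s:excess} to depend on that $\theta$. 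We take $\theta=2^{-1/(2p)}$. Monotonicity of the integral in the ball then gives \eqref{s:iteration-excess} at index $j+1$ from the lemma.

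Base case: at $t_{N-2}$ we have $u=0$, so with $c_{N-2}=0$ the initial excess vanishes. The first application, on $I_{N-1}$ with $R=\widetilde r_{N-1}$, gives $c_{N-1}=C(\widetilde r_{N-1}^{-p}s_{N-1})^{1/(2-p)}$.

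Inductive step: put $c=c_{j-1}$ and $E_0\le M_{j-1}\le C_0 b_{j-1}\widetilde r_{j-1}^n$, and apply the lemma with $R=\widetilde r_j$ and $T=s_{j-1}$. The level increment is
\begin{equation*}
 C\Bigl[s_{j-1}^{-n/\lambda}(b_{j-1}\widetilde r_{j-1}^n)^{p/\lambda}+(s_{j-1}/\widetilde r_j^p)^{1/(2-p)}\Bigr].
\end{equation*}
Substitute $\widetilde r_{j-1}^n=(b_{j-1}^{p-2}s_{j-1})^{n/p}$. The first term becomes
\begin{equation*}
 b_{j-1}^{(p+n(p-2))/\lambda}s_{j-1}^{(n-n)/\lambda}=b_{j-1},
\end{equation*}
using $\lambda=n(p-2)+p$. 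For the second term, $\widetilde r_j^p=b_j^{p-2}s_j$ gives $(2b_j^{2-p})^{1/(2-p)}=Cb_j$. Hence $c_j-c_{j-1}\le C(b_{j-1}+b_j)$. Summing gives \eqref{s:iteration-level}, and the $c_j$ are nondecreasing by construction. The main bookkeeping is to check that every constant depends only on $n,p,C_0$. The ratio condition matters only for the geometric nesting of the balls, so that each output ball contains the next input ball.

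For \eqref{s:iteration-point}, choose $j$ so large that $f=0$ on $B_{\widetilde r_{j+1}}\times(t_j,0]$, and let $t=t_j$. Apply Lemma~\ref{s:excess} once more on $B_{\widetilde r_{j+1}}\times(t_j,t_j+\tau)$ with zero source. Its increment is bounded as above, and $\tau$ ranges up to $2s_j$, which carries us past $0$. This step is the expected obstacle, because the intervals are not matched to the dyadic ones. The excess of $u(\cdot,t_j+\tau)$ over $c_j+Cb_j+C(\ldots)$ on $B_{\theta\widetilde r_{j+1}}$ is then zero. More precisely, the comparison solution $v$ is bounded by that level, and $u\le v$ pointwise since the source vanishes. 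Taking $\tau=-t_j$ gives $u(0,0)\le c_j+C\sum_i b_i$, using continuity of $u$ at $(0,0)$ and letting the level bounds persist. By \eqref{s:iteration-level} this proves \eqref{s:iteration-point}.
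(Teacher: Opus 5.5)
Your induction is the paper's proof. It uses the same application of Lemma~\ref{s:excess} on $B_{\widetilde r_j}\times I_j$ with $\theta=2^{-1/(2p)}$, the same identity $(b_{j-1}\widetilde r_{j-1}^n)^{p/\lambda}s_{j-1}^{-n/\lambda}=b_{j-1}$, and the same value $2^{1/(2-p)}b_j$ for the second term, with the levels defined from the budget $C_0b_{j-1}\widetilde r_{j-1}^n$ so that they do not depend on $u$. For \eqref{s:iteration-point}, your extra application of Lemma~\ref{s:excess} on $B_{\widetilde r_{j+1}}\times(t_j,0)$, taken with $\tau=-t_j=2s_j$ exactly (increment $C(b_j+b_{j+1})$), is valid but unnecessary: the paper notes that $M_j=0$ for large $j$, so \eqref{s:iteration-excess} already gives $u(\cdot,t_j)\le c_j$ on $B_{\widetilde r_{j+1}}$, and continuity as $(0,t_j)\to(0,0)$ finishes the proof.
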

\begin{proof}
The ratio condition gives
\begin{equation}\label{s:R-ratio}
 2^{-\frac{3}{2p}}\le \widetilde r_{j+1}/\widetilde r_j\le2^{-\frac{1}{2p}}<1.
\end{equation}
Thus these radii decrease geometrically to zero. Also $|I_j|=2s_j=s_{j-1}$.

Set $\overline M_j=C_0b_j\widetilde r_j^n$ for $j\ge N-1$, and $\overline M_{N-2}=0$. Starting with $c_{N-2}=0$, define
\begin{equation*}
 c_j=c_{j-1}+C_\theta\left[
 \overline M_{j-1}^{\frac{p}{\lambda}}(2s_j)^{-\frac{n}{\lambda}}
 +\left(\frac{2s_j}{\widetilde r_j^p}\right)^{\frac{1}{2-p}}\right],
 \qquad \theta=2^{-\frac{1}{2p}}.
\end{equation*}
Here~$C_\theta=C(n,p,\theta)$ is the constant from Lemma~\ref{s:excess}. At the first step the initial excess is zero. At every following step the previous bound \eqref{s:iteration-excess} controls the initial excess on precisely $B_{\widetilde r_j}$. Lemma~\ref{s:excess}, and $\widetilde r_{j+1}\le\theta \widetilde r_j$, give \eqref{s:iteration-excess} by induction. If a level exceeds the height bound for $u$, the conclusion remains immediate and the same recurrence may be retained.

For $j\ge N$ the first term simplifies by the exact identity
\begin{equation}\label{s:mass-scale-cancel}
 (b_{j-1}\widetilde r_{j-1}^n)^{\frac{p}{\lambda}}(2s_j)^{-\frac{n}{\lambda}}=b_{j-1}.
\end{equation}
The second term is $2^{\frac{1}{2-p}}b_j$. Therefore
$c_j-c_{j-1}\le C(b_{j-1}+b_j)$, with the first step bounded by $Cb_{N-1}$. Summation proves \eqref{s:iteration-level}.

If the source ends before zero, then $M_j=0$ for large $j$. Equation~\eqref{s:iteration-excess} gives $u(\cdot,t_j)\le c_j$ almost everywhere on $B_{\widetilde r_{j+1}}$. Continuity on the source-free neighborhood implies the same bound at $x=0$. Let $j\to\infty$ to get \eqref{s:iteration-point}.
\end{proof}

\subsection{Intrinsic radii and capacity bounds}\label{sec:decomposition}
We choose increments larger than the Wiener terms. At the corresponding intrinsic radii, the failed capacity inequality bounds the capacities of the inner parts. In the singular range, we replace the increments by a summable majorant with controlled consecutive ratios; its intrinsic radii decrease geometrically. The outer parts require different estimates in the two ranges.

Fix~$a,R_0>0$ and a finite collection or an infinite tail of indices~$j\ge N$.

\subsubsection{Choice of increments above the Wiener terms}
For~$p>2$, choose numbers
\begin{equation}\label{eq:choice-b}
 \omega_a(j)<b_j<a\,,
\end{equation}
and put
\begin{equation}\label{eq:core-radius}
 \widetilde r_j=r_j(b_j)=(b_j^{p-2}s_j)^{\frac{1}{p}}\,.
\end{equation}
No ratio condition is needed in this range.

For~$p<2$, a smaller increment gives a larger intrinsic radius. We majorize the increments by a summable sequence with controlled consecutive ratios. The radii of the majorant decrease geometrically, and its sum is at most a fixed multiple of the original sum.

\begin{lemma}[Sequence regularization]\label{s:regularize-sequence}
Let~$\pc<p<2$ and let $d_i>0$ be indexed by a finite set $N\le i\le m$, or by all $i\ge N$, with finite sum. Set $d_i=0$ at absent indices. For~$K>0$ and~$\alpha$ from~\eqref{s:b-ratio}, define
\begin{equation}\label{s:regularized-b}
 b_j=K\sum_{i\ge N}\alpha^{|j-i|}d_i,
 \qquad j\ge N-1.
\end{equation}
Then \eqref{s:b-ratio} holds and
\begin{equation}\label{s:b-sum}
 \sum_{j\ge N-1}b_j
 \le K\frac{1+\alpha}{1-\alpha}\sum_{i\ge N}d_i.
\end{equation}
Let $\widetilde r_j^p=b_j^{p-2}s_j$, and put
\begin{equation}\label{s:wide-radius}
 \widehat r_i=2\widetilde r_{i-1},\qquad
 q_i=\left(\frac{s_i}{\widehat r_i^p}\right)^{\frac{1}{2-p}}
     =\gamma_p b_{i-1},\qquad
 \gamma_p=2^{-\frac{p+1}{2-p}}.
\end{equation}
If $K\ge2/(\alpha\gamma_p)$, then $q_i\ge2d_i$ at every present index.
\end{lemma}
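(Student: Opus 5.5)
The proof is a direct computation with the convolution kernel $\alpha^{|k|}$. I will check three things in order: the ratio condition \eqref{s:b-ratio}, the sum bound \eqref{s:b-sum}, and the identity for $q_i$ together with the lower bound $q_i\ge2d_i$.

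\emph{Ratio condition.} Since $d_i\ge0$ and $\alpha\in(0,1)$, the triangle inequality gives $\bigl||j+1-i|-|j-i|\bigr|=1$. Hence $\alpha^{|j+1-i|}\ge\alpha\cdot\alpha^{|j-i|}$ and $\alpha^{|j+1-i|}\le\alpha^{-1}\alpha^{|j-i|}$ termwise. Multiplying by $Kd_i$ and summing over $i$ gives $\alpha b_j\le b_{j+1}\le\alpha^{-1}b_j$. All sums converge because $\sum_i d_i<\infty$ and $\alpha^{|j-i|}\le1$. Moreover $b_j>0$, since at least one $d_i>0$.

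\emph{Sum bound.} Interchanging the order of summation by Tonelli gives
\[
 \sum_{j\ge N-1}b_j=K\sum_{i\ge N}d_i\sum_{j\ge N-1}\alpha^{|j-i|}
 \le K\sum_{i\ge N}d_i\sum_{k\in\mathbb Z}\alpha^{|k|}
 =K\frac{1+\alpha}{1-\alpha}\sum_{i\ge N}d_i .
\]
This is \eqref{s:b-sum}.

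\emph{Identity for $q_i$.} From $\widehat r_i^p=2^p\widetilde r_{i-1}^p=2^pb_{i-1}^{p-2}s_{i-1}$ and $s_i=s_{i-1}/2$,
\[
 \frac{s_i}{\widehat r_i^p}=2^{-(p+1)}b_{i-1}^{2-p}.
\]
Raising to the power $1/(2-p)$ gives $q_i=2^{-\frac{p+1}{2-p}}b_{i-1}=\gamma_pb_{i-1}$. This is exactly why $\gamma_p$ has the stated form. The only care needed is the index shift $s_i=s_{i-1}/2$.

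\emph{Lower bound.} At a present index $i\ge N$, the index $i-1\ge N-1$ lies in the range of definition, and retaining only the term with index $i$ in \eqref{s:regularized-b} gives $b_{i-1}\ge K\alpha d_i$. Therefore $q_i\ge\gamma_pK\alpha d_i\ge2d_i$ when $K\ge2/(\alpha\gamma_p)$.

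No step is a genuine obstacle. The points requiring attention are the index bookkeeping in the $q_i$ identity and the observation that $b_{N-1}$ is defined, which is needed at $i=N$.
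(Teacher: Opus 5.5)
Your proof is correct and follows essentially the same route as the paper's. You use the termwise ratio bound for the kernel $\alpha^{|j-i|}$, sum over $j$ first to get \eqref{s:b-sum}, use the index shift $s_{i-1}=2s_i$ for the identity $q_i=\gamma_pb_{i-1}$, and retain the $i$th term to obtain $b_{i-1}\ge K\alpha d_i$.
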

\begin{proof}
Each summand in \eqref{s:regularized-b} changes by a factor between $\alpha$ and $\alpha^{-1}$ when $j$ increases by one. Summing first over $j$ gives \eqref{s:b-sum}. Since $b_{i-1}\ge K\alpha d_i$, the last bound follows from \eqref{s:wide-radius}. The identity for $q_i$ uses $s_{i-1}=2s_i$ and $\widetilde r_{i-1}^p=b_{i-1}^{p-2}s_{i-1}$.
\end{proof}

The choice~$\widehat r_i=2\widetilde r_{i-1}$ leaves room to enlarge the outer part without entering the iteration balls. Indeed, the enlarged time interval~$J_i^+$ in~\eqref{eq:enlarged-band} meets only~$I_{i-1},I_i,I_{i+1}$, and the corresponding iteration balls have radii at most~$\widetilde r_{i-1}$; see~\eqref{s:Li-location}.

For~$\pc<p<2$, assume
\begin{equation}\label{s:d-above-omega}
 d_i>\omega_a(i),\qquad \sum_{i\ge N}d_i\le\varepsilon a,
\end{equation}
where $\varepsilon=\varepsilon(n,p)>0$ will be small. Fix $K$ as in Lemma~\ref{s:regularize-sequence}. The bound $b_j\le K\sum_i d_i$ and \eqref{s:wide-radius} show that $q_i<a$ when $\varepsilon$ is small enough. For the following decomposition, set
\begin{equation*}
 q_j=
 \begin{cases}
 b_j,&p>2,\\
 \gamma_p b_{j-1},&\pc<p<2,
 \end{cases}
 \qquad
 \rho_j=r_j(q_j)=
 \begin{cases}
 \widetilde r_j,&p>2,\\
 \widehat r_j=2\widetilde r_{j-1},&\pc<p<2.
 \end{cases}
\end{equation*}
In both ranges,~$\omega_a(j)<q_j<a$ at every present index. The defining supremum in~\eqref{eq:omega} gives
\begin{equation}\label{s:failed-test}
 \capc_a(K_{j,q_j},Q_{j,q_j})\le C_*q_j\rho_j^n\,.
\end{equation}

\subsubsection{Inner capacity and outer capacity growth}
For a finite collection or an infinite tail, split the sets~$E_j$ from~\eqref{eq:finite-tail} into
\begin{equation}\label{eq:core-remainder}
 C_j=E_j\cap\{|x|\le\rho_j\},\qquad
 F_j=E_j\cap\{|x|\ge\rho_j\}\,.
\end{equation}
The two parts may overlap on the sphere or be empty. Figure~\ref{fig:core-remainder} shows the decomposition for~$p>2$. Capacities without a displayed reference are whole-space capacities.

\begin{samepage}
\begin{lemma}[Capacity bounds for the inner and outer parts]\label{lem:core-remainder}
In both ranges,
\begin{equation}\label{eq:core-capacity}
 \capc_a(C_j,Q_{j,q_j})\le C_*q_j\rho_j^n\,.
\end{equation}
If~$p>2$, there is~$B_0=B_0(n,p)>0$ such that
\begin{equation}\label{eq:remainder-capacity}
 \capc_a\bigl(F_j\cap(\overline B_R\times\R)\bigr)
 \le B_0 R^\nu s_j^{-\delta}
 \qquad\text{for every }R>0\,.
\end{equation}
Here~$\nu,\delta$ are given by~\eqref{eq:nu-delta}. The constants are independent of~$a,j,R_0$, the chosen increments, and the geometry of~$E$.
\end{lemma}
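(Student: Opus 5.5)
The inner bound~\eqref{eq:core-capacity} follows directly from set monotonicity. By construction~$\rho_j=r_j(q_j)$, so~$C_j=E_j\cap\{|x|\le\rho_j\}\subset E\cap(\overline B_{\rho_j}\times[-4s_j,-2s_j])=K_{j,q_j}$, using~\eqref{eq:test-set}. Then~\eqref{eq:set-monotonicity} together with the failed test~\eqref{s:failed-test} gives~\eqref{eq:core-capacity} in both ranges. This part has no real difficulty.

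For the outer bound~\eqref{eq:remainder-capacity} with~$p>2$, I split into two regimes of~$R$. If~$R\le\rho_j$, then~$F_j\cap(\overline B_R\times\R)$ is empty, apart from the sphere~$|x|=\rho_j$ when~$R=\rho_j$. In that case I bound it by the~$R=\rho_j$ case of the next regime, which is consistent because~$R^\nu$ is comparable at~$R=\rho_j$.

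For~$R\ge\rho_j$, I use the failed tests at larger increments. Define~$q(R)$ by~$r_j(q(R))=R$, that is,~$q(R)^{p-2}s_j=R^p$. Since~$p>2$,~$q(R)\ge q_j>\omega_a(j)$. If~$q(R)\le a$, the test fails at~$q(R)$. So~$\capc_a(K_{j,q(R)},Q_{j,q(R)})\le C_*q(R)R^n$, and~$K_{j,q(R)}\supset F_j\cap(\overline B_R\times\R)$. Passing to whole-space capacity via~\eqref{eq:domain-capacity} gives the bound~$C_*q(R)R^n$. Now~$q(R)=(R^p/s_j)^{1/(p-2)}=R^{p/(p-2)}s_j^{-\delta}$, so~$q(R)R^n=R^{n+p/(p-2)}s_j^{-\delta}=R^\nu s_j^{-\delta}$, exactly as required.

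If instead~$q(R)>a$, that is~$R^p>a^{p-2}s_j$, the tests are unavailable. Here I use Lemma~\ref{lem:cylinder-capacity} on the whole cylinder~$\overline B_R\times[-4s_j,-2s_j]$, which contains the set. It gives the bound~$C(aR^n+a^{p-1}s_jR^{n-p})$. Both terms must be dominated by~$R^\nu s_j^{-\delta}$ when~$a<q(R)$. The first term works because~$aR^n\le q(R)R^n$. For the second, write~$a^{p-1}s_jR^{n-p}=a\cdot a^{p-2}s_j\cdot R^{n-p}\le a R^pR^{n-p}=aR^n\le q(R)R^n$, using~$a^{p-2}s_j<R^p$.

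Combining the regimes gives~$B_0=\max\{C_*,2C\}$, which is independent of~$a$,~$j$,~$R_0$, the increments and~$E$. The main point of care is the middle regime. There the failed test is available only for~$q\in(\omega_a(j),a]$, and the sup definition~\eqref{eq:omega} guarantees failure for every~$q$ above~$\omega_a(j)$, with no interval property of admissibility needed. I also need to check that the whole-space capacity is dominated by the capacity relative to~$Q_{j,q(R)}$, which is the direction provided by~\eqref{eq:domain-capacity} after exhaustion.
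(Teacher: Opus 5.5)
Your proposal is correct and follows the paper's proof essentially step for step. The inner bound comes from $C_j\subset K_{j,q_j}$ together with the failed test. The outer bound uses the same three regimes in $R$: the set is empty for $R<\rho_j$; the failed test at the increment $q$ with $r_j(q)=R$ covers $\rho_j\le R\le a^{(p-2)/p}s_j^{1/p}$; and beyond that, Lemma~\ref{lem:cylinder-capacity} applies together with $R^p\ge a^{p-2}s_j$. Your remark about the ``comparability of $R^\nu$'' at $R=\rho_j$ is unnecessary, since $R<\rho_j$ already gives the empty set and $R=\rho_j$ falls in the middle regime.
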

\end{samepage}
\begin{proof}
Since~$C_j\subset K_{j,q_j}$, capacity monotonicity and~\eqref{s:failed-test} prove~\eqref{eq:core-capacity} in both ranges.

Assume~$p>2$, so~$q_j=b_j$ and~$\rho_j=\widetilde r_j$. We bound~$\capc_a(F_j\cap(\overline B_R\times\R))$ in three ranges. If~$R<\widetilde r_j$, the intersection is empty. If
\begin{equation*}
 \widetilde r_j\le R\le a^{\frac{p-2}{p}}s_j^{\frac{1}{p}}\,,
\end{equation*}
put~$q=(R^p/s_j)^{\frac{1}{p-2}}$, so that~$r_j(q)=R$. Then~$b_j\le q\le a$ and~$q>\omega_a(j)$, so the definition of~$\omega_a(j)$ gives
\begin{equation*}
 \capc_a(K_{j,q},Q_{j,q})\le C_*qR^n
 =C_*R^\nu s_j^{-\delta}\,.
\end{equation*}
Since~$F_j\cap(\overline B_R\times\R)\subset K_{j,q}$, set monotonicity and~\eqref{eq:domain-capacity} give~\eqref{eq:remainder-capacity} for its whole-space capacity.

Finally, suppose~$R\ge a^{\frac{p-2}{p}}s_j^{\frac{1}{p}}$. By Lemma~\ref{lem:cylinder-capacity},
\begin{equation*}
 \capc_a\bigl(F_j\cap(\overline B_R\times\R)\bigr)
 \le C\bigl(aR^n+a^{p-1}s_jR^{n-p}\bigr)\,.
\end{equation*}
Since~$R^p\ge a^{p-2}s_j$, the second term is at most the first. Also~$aR^n\le R^\nu s_j^{-\delta}$. This proves~\eqref{eq:remainder-capacity} in all three ranges.
\end{proof}

\begin{figure}[htbp]
\centering
\begin{tikzpicture}[x=1cm,y=1cm,font=\footnotesize,
  line cap=round,line join=round]
  % One smooth boundary separates the same connected domain and complement
  % throughout the picture. The time and radial cuts are artificial.
  \def\complement{(-3.20,-4.15)--
    plot[domain=-4.15:0,variable=\t,samples=60]
      ({0.16*\t+0.013*\t*\t},\t)--(-3.20,0)--cycle}
  \path[fill=black!7] \complement;
  % Hatch the complementary portion of each time strip, then distinguish
  % its central part. The three intrinsic radii are intentionally nonmonotone.
  \begin{scope}
    \clip (-3.20,-4) rectangle (3.20,-0.5);
    \path[pattern=north east lines,pattern color=black!45] \complement;
  \end{scope}
  \begin{scope}
    \clip (-1.15,-4) rectangle (1.15,-2);
    \path[fill=black!30] \complement;
  \end{scope}
  \begin{scope}
    \clip (-0.70,-2) rectangle (0.70,-1);
    \path[fill=black!30] \complement;
  \end{scope}
  \begin{scope}
    \clip (-1.45,-1) rectangle (1.45,-0.5);
    \path[fill=black!30] \complement;
  \end{scope}
  % The fixed outer cut and dyadic time levels do not describe the boundary.
  \draw[black!45] (-3.20,-4)--(-3.20,0);
  \draw[black!45] (3.20,-4)--(3.20,0);
  \foreach \yy in {-4,-2,-1,-0.5}
    \draw[black!45] (-3.20,\yy)--(3.20,\yy);
  \foreach \xx in {-1.15,1.15}
    \draw[densely dashed] (\xx,-4)--(\xx,-2);
  \foreach \xx in {-0.70,0.70}
    \draw[densely dashed] (\xx,-2)--(\xx,-1);
  \foreach \xx in {-1.45,1.45}
    \draw[densely dashed] (\xx,-1)--(\xx,-0.5);
  \draw[line width=1pt]
    plot[domain=-4.15:0,variable=\t,samples=60]
      ({0.16*\t+0.013*\t*\t},\t);
  \draw[->] (-3.55,0)--(3.65,0) node[right] {$x$};
  \draw[->] (0,0)--(0,0.65) node[above] {$t$};
  \fill (0,0) circle (1.6pt);
  \node[anchor=south west,inner sep=4pt] at (0,0) {$(0,0)$};
  \node[above] at (-3.20,0) {$-R_0$};
  \node[above] at (3.20,0) {$R_0$};
  \node at (-2.20,-0.25) {$E$};
  \node at (2.20,-0.25) {$\Theta$};
  \draw[black!70] (-0.34,-2.65)--(0.38,-2.65);
  \node[anchor=west] at (0.40,-2.65) {$\partial\Theta$};
  \node[fill=black!30,inner sep=1.5pt] at (-0.78,-3.20) {$C_j$};
  \node[fill=white,inner sep=2pt] at (-2.05,-3.20) {$F_j$};
  \node[left] at (-3.25,-4) {$-4s_j$};
  \node[left] at (-3.25,-2) {$-2s_j$};
  \node[left] at (-3.25,-1) {$-s_j$};
  \node[left] at (-3.25,-0.5) {$-s_j/2$};
  \node[right] at (3.25,-3) {$i=j$};
  \node[right] at (3.25,-1.5) {$i=j+1$};
  \node[right] at (3.25,-0.75) {$i=j+2$};
  % Compact legend below the diagram.
  \path[draw=black!70,fill=black!30] (-3.20,-4.55) rectangle (-2.84,-4.32);
  \node[anchor=west] at (-2.72,-4.435) {inner part $C_i$};
  \path[draw=black!70,pattern=north east lines,pattern color=black!45]
    (-0.60,-4.55) rectangle (-0.24,-4.32);
  \node[anchor=west] at (-0.12,-4.435) {remainder $F_i$};
  \draw[densely dashed] (-3.20,-4.92)--(-2.84,-4.92);
  \node[anchor=west] at (-2.72,-4.92)
    {cuts $|x|=\widetilde r_i$,\quad $\widetilde r_i=(b_i^{p-2}s_i)^{\frac{1}{p}}$};
\end{tikzpicture}
\caption{A local cross-section of the decomposition for~$p>2$. The smooth boundary and connected domain~$\Theta$ are used only in the drawing. The solid curve is~$\partial\Theta$, and the time and radial cuts are artificial. The shaded inner parts~$C_i$ and hatched remainders~$F_i$ are portions of the same complement~$E$ inside~$\overline B_{R_0}$. The radii~$\widetilde r_i$ need not decrease with~$i$. Under~\eqref{eq:choice-b}, the inner parts have the capacity bound in~\eqref{eq:core-capacity}, and the remainders have the bound in~\eqref{eq:remainder-capacity} at every radius. For each finite collection, the continuous potential of the smooth remainder obstacles of height~$h=a/A$ vanishes at~$(0,0)$ by Lemma~\ref{lem:remainder-waiting}. Adding the inner parts increases its value by at most~$C\sum_i b_i$ by Theorem~\ref{thm:removal}. The conclusion of Lemma~\ref{lem:remainder-waiting} requires the capacity bound at every radius; spatial separation alone is insufficient.}
\label{fig:core-remainder}
\end{figure}

\subsection{Obstacle neighborhoods and source bounds}\label{subsec:upper-neighborhoods}\label{sec:s-upper}
We enlarge the inner and outer parts and fix the resulting obstacles before taking finite truncations. Keeping these neighborhoods fixed ensures that increasing limits retain the prescribed height near the complement. Continuity of capacity preserves the bound for the inner parts in both ranges. For~$p>2$, the outer neighborhoods must satisfy the capacity bound at every radius. For~$\pc<p<2$, the outer neighborhoods instead remain outside the balls used in the iteration during every overlapping time interval.

\subsubsection{Inner neighborhoods}
Choose compact neighborhoods~$C_j^+$ of the inner parts~$C_j$, still compactly contained in~$Q_{j,q_j}$. By~\eqref{eq:core-capacity} and continuity of capacity on decreasing compact sets (see~\cite[Lemma~5.8]{KKKP} in the degenerate range and~\cite[Lemma~3.3]{MS} for the full range~$1<p<\infty$), they can be chosen so that
\begin{equation}\label{eq:core-neighborhood}
 \capc_a(C_j^+,Q_{j,q_j})\le2C_*q_j\rho_j^n\,.
\end{equation}
Keep their supports in~$B_{2\rho_j}\times J_j^+$, where
\begin{equation}\label{eq:enlarged-band}
 J_j^+=[-9s_j/2,-3s_j/2]\,.
\end{equation}
If~$C_j$ is empty, take~$C_j^+$ empty. For~$p>2$, the whole-space capacity~$\capc_a(C_j^+)$ is no larger than the displayed finite-reference capacity. For~$\pc<p<2$, write~$O_j=C_j^+$; thus
\begin{equation}\label{s:Oi-cap}
 \capc_a(O_j,Q_{j,q_j})\le2C_*q_j\widehat r_j^n\,.
\end{equation}

\subsubsection{Outer neighborhoods with capacity growth for~$p>2$}
Throughout this subsubsection,~$p>2$. To prove irregularity, we enlarge~$F_j$ so that the majorant equals the prescribed height near every point of~$F_j$. The enlargement must retain the capacity bound at every radius.

\begin{lemma}[Outer neighborhoods with a growth bound]
\label{lem:growth-neighborhood}
Let~$F$ be compact, with
\begin{equation*}
 F\subset(\overline B_{R_0}\setminus B_{r_*})\times[t_1,t_2]\,,
 \qquad r_*>0\,,
\end{equation*}
and suppose
\begin{equation*}
 \capc_a\bigl(F\cap(\overline B_R\times\R)\bigr)
 \le B R^\nu s^{-\delta},\qquad R>0\,,
\end{equation*}
where~$B,s>0$. Given a temporal neighborhood of~$[t_1,t_2]$, there is a compact neighborhood~$G$ of~$F$, supported in that temporal neighborhood and outside~$B_{r_*/2}\times\R$, such that
\begin{equation}\label{eq:neighborhood-growth}
 \capc_a\bigl(G\cap(\overline B_R\times\R)\bigr)
 \le C B R^\nu s^{-\delta},\qquad R>0\,.
\end{equation}
The constant depends only on~$\nu$, not on~$r_*,R_0$ or the number of spatial scales between them.
\end{lemma}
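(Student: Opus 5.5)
We build $G$ from dyadic spatial annuli, treating each one with continuity of capacity on decreasing compact sets. For $k\ge0$ set $A_k=\{2^{-k-1}R_0\le|x|\le 2^{-k}R_0\}$; only the finitely many $k$ with $2^{-k}R_0\ge r_*/2$ matter. Put $F_k=F\cap(A_k\times\R)$. Since $F_k\subset F\cap(\overline B_{2^{-k}R_0}\times\R)$, monotonicity and the hypothesis give $\capc_a(F_k)\le B(2^{-k}R_0)^\nu s^{-\delta}$. By continuity of capacity on decreasing compact sets (\cite[Lemma~5.8]{KKKP}, \cite[Lemma~3.3]{MS}, in the whole-space form of Subsection~\ref{subsec:whole-space-potentials}), choose a compact neighborhood $G_k$ of $F_k$ with
\begin{equation*}
 \capc_a(G_k)\le 2B(2^{-k}R_0)^\nu s^{-\delta}.
\end{equation*}
We also require that $G_k$ lies in the prescribed temporal neighborhood and in the slightly enlarged annulus $\{2^{-k-2}R_0\le|x|\le 2^{-k+1}R_0\}$, intersected with $\{|x|\ge r_*/2\}$. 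Set $G=\bigcup_kG_k$. This is a finite union, hence compact, and it is a neighborhood of $F$ because the $F_k$ cover $F$.

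To verify \eqref{eq:neighborhood-growth}, fix $R>0$. The set $G\cap(\overline B_R\times\R)$ is contained in the union of those $G_k$ that meet $\overline B_R$. Such $G_k$ satisfy $2^{-k-2}R_0\le R$, so $2^{-k}R_0\le 4R$. Countable (here finite) subadditivity of $\capc_a$ on compact sets then gives
\begin{equation*}
 \capc_a\bigl(G\cap(\overline B_R\times\R)\bigr)\le\sum_{k:\,2^{-k}R_0\le4R}2B(2^{-k}R_0)^\nu s^{-\delta}\le \frac{2\cdot4^\nu}{1-2^{-\nu}}BR^\nu s^{-\delta}.
\end{equation*}
The geometric sum uses $\nu>0$. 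The constant depends only on $\nu$, not on $r_*$, $R_0$, or the number of annuli.

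The main obstacle is subadditivity, $\capc_a(K_1\cup K_2)\le\capc_a(K_1)+\capc_a(K_2)$, for compact sets in the whole-space problem. For $p>2$ we take it from Remark~\ref{rem:measure-union}. That remark gives $\mu^a_{K_1\cup K_2}\le\mu^a_{K_1}+\mu^a_{K_2}$, and evaluating on $K_1\cup K_2$ yields total mass at most $\mu^a_{K_1}(K_1)+\mu^a_{K_2}(K_2)$, since each $\mu^a_{K_i}$ is supported on $K_i$. Induction extends this to finitely many sets. The remark is stated in a common reference problem, so we pass to whole space by the exhaustion of Subsection~\ref{subsec:whole-space-potentials}, using finite propagation and the uniformity of the constants.

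A secondary point is that the neighborhood of $F_k$ must be taken relative to compact sets with fixed support. Continuity of capacity applies to the decreasing family of closed $\epsilon$-neighborhoods of $F_k$, intersected with the allowed annulus and time band. These neighborhoods shrink to $F_k$ because $F_k$ is compact.
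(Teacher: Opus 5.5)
Your argument is correct, but it is organized differently from the paper's.

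\emph{Your route.} You cut $F$ into dyadic annular pieces and enlarge each piece separately. You take $G$ to be their union, and you control $G\cap(\overline B_R\times\R)$ by summing the capacities of the pieces that reach $\overline B_R$. This needs subadditivity of $\capc_a$. You correctly extract it from the measure inequality of Remark~\ref{rem:measure-union}, which holds in this range $p>2$ and is proved before the lemma, so there is no circularity. Uniformity in the number of scales then comes from the geometric decay of $(2^{-k}R_0)^\nu$.

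\emph{The paper's route.} It works with the nested sets $F_i=F\cap(\overline B_{2\rho_i}\times\R)$, where $\rho_i=2^{i}r_*/2$. It picks open neighborhoods $O_i$ with $\capc_a(\overline O_i)\le 2B(2\rho_i)^\nu s^{-\delta}$. It then takes $G$ inside $\bigcap_i\bigl(O_i\cup\{|x|>\rho_i\}\bigr)$, so that $G\cap(\overline B_{\rho_i}\times\R)\subset\overline O_i$ at every grid radius. Each radius is thus controlled by a single $O_i$ through set monotonicity alone. This gives the constant $2\cdot4^\nu$ without any summation.

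\emph{Comparison.} The paper's route is more economical in what it uses about the nonlinear capacity: only monotonicity and continuity on decreasing compact sets. It needs no subadditivity, and no transfer of Remark~\ref{rem:measure-union} from a bounded reference problem to the whole-space problem. Yours is the more familiar ``union of annular enlargements'' argument. Its price is importing subadditivity and a slightly worse constant $2\cdot4^\nu/(1-2^{-\nu})$.
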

\begin{proof}
If~$F$ is empty, take~$G$ empty. Otherwise we impose capacity bounds at finitely many dyadic radii and intersect the resulting neighborhoods. Monotonicity will then give the bound at every radius. Choose radii~$\rho_i=2^ir_*/2$, from~$r_*/2$ to a radius larger than~$2R_0$. To leave room for enlargement at radius~$\rho_i$, use the compact set~$F_i=F\cap(\overline B_{2\rho_i}\times\R)$, whose capacity is at most~$B(2\rho_i)^\nu s^{-\delta}$. By continuity of capacity on decreasing compact sets,~\cite[Lemma~5.8]{KKKP}, it has an open neighborhood~$O_i$ whose closure is compact and has capacity at most~$2B(2\rho_i)^\nu s^{-\delta}$. If~$F_i$ is empty, take~$O_i$ empty. All these neighborhoods can be confined to fixed spatial and temporal neighborhoods of~$F_i$.

To retain the bound at~$\rho_i$, we require only the part of the neighborhood inside~$\overline B_{\rho_i}$ to lie in~$O_i$. Accordingly, the finite intersection
\begin{equation*}
 \bigcap_i\left(O_i\cup\{(x,t):|x|>\rho_i\}\right)
\end{equation*}
is an open neighborhood of~$F$. Intersect it also with the prescribed temporal neighborhood, with~$\{|x|>r_*/2\}$, and with a fixed bounded spatial neighborhood inside the ball at the last grid radius. Choose a compact neighborhood~$G$ of~$F$ inside that intersection. At each grid radius,
\begin{equation*}
 G\cap(\overline B_{\rho_i}\times\R)\subset\overline O_i\,.
\end{equation*}
For an intermediate radius, use the next larger grid radius and monotonicity. If~$R\le\rho_i\le2R$, this gives
\begin{equation*}
 \capc_a\bigl(G\cap(\overline B_R\times\R)\bigr)
 \le 2B(2\rho_i)^\nu s^{-\delta}
 \le 2\cdot4^\nu B R^\nu s^{-\delta}\,.
\end{equation*}
Below~$r_*/2$ the intersection is empty. At the last grid radius,~$F_i=F$, so the definition of~$G$ gives~$G\subset O_i$; the same capacity bound then suffices at all larger radii. This proves~\eqref{eq:neighborhood-growth}. Each radius uses a single~$O_i$, so the constant is independent of the number of grid radii.
\end{proof}

Apply this lemma to~$F_j$, choosing its temporal support in~$J_j^+$ from~\eqref{eq:enlarged-band}. We obtain a compact neighborhood~$F_j^+$ with
\begin{equation}\label{eq:Gj-capacity}
 \capc_a\bigl(F_j^+\cap(\overline B_R\times\R)\bigr)
 \le C B_0 R^\nu s_j^{-\delta}\quad(R>0)\,.
\end{equation}
For nonempty~$F_j$, we can keep~$F_j^+$ in~$B_{3R_0}\times\R$. Each~$F_j^+$ stays a positive distance from the spatial origin, although this distance may tend to zero with~$j$.

\subsubsection{Vanishing of the remainder potential for~$p>2$}

Throughout this subsubsection,~$p>2$. At height~$h=a/A$, Lemma~\ref{lem:height-capacity} makes the coefficient in the capacity bound for each~$F_j^+$ small enough to apply Lemma~\ref{lem:forced-waiting}. Each dyadic time interval meets at most three enlarged intervals, so one choice of~$A$ works for every finite collection.

\begin{lemma}[Vanishing of the potential at the origin]\label{lem:remainder-waiting}
There is~$A=A(n,p)>1$ such that the following holds. Let~$h=a/A$, and let the compact neighborhoods~$F_j^+$ satisfy~\eqref{eq:Gj-capacity} and~\eqref{eq:enlarged-band}. For each~$j$ choose a smooth obstacle~$0\le\psi_j\le h$, equal to~$h$ on an open neighborhood of~$F_j$, and supported in~$F_j^+$. For every finite set of indices, the whole-space potential of their maximum satisfies
\begin{equation}\label{eq:remainder-zero}
 \redpot_{\max_j\psi_j}(0,0)=0\,.
\end{equation}
It is bounded by~$h$ and equals~$h$ on open neighborhoods of the corresponding original remainders~$F_j$.
\end{lemma}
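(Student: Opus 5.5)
The plan is to reduce \eqref{eq:remainder-zero} to Lemma~\ref{lem:forced-waiting} applied to the bounded penalizations of the obstacle $\Psi=\max_j\psi_j$, with the source bound \eqref{eq:source-growth} coming from the capacity growth \eqref{eq:Gj-capacity} after lowering the height. Fix a finite index set and let $u_N$ be the whole-space penalizations of $\Psi$ from Lemma~\ref{lem:penalty}, with sources $f_N=N(\Psi-u_N)_+$. By finite propagation (Subsection~\ref{subsec:whole-space-potentials}), these can be computed in one bounded ball over the relevant time horizon, and the sources are bounded and compactly supported in $\bigcup_jF_j^+$. The sources end before time zero, since every $F_j^+$ lies in $J_j^+\subset\{t\le-3s_j/2\}$.

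\emph{Source growth.} Fix $R>0$ and a dyadic interval $[-4s_k,-2s_k]$, $k\in\mathbb Z$. The enlarged intervals $J_j^+=[-9s_j/2,-3s_j/2]$ meeting it satisfy $|j-k|\le1$, so at most three indices contribute. The key step is to bound $\iint_{B_R\times[-4s_k,-2s_k]}f_N$ by the sum over these three $j$ of the source restricted to $F_j^+\cap(\overline B_R\times\R)$. By Lemma~\ref{s:source-restriction}, or \eqref{eq:measure-restriction}, each piece is at most $\capc_h(F_j^+\cap(\overline B_R\times\R))$ at height $h$, since $0\le u_N\le h$. Lemma~\ref{lem:height-capacity} then gives
\[
 \capc_h\bigl(F_j^+\cap(\overline B_R\times\R)\bigr)\le CA^{-\frac1{p-1}}\capc_a\bigl(F_j^+\cap(\overline B_R\times\R)\bigr)\le CA^{-\frac1{p-1}}B_0R^\nu s_j^{-\delta}.
\]
Since $s_j^{-\delta}\le2^\delta s_k^{-\delta}$ for $|j-k|\le1$, we obtain \eqref{eq:source-growth} with $\epsilon=C'A^{-1/(p-1)}$. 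We then choose $A=A(n,p)$ so that $\epsilon\le\epsilon_0$. All constants are independent of $N$, $a$, and the finite collection.

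\emph{Conclusion.} Lemma~\ref{lem:forced-waiting} gives $u_N(0,0)=0$ for every $N$, because the sources end strictly before zero. By Lemma~\ref{lem:penalty}, $u_N\uparrow\redpot_\Psi$ locally uniformly, so $\redpot_\Psi(0,0)=0$. The bound $\redpot_\Psi\le h$ follows from the truncation of majorants. Since $\Psi$ is continuous, $\redpot_\Psi\ge\Psi$ everywhere (Step~5 of Lemma~\ref{lem:penalty}), so $\redpot_\Psi=h$ on the open sets where $\psi_j=h$, which contain $F_j$.

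The main obstacle is the restriction step. Lemma~\ref{s:source-restriction} requires a compact $K$ inside a smaller reference cylinder, whereas we need whole-space capacity of $F_j^+\cap(\overline B_R\times\R)$. I would handle this by applying that lemma on large cylinders containing the finite-propagation ball, then letting the cylinders exhaust space using \eqref{eq:domain-capacity} and the exhaustion convention. Closed versus open balls cause no trouble, because $\overline B_R\supset B_R$.
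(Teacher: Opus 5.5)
Your proposal is correct and follows essentially the same route as the paper: penalize $\max_j\psi_j$, bound the source on each dyadic strip by the capacities of at most three neighboring pieces $F_j^+\cap(\overline B_R\times\R)$ at height $h$, apply Lemma~\ref{lem:height-capacity} so that the coefficient falls below $\epsilon_0$ in Lemma~\ref{lem:forced-waiting}, and pass to the potential by the local uniform convergence of Lemma~\ref{lem:penalty}. Your exhaustion argument for the whole-space source restriction is a valid justification; the paper instead cites \eqref{eq:measure-restriction} directly under the same exhaustion convention.
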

\begin{proof}
We verify the source bound in Lemma~\ref{lem:forced-waiting} for the penalized solutions, then pass to the potential. The height estimate in Lemma~\ref{lem:height-capacity} gives the required smallness when~$h=a/A$ and~$A$ is sufficiently large. Fix a finite collection and put~$\psi=\max_j\psi_j$. Use the penalizations
\begin{equation*}
 (u_N)_t-\Dp u_N=f_N\,,
 \qquad f_N=N(\psi-u_N)_+\,,
 \qquad 0\le u_N\le h\,.
\end{equation*}
Their sources are bounded, nonnegative, and supported in~$\bigcup_jF_j^+$. For every compact subset~$K$ of this support,~\eqref{eq:measure-restriction} gives
\begin{equation*}
 \iint_Kf_N\,dx\,dt\le\capc_h(K)\,.
\end{equation*}
We next estimate the source in each time interval required by Lemma~\ref{lem:forced-waiting}. An interval~$[-4s_i,-2s_i]$ intersects~$J_j^+$ only when~$j\in\{i-1,i,i+1\}$. Source restriction and subadditivity of the source measure give the first inequality below. The height estimate in Lemma~\ref{lem:height-capacity}, followed by~\eqref{eq:Gj-capacity}, gives the second:
\begin{equation}\label{eq:penalty-source-growth}
 \iint_{B_R\times[-4s_i,-2s_i]}f_N\,dx\,dt \le\sum_{j=i-1}^{i+1} \capc_h\bigl(F_j^+\cap(\overline B_R\times\R)\bigr) \le C B_0(h/a)^{\frac{1}{p-1}}R^\nu s_i^{-\delta}\,.
\end{equation}
Indices absent from the finite collection contribute zero. The constant includes only the three neighboring time ratios; it is independent of the collection and~$N$.

Choose~$A>1$ so large that the coefficient on the right of~\eqref{eq:penalty-source-growth} is at most the~$\epsilon_0$ in Lemma~\ref{lem:forced-waiting}. Since~$h/a=1/A$, this choice depends only on~$n,p$ and works for every finite collection and every~$N$. Lemma~\ref{lem:forced-waiting} gives~$u_N(0,t)=0$ for~$t<0$. The sources in the finite collection end strictly before zero, so the same lemma gives~$u_N(0,0)=0$. In a fixed neighborhood of~$(0,0)$ the solutions are~$p$-parabolic and uniformly bounded by~$h$. By Lemma~\ref{lem:penalty}, they converge locally uniformly there to~$\redpot_\psi$, proving~\eqref{eq:remainder-zero}. Since this continuous potential lies above~$\psi$ and is at most~$h$, it equals~$h$ wherever one of the~$\psi_j$ equals~$h$.
\end{proof}

\subsubsection{Outer separation and local source bounds for~$\pc<p<2$}\label{subsec:singular-source}
Throughout this subsubsection,~$\pc<p<2$, with increments and radii chosen in Subsection~\ref{sec:decomposition}. The outer part~$F_i$ lies outside~$B_{\widehat r_i}$, where~$\widehat r_i=2\widetilde r_{i-1}$. Choose a compact neighborhood $L_i$ of $F_i$ such that
\begin{equation}\label{s:Li-location}
 L_i\subset\{|x|>3\widetilde r_{i-1}/2\}\times J_i^+.
\end{equation}
Here the common enlarged time interval is
\begin{equation}\label{s:enlarged-band}
 J_i^+=[-9s_i/2,-3s_i/2]\,.
\end{equation}
No capacity estimate for $L_i$ is required. If $F_i$ is empty, take $L_i$ empty.

Since the interiors of $O_i$ and $L_i$ cover $E_i$, there is a smooth function $0\le\psi_i\le a$, equal to $a$ on an open neighborhood of $E_i$, with support in $O_i\cup L_i$. These choices are made once for an infinite tail, before taking finite maxima. All spatial supports can be kept in a fixed ball: the radii $\widetilde r_j$ are decreasing, the spatially truncated sets are contained in $\overline B_{R_0}$, and the neighborhood enlargements can be chosen within a fixed larger ball.

Choose an ambient reference ball $D_*$ containing the spatial projection of $\Theta$, all $B_{64\widehat r_i}$, and all these obstacle neighborhoods, with positive margins. Choose its initial time~$b_*$ earlier than every time in~$\Theta$ and earlier than~$-10s_N$, and its final time later than zero. For a finite set of indices consider, in that fixed reference, the penalization
\begin{equation}\label{s:local-budget-penalty}
 (u_\nu)_t-\Dp u_\nu=f_\nu,
 \qquad f_\nu=\nu(\max_i\psi_i-u_\nu)_+,
 \qquad 0\le u_\nu\le a.
\end{equation}
Lemma~\ref{lem:penalty} applies to the finite maximum. We use $\nu$ for the penalty index to distinguish it from the initial dyadic index $N$.

\begin{lemma}[Local source bound]\label{s:local-budget}
The solutions \eqref{s:local-budget-penalty} satisfy
\begin{equation}\label{s:local-budget-eq}
 \iint_{B_{\widetilde r_j}\times I_j}f_\nu\,dx\,dt
 \le C_0 b_j\widetilde r_j^n,
 \qquad j\ge N-1,
\end{equation}
with $C_0=C_0(n,p)$ independent of the penalty index and the finite collection.
\end{lemma}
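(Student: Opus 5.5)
We bound the source mass in $B_{\widetilde r_j}\times I_j$ by first identifying which obstacle neighborhoods can meet this cylinder, and then using the capacity bounds of the inner neighborhoods together with source restriction.

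\emph{Which neighborhoods meet the cylinder.} The source $f_\nu$ is supported in $\bigcup_i(O_i\cup L_i)$. The time support of $O_i$ and $L_i$ lies in $J_i^+=[-9s_i/2,-3s_i/2]$, and $I_j=(-4s_j,-2s_j)$. Hence $J_i^+$ meets $I_j$ only for $i\in\{j-1,j,j+1\}$. For these $i$, the outer neighborhood $L_i$ lies in $\{|x|>3\widetilde r_{i-1}/2\}$ by \eqref{s:Li-location}. Since the radii $\widetilde r$ decrease, $\widetilde r_j\le\widetilde r_{i-1}$ whenever $i\le j+1$. Therefore $L_i$ does not meet $B_{\widetilde r_j}\times\R$. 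It follows that $f_\nu$ on $B_{\widetilde r_j}\times I_j$ is supported in $O_{j-1}\cup O_j\cup O_{j+1}$.

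\emph{Source restriction.} Lemma~\ref{s:source-restriction} applies because $u_\nu$ is a bounded nonnegative energy solution in the ambient cylinder $D_*\times(b_*,\cdot)$, bounded by $a$, with a bounded nonnegative source. Each $O_i$ is compact and contained in $Q_{i,q_i}$, a smaller smooth cylinder lying in the ambient one by the choice of $D_*$ and $b_*$. The lemma gives
\begin{equation*}
 \iint_{O_i}f_\nu\le\capc_a(O_i,Q_{i,q_i})\le 2C_*q_i\widehat r_i^n,
\end{equation*}
where the second inequality is \eqref{s:Oi-cap}. Summing the three terms bounds the left side of \eqref{s:local-budget-eq} by $\sum_{i=j-1}^{j+1}2C_*q_i\widehat r_i^n$. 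Indices absent from the finite collection contribute zero.

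\emph{Conversion to $b_j\widetilde r_j^n$.} By \eqref{s:wide-radius}, $q_i=\gamma_pb_{i-1}$ and $\widehat r_i=2\widetilde r_{i-1}$. Each term is therefore $C\,b_{i-1}\widetilde r_{i-1}^n$ with $i-1\in\{j-2,j-1,j\}$. The ratio condition \eqref{s:b-ratio} gives $b_{i-1}\le\alpha^{-2}b_j$. The ratio bound \eqref{s:R-ratio} gives $\widetilde r_{i-1}\le 2^{3/p}\widetilde r_j$. Combining these yields \eqref{s:local-budget-eq} with $C_0=C(n,p)$.

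\emph{Main obstacle.} The edge indices need care. The sequence $b$ is defined only for indices $\ge N-1$, and $O_i$ is present only for $i\ge N$. For $j=N-1$ only $O_N$ can contribute, and its bound involves $b_{N-1}$ and $\widetilde r_{N-1}$, which is already of the required form. Second, the cylinder $B_{\widetilde r_j}\times I_j$ may meet the boundary of some $O_i$ only partially. This is harmless because restriction to a compact subset is monotone. Apart from these points the argument is bookkeeping, and the constants are independent of $\nu$ and of the collection.
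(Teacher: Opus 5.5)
Your proposal is correct and follows the paper's proof. It uses the same steps: the time intervals $J_i^+$ meet $I_j$ only for $i\in\{j-1,j,j+1\}$, the outer neighborhoods $L_i$ are excluded by \eqref{s:Li-location} and the decreasing radii, and source restriction together with \eqref{s:Oi-cap}, \eqref{s:wide-radius} and the ratio bounds \eqref{s:b-ratio}, \eqref{s:R-ratio} gives the estimate. You also make explicit the edge index $j=N-1$ and the constants $\alpha^{-2}$ and $2^{3/p}$, which the paper's final comparison leaves implicit.
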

\begin{proof}
The time interval~$I_j=(-4s_j,-2s_j)$ meets $J_i^+$ only for $i\in\{j-1,j,j+1\}$. For each such index, $i-1\le j$, so decreasing radii give $\widetilde r_j\le \widetilde r_{i-1}$. Hence \eqref{s:Li-location} implies
\begin{equation*}
 L_i\cap(B_{\widetilde r_j}\times I_j)=\varnothing.
\end{equation*}
Since $f_\nu$ vanishes outside the supports of the $\psi_i$,
\begin{equation*}
 \iint_{B_{\widetilde r_j}\times I_j}f_\nu\,dx\,dt
 \le\sum_{i=j-1}^{j+1}\iint_{O_i}f_\nu\,dx\,dt.
\end{equation*}
An absent index contributes zero. Apply Lemma~\ref{s:source-restriction} to each $O_i\Subset Q_{i,q_i}$ and then use \eqref{s:Oi-cap}. This gives
\begin{equation*}
 \iint_{B_{\widetilde r_j}\times I_j}f_\nu\,dx\,dt
 \le2C_*\sum_{i=j-1}^{j+1}q_i\widehat r_i^n.
\end{equation*}
By \eqref{s:wide-radius}, each summand is
$2^n\gamma_p b_{i-1}\widetilde r_{i-1}^n$. The ratios \eqref{s:b-ratio} and \eqref{s:R-ratio} compare these finitely many neighboring values to $b_j\widetilde r_j^n$ by a constant depending only on $p,n$. This proves \eqref{s:local-budget-eq}.
\end{proof}

\subsection{Proof of the cumulative upper estimate}\label{subsec:upper-proof}
\begin{proof}[Proof of Theorem~\ref{thm:upper-main}]
\emph{The range~$p>2$.} The obstacle height bounds~$\redpot_{E_{N,m}}^{a/A}$ by~$a/A$. To obtain the series bound, we add the inner parts to a potential that vanishes at the origin. If~$\omega_a(j)=a$ for any~$j\in\{N,\ldots,m\}$, estimate~\eqref{eq:cumulative-upper} already follows from~$\redpot_{E_{N,m}}^{a/A}\le a/A$ after increasing~$C$. Otherwise choose~$b_j\in(\omega_a(j),a)$ with
\begin{equation}\label{eq:finite-excess}
 \sum_{j=N}^m b_j\le\sum_{j=N}^m\omega_a(j)+\eta\,,
\end{equation}
where~$\eta>0$ is arbitrary. Choose the inner parts~$C_j$ and remainders~$F_j$ by~\eqref{eq:core-remainder}, and their neighborhoods~$C_j^+,F_j^+$ as in Subsection~\ref{subsec:upper-neighborhoods}. Let~$h=a/A$ with the constant from Lemma~\ref{lem:remainder-waiting}.

Take the obstacles~$\psi_j$ from that lemma. For each inner part, choose a smooth obstacle~$0\le\theta_j\le h$, equal to~$h$ on an open neighborhood of~$C_j$, and supported in~$C_j^+$. Let
\begin{equation*}
 V_m=\redpot_{\max_{N\le j\le m}\psi_j}\,,
 \qquad
 U_m=\redpot_{\max_{N\le j\le m}\{\psi_j,\theta_j\}}\,.
\end{equation*}
By Lemma~\ref{lem:remainder-waiting},~$V_m(0,0)=0$. Add the obstacles~$\theta_j$ one at a time. For each~$j$, apply Theorem~\ref{thm:removal} with~$t_*=-s_j$ and~$T=0$; the obstacle~$\theta_j$ ends before~$-s_j$. Height and domain monotonicity, followed by~\eqref{eq:core-neighborhood}, give~$\capc_h(C_j^+)\le\capc_a(C_j^+,Q_{j,b_j})\le2C_*b_j\widetilde r_j^n$. Telescoping the successive removal estimates and using~$V_m(0,0)=0$ gives
\begin{equation}\label{eq:restore-cores}
 U_m(0,0) \le C\sum_{j=N}^m \bigl(b_j\widetilde r_j^n\bigr)^{\frac{p}{\lambda}}s_j^{-\frac{n}{\lambda}} =C\sum_{j=N}^m b_j\,.
\end{equation}
For the equality, use
\begin{equation*}
 b_j\widetilde r_j^n=b_j^{\frac{\lambda}{p}}s_j^{\frac{n}{p}}\,.
\end{equation*}
The constant is independent of~$V_m$ and the obstacles~$\theta_i$ already added, including those that act at later times.

The function~$U_m$ is an admissible height-$h$ majorant of~$E_{N,m}$. It equals~$h$ in neighborhoods of all points of~$E_{N,m}$. Therefore
\begin{equation*}
 \redpot_{E_{N,m}}^h(0,0)\le U_m(0,0)
 \le C\sum_{j=N}^m\omega_a(j)+C\eta\,.
\end{equation*}
Let~$\eta\downarrow0$, and combine this with~$\redpot^h\le h$. This proves~\eqref{eq:cumulative-upper}.

\emph{The range~$\pc<p<2$.} Write $W=\sum_{i=N}^m\omega_a(i)$. If $W$ exceeds a sufficiently small fixed multiple of $a$, the bound $\redpot^a\le a$ proves~\eqref{s:upper-theorem-eq} after increasing~$C$. Thus assume $W<\varepsilon a/2$, with $\varepsilon$ as in \eqref{s:d-above-omega}. Choose $d_i>\omega_a(i)$ with $\sum_i d_i\le W+\xi<\varepsilon a$.

Define~$b_j$ by~\eqref{s:regularized-b}, and choose the neighborhoods, obstacles and penalizations as in Subsubsection~\ref{subsec:singular-source}. Their first source time is later than $t_{N-2}=-8s_N$, so $u_\nu=0$ there. Lemma~\ref{s:local-budget} verifies the hypothesis of Lemma~\ref{s:iteration}. For each fixed finite collection, the sources end strictly before zero, so all its penalizations are $p$-parabolic in one common neighborhood of~$(0,0)$. Lemma~\ref{s:iteration} gives
\begin{equation*}
 u_\nu(0,0)\le C\sum_{j\ge N-1}b_j
 \le C'\sum_{i=N}^m d_i.
\end{equation*}
By Lemma~\ref{lem:penalty},~$u_\nu$ increases locally uniformly to the continuous-obstacle potential in the fixed reference, which satisfies the same pointwise bound. This potential equals $a$ on open neighborhoods of $E_{N,m}$ and therefore majorizes the potential of the compact obstacle. We conclude
\begin{equation*}
 \redpot_{E_{N,m},D_*,b_*}^a(0,0)\le C'(W+\xi).
\end{equation*}
This holds with constants independent of every sufficiently large reference ball $D_*$. Exhaustion gives the same estimate for the whole-space reference potential. Equivalently, one may retain the bounded references throughout the later Perron argument. Let $\xi\downarrow0$ and use $\redpot^a\le a$ to prove \eqref{s:upper-theorem-eq}. For a smooth bounded reference cylinder~$Q$ with~$E_{N,m}\cup\{(0,0)\}\Subset Q$, restricting whole-space admissible majorants to its continuation cylinder and then regularizing gives~$\redpot^a_{E_{N,m},Q}\le\redpot^a_{E_{N,m}}$ in~$Q$. Thus the same bound holds independently of~$Q$.

For the continuous obstacle used here, the potentials in bounded reference balls increase and remain bounded by~$a$. Their limit is superparabolic by bounded monotone convergence and still majorizes the obstacle pointwise. Every global admissible majorant dominates each bounded-reference potential, so the limit is the global continuous-obstacle potential. It therefore majorizes the original compact obstacle. The necessity proof in Subsubsection~\ref{subsec:intrinsic-necessity} uses one fixed bounded reference ball.
\end{proof}

For a convergent singular tail, the same construction gives excess bounds for one fixed family of obstacles.

\begin{corollary}[Uniform excess bounds for obstacle potentials]\label{cor:singular-tail}
Let~$n\ge2$, $\pc<p<2$ and~$a,R_0>0$. Suppose that~$\sum_j\omega_a(j)<\infty$. For every~$\epsilon\in(0,1)$ there are~$N\in\N$, a bounded reference cylinder~$Q$ with~$\overline\Theta\cap\{t\le0\}\Subset Q$, and functions~$\psi_i\in C_c^\infty(Q\cap\{t<0\})$, $i\ge N$, such that~$0\le\psi_i\le a$ and~$\psi_i=a$ on an open neighborhood of~$E_i$ from~\eqref{s:finite-complement}. Let~$U_m$ be the potential of~$\max_{N\le i\le m}\psi_i$ in~$Q$ with zero initial and lateral data, and put~$U=\sup_{m\ge N}U_m$.

There are radii~$\widetilde r_j\downarrow0$ and nondecreasing levels~$c_j\ge0$, $j\ge N-1$, with~$B_{\widetilde r_j}\times\{t_j\}\Subset Q$ for~$t_j=-2s_j$, such that, for every~$m\ge N$ and~$j\ge N-1$,
\begin{equation*}
 c_j<\frac{\epsilon a}{2},\qquad \vint_{B_{\widetilde r_{j+1}}}(U_m(x,t_j)-c_j)_+\,dx<\frac{\epsilon a}{2}\,.
\end{equation*}
The reference, obstacles, radii and levels are independent of~$m$. Moreover,~$U$ is bounded and~$p$-superparabolic in~$Q$, and there is a sequence~$z_\ell\in\Theta\cap\{t<0\}$ tending to~$(0,0)$ with~$U(z_\ell)<\epsilon a$.
\end{corollary}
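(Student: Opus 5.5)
The plan is to run the singular half of the proof of Theorem~\ref{thm:upper-main} once, for the whole infinite tail, and to use its bounds uniformly in the truncation $m$. Since $\sum_j\omega_a(j)<\infty$, choose $N$ so large that $W=\sum_{i\ge N}\omega_a(i)$ is smaller than a small multiple of $\epsilon a$. Then choose $d_i>\omega_a(i)$ with $\sum_{i\ge N}d_i\le 2W$ (and still $\le\varepsilon a$ as required by~\eqref{s:d-above-omega}). Define $b_j$ by~\eqref{s:regularized-b} for the infinite tail, so that by~\eqref{s:b-sum} the sum $\sum_{j\ge N-1}b_j$ is at most a constant multiple of $\epsilon a$, with the constant made as small as needed by the choice of $N$. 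The radii $\widetilde r_j$ then decrease geometrically to zero by~\eqref{s:R-ratio}.

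Next, the neighborhoods $O_i=C_i^+$ and $L_i$ of Subsubsection~\ref{subsec:singular-source} and the smooth obstacles $\psi_i$ (equal to $a$ near $E_i$, supported in $O_i\cup L_i$) are fixed once for all $i\ge N$, together with one reference cylinder $Q=D_*\times(b_*,T_*)$ containing $\overline\Theta\cap\{t\le0\}$. The supports lie in $J_i^+\subset\{t<0\}$, so $\psi_i\in C_c^\infty(Q\cap\{t<0\})$. Take the levels $c_j$ from the recursion in the proof of Lemma~\ref{s:iteration}. These depend only on $b_j$, $s_j$ and $C_0$, so they are independent of $m$, and~\eqref{s:iteration-level} gives $c_j\le C\sum b_i<\epsilon a/2$ after the choice of $N$.

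For a fixed $m$, Lemma~\ref{s:local-budget} verifies~\eqref{s:mass-budget} for the penalizations $u_\nu$ of $\max_{N\le i\le m}\psi_i$. Lemma~\ref{s:iteration} then yields $\int_{B_{\widetilde r_{j+1}}}(u_\nu(x,t_j)-c_j)_+\,dx\le M_j\le C_0b_j\widetilde r_j^n$. Letting $\nu\to\infty$ by the locally uniform convergence in Lemma~\ref{lem:penalty} transfers this bound to $U_m$. To get the averaged estimate, divide by $|B_{\widetilde r_{j+1}}|$ and use the ratio bound~\eqref{s:R-ratio}: the average is at most $Cb_j\le C\sum b_i<\epsilon a/2$.

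The function $U$ is an increasing limit of potentials bounded by $a$. It is therefore bounded and superparabolic by~\cite[Lemma~2.8]{GKM}, and $U=a$ on the fixed open neighborhoods of every $E_i$.

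Finally comes the sequence $z_\ell$, which I expect to be the main obstacle. By monotone convergence the averaged bounds pass to $U$, so $\vint_{B_{\widetilde r_{j+1}}}U(x,t_j)\,dx\le c_j+\epsilon a/2$. To obtain points with $U<\epsilon a$, the plan is to tighten the constants, making $c_j$ and the average excess each at most $\epsilon a/4$ by taking $N$ larger. Chebyshev's inequality then gives a set of positive measure in $B_{\widetilde r_{j+1}}$ where $U(\cdot,t_j)<3\epsilon a/4$.

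There are two difficulties. The first is that the value on the time slice $t_j$ must be the pointwise value: here one uses that $t_j=-2s_j$ is an endpoint of the source intervals, so near slice $t_j$ and away from the obstacles each $U_m$ is continuous. Because the obstacles may meet the slice, I would instead use times slightly before $t_j$ via the $L^1$ continuity of $U_m$, or use the past essential lower limit representative~\eqref{eq:past-representative}, choosing $z_\ell$ at Lebesgue points. The second is that $z_\ell$ must lie in $\Theta$. If $z=(x,t_j)$ with $|x|<\widetilde r_{j+1}\le R_0$ belonged to $E$, then $z\in E_j$ or $E_{j+1}$, and $U=a$ on a neighborhood of $z$, contradicting $U(z)<\epsilon a<a$. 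Hence $z_\ell\in\Theta\cap\{t<0\}$, and $z_\ell\to(0,0)$ since $\widetilde r_j,s_j\to0$.
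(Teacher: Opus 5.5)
Your proposal is correct and follows essentially the same route as the paper. You fix the regularized tail $b_j$, the obstacles and the reference cylinder once; you obtain $m$-independent levels from Lemma~\ref{s:local-budget} and Lemma~\ref{s:iteration}; you pass to the limit first as $\nu\to\infty$ and then monotonically in $m$ at each $t_j$; and you use $U=a$ near $E$ to place the chosen points in $\Theta$.

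Two small remarks. First, $\sum_i d_i\le 2W$ is impossible when $W=0$, so take $\sum_i d_i\le W+\xi$ with $\xi$ small, as in the proof of Theorem~\ref{thm:upper-main}. Second, your slice-value difficulty does not arise: the potentials $U_m$ of the continuous obstacles $\psi_i$ are continuous by Lemma~\ref{lem:penalty}, and $U=\sup_m U_m$ pointwise. Hence the bound $c_j+Cb_j<\epsilon a$ already gives a point $x_j$ with $U(x_j,t_j)<\epsilon a$, with no tightening of constants and no Lebesgue-point argument. Your variant via earlier times would in fact need bounds uniform in $m$, which you do not have.
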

\begin{proof}
Fix~$\alpha,K$ as in Lemma~\ref{s:regularize-sequence}. Convergence of the series allows us to choose~$N$ and~$d_i>\omega_a(i)$, $i\ge N$, so that~\eqref{s:d-above-omega} holds and the sequence~$b_j$ in~\eqref{s:regularized-b} satisfies
\begin{equation}\label{s:small-tail}
 C\sum_{j\ge N-1}b_j<\frac{\epsilon a}{2}\,.
\end{equation}
Here~$C=C(n,p)$ is chosen to dominate the constant in~\eqref{s:iteration-level} and the constant in the normalized excess bound below. These choices are possible by~\eqref{s:b-sum}. Define~$\widetilde r_j,t_j$ by~\eqref{s:iteration-scales}. Choose all neighborhoods~$O_i,L_i$, obstacles~$\psi_i$, and one reference cylinder~$Q$ as in Subsubsection~\ref{subsec:singular-source}, before taking finite maxima.

For every~$m\ge N$, Lemma~\ref{s:local-budget} bounds the sources of the penalizations for~$\max_{N\le i\le m}\psi_i$. They vanish at~$t_{N-2}$. Apply Lemma~\ref{s:iteration} with the same bounds~$\overline M_j=C_0b_j\widetilde r_j^n$ for every~$m$. Its recurrence gives the same levels~$c_j$ for all truncations. At each fixed time~$t_j$, local uniform convergence in Lemma~\ref{lem:penalty} passes the excess estimate to~$U_m$, giving
\begin{equation*}
 \int_{B_{\widetilde r_{j+1}}}(U_m(x,t_j)-c_j)_+\,dx\le C_0b_j\widetilde r_j^n\,.
\end{equation*}
Divide by~$|B_{\widetilde r_{j+1}}|$ and use the lower radius ratio in~\eqref{s:R-ratio}. The normalized excess is at most~$Cb_j<\epsilon a/2$, while~\eqref{s:iteration-level} and~\eqref{s:small-tail} give~$c_j<\epsilon a/2$.

Since~$0\le U_m\le a$ and~$U_m\uparrow U$, the limit is bounded and superparabolic. Monotone convergence at each fixed~$t_j$ gives
\begin{equation}\label{s:small-average}
 \frac1{|B_{\widetilde r_{j+1}}|}\int_{B_{\widetilde r_{j+1}}}U(x,t_j)\,dx\le c_j+Cb_j<\epsilon a\,.
\end{equation}
The fixed obstacle~$\psi_i$ equals~$a$ near~$E_i$, so~$U_m=U=a$ there for~$m\ge i$. For all sufficiently large~$j$, the ball at time~$t_j$ lies in~$B_{R_0/2}\times(-2s_N,0)$, where every point of~$E$ belongs to some~$E_i$, $i\ge N$. By~\eqref{s:small-average}, choose~$x_j\in B_{\widetilde r_{j+1}}$ with~$U(x_j,t_j)<\epsilon a<a$. Such points lie in~$\Theta$. Since~$\widetilde r_j\downarrow0$ and~$t_j\uparrow0$, they give the required sequence.
\end{proof}

\section{Wiener criteria}
\label{sec:necessity}

For sufficiency, the bound~$0\le H-u\le a$ near the origin for a finite-union potential~$u$ gives approximate barriers and hence Perron competitors with arbitrarily small error (Proposition~\ref{prop:sufficiency}). For necessity, the cumulative upper estimate supplies an irregularity barrier: it equals~$h$ near the local complement in the past but stays below~$h/2$ along an interior sequence approaching the origin. Perron comparison then proves irregularity (Proposition~\ref{prop:necessity}). We subsequently compare the intrinsic terms with the prescribed capacitary potentials in Theorem~\ref{thm:potential-wiener} and derive the formulation in terms of capacity differences.

\subsection{The intrinsic Wiener criterion}\label{subsec:intrinsic-criterion-proof}
The following theorem gives the countable-height formulation of Theorem~\ref{cor:intrinsic-wiener-intro}.

\begin{theorem}[Intrinsic Wiener criterion]\label{thm:main}\label{s:singular-theorem}
Let~$n\ge2$ and~$p\in(\pc,2)\cup(2,\infty)$, with the constants fixed before Definition~\ref{def:wiener}. Let~$\Theta$ be a bounded open subset of~$\spacetime$, and let~$0\in\partial\Theta$. For the terms of Definition~\ref{def:wiener}, the following conditions are equivalent:
\begin{enumerate}[(i)]
\item The origin is regular for~\eqref{eq:pde} in~$\Theta$.
\item For every~$a>0$, one has~$\sum_{j=1}^\infty\omega_a(j)=\infty$.
\item For every positive integer~$k$, one has~$\sum_{j=1}^\infty\omega_{1/k}(j)=\infty$.
\end{enumerate}
\end{theorem}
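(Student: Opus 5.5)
The equivalence (ii)$\Leftrightarrow$(iii) is immediate from the height monotonicity~\eqref{eq:height-omega}. Indeed, (ii) trivially implies (iii). Conversely, given $a>0$, choose $k$ with $1/k\le a$. Then $\omega_{1/k}(j)\le\omega_a(j)$, so divergence at height $1/k$ forces divergence at height $a$. It therefore remains to prove (iii)$\Rightarrow$(i) (sufficiency) and (i)$\Rightarrow$(ii) (necessity).

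\emph{Sufficiency.} Fix $f\in C(\partial\Theta)$ and $\varepsilon>0$. By sign symmetry and adding constants, it suffices to show $\limsup_{z\to0}\upperH_\Theta f(z)\le f(0)+C\varepsilon$. Choose a neighborhood $U$ of $0$ on which $f\le f(0)+\varepsilon$, and set $M=2\|f\|_\infty+1$. Take a height $a=1/k\le\varepsilon$ and $H>a$ large compared with $M$. Pick a smooth cylinder $Q^*\Subset U$ around the origin. Proposition~\ref{prop:saturation} gives a compact $K\subset E\cap Q^*\cap\{t<0\}$ with $u:=\redpot^H_{K,Q^*}\ge H-a$ near the origin. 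The candidate upper competitor is
\begin{equation*}
 W=f(0)+\varepsilon+\frac{M}{H}\bigl(H-u\bigr)\ \text{in }\Theta\cap Q^*,\qquad W=f(0)+\varepsilon+M\ \text{elsewhere},
\end{equation*}
glued by a minimum with the constant. The gluing is where care is needed. Since $H-u$ is only subparabolic off $K$, I would instead use the reflected function $H-u$ as a \emph{lower} competitor for $-f$, or equivalently follow the barrier-family characterization \cite[Theorem~3.3]{BBGP}. Under that characterization, it suffices to produce, for each $a$, a function that is $p$-parabolic near the origin in $\Theta$, bounded by $a$ at the origin, and large away from it. The function $H-u$ has exactly these properties. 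It vanishes quasi-continuously on $K$ only where the obstacle is active; that is not needed, since what is needed is $H-u\le a$ near $0$ and $H-u\ge$ something positive on $\partial Q^*\cap\partial_p$. The positivity follows from the zero lateral and initial values of $u$. Letting $a\to0$ then yields regularity. The main technical point in this direction is checking that the $a$-approximate barriers satisfy the boundary inequalities on the whole topological boundary, including the top portions; this uses $u\le H$ and the continuity of $u$ near the origin.

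\emph{Necessity.} Suppose instead that $\sum_j\omega_a(j)<\infty$ for some $a$; I will show the origin is irregular. Set $h=a/A$ for $p>2$, or $h=a$ for $p<2$. For $p<2$, Corollary~\ref{cor:singular-tail} with $\epsilon=1/2$ directly supplies a bounded superparabolic $U$ on a reference cylinder $Q$. This $U$ equals $a$ on open neighborhoods of every $E_i$, $i\ge N$, and satisfies $U(z_\ell)<a/2$ along $z_\ell\to0$ in $\Theta$. For $p>2$, I would build the analogous $U$ as the increasing limit over $m$ of the potentials $U_m$ from the proof of Theorem~\ref{thm:upper-main}, keeping the neighborhoods $C_j^+,F_j^+$ and the obstacles $\psi_j,\theta_j$ fixed over an infinite tail. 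The bound~\eqref{eq:restore-cores} is uniform in $m$, so $U(0,0)\le C\sum_{j\ge N}b_j\le h/4$ for $N$ large. The past lower-limit representation~\eqref{eq:past-representative} then produces points $z_\ell\to0$ with $t<0$ and $U(z_\ell)<h/2$. These points lie in $\Theta$ because $U=h$ on a neighborhood of $E\cap(\overline B_{R_0}\times[-4s_N,0))$.

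Finally, take a continuous datum $0\le g\le h$ with $g(0)=h$ and $g=0$ outside a small neighborhood of the origin, contained in $B_{R_0}\times(-2s_N,\infty)$. Consider the function $\min\{U,h\}$, extended by $h$ where needed, together with $h$ times a large-time cutoff. This function lies in the upper class for $g$ on the part of the boundary near the origin. Points of $\partial\Theta$ with $t\ge0$ near $0$ require separate handling: there I would use that $g$ can be chosen to vanish for $t>0$ only at heights handled by the superparabolic continuation, and take $U$ on $Q$ with homogeneous continuation. Then $\upperH_\Theta g(z_\ell)\le U(z_\ell)<h/2<g(0)$, so the origin is irregular. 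I expect this last step to be the main obstacle, specifically verifying the upper-class boundary inequality at boundary points near the origin that are not in the past complement (times $t\ge0$). I would first try choosing the support of $g$ inside $\{t<-\delta\}\cup\{\text{small ball}\}$ and exploiting that $U\ge0$ there.
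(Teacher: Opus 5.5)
Your reduction (ii)$\Leftrightarrow$(iii) and your construction of the irregularity barrier $U$ in both ranges agree with the paper. The final Perron step of the necessity argument, however, fails as you set it up, and the failure is not confined to boundary points with $t\ge0$.

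\textbf{Necessity: the missing idea.} You want $\upperH_\Theta g(z_\ell)\le U(z_\ell)<h/2$ by placing $U$, or a modification of it, in the upper class $\mathcal U_g$. But the origin is itself a point of $\partial\Theta$, so every $W\in\mathcal U_g$ satisfies $\liminf_{z\to(0,0)}W(z)\ge g(0,0)=h$. Hence no single upper competitor can stay below $h/2$ along $z_\ell\to(0,0)$, whatever support you choose for $g$. Recall that $g$ must be close to $h$ at all boundary points near the origin, including those with $t\ge0$.

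The missing idea is Lemma~\ref{lem:perron-irregularity}: work with the \emph{lower} class, and use $U$ only as a comparison function, not as a Perron competitor. For $v\in\mathcal L_g$, apply the comparison principle \cite[Theorem~2.4]{BBGP} in $\Theta_-=\Theta\cap\{t<0\}$ with terminal time zero. It requires $\limsup v\le\liminf U$ only at points of $\partial\Theta$ with $t<0$. There the inequality follows from $U=h$ near the local past complement, $U\ge0$, and $0\le g\le h$ with $g$ supported near the origin. Neither the origin nor any boundary point with $t\ge0$ is tested. Hence $\lowerH_\Theta g\le U$ in $\Theta_-$, so $\limsup_\ell\lowerH_\Theta g(z_\ell)\le h/2<g(0,0)$. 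The sign symmetry $\lowerH_\Theta g=-\upperH_\Theta(-g)$ then gives irregularity.

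\textbf{Sufficiency: repairs needed.}
\begin{enumerate}
\item The candidate $\frac MH(H-u)$ is not $p$-parabolic: for $p\ne2$, multiplying a solution by a constant requires a rescaling of time.
\item Your reason for discarding it is mistaken. Since $K\subset E$, the set $\Theta\cap Q^*$ lies in $Q^*\setminus K$, where $u=\redpot^H_{K,Q^*}$ is $p$-parabolic. So $H-u$ is $p$-parabolic there, not merely subparabolic.
\item $H-u$ is not a barrier in the sense of \cite[Theorem~3.3]{BBGP}, which requires limit zero at the point. You only know $H-u\le a$ near the origin.
\end{enumerate}
What works is the direct construction in Proposition~\ref{prop:sufficiency}. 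Paste $\min\{M,f(0)+\epsilon+H-u\}$ with $M$, and $\max\{-M,f(0)-\epsilon-H+u\}$ with $-M$. Choose $H$ so large that these local formulas equal $\pm M$ near the artificial lateral and bottom boundary of $Q^*$, where $u$ vanishes continuously. Then carry the error $\epsilon+a$ through the Perron inequalities before letting $a,\epsilon\to0$.

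The boundary inequalities you single out are immediate from $H-u\ge0$. What needs care is the gluing across the artificial top of $Q^*$ inside $\Theta$, where the continued potential need not be small. The paper avoids this by first reducing to $\Theta_-$ via \cite[Lemma~3.10 and Theorem~3.11]{BBGP}.
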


Changing finitely many terms does not affect the criterion. By translation, the same formulation holds at every boundary point. The reference cylinders cross the boundary, and no geometric assumption is imposed on~$\Theta$.

In the proof of Theorem~\ref{thm:main}, fix~$\Theta$ as in its hypotheses and put~$\Theta_-=\Theta\cap\{t<0\}$. We prove the two implications separately and complete the criterion in Subsubsection~\ref{subsec:intrinsic-proof}.

\subsubsection{Divergence and regularity}\label{subsec:joint-sufficiency}\label{s:singular-criterion}
The finite-union potentials have their Riesz measures in the complement, so they and their reflections are~$p$-parabolic in~$\Theta_-\cap Q^*$. The bound~$0\le H-\redpot^H_{K,Q^*}\le a$ near the origin gives the local Perron competitors. A large~$H$ permits pasting to constants away from the origin, while letting~$a\to0$ makes the local error arbitrarily small.

\begin{proposition}[Regularity from divergence of the Wiener series]\label{prop:sufficiency}\label{s:singular-sufficiency}
If~$\sum_j\omega_{1/k}(j)=\infty$ for every~$k\in\N$, then the origin is regular for~$\Theta$.
\end{proposition}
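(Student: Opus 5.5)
By Definition~\ref{def:regularity} and sign symmetry, it suffices to show that for every $f\in C(\partial\Theta)$ we have $\limsup_{\Theta\ni z\to0}\upperH_\Theta f(z)\le f(0)$; applying this to $-f$ gives the lower bound. Fix $f$ and $\varepsilon>0$. Normalize so that $f(0)=0$ and $|f|\le M$ on $\partial\Theta$, and choose $\rho>0$ with $f<\varepsilon$ on $\partial\Theta\cap B^{n+1}_\rho(0)$. Take $k$ with $a=1/k$ small; it will be fixed relative to $\varepsilon$ and $M$ below. Choose a smooth cylinder $Q^*\Subset B^{n+1}_{\rho/2}(0)$ containing the origin in its interior, and choose a height $H>a$. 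Proposition~\ref{prop:saturation}, applied with the divergent series $\sum_j\omega_a(j)$, gives a compact finite union $K\subset E\cap Q^*\cap\{t<0\}$ with
\begin{equation*}
 u:=\redpot^H_{K,Q^*}\ge H-a
\end{equation*}
in an open neighborhood $V$ of the origin. Since $\mu_u$ is supported on $K\subset E$, the function $u$ is $p$-parabolic in $\Theta\cap Q^*$. It is continuous there, and its zero initial and lateral values hold on $\partial_pQ^*$.

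The competitor I would build is
\begin{equation*}
 W=\varepsilon+\frac{M}{H-a}\,(H-u)
\end{equation*}
in $\Theta\cap Q^*$, pasted with the constant $\varepsilon+M$ outside $Q^*$. Because the equation is not linear, multiplying $H-u$ by a constant is not allowed as it stands. I would instead use the scaling $u\mapsto\lambda u(x,\lambda^{p-2}t)$: run the whole construction at the rescaled height, or equivalently pick $H=M+a$. With that choice, $W:=\varepsilon+H-u$ is $p$-superparabolic, being a constant minus a subparabolic function; more precisely, $H-u$ is $p$-parabolic off $K$.

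Two boundary checks are needed. On $\partial Q^*$ away from the top, $u$ takes the value $0$ continuously, so there $W=\varepsilon+H\ge\varepsilon+M$. This lets the pasting $\min\{W,\varepsilon+M\}$ inside $Q^*$, extended by $\varepsilon+M$ outside, be superparabolic in $\Theta\cap\{t<t_{\mathrm{top}}\}$. The top of $Q^*$ needs separate handling: either take $Q^*$ to extend past time $0$ and use the homogeneous continuation, or invoke the standard fact that superparabolicity is unaffected by future times. On $\partial\Theta\cap Q^*$ we have $W\ge\varepsilon\ge f$ there. Hence $W\in\mathcal U_f$ after pasting, and so $\upperH_\Theta f\le W\le\varepsilon+a$ on $V\cap\Theta$. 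Letting $a\to0$ and then $\varepsilon\to0$ gives the conclusion.

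The main obstacle is the pasting and the choice $H=M+a$. Proposition~\ref{prop:saturation} must be applied at the height $H$, which depends on $M$; this is allowed, since the proposition holds for every $H>a$. The delicate points are the superparabolicity of $\min\{\varepsilon+H-u,\varepsilon+M\}$ across $\partial Q^*$, where I would use the continuous zero lateral values from Lemma~\ref{lem:local-majorant-comparison}, and the terminal portion of $\partial\Theta$ above $Q^*$, which I would handle by choosing $Q^*$ with top strictly after $0$.
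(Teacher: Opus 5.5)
Your construction is the paper's. The competitor $\min\{\varepsilon+H-u,\varepsilon+M\}$ with $H=M+a$ is exactly the paper's $V^+$: the paper's choice $f(0)+\epsilon+H>M+1$ plays the role of your margin $a$ at the artificial lateral and bottom boundary. Replacing $V^-$ by sign symmetry is harmless.

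The step that is not settled is the part of $\Theta$ after the top $t=t_{\mathrm{top}}$ of $Q^*$. Taking $t_{\mathrm{top}}>0$ only puts the origin in the interior of $Q^*$. In general $\Theta$ continues past $t_{\mathrm{top}}$, and near the top face $\min\{W,\varepsilon+M\}$ can be strictly below $\varepsilon+M$, so the pasting lemma does not apply across it.

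Your first remedy fails. Suppose you use $u$ on the whole continuation cylinder $D\times(b,\infty)$. The points of $\partial\Theta$ in $D\times[t_{\mathrm{top}},\infty)$ lie outside $B^{n+1}_\rho(0)$, where $f$ may be close to $M$. There $\min\{\varepsilon+H-u,\varepsilon+M\}\ge f$ would require $u\le\varepsilon+a$, and nothing forces this for the homogeneously continued potential. So the boundary inequality for $\mathcal U_f$ can fail at such points.

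Your second remedy is correct only in a specific form, which you should state and prove:
\begin{enumerate}[(i)]
\item Set $U=\varepsilon+M$ on $\Theta\cap\{t>t_{\mathrm{top}}\}$, and give $U$ its lower limit from the past on the slice $t=t_{\mathrm{top}}$.
\item Since $U\le\varepsilon+M$ below the slice, this is an upward jump in time. For a box crossing the slice, first compare on the part below it; continuity of the comparison function carries the inequality to the slice.
\item Then compare on the part above the slice with the constant $\varepsilon+M$, which dominates the comparison function on that part's parabolic boundary.
\end{enumerate}

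The paper avoids this step altogether by reducing to $\Theta_-=\Theta\cap\{t<0\}$ via Lemma~3.10 and Theorem~3.11 of~\cite{BBGP}. This loses nothing, because the complement at negative times, and hence every $K_{j,q}$ and $\omega_a(j)$, is the same for $\Theta_-$ as for $\Theta$. The top of $Q^*$ at a positive time then never meets the domain, and no future pasting is needed.
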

\begin{proof}
We may work in~$\Theta_-$. If the origin is not a boundary point of this part, it is regular by~\cite[Lemma~3.10]{BBGP}. Otherwise,~\cite[Theorem~3.11]{BBGP} reduces regularity to~$\Theta_-$, and the tested complementary sets at negative times are unchanged.

Let~$f\in C(\partial\Theta_-)$ and fix~$\epsilon>0$. We construct members of the upper and lower Perron classes whose values near the origin differ from~$f(0)$ by at most~$\epsilon+a$, where~$a<\epsilon$. Choose~$a=1/k$ with~$a<\epsilon$. Take a small cylinder~$Q^*=B_d\times(-\tau,\tau)$ so that
\begin{equation*}
 |f(z)-f(0)|<\epsilon
 \quad\text{on }\partial\Theta_-\cap\overline{Q^*}\,.
\end{equation*}
Let~$M>\|f\|_\infty+|f(0)|+2$, and choose~$H>a$ so large that
\begin{equation*}
 f(0)+\epsilon+H>M+1\,,
 \qquad f(0)-\epsilon-H<-M-1\,.
\end{equation*}
By Proposition~\ref{prop:saturation}, there is a compact set~$K\Subset Q^*\cap\{t<0\}$ in the complement whose potential~$\redpot=\redpot_{K,Q^*}^{H}$ satisfies~$\redpot\ge H-a$ near the origin. In~$\Theta_-\cap Q^*$,~$\redpot$ is~$p$-parabolic and~$0\le \redpot\le H$. It vanishes continuously on the artificial lateral and bottom boundary of~$Q^*$; those boundary portions are separated from~$K$.

We have~$H-\redpot\ge0$, and~$\redpot\ge H-a$ gives~$H-\redpot\le a$ near the origin. We use it to define, on~$\Theta_-\cap Q^*$,
\begin{equation*}
 V^+=\min\{M,f(0)+\epsilon+H-\redpot\}\,,
 \qquad
 V^-=\max\{-M,f(0)-\epsilon-H+\redpot\}\,.
\end{equation*}
Set~$V^+=M$ and~$V^-=-M$ on the rest of~$\Theta_-$. Because~$\redpot$ vanishes continuously on the artificial boundary, the strict margins in the choice of~$H$ make the local definitions equal to~$M$ and~$-M$ near that boundary. The pasted functions are therefore continuous at every gluing point inside~$\Theta_-$. The pasting lemma~\cite[Lemma~2.9]{BBGP}, applied also after negation, shows that~$V^+$ is superparabolic and~$V^-$ is subparabolic. The artificial top~$t=\tau>0$ does not meet~$\Theta_-$.

On the genuine boundary inside~$\overline{Q^*}$, the inequalities~$H-\redpot\ge0$ and~$|f-f(0)|<\epsilon$ show that the limiting inequalities for the upper and lower Perron classes hold. On the remaining boundary,~$M$ and~$-M$ bound~$f$. Both functions are bounded, so they belong to the respective Perron classes. By~\cite[Theorem~2.6]{BBGP},
\begin{equation*}
 V^-\le\lowerH_{\Theta_-}f
 \le\upperH_{\Theta_-}f\le V^+\,.
\end{equation*}
Near the origin,~$\redpot\ge H-a$ gives
\begin{equation*}
 f(0)-\epsilon-a\le V^-\le V^+\le f(0)+\epsilon+a\,.
\end{equation*}
Let~$\epsilon\downarrow0$. Both Perron solutions attain~$f(0)$. This proves regularity in~$\Theta_-$ and hence in~$\Theta$.
\end{proof}

\subsubsection{Convergence and irregularity}\label{subsec:intrinsic-necessity}
We construct an irregularity barrier~$U$: it equals a fixed height~$h$ near the local complement in the past but has values below~$h/2$ along an interior sequence approaching the origin. Fixed continuous obstacle neighborhoods preserve the equality~$U=h$ when we take increasing limits. The following comparison bounds a lower Perron solution by~$U$, so that it cannot attain a boundary datum equal to~$h$ near the origin.

\begin{lemma}[Irregularity from a superparabolic majorant]\label{lem:perron-irregularity}
Let~$U\ge0$ be bounded and~$p$-superparabolic in~$\Theta_-$. Suppose there are~$h>0$ and an open neighborhood~$W$ of the origin such that
\begin{equation*}
 \liminf_{\Theta_-\ni z\to\xi}U(z)\ge h
 \quad\text{for every }\xi\in\partial\Theta\cap W\cap\{t<0\}\,.
\end{equation*}
If there is a sequence~$z_\ell\in\Theta_-$ tending to the origin with~$\limsup_{\ell\to\infty}U(z_\ell)<h$, then the origin is irregular for~$\Theta$.
\end{lemma}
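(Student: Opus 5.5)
We use Perron comparison on the past part and then pass back to $\Theta$ by the localization of \cite{BBGP}. By \cite[Theorem~3.11]{BBGP} (or \cite[Lemma~3.10]{BBGP} if needed), regularity of the origin for $\Theta$ is equivalent to regularity for $\Theta_-$ whenever the origin lies on $\partial\Theta_-$. It is indeed on $\partial\Theta_-$, since $z_\ell\in\Theta_-$ tend to it. It therefore suffices to exhibit $g\in C(\partial\Theta_-)$ such that $\lowerH_{\Theta_-}g$ does not tend to $g(0)$ along $z_\ell$.

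Choose an open neighborhood $W'\Subset W$ of the origin and a continuous $g$ on $\R^{n+1}$ with $0\le g\le h$, $g=h$ near the origin, and $\supp g\subset W'$. We claim $U\in\mathcal U_g$ on $\Theta_-$. It is superparabolic and bounded below by $0$, so only the boundary inequalities need checking. At $\xi\in\partial\Theta_-$ outside $W'$ we have $g(\xi)=0\le U$. At $\xi\in\partial\Theta\cap W\cap\{t<0\}$ the hypothesis gives $\liminf U\ge h\ge g(\xi)$. The remaining points of $\partial\Theta_-$ in $W'$ lie on $\{t=0\}$, either in $\partial\Theta$ or in $\Theta\cap\{t=0\}$.

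\textbf{Main obstacle.} The hypothesis says nothing at the top slice $\{t=0\}$, which is part of the full topological boundary of $\Theta_-$. We will handle this first, and the plan is to make these points irrelevant. In the Perron method for $\Theta_-$, the top portion $\partial\Theta_-\cap\{t=0\}$ does not affect the Perron solutions, because comparison of $p$-parabolic functions needs no condition at terminal time. This is exactly the reduction used in \cite[Theorem~2.4]{BBGP} and in the proof of Lemma~\ref{lem:local-majorant-comparison}.

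Concretely, we would avoid the issue by passing to the sets $\Theta_-^\tau=\Theta\cap\{t<-\tau\}$ with $\tau\downarrow0$. Alternatively, we add to $U$ a term that is infinite at $t=0$, namely $U_\delta=U+\delta(-t)^{-1}$. The function $(-t)^{-1}$ is not a supersolution in general, so instead we would use the statement of \cite[Theorem~2.4]{BBGP} directly. That theorem gives $V\le U$ for $V\in\mathcal L_g$ when the boundary inequality holds on $\partial\Theta_-\cap\{t<0\}$. Hence $\lowerH_{\Theta_-}g\le U$ everywhere in $\Theta_-$, since both $V$ and $U$ are bounded.

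Granting this comparison, we get $\lowerH_{\Theta_-}g(z_\ell)\le U(z_\ell)$, so
\begin{equation*}
\limsup_{\ell\to\infty}\lowerH_{\Theta_-}g(z_\ell)<h=g(0).
\end{equation*}
Thus the origin is irregular for $\Theta_-$ by Definition~\ref{def:regularity}, using the sign-symmetric formulation with the lower Perron solution. The reduction of \cite[Theorem~3.11]{BBGP} then transfers irregularity to $\Theta$, after restricting $g$ to $\partial\Theta$ and extending it continuously.
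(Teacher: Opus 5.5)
Your argument is correct, and its core is the paper's. You use a cutoff datum $0\le g\le h$ with $g=h$ near the origin. You compare every lower Perron competitor with $U$ through \cite[Theorem~2.4]{BBGP} in $\Theta_-$ with terminal time zero, so only boundary points with $t<0$ enter. You then evaluate along $z_\ell$.

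The route differs only in where the competitors live. You take them from the lower class of $\Theta_-$, prove irregularity for $\Theta_-$, and return to $\Theta$ by \cite[Theorem~3.11]{BBGP}. The paper takes $v$ in the lower class of $\Theta$ itself. Its restriction to $\Theta_-$ is subparabolic and satisfies $\limsup v\le g$ on $\partial\Theta_-\cap\{t<0\}=\partial\Theta\cap\{t<0\}$. The same comparison then gives $\lowerH_\Theta g\le U$ in $\Theta_-$, and irregularity for $\Theta$ follows at once because $z_\ell\in\Theta_-\subset\Theta$. So the paper needs no localization theorem, while yours relies on one. That use is correct, although re-extending $g$ at the end is unnecessary, since the theorem transfers regularity as a property of the point.

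Two side remarks in your text should be corrected.
\begin{enumerate}
\item The opening claim $U\in\mathcal U_g$ on $\Theta_-$ is false, not merely unverified. The origin is a top point of $\partial\Theta_-$ with $g(0)=h>\limsup_\ell U(z_\ell)\ge\liminf_{z\to0}U(z)$.
\item Your dismissal of $U+\delta(-t)^{-1}$ is mistaken. Since $\Dp$ acts only in $x$ and $\partial_t(-t)^{-1}>0$, adding this term preserves superparabolicity. Hence $U+\delta(-t)^{-1}\in\mathcal U_g$ for $\Theta_-$, and letting $\delta\downarrow0$ gives $\upperH_{\Theta_-}g\le U$. This is another valid way to neutralize the top slice.
\end{enumerate}
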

\begin{proof}
Restrict to~$\partial\Theta$ a continuous ambient cutoff supported in~$W$, equal to one near the origin, and multiply it by~$h$. This gives~$g\in C(\partial\Theta)$ with~$0\le g\le h$ and~$g=h$ near the origin. Let~$v$ be any lower Perron competitor for~$g$. At boundary points strictly before zero where~$g>0$, the hypothesis gives~$\liminf U\ge h\ge g\ge\limsup v$. At the other boundary points strictly before zero,~$\limsup v\le0\le\liminf U$. Apply the parabolic comparison principle~\cite[Theorem~2.4]{BBGP} in~$\Theta_-$ with terminal time zero. It gives~$v\le U$, and taking the supremum over the lower Perron class yields
\begin{equation*}
 \lowerH_\Theta g\le U\quad\text{in }\Theta_-\,.
\end{equation*}
Along the given sequence,~$\limsup_{\ell\to\infty}\lowerH_\Theta g(z_\ell)<h=g(0)$. Thus the lower Perron solution does not attain the boundary datum at the origin. Sign symmetry of the Perron solutions implies irregularity; see~\cite[Definition~3.1 and the following observation]{BBGP}.
\end{proof}

\begin{proposition}[Irregularity from convergence of the Wiener series]\label{prop:necessity}\label{s:necessity}\label{subsec:degenerate-necessity}\label{s:necessity-section}
If~$\sum_j\omega_a(j)<\infty$ for some~$a>0$, then the origin is irregular for~$\Theta$.
\end{proposition}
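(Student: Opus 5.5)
The plan is to build a barrier $U$ that satisfies the hypotheses of Lemma~\ref{lem:perron-irregularity}. We treat the two ranges separately, following the introduction.

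\emph{Singular range.} For $\pc<p<2$ the construction is essentially Corollary~\ref{cor:singular-tail}. Fix $a$ with $\sum_j\omega_a(j)<\infty$, choose $R_0>0$, and take $\epsilon=1/2$. The corollary then supplies a bounded superparabolic $U=\sup_mU_m$ in $Q$ and points $z_\ell\in\Theta\cap\{t<0\}$ with $z_\ell\to0$ and $U(z_\ell)<a/2$. Set $h=a$. Since $\psi_i=a$ on an open neighborhood of $E_i$, we have $U=a$ on a neighborhood of every point of $E$ in $B_{R_0/2}\times(-2s_N,0)$. Lower semicontinuity then gives the boundary liminf hypothesis on $W=B_{R_0/2}\times(-2s_N,2s_N)$. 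Restrict $U$ to $\Theta_-$ and apply Lemma~\ref{lem:perron-irregularity}.

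\emph{Degenerate range.} For $p>2$, set $h=a/A$ with $A$ from Lemma~\ref{lem:remainder-waiting}. Choose $N$ so large that $C\sum_{j\ge N}\omega_a(j)\le h/8$, with $C$ from Theorem~\ref{thm:upper-main}. Then fix $b_j\in(\omega_a(j),a)$ with $C\sum_{j\ge N}b_j\le h/4$, handling any index with $\omega_a(j)=a$ by enlarging $N$, which is possible since the series converges. Following Subsection~\ref{subsec:upper-neighborhoods}, fix once and for all the neighborhoods $C_j^+$ and $F_j^+$ and the smooth obstacles $\psi_j,\theta_j$, all before truncation.

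Let $U_m$ be the whole-space potential of $\max_{N\le j\le m}\{\psi_j,\theta_j\}$. The proof of Theorem~\ref{thm:upper-main}, namely Lemma~\ref{lem:remainder-waiting} followed by the telescoping estimate~\eqref{eq:restore-cores}, gives $U_m(0,0)\le C\sum_{j\ge N}b_j\le h/4$ uniformly in $m$.

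The $U_m$ increase in $m$ and are bounded by $h$. Their limit $U$ is therefore bounded and superparabolic, using finite propagation to work in one fixed bounded reference ball. Moreover $U=h$ on open neighborhoods of every $E_j$ with $j\ge N$, which gives the boundary hypothesis in a neighborhood $W$ of the origin intersected with $\{t<0\}$, as in the singular case.

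The key point in this range is $U(0,0)\le h/4$. Each $U_m$ is $p$-parabolic near $(0,0)$, but the sources accumulate at the origin, so $U$ need not be continuous there. To pass to the limit I would use~\eqref{eq:past-representative}. Since $U_m\le U$, we have $U(0,0)\ge\lim_m U_m(0,0)$ trivially, but we need the reverse inequality. For this, apply the difference estimate Theorem~\ref{thm:removal} to compare $U_m$ and $U_{m'}$ at points $(x,t)$ with $t$ close to $0$: adding the obstacles with indices $j>m$ costs at most $C\sum_{j>m}b_j$ at time $0$.

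This is the step I expect to be hardest. Theorem~\ref{thm:removal} is stated with time after $t_*$, and the added obstacles for large $j$ act up to times near $0$. If the direct estimate fails, I would instead use the telescoping bound directly on $\{t\ge -s_m\}$, which bounds $U$ there by $U_m+C\sum_{j>m}b_j$. Taking the past essential lower limit in~\eqref{eq:past-representative} then gives $U(0,0)\le h/4+o(1)$.

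Once $U(0,0)\le h/4$ holds, the representative~\eqref{eq:past-representative} yields points $z_\ell\to0$ with $\tau<0$ and $U(z_\ell)<h/2$. These points lie in $\Theta$, since $U=h$ near the complement at those times, so $z_\ell\in\Theta_-$. Lemma~\ref{lem:perron-irregularity} then gives irregularity.
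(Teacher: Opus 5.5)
Your proposal follows the paper's proof. It uses the same heights $h=a/A$ and $h=a$, a far tail with $C\sum_{j\ge N}b_j<h/4$, and obstacle neighborhoods fixed before truncation. It takes the same increasing limit $U=\sup_mU_m$, which equals $h$ near the local past complement, and uses the past representative to locate interior points with $U<h/2$. In the singular range it uses Corollary~\ref{cor:singular-tail}, and it concludes with Lemma~\ref{lem:perron-irregularity}. Your $\epsilon=1/2$ in place of $1/4$ is harmless, since that lemma only needs $\limsup U(z_\ell)<h$.

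The one thing to correct is the step you call the hardest, where there is nothing to prove.

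\begin{enumerate}[(a)]
\item \emph{Why the step is immediate.} Your $U$ is the pointwise supremum of the increasing sequence $U_m$, and this supremum is already superparabolic with no regularization. It is lower semicontinuous as a supremum of lower semicontinuous functions, it is bounded by $h$, and comparison passes to it by the compactness argument of Step~5 of Lemma~\ref{lem:penalty}. Hence $U(0,0)=\sup_mU_m(0,0)\le h/4$ holds by definition. Since $U$ is a bounded superparabolic function, \eqref{eq:past-representative} (or \cite[Theorem~5.1]{KLpoint}) applies to this same $U$, giving $U(0,0)=\essliminf_{(x,t)\to(0,0),\,t<0}U(x,t)\le h/4$. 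This is exactly the paper's Step~3. A possible discontinuity of $U$ at the origin is why you must use past values to produce the sequence $z_\ell$; it does not affect the value $U(0,0)$.

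\item \emph{The removal-estimate repair fails.} Theorem~\ref{thm:removal} does not control adding the remainder obstacles $\psi_j$, $j>m$. Their capacities satisfy only the growth bound \eqref{eq:Gj-capacity}, which is not small, so the removal estimate gives no useful bound for them. The smallness of $U_m(0,0)$ comes instead from applying Lemma~\ref{lem:remainder-waiting} to all remainders at once, and only then restoring the inner parts via \eqref{eq:restore-cores}.

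\item \emph{The fallback bound is false.} The claim $U\le U_m+C\sum_{j>m}b_j$ on $\{t\ge-s_m\}$ fails in general. The sets $E_j$ with $j\ge m+2$ lie in that region, and $U=h$ on neighborhoods of them. Near $(0,0)$ the right side stays below $h$, by continuity of $U_m$ there and $U_m(0,0)\le h/4$.
\end{enumerate}

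Delete that paragraph and replace it with the identity $U(0,0)=\sup_mU_m(0,0)$. The rest of your argument then goes through as written.
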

\begin{proof}
Fix~$R_0>0$ and put
\begin{equation*}
 h=
 \begin{cases}
 a/A,&p>2,\\
 a,&\pc<p<2,
 \end{cases}
\end{equation*}
where~$A>1$ is from Theorem~\ref{thm:upper-main} in the range~$p>2$. We construct an increasing family of potentials of continuous obstacles, bounded by~$h$. Its limit equals~$h$ near the complement in the local past, and has values below~$h/2$ at interior points approaching the origin. Lemma~\ref{lem:perron-irregularity} then applies in both ranges. Use the sets~$E_j$ of~\eqref{eq:finite-tail} with this~$R_0$.

\emph{Step 1: Continuous obstacles for a small tail.} We pass to a sufficiently far tail in each range and fix all obstacle neighborhoods before taking finite maxima.

For~$p>2$, choose~$N$ large enough and~$b_j\in(\omega_a(j),a)$ with a summable positive excess so that
\begin{equation}\label{eq:small-tail}
 C\sum_{j=N}^\infty b_j<h/4\,,
\end{equation}
where~$C$ is the constant in~\eqref{eq:restore-cores}. Choose the neighborhoods and continuous obstacles from Subsection~\ref{subsec:upper-neighborhoods}. Their spatial supports lie in a common bounded ball because~$\widetilde r_j\le a^{\frac{p-2}{p}}s_j^{\frac{1}{p}}$ and~$F_j\subset\overline B_{R_0}\times\R$. All their source times lie strictly below zero and above a fixed first time. Let~$U_m$ be the whole-space continuous-obstacle potential used in~\eqref{eq:restore-cores}. For every finite truncation,
\begin{equation}\label{eq:finite-small}
 U_m(0,0)<h/4\,.
\end{equation}

For~$\pc<p<2$, apply Corollary~\ref{cor:singular-tail} with~$\epsilon=1/4$ and use its fixed reference cylinder, obstacles, and potentials~$U_m$.

\emph{Step 2: The increasing limit and equality near the complement.} In both ranges,~$0\le U_m\le h$ and~$U_m\le U_{m+1}$. Thus
\begin{equation*}
 U=\sup_{m\ge N}U_m
\end{equation*}
is a bounded superparabolic function on the common reference, with~$0\le U\le h$; see Subsection~\ref{subsec:capacity-foundations}. In the degenerate case the reference is~$\spacetime$, and in the singular case it is the fixed ambient cylinder from Step~1.

Every complementary point with~$|x|<R_0/2$ and~$-2s_N<t<0$ belongs to some~$E_j$, $j\ge N$. The chosen continuous obstacle equals~$h$ on an open ambient neighborhood of that point. Hence~$U_m=h$ there for~$m\ge j$. Since the neighborhoods are fixed and~$U\le h$, the limit~$U$ equals~$h$ on the same neighborhood.

\emph{Step 3: Small interior values.} For~$p>2$, passing to the supremum in~\eqref{eq:finite-small} gives~$U(0,0)\le h/4$. To obtain small values at points approaching the origin from the past, use~\eqref{eq:past-representative}, or~\cite[Theorem~5.1]{KLpoint}:
\begin{equation*}
 U(0,0)=\essliminf_{(x,t)\to(0,0),\ t<0}U(x,t)\,.
\end{equation*}
Since~$U(0,0)\le h/4$, there is a sequence of points approaching the origin from the past on which~$U<h/2$. These points can be chosen in~$\Theta$: in the local past neighborhood just described, every point of the complement has~$U=h$.

For~$\pc<p<2$, Corollary~\ref{cor:singular-tail} gives a sequence in~$\Theta_-$ approaching the origin with~$U<a/4<h/2$.

In either range we have a sequence in~$\Theta_-$ approaching the origin with~$U<h/2$ along it.

\emph{Step 4: Perron comparison.} Apply Lemma~\ref{lem:perron-irregularity} to the restriction of~$U$ to~$\Theta_-$, with height~$h$ and neighborhood~$B_{R_0/2}\times(-2s_N,2s_N)$. Step~2 gives the boundary lower bound, and Step~3 gives an interior sequence with~$\limsup U\le h/2<h$. The origin is irregular.
\end{proof}

\subsubsection{Proof of the intrinsic criterion}\label{subsec:intrinsic-proof}
\begin{proof}[Proof of Theorems~\ref{cor:intrinsic-wiener-intro} and~\ref{thm:main}]
For the conditions in Theorem~\ref{thm:main}, Proposition~\ref{prop:sufficiency} proves (iii)$\Rightarrow$(i), and Proposition~\ref{prop:necessity} gives the contrapositive of (i)$\Rightarrow$(ii). Condition (ii) implies (iii). Conversely, for any~$a>0$, choose~$k$ with~$1/k\le a$. By~\eqref{eq:height-omega},~$\omega_{1/k}(j)\le\omega_a(j)$, so divergence at~$1/k$ implies divergence at~$a$. This also proves the formulation in Theorem~\ref{cor:intrinsic-wiener-intro}.
\end{proof}

\paragraph{The range~$p>2$.}
The criterion and cumulative upper estimate hold for every~$n\ge2$ and~$p>2$. Lemma~\ref{lem:interpolation} gives both spatial interpolation steps at and above~$p=n$, with the same powers of~$n,p$ and~$\lambda$. The homogeneous flux estimate~\cite[Theorem~3.1(iii), equation~(3.4)]{KP}, the weak Harnack estimate~\cite[Theorem~1.1]{KuusiH}, and the capacity results for compact sets, bounded penalizations and comparison principles all apply in this range. The strict inequality~$p>2$ is used in the sum over the levels below~$k_0$ after~\eqref{eq:level-flux}, the contradiction argument following~\eqref{eq:growth-bootstrap}, finite propagation in Lemma~\ref{lem:persistence}, and the backward selection of intrinsic cylinders.

\begin{remark}[Order of constants]\label{s:singular-constants}
For~$p>2$, first fix~$\kappa$ from Lemma~\ref{lem:persistence}. Choose~$L$ by~\eqref{eq:L-choice}, and then choose~$C_*$ from the normalized mass bound in Lemma~\ref{lem:reception}. These determine the test~\eqref{eq:omega}. Next fix~$B_0$ from Lemma~\ref{lem:core-remainder}. The flux estimate determines the constants in Lemma~\ref{lem:growth} and~$\epsilon_0$ in Lemma~\ref{lem:forced-waiting}. Finally choose~$A$ so that~$CB_0A^{-\frac{1}{p-1}}\le\epsilon_0$ in~\eqref{eq:penalty-source-growth}. All of these are functions only of~$n,p$. The height~$a>0$ is arbitrary.

In this range, the selection in the sufficiency proof depends on~$a$, whereas the necessity proof uses~$a/A$. The criterion retains the quantifier over every~$a>0$.

For~$\pc<p<2$, first fix the constants in Lemmas~\ref{s:BIV-lower} and~\ref{lem:persistence}, then~$\eta,c_0,C_*$ in Lemma~\ref{lem:reception}. These determine the test. For the upper estimate, fix~$\alpha$ and~$K$ as in Lemma~\ref{s:regularize-sequence}. They and~$C_*$ determine the local source bound in Lemma~\ref{s:local-budget}, which in turn determines the level bound in Lemma~\ref{s:iteration}. After fixing these constants, choose the smallness threshold~$\varepsilon$ in~\eqref{s:d-above-omega}. For sufficiency, choose the forward selection constants using the spatial enlargement factor~$64$ and~$\eta$. All constants depend only on~$n,p$ and may diverge as~$p\downarrow\pc$ or~$p\uparrow2$.
\end{remark}

\subsection{The potential Wiener criterion}\label{subsec:potential-proof}
To derive Theorem~\ref{thm:potential-wiener} from Theorem~\ref{thm:main}, we compare the intrinsic terms with the potentials of the condensers in~\eqref{eq:potential-test-sets}. Applying the cumulative upper estimate on one time interval gives the upper bounds. The lower bounds follow from the capacity estimates in Section~\ref{sec:lower}. In the singular range, the total cost of the increments whose reference cylinders do not fit inside~$Q_j$ is summable.

\begin{proof}[Proof of Theorem~\ref{thm:potential-wiener}]
For~$p>2$, let~$A>1$ be from Theorem~\ref{thm:upper-main}; for~$\pc<p<2$, put~$A=1$ in this proof. We first bound~$\redpot^a_{K_j,Q_j}(0,0)$ from above and then from below by the intrinsic terms.

\emph{Step 1: The upper bound.} Apply Theorem~\ref{thm:upper-main} with prescribed height~$Aa$, spatial cutoff~$R_j$ and~$N=m=j$. The potential in the finite reference cylinder is no larger than the whole-space potential. In either range,
\begin{equation*}
 \redpot^a_{K_j,Q_j}(0,0)\le\redpot^a_{K_j}(0,0)
 \le C\omega_{Aa}(j)\,.
\end{equation*}
For~$p<2$, this is the upper bound at the original height, since~$A=1$.

\emph{Step 2: The lower bounds.} Suppose that~$K_{j,q}\subset K_j$ and~$B_{8Lr_j(q)}\subset B_{2R_j}$. The initial times of~$Q_{j,q}$ and~$Q_j$ agree. Comparison and Lemma~\ref{lem:reception} then give, for every admissible~$q$ satisfying these containments,
\begin{equation*}
 \redpot^a_{K_j,Q_j}(0,0)
 \ge\redpot^a_{K_{j,q},Q_{j,q}}(0,0)
 \ge
 \begin{cases}
 q,&p>2,\\
 c_0q,&\pc<p<2.
 \end{cases}
\end{equation*}

For~$p>2$, uniformly over~$0<q\le a$,
\begin{equation*}
 \frac{8Lr_j(q)}{R_j}
 \le8La^{\frac{p-2}{p}}s_j^{\frac{1}{2p}}\longrightarrow0\,.
\end{equation*}
Thus~$K_{j,q}\subset K_j$ and~$B_{8Lr_j(q)}\subset B_{2R_j}$ for all sufficiently large~$j$ and every~$0<q\le a$. Taking the supremum over admissible~$q$ and using Step~1 gives
\begin{equation}\label{eq:potential-degenerate-comparison}
 \frac{\omega_a(j)}a\le\frac{\redpot^a_{K_j,Q_j}(0,0)}a
 \le\frac C a\omega_{Aa}(j)\,.
\end{equation}

For~$\pc<p<2$, small increments may give reference cylinders too wide for this comparison. We separate them using the threshold
\begin{equation}\label{eq:potential-singular-cutoff}
 \delta_j=\left(\frac{64^ps_j}{R_j^p}\right)^{\frac{1}{2-p}}
 =64^{\frac{p}{2-p}}2^{-\frac{j}{2(2-p)}}\,.
\end{equation}
Then~$\sum_j\delta_j<\infty$. An admissible increment~$q\ge\delta_j$ satisfies
\begin{equation*}
 r_j(q)^p=\frac{s_j}{q^{2-p}}\le\frac{R_j^p}{64^p}\,.
\end{equation*}
Hence~$K_{j,q}\subset K_j$ and~$B_{64r_j(q)}\subset B_{R_j}$, so comparison and Lemma~\ref{lem:reception} give~$\redpot^a_{K_j,Q_j}(0,0)\ge c_0q$. If~$\omega_a(j)>\delta_j$, take admissible increments above~$\delta_j$ tending to the supremum. This gives~$\redpot^a_{K_j,Q_j}(0,0)\ge c_0\omega_a(j)$. If~$\omega_a(j)\le\delta_j$, nonnegativity suffices. Together with Step~1, these estimates give
\begin{equation}\label{eq:potential-singular-comparison}
 \frac{c_0}{a}\bigl(\omega_a(j)-\delta_j\bigr)_+
 \le\frac{\redpot^a_{K_j,Q_j}(0,0)}a
 \le\frac C a\omega_a(j)\,.
\end{equation}
In particular,
\begin{equation*}
 \omega_a(j)\le\delta_j+\frac{\redpot^a_{K_j,Q_j}(0,0)}{c_0}\,.
\end{equation*}

\emph{Step 3: The series criteria.} The lower bounds show that divergence of every intrinsic series implies divergence of every potential series. For~$p>2$, the upper bound shows that convergence of an intrinsic series at height~$b$ gives convergence of the potential series at height~$b/A$. For~$\pc<p<2$, summability of~$\delta_j$ shows that the intrinsic and potential series have the same convergence or divergence at each fixed height. Theorem~\ref{thm:main} therefore proves~\eqref{eq:potential-wiener} in both ranges.

In both ranges, monotonicity in the obstacle height gives, for~$0<a_1\le a_2$,
\begin{equation*}
 \redpot^{a_1}_{K_j,Q_j}(0,0)\le \redpot^{a_2}_{K_j,Q_j}(0,0)\,.
\end{equation*}
For any~$a>0$, choose~$k$ with~$1/k\le a$. Divergence at height~$1/k$ implies divergence at~$a$. This proves the countable-height version. The equation and all the constructions are invariant under space--time translations.
\end{proof}

\subsection{Capacity differences and spatial averages}\label{subsec:capacity-proof}\label{subsec:potential-wiener}
The time-slice identity~\eqref{eq:capacity-time-slice} expresses spatial averages of the potentials as capacity differences. A summable averaging estimate then gives the equivalent Wiener series in Remark~\ref{cor:capacity-wiener}. For an obstacle supported on a time slice, the value of the balayage must be distinguished from its right trace: the right trace records the added height, whereas the canonical value on the slice is inherited from earlier times.

\begin{proposition}[Capacity of an added time slice]\label{lem:capacity-time-slice}
Let~$n\ge2$, $p\in(\pc,2)\cup(2,\infty)$, and~$Q=D\times(b,c)$, where~$D$ is smooth and bounded. Suppose~$b<T<c$, $K\Subset D\times(b,T)$ and~$F=\overline B_\rho(x_0)\Subset D$. For~$S=F\times\{T\}$ and every~$a>0$,
\begin{equation}\label{eq:capacity-time-slice}
 \capc_a(K\cup S,Q)-\capc_a(K,Q)
 =\int_F\bigl(a-\redpot^a_{K,Q}(x,T)\bigr)\,dx\,.
\end{equation}
In particular,~$\capc_a(S,Q)=a|F|$.
\end{proposition}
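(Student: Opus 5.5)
We identify the two potentials explicitly. Write $V=\redpot^a_{K,Q}$ and $W=\redpot^a_{K\cup S,Q}$. Since $K$ lies in $\{t<T\}$, the obstacle $a\ind_S$ does not act before $T$. The first step is therefore to show $W=V$ on $D\times(b,T]$, with the canonical value at $t=T$ inherited from earlier times by \eqref{eq:past-representative}. For this, note that any nonnegative superparabolic majorant $v$ of $a\ind_K$ can be modified after $T$ into a majorant of $a\ind_{K\cup S}$. One way is to take $v$ up to time $T$ and $\max\{v,\tilde w\}$ afterwards, where $\tilde w$ is the solution for $t>T$ with initial value $\max\{v(\cdot,T),a\ind_F\}$. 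Pasting in time is allowed because lower semicontinuity at $T$ is only required from the past. Taking infima and regularizing gives $W=V$ for $t\le T$.

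Next we describe $W$ after $T$. For $t>T$, $W$ is the solution with zero lateral data, obstacle $a\ind_K$ restricted to $t>T$, which is empty, and right trace $\max\{V(\cdot,T),a\ind_F\}$ in $L^2(D)$. Minimality identifies the trace: any majorant must have right trace at least $a$ on $F$ and at least $V(\cdot,T)$. Conversely, the solution started from this maximum is admissible, by the comparison of Lemma~\ref{lem:local-majorant-comparison}. For the rigorous version I would approximate $S$ by continuous obstacles $\psi_\varepsilon$ supported in $F_\varepsilon\times[T,T+\varepsilon]$, and also approximate $K$ by smooth outer obstacles. I would then pass to the limit using continuity of capacity on decreasing compact sets (\cite[Lemma~5.8]{KKKP}, \cite[Lemma~3.3]{MS}) and weak convergence of Riesz measures.

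The mass computation then runs as follows. Test \eqref{eq:riesz} for $W$ and for $V$ with $\varphi(x,t)=\chi(x)\zeta(t)$, where $\chi=1$ near $\overline{\operatorname{supp}}$ of the sources inside $D$. Since $K$ and $S$ are compact in $Q$, we may take $\chi\in C_c^\infty(D)$ equal to one on the projections. Subtracting the two identities, the flux terms agree on $t<T$. The difference of measures is supported on $S\cup\{t>T\}$, and on $t>T$ there is no source because the solutions are $p$-parabolic off $K\cup S$. Hence $\mu_W-\mu_V$ is concentrated on $\{t=T\}$. Its mass there is the jump $\int_D\bigl(W(x,T+)-V(x,T)\bigr)\chi\,dx$, obtained by letting $\zeta$ approximate $\ind_{(-\infty,T]}$ and using $L^2$ time continuity of both sides. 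Along the way one must also check that $\mu_W|_K=\mu_V|_K$: the measures agree on $\{t<T\}$, and $K$ does not meet $\{t=T\}$ because it is compact in $\{t<T\}$. Since $V(\cdot,T+)=V(\cdot,T)$ (no source at $T$ for $V$) and $W(\cdot,T+)=\max\{V(\cdot,T),a\ind_F\}=a$ on $F$, we get $V$ outside $F$, using $V\le a$. The jump is therefore $\int_F(a-V(x,T))\,dx$. Then $\capc_a(K\cup S,Q)=\mu_W(K)+\mu_W(S)=\capc_a(K,Q)+\int_F(a-V)\,dx$, and taking $K=\varnothing$ gives $\capc_a(S,Q)=a|F|$.

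The main obstacle is the identification of the right trace $W(\cdot,T+)=\max\{V(\cdot,T),a\ind_F\}$ and the exact jump mass under approximation. With continuous outer approximations of $S$ of small time thickness, the penalized sources in the thin slab must converge in mass to exactly $\int_F(a-V)_+$. An excess could occur, because the slightly enlarged spatial neighborhood $F_\varepsilon$ contributes extra mass. This is controlled by $|F_\varepsilon\setminus F|\to0$, since $F$ is a ball and its boundary has measure zero. A deficit would be excluded by the lower bound from the trace. I would carry this out with the penalizations of Lemma~\ref{lem:penalty}, using the $L^1$ contraction from Lemma~\ref{s:excess}'s argument to compare the solutions just after the slab.
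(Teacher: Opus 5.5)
Your overall plan matches the paper's. You identify $W=\redpot^a_{K\cup S,Q}$ as $V=\redpot^a_{K,Q}$ up to time $T$, followed by the homogeneous evolution from $g=\max\{V(\cdot,T),a\ind_F\}$. You then read off the extra Riesz mass $\int_F(a-V(x,T))\,dx$ from the time jump, and pass from continuous obstacles to compact $K$. The measure computation is fine. The inequality $W\ge U$, where $U$ is this concatenation, is right in idea; the paper makes it rigorous by comparing with evolutions of continuous data $g_m\uparrow g$.

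The gap is the converse inequality $W\le U$. You say the concatenation is admissible ``by the comparison of Lemma~\ref{lem:local-majorant-comparison}''. That lemma only compares a potential with a given majorant; it cannot certify admissibility. Admissibility in Definition~\ref{def:reduction} needs a superparabolic function with $U\ge a$ pointwise on $S$, and the concatenation fails this. With its canonical value at $T$, inherited from the past as you yourself use, $U=V(\cdot,T)$ on $F$, which is in general strictly below $a$ (e.g.\ $V=0$ when $K=\varnothing$). If you instead set $U=a$ on $S$, lower semicontinuity fails at every point of $S$ where $V(x,T)<a$. Lower semicontinuity in Definition~\ref{def:superparabolic} is required in full space--time neighborhoods, not only from the past, so the remark justifying your pasting in Step~1 is also wrong. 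In Step~1 this is harmless: jump to the constant $a$ at a time $T-\tau$ after $K$ has ended. In Step~2, however, the majorant must also stay close to $U$ after $T$, and that is where the real work lies.

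The missing ingredient is a family of genuine majorants converging to $U$. At a time $T-\tau_\varepsilon$ after the sources have ended, raise the potential to $a$ on a slightly larger ball $F_\varepsilon=\overline B_{\rho+\varepsilon}(x_0)$ and continue homogeneously. Lemma~\ref{lem:persistence} then guarantees that the value at time $T$ is still $a$ on $F$ when $p>2$, choosing $\tau_\varepsilon<\kappa a^{2-p}(\varepsilon/2)^p$. When $p<2$ it gives at least $a-\varepsilon$, and one uses $\min\{a,U_\varepsilon+\varepsilon\}$ as the majorant. The raised data converge to $g$ in $L^2(D)$, and $L^2$ contraction of the homogeneous evolution gives convergence to $U$ at each later time, hence $W\le U$.

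Your fallback via continuous obstacles does not replace this step, for three reasons.
\begin{itemize}
\item A continuous obstacle equal to $a$ on $S$ cannot be supported in $F_\varepsilon\times[T,T+\varepsilon]$; its support must extend before $T$.
\item The slab sources must pay for the lateral outflow from $F_\varepsilon$ during the slab, not only for the jump on $F_\varepsilon\setminus F$.
\item The $L^1$ contraction bounds $\int(W_\varepsilon-V)_+\,dx$ only by the slab mass, which a priori capacity bounds control only up to a constant factor. This yields neither the exact limit $\int_F(a-V(x,T))\,dx$ nor the localization $W(\cdot,T+)\le V(\cdot,T)$ off $F$.
\end{itemize}
A short-time lower bound for raised data, such as Lemma~\ref{lem:persistence}, is needed in either route.
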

\begin{proof}
We construct a candidate by raising the right trace to~$a$ on~$F$ at time~$T$. Its time jump gives the additional Riesz measure. Two comparisons identify it with the potential of the enlarged obstacle, after which we pass from continuous obstacles to compact sets.

We first consider a continuous obstacle~$0\le\psi\le a$ compactly supported strictly before~$T$, and put~$v=\redpot_\psi$. Let~$\mathcal S_\tau$ denote homogeneous evolution in~$D$ with zero lateral values. Since~$p>2n/(n+2)$, the monotone evolution construction in~$W^{1,p}_0(D)\subset L^2(D)$ applies to bounded~$L^2$ initial data; see~\cite[Theorem~1.1 and Remark~1.2(2)]{Liu}. Testing the difference of two evolutions gives
\begin{equation*}
 \|\mathcal S_\tau f-\mathcal S_\tau g\|_{L^2(D)}
 \le\|f-g\|_{L^2(D)},
 \qquad \mathcal S_\tau g\longrightarrow g\text{ in }L^2(D)
 \quad(\tau\downarrow0)\,.
\end{equation*}
Set
\begin{equation}\label{eq:capacity-slice-datum}
 g=v(\cdot,T)+(a-v(\cdot,T))\ind_F\,.
\end{equation}
Define~$U=v$ before~$T$ and~$U(\cdot,t)=\mathcal S_{t-T}g$ after~$T$, with the canonical lower representative on the slice. Comparison gives~$0\le v\le U\le a$. Integration by parts in time gives
\begin{equation}\label{eq:capacity-slice-measure}
 \mu_U=\mu_v+(a-v(x,T))\ind_F(x)\,dx\otimes\delta_T\,.
\end{equation}
Here~$\delta_T$ is the unit point mass at~$T$. The homogeneous evolution after~$T$ adds no source. The initial datum is~$g$, which may be spatially discontinuous. The additional measure in~\eqref{eq:capacity-slice-measure} records the jump of~$U$ from~$v(\cdot,T)$ to~$g$ at time~$T$.

We identify~$U$ as the potential of~$\max\{\psi,a\ind_S\}$. Every nonnegative superparabolic majorant~$w$ of this obstacle satisfies~$w\ge v$ and~$w(\cdot,T)\ge g$. Choose continuous cutoffs~$0\le\chi_m\le\ind_F$ increasing to~$\ind_F$ off~$\partial F$. The continuous data
\begin{equation*}
 g_m=v(\cdot,T)+(a-v(\cdot,T))\chi_m
\end{equation*}
lie below~$w(\cdot,T)$ and converge to~$g$ in~$L^2(D)$. Lower semicontinuity of~$w$ supplies the boundary lower limits at~$T$. Comparison for the homogeneous evolution gives~$\mathcal S_{t-T}g_m\le w$; continuous initial-value solvability in both ranges is given in~\cite[Theorems~2.2--2.3]{GKM}. Contraction and canonical regularization give~$U\le w$. Taking the infimum and regularizing shows that~$U$ is no larger than the potential.

For the reverse inequality, we need majorants that reach~$a$ on~$F\times\{T\}$. The canonical value~$U(\cdot,T)=v(\cdot,T)$ need not do so. We therefore raise the potential slightly earlier and on the larger ball~$F_\varepsilon=\overline B_{\rho+\varepsilon}(x_0)\Subset D$. At time~$T-\tau_\varepsilon$, after the support of~$\psi$ has ended, replace~$v$ by~$a$ on~$F_\varepsilon$ and continue homogeneously. Denote this canonical concatenation by~$U_\varepsilon$. The time jump is nonnegative, so~$U_\varepsilon$ is superparabolic and majorizes~$\psi$.

If~$p>2$, choose~$\tau_\varepsilon\downarrow0$ with
\begin{equation*}
 0<\tau_\varepsilon<\kappa a^{2-p}(\varepsilon/2)^p\,.
\end{equation*}
For each~$x\in F$, the ball~$B_{\varepsilon/2}(x)$ lies in~$F_\varepsilon$. Lemma~\ref{lem:persistence} therefore gives~$U_\varepsilon(x,T)=a$ on~$F$. Put~$W_\varepsilon=U_\varepsilon$.

If~$\pc<p<2$, Lemma~\ref{lem:persistence} gives
\begin{equation*}
 U_\varepsilon(x,T)\ge a-C\left(\frac{\tau_\varepsilon}{(\varepsilon/2)^p}\right)^{\frac{1}{2-p}}
 \qquad(x\in F)\,.
\end{equation*}
Choose~$\tau_\varepsilon\downarrow0$ so that the loss is at most~$\varepsilon$, and set~$W_\varepsilon=\min\{a,U_\varepsilon+\varepsilon\}$. Addition of a constant and taking a finite minimum preserve superparabolicity~\cite[Lemma~2.7]{GKM}, so~$W_\varepsilon$ is a majorant of~$\max\{\psi,a\ind_S\}$. Admissible majorants in Definition~\ref{def:reduction} need not have zero boundary values; those values are properties of the potential. In either range,
\begin{equation*}
 0\le W_\varepsilon\le a,\qquad
 |W_\varepsilon-U_\varepsilon|\le\varepsilon\,.
\end{equation*}
The initial data for the evolution of~$U_\varepsilon$ after~$T-\tau_\varepsilon$ are
\begin{equation*}
 g_\varepsilon=v(\cdot,T-\tau_\varepsilon)
       +(a-v(\cdot,T-\tau_\varepsilon))\ind_{F_\varepsilon}\,.
\end{equation*}
They converge to~$g$ in~$L^2(D)$. Hence, for fixed~$t>T$,
\begin{equation*}
 \|U_\varepsilon(t)-U(t)\|_2 \le\|g_\varepsilon-g\|_2 +\|\mathcal S_{t-T+\tau_\varepsilon}g -\mathcal S_{t-T}g\|_2\longrightarrow0\,.
\end{equation*}
For fixed~$t<T$, we have~$U_\varepsilon(t)=v(t)$ eventually. The same almost-everywhere limit holds for~$W_\varepsilon$. The potential is bounded above by every~$W_\varepsilon$, so passing this comparison to the limit and taking canonical representatives gives the reverse inequality. At~$T$, the canonical value of~$U$ is the old value~$v(\cdot,T)$; its right trace is~$g$.

For compact~$K$, choose decreasing continuous outer obstacles~$\psi_m\downarrow a\ind_K$ supported in one compact set strictly before~$T$. Their potentials converge after regularization to~$\redpot^a_{K,Q}$. Interior regularity on a fixed neighborhood of~$F\times\{T\}$ that contains no source makes this convergence locally uniform there. Adding~$a\ind_S$ gives potentials converging to~$\redpot^a_{K\cup S,Q}$: if~$w$ majorizes~$a\ind_{K\cup S}$ and~$\eta>0$, lower semicontinuity and compactness give~$\psi_m\le w+\eta$ for all large~$m$, and the same majorant works on~$S$. Taking infima, regularizing and then sending~$\eta\downarrow0$ identifies the limit.

The convergence results for bounded supersolutions and potentials cited in Subsection~\ref{subsec:capacity-foundations} give weak convergence of the Riesz measures; see~\cite[Theorem~3.2, Lemma~4.2 and Theorem~5.7]{KKKP} and~\cite[Lemmas~2.2, 2.16--2.17 and~3.2--3.3]{MS}. Their supports lie in one common compact set. Testing weak convergence with a compactly supported function equal to one there therefore gives convergence of total masses. Passing~\eqref{eq:capacity-slice-measure} to the limit proves~\eqref{eq:capacity-time-slice}. Taking~$K=\varnothing$ gives~$\capc_a(S,Q)=a|F|$.
\end{proof}

For~$K_j,Q_j,s_j$ from~\eqref{eq:potential-scales}--\eqref{eq:potential-test-sets}, put~$\rho_j=s_j^{\frac{2}{p}}$ and~$S_j=\overline B_{\rho_j}\times\{0\}$, and define
\begin{equation}\label{eq:capacity-difference-term}
 \mathcal D_a(j)=
 \frac{\capc_a(K_j,Q_j)+\capc_a(S_j,Q_j)
              -\capc_a(K_j\cup S_j,Q_j)}{a|B_{\rho_j}|}\,.
\end{equation}
All capacities in this expression use the same height and reference cylinder. The slice~$S_j$ need not lie in~$E$.

For every~$a>0$, the capacity difference satisfies
\begin{equation}\label{eq:capacity-average-identity}
 \capc_a(S_j,Q_j)=a|B_{\rho_j}|,
 \qquad
 \mathcal D_a(j)=\frac1{a|B_{\rho_j}|}
       \int_{B_{\rho_j}}\redpot^a_{K_j,Q_j}(x,0)\,dx\in[0,1]\,.
\end{equation}

\begin{remark}[Capacity differences]\label{cor:capacity-wiener}
Under the hypotheses of Theorem~\ref{thm:potential-wiener}, with~$\mathcal D_a(j)$ as in~\eqref{eq:capacity-difference-term},
\begin{equation}\label{eq:capacity-wiener}
 0\text{ is regular}
 \quad\Longleftrightarrow\quad
 \sum_{j=1}^{\infty}\mathcal D_a(j)=\infty
 \quad\text{for every }a>0\,.
\end{equation}
It suffices to test~$a=1/k$, $k\in\N$. The exact identity~\eqref{eq:capacity-average-identity} and the summable averaging estimate~\eqref{eq:capacity-summable-average} give this equivalence.
\end{remark}

Remark~\ref{cor:capacity-wiener} is therefore an exact reformulation of Theorem~\ref{thm:potential-wiener}. To estimate~$\mathcal D_a(j)$, one needs control of the numerator in~\eqref{eq:capacity-difference-term}; separate multiplicative estimates of its three terms can have errors larger than their difference.

\begin{proof}[Proof of Remark~\ref{cor:capacity-wiener}]
Apply Proposition~\ref{lem:capacity-time-slice} with~$K=K_j$, $F=\overline B_{\rho_j}$ and~$T=0$. The set~$K_j$ ends at~$-2s_j$, strictly before~$S_j$. Subtracting the capacity increment from~$\capc_a(S_j,Q_j)$ proves~\eqref{eq:capacity-average-identity}.

It remains to compare the average with the pointwise value. Put
\begin{equation*}
 \ell_j(a)=(a^{p-2}s_j)^{\frac{1}{p}},\qquad
 v_j(y,\tau)=a^{-1}\redpot^a_{K_j,Q_j}(\ell_j(a)y,s_j\tau)\,.
\end{equation*}
For fixed~$a>0$ and all sufficiently large~$j$, one has~$4\ell_j(a)<2R_j$ and~$\rho_j<\ell_j(a)$. Thus
\begin{equation*}
 (v_j)_\tau=\Dp v_j,\qquad 0\le v_j\le1,
 \qquad\text{in }B_4\times(-3/2,1/2)\,.
\end{equation*}
The interior H\"older estimates in~\cite[Theorem~1 and Remark~0.2]{DiB86} for~$p>2$, and~\cite[Theorem~2.4]{CV} for~$\pc<p<2$, give constants~$C>0$ and~$\alpha\in(0,1)$ depending only on~$n,p$ such that
\begin{equation*}
 |v_j(y,0)-v_j(0,0)|\le C|y|^\alpha,
 \qquad |y|\le1\,.
\end{equation*}
Apply the cited interior H\"older estimates to~$v_j+1$ on~$B_4\times(-3/2,1/2)$. The supremum of~$v_j+1$ lies between~$1$ and~$2$, and the intrinsic distance from the smaller cylinder to the boundary of the fixed cylinder is bounded below. No positive lower bound for~$v_j$ is needed. Averaging over~$B_{\rho_j/\ell_j(a)}$ gives
\begin{equation}\label{eq:capacity-summable-average}
 |\mathcal D_a(j)-\frac{\redpot^a_{K_j,Q_j}(0,0)}a|
 \le C\left(\frac{\rho_j}{\ell_j(a)}\right)^\alpha
 =Ca^{-\frac{\alpha(p-2)}{p}}2^{-\frac{j\alpha}{p}}\,.
\end{equation}
For every fixed~$a$, the error is summable. Thus the two series have the same convergence or divergence at that height. Theorem~\ref{thm:potential-wiener} proves~\eqref{eq:capacity-wiener}. Its countable-height version and the same summable comparison prove the final statement of the remark.
\end{proof}

\section{Resolutivity}\label{sec:r-resolutivity}
We prove Theorem~\ref{thm:resolutivity} for continuous data on the full topological boundary, with the Perron classes of Definition~\ref{def:perron}. Throughout this section,~$n\ge1$ and~$1<p<\infty$. We construct upper and lower Perron competitors by positive and negative corrections of an approximation to the boundary datum, then use an energy estimate to identify their interior limits.

Fix~$\Theta$ as in Theorem~\ref{thm:resolutivity}. After translating time, choose a ball~$D_0\subset\R^n$ and~$T>0$ such that~$\overline\Theta\Subset Q:=D_0\times(0,T)$. Lemma~\ref{lem:r-data} approximates~$f$ uniformly on~$\partial\Theta$ by restrictions of Lipschitz functions~$F$ with~$F_t-\Dp F\in L^\infty(Q)$. Keep~$F$ fixed and set
\begin{equation*}
 h=1+\|F_t-\Dp F\|_{L^\infty(Q)},\qquad q=-F_t+\Dp F-h\le-1.
\end{equation*}
We first seek a nonnegative correction~$H$ that makes~$F+H$ a supersolution, then a nonpositive correction~$w$ that cancels its right-hand side:
\begin{align*}
 (F+H)_t-\Dp(F+H)&=F_t-\Dp F+h\ge1,\\
 (F+H+w)_t-\Dp(F+H+w)&=0\quad\text{in }\Theta.
\end{align*}

All correction problems are solved on~$Q$ with zero initial and lateral values. Upper and lower obstacles make the finite corrections vanish near~$\partial\Theta$, while the obstacle constraints become inactive on interior compact sets. The positive corrections satisfy~$H_j\uparrow H$ and~$H_j\ge0$. For each~$j$, we construct negative corrections~$w_{j,m}\le0$. Letting~$j\to\infty$ with~$m$ fixed gives~$w_m$, while letting~$m\to\infty$ with~$j$ fixed gives~$\widetilde w_j$. The remaining obstacle reactions give
\begin{equation}\label{eq:r-competitors}
 \begin{aligned}
 U_m&=F+H+w_m,&\qquad (U_m)_t-\Dp U_m&\ge0,\\
 V_j&=F+H_j+\widetilde w_j,&\qquad (V_j)_t-\Dp V_j&\le0.
 \end{aligned}
\end{equation}
For fixed~$m$,~$w_m$ vanishes near~$\partial\Theta$, so~$U_m\ge F$ there. For fixed~$j$,~$H_j$ vanishes near~$\partial\Theta$, so~$V_j\le F$ there. The canonical representatives therefore belong to the Perron classes on the full boundary, including its terminal portions. No boundary trace of the limiting corrections~$H$ and~$w$ is needed.

The energy estimate and Lemma~\ref{lem:r-two-limits} give~$\widetilde w_j\to w=\lim_m w_m$ by first letting~$m\to\infty$ with~$j$ fixed, then letting~$j\to\infty$. Thus~$U_m$ and a subsequence of~$V_j$ have the common almost-everywhere limit~$F+H+w$. The Perron inequalities identify the Perron solutions, and uniform stability in the boundary datum extends the conclusion to every continuous~$f$.

\subsection{Approximation of the boundary data}\label{sec:r-data}
Even if~$F$ is smooth,~$\Delta_pF$ may be unbounded when~$p<2$. For example, $F(x,t)=x_1^2$ has
\begin{equation*}
 \Delta_pF=2^{p-1}(p-1)|x_1|^{p-2}
\end{equation*}
away from the hyperplane $x_1=0$, and this is unbounded in the singular range. We therefore approximate the datum by an elliptic resolvent instead.

\begin{lemma}[Approximation by data with bounded source]\label{lem:r-data}
Fix a cylinder $Q=D_0\times(0,T)$ with $D_0$ a ball and $\overline\Theta\Subset Q$. The restrictions to $\partial\Theta$ of functions
\begin{equation}\label{eq:r-data-class}
 F\in W^{1,\infty}(Q)\cap C(\overline Q),\qquad F_t-\Dp F\in L^\infty(Q)
\end{equation}
are uniformly dense in $C(\partial\Theta)$. Here $W^{1,\infty}$ includes both the spatial and the time derivatives, and $F_t-\Dp F$ is interpreted distributionally.
\end{lemma}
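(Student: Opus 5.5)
Given $f\in C(\partial\Theta)$ and $\epsilon>0$, I will produce a function $F$ in the class \eqref{eq:r-data-class} with $|F-f|\le\epsilon$ on $\partial\Theta$. This is done in three steps.

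\emph{Step 1: a smooth approximant.} By Tietze, extend $f$ continuously to $\R^{n+1}$. Mollifying in space and time gives $G\in C^\infty_c(\R^{n+1})$ with $|G-f|<\epsilon/2$ on the compact set $\partial\Theta$. The function $G$ is smooth and Lipschitz in $(x,t)$ with $G_t\in L^\infty$. Only $\Dp G$ can fail to be bounded, and this happens only for $p<2$, as the example $x_1^2$ shows.

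\emph{Step 2: an elliptic resolvent at each time.} For a small parameter $\tau>0$ and each fixed $t$, let $F^\tau(\cdot,t)$ solve
\begin{equation*}
 F-\tau\Dp F=G(\cdot,t)\ \text{in }D_1,\qquad F=G(\cdot,t)\ \text{on }\partial D_1,
\end{equation*}
where $D_1$ is a ball slightly larger than $D_0$. Then $-\Dp F^\tau=(G-F^\tau)/\tau$, so $\Dp F^\tau$ is bounded once $F^\tau$ is bounded. The comparison principle gives $\|F^\tau-G\|_\infty\le$ the modulus of $G$ at the scale of the resolvent. Concretely, compare with $G(x_0,t)\pm(\omega(\delta)+M|x-x_0|^{2}/\delta^{2}$-type barriers$)$, or better with $G(x_0,t)\pm L|x-x_0|\pm C\tau^{1/p}$-type cones corrected by radial supersolutions. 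This gives $\|F^\tau-G\|_\infty\le C(L)\tau^{1/p}\to0$ uniformly in $t$. Hence $|F^\tau-f|<\epsilon$ on $\partial\Theta$ for small $\tau$, and $\|\Dp F^\tau\|_\infty\le C\tau^{1/p-1}$.

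\emph{Step 3: regularity in $x$ and $t$.} I still need $F^\tau\in W^{1,\infty}(Q)$ together with a bound on $F^\tau_t$. Both come from comparison and translation invariance.
\begin{itemize}
\item \emph{Spatial Lipschitz bound.} Translating $x\mapsto x+h$ and comparing the resolvents of $G(\cdot+h,t)$ and $G(\cdot,t)$ on the intersection of the balls, in the same spirit as the translation argument in the proof of Lemma~\ref{lem:penalty}, gives $|F^\tau(x+h,t)-F^\tau(x,t)|\le (\mathrm{Lip}_xG+L_b)|h|$. The boundary contribution $L_b$ is controlled by boundary barriers, using that the boundary data are smooth and the ball is uniformly convex.
\item \emph{Time Lipschitz bound.} The resolvent is an $L^\infty$-contraction, so $\|F^\tau(\cdot,t)-F^\tau(\cdot,s)\|_\infty\le\|G(\cdot,t)-G(\cdot,s)\|_\infty\le\mathrm{Lip}_tG\,|t-s|$. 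Hence $F^\tau_t\in L^\infty$.
\item \emph{Continuity and the distributional source.} Continuity on $\overline Q$ follows from these two Lipschitz bounds. The distribution $F^\tau_t-\Dp F^\tau$ is then the sum of two bounded functions, provided the pointwise elliptic equation integrates correctly in time. This holds because $F^\tau(\cdot,t)$ is a weak solution for each $t$ and the equation depends measurably on $t$.
\end{itemize}

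The main obstacle is the uniform boundary estimate in Step~2 and the lateral Lipschitz constant in Step~3 for the singular range. I would handle both with explicit radial barriers of the form $c\,\tau^{1/p}$ plus the power profile $|x|^{p/(p-1)}$, as in the torsion function \eqref{s:torsion}. Alternatively, I would avoid the lateral boundary entirely by working in all of $\R^n$ with $G$ compactly supported, where the $L^\infty$-contraction and translation arguments need no boundary.
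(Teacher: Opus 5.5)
Your argument is correct in substance, and its core is the paper's. You solve $F-\tau\Dp F=G(\cdot,t)$ slice by slice, get the time Lipschitz bound from the $L^\infty$ contraction and the spatial one from translations, and read off $F_t-\Dp F=F_t+(G-F)/\tau\in L^\infty$. The paper differs from you in two steps.

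First, it poses the resolvent variationally on a flat torus $\mathbb T^n\supset\overline{D_0}$, after cutting $G$ off in one coordinate chart. Translation invariance is then exact, and the spatial Lipschitz constant is $\|\nabla G\|_\infty$, with no boundary term and uniformly in $\tau$. The boundary estimate you single out as the main obstacle therefore never arises, and your $\R^n$ alternative is essentially this route.

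Second, the paper proves uniform convergence without barriers. Comparing $\frac12\int|v-G|^2+\frac\tau p\int|\nabla v|^p$ at $F^\tau$ and at $G$ gives $\sup_t\|F^\tau-G\|_2^2\le\frac{2\tau}{p}\sup_t\int|\nabla G|^p$. The $\tau$-uniform Lipschitz bound then upgrades this to uniform smallness.

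Your Dirichlet version also works, because for fixed $\tau$ you need no uniformity of the Lipschitz constant. Multiply $G$ by a cutoff equal to one near $\overline{D_0}$ and vanishing near $\partial D_1$. The torsion barrier of~\eqref{s:torsion} with $-\Dp T=2\|G\|_\infty/\tau$ then gives $|F^\tau|\le T$, which supplies the boundary term in the translation argument. Interior gradient estimates for $-\Dp F^\tau\in L^\infty$ would also do, since $\overline{D_0}\Subset D_1$.

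For the convergence, however, only the last barrier you mention works. The quadratic one fails for $p<2$, because $\Dp|x-x_0|^2$ is unbounded at $x_0$, and the bare cone must be rounded off. Let $L$ be the spatial Lipschitz constant of $G$, and set $\psi(s)=\frac1p+\frac{p-1}{p}s^{p/(p-1)}$ for $s\le1$ and $\psi(s)=s$ for $s\ge1$. The function $G(x_0,t)+c+L\rho\,\psi(|x-x_0|/\rho)$ lies above $G+c$ and has $\Dp\le nL^{p-1}/\rho$. It is therefore an upper barrier when $c=n\tau L^{p-1}/\rho$, and optimizing in $\rho$ gives $\|F^\tau-G\|_\infty\le C\tau^{1/2}L^{p/2}$.

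This is exactly the scaling of the cone datum $L|x|$, so a bound depending only on $L$ cannot be of order $\tau^{1/p}$ when $p<2$. Since only convergence is used, this slip is harmless. Your route yields a rate at the cost of barrier constructions, while the torus makes the paper's argument barrier-free.
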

\begin{proof}
By the real Stone--Weierstrass theorem, polynomial restrictions are uniformly dense in~$C(\partial\Theta)$. It therefore suffices to approximate a smooth function defined near~$\overline Q$. Choose a flat spatial torus~$\mathbb T^n$ containing~$\overline{D_0}$ in one coordinate chart. Multiplying by a spatial cutoff supported in that chart and equal to one near~$\overline{D_0}$ gives~$G\in C^\infty(\mathbb T^n\times[0,T])$ with the same values near~$\overline Q$.

For each $t$ and $\lambda>0$, minimize
\begin{equation}\label{eq:r-elliptic-functional}
 \frac12\int_{\mathbb T^n}|v-G(\cdot,t)|^2
 +\frac\lambda p\int_{\mathbb T^n}|\nabla v|^p
\end{equation}
over~$W^{1,p}(\mathbb T^n)\cap L^2(\mathbb T^n)$. A minimizing sequence is bounded in~$L^2$, and its gradients are bounded in~$L^p$. The~$L^2$ bound controls the mean, so Poincar\'e's inequality bounds the full~$W^{1,p}$ norm, also when~$p>2$. Weak compactness and weak lower semicontinuity give a minimizer~$F_\lambda(\cdot,t)$. Strict convexity gives uniqueness. The Euler equation for~$F_\lambda$ is
\begin{equation}\label{eq:r-resolvent}
 F_\lambda-\lambda\Delta_pF_\lambda=G(\cdot,t).
\end{equation}

Let~$u_i$ solve~\eqref{eq:r-resolvent} with bounded data~$g_i$. Subtract the weak equations and test by~$(u_1-u_2-\|g_1-g_2\|_\infty)_+$, which belongs to~$W^{1,p}\cap L^2$. Monotonicity gives
\begin{equation*}
 \int_{\mathbb T^n}(u_1-u_2-\|g_1-g_2\|_\infty)_+^2\le0.
\end{equation*}
Interchanging~$u_1,u_2$ gives
\begin{equation}\label{eq:r-resolvent-contraction}
 \|u_1-u_2\|_\infty\le\|g_1-g_2\|_\infty.
\end{equation}
Comparison with constants also gives~$\|F_\lambda\|_\infty\le\|G\|_\infty$.

Spatial translation invariance and \eqref{eq:r-resolvent-contraction} show that
\begin{equation*}
 \|F_\lambda(\cdot+y,t)-F_\lambda(\cdot,t)\|_\infty
 \le \|G(\cdot+y,t)-G(\cdot,t)\|_\infty.
\end{equation*}
By difference quotients,~$F_\lambda(\cdot,t)$ has a Lipschitz representative with constant at most~$\|\nabla G\|_\infty$. Similarly,
\begin{equation*}
 \|F_\lambda(\cdot,t)-F_\lambda(\cdot,s)\|_\infty
 \le\|G(\cdot,t)-G(\cdot,s)\|_\infty
 \le\|G_t\|_\infty|t-s|.
\end{equation*}
For these representatives, the last inequality holds pointwise. Thus~$F_\lambda$ is Lipschitz in~$(x,t)$, uniformly in~$\lambda$, and~$\|(F_\lambda)_t\|_\infty\le\|G_t\|_\infty$.

Comparison of \eqref{eq:r-elliptic-functional} at $F_\lambda$ and $G$ gives
\begin{equation}\label{eq:r-resolvent-L2}
 \sup_t\|F_\lambda(\cdot,t)-G(\cdot,t)\|_2^2
 \le\frac{2\lambda}{p}\sup_t\int_{\mathbb T^n}|\nabla G(\cdot,t)|^p.
\end{equation}
Since~$F_\lambda(\cdot,t)$ is uniformly Lipschitz, estimate~\eqref{eq:r-resolvent-L2} gives~$F_\lambda\to G$ uniformly on~$\mathbb T^n\times[0,T]$. Indeed, if~$|F_\lambda-G|\ge\varepsilon$ at one point, then~$|F_\lambda-G|\ge\varepsilon/2$ on a spatial ball whose radius is bounded below independently of~$t,\lambda$. This contradicts~\eqref{eq:r-resolvent-L2} as~$\lambda\to0$.
Finally \eqref{eq:r-resolvent} gives, on $Q$,
\begin{equation}\label{eq:r-bounded-source}
 (F_\lambda)_t-\Dp F_\lambda=(F_\lambda)_t-\frac{F_\lambda-G}{\lambda}\in L^\infty(Q).
\end{equation}
Integrating~\eqref{eq:r-resolvent} in~$t$ shows that it also holds in space-time distributions. Thus~$F_\lambda$ satisfies~\eqref{eq:r-data-class}, and its restriction to~$\partial\Theta$ approximates that of~$G$ uniformly.
\end{proof}

\begin{remark}\label{rem:r-fixed-data}
We keep~$F$ fixed while approximating the obstacles and exhausting~$\Theta$. The bound for~$\|(F_\lambda)_t-\Dp F_\lambda\|_\infty$ may depend on~$\lambda$. We approximate~$f$ uniformly only after taking these limits.
\end{remark}

\subsection{Obstacle solutions and stability}\label{sec:r-obstacles}
Adding a correction~$v$ to~$F$ changes the spatial flux by~$A(\nabla F+\nabla v)-A(\nabla F)$. We therefore consider the shifted flux~$A(a+\nabla v)-A(a)$ for~$a\in L^p(Q;\R^n)$. For a prescribed right-hand side~$q$ and obstacle~$\psi$, let~$W_q[a,\psi]$ be the solution constructed by smooth approximation in Appendix~\ref{sec:r-flux}. The estimates below compare~$W_q[a,\psi]$ as~$a$ varies, with~$q,\psi$ fixed. Norms without a displayed domain are over~$Q$, or over~$D_0$ when applied to a fixed time slice; the constants~$C_p,c_p>0$ depend only on~$p$.

\begin{lemma}[Obstacle solutions and stability]\label{lem:r-obstacle}
Let~$a\in L^p(Q;\R^n)$,~$q\in L^\infty(Q)$ with~$q\le0$, and~$\psi\in C_c^\infty(Q)$ with~$\psi\le0$.
\begin{enumerate}[(i)]
\item \textup{Existence and bounds.} There is a solution~$w=W_q[a,\psi]$ with initial value zero and
\begin{equation*}
 w\in C([0,T];L^2(D_0))\cap L^p(0,T;W_0^{1,p}(D_0))\,.
\end{equation*}
It satisfies
\begin{gather}
 \psi\le w\le0,\qquad w\ge-t\|q\|_\infty\,,\label{eq:r-obstacle-bounds}\\
 w_t-\div\bigl(A(a+\nabla w)-A(a)\bigr)\ge q\,.\label{eq:r-obstacle-super}
\end{gather}
The differential inequality holds in distributions, with equality on every open set where~$\psi=-M$ for a constant~$M>T\|q\|_\infty$. For fixed~$a,q$,~$W_q[a,\psi]$ is nondecreasing in~$\psi$. Moreover,
\begin{equation}\label{eq:r-obstacle-energy}
 \sup_t\|w(t)\|_2^2+\|\nabla w\|_p^p
 \le C_p\bigl(\|a\|_p^p+T\|q\|_\infty^2|Q|\bigr)\,.
\end{equation}
For~$p>2$, the term~$\|a\|_p^p$ on the right may be omitted.

\item \textup{Stability.} For~$b\in L^p(Q;\R^n)$ and the same~$q,\psi$, let~$w_a=W_q[a,\psi]$,~$w_b=W_q[b,\psi]$, and set
\begin{equation*}
 \mathcal E_{a,b}=A(a+\nabla w_b)-A(a)-A(b+\nabla w_b)+A(b)\,.
\end{equation*}
If~$1<p\le2$ and~$\|a\|_p+\|b\|_p\le B$, then
\begin{equation}\label{eq:r-common-singular}
 \sup_t\|w_a(t)-w_b(t)\|_2^2+\|\nabla w_a-\nabla w_b\|_p^2
 \le C\|\mathcal E_{a,b}\|_{p'}^2\,,
\end{equation}
where~$C=C(p,Q,\|q\|_\infty,B)$. If~$p>2$, then
\begin{equation}\label{eq:r-common-degenerate}
 \sup_t\|w_a(t)-w_b(t)\|_2^2+\|\nabla w_a-\nabla w_b\|_p^p
 \le C_p\|\mathcal E_{a,b}\|_{p'}^{p'}\,.
\end{equation}
These constants are independent of~$\psi$ and its derivatives.
\end{enumerate}
\end{lemma}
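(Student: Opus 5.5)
We construct $W_q[a,\psi]$ by penalization, following Lemma~\ref{lem:penalty}: the lower obstacle $\psi$ is enforced by a bounded penalty. For $\epsilon>0$ we solve
\begin{equation*}
 w_t-\div\bigl(A(a+\nabla w)-A(a)\bigr)=q+\epsilon^{-1}(\psi-w)_+\,,\qquad w=0\text{ on }\partial_pQ,
\end{equation*}
on $V=W^{1,p}_0(D_0)\cap L^2(D_0)$. The operator $z\mapsto-\div(A(a+\nabla z)-A(a))$ is monotone and hemicontinuous. Young's inequality gives coercivity and growth with the integrable term $|a|^p$, so the monotone evolution theorem of~\cite{Liu} applies, as in Lemma~\ref{lem:forced-flux}. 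Where $a$ is only $L^p$ we first mollify $a$ (this is the smooth approximation of Appendix~\ref{sec:r-flux}) and pass to the limit using the stability estimate proved below.

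For the bounds~\eqref{eq:r-obstacle-bounds} we use positive-part comparisons, which work because constants are solutions of the shifted homogeneous operator. Since $q\le0$ and $\psi\le0$, testing with $w_+$ gives $w\le0$: the penalty vanishes where $w>0\ge\psi$. Next, $-t\|q\|_\infty$ is a subsolution with right side $-\|q\|_\infty\le q$, so testing with $(-t\|q\|_\infty-w)_+$ gives the lower bound. Monotonicity in $\psi$ follows from comparison of penalized problems, because the penalty is nondecreasing in $\psi$ and nonincreasing in $w$. The penalty source is nonnegative, which gives~\eqref{eq:r-obstacle-super}. The estimate $M_\epsilon=\iint\epsilon^{-1}(\psi-w)_+\le C$ follows by testing against a cutoff, as in Step~4 of Lemma~\ref{lem:penalty}. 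As $\epsilon\to0$, the limit satisfies $w\ge\psi$. On an open set where $\psi=-M<-T\|q\|_\infty\le w$, the penalty vanishes eventually, so equality holds there.

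For~\eqref{eq:r-obstacle-energy} we test with $w$ itself, which has zero trace. The diffusion term is bounded below by $c|\nabla w|^p$ minus $C|a|^p$ when $p<2$. For $p>2$ we use~\eqref{eq:monotonicity} directly, which gives $c|\nabla w|^p$ without any $a$ term. The source contributes $\iint qw\le\|q\|_\infty\iint|w|\le T\|q\|_\infty^2|Q|$ by the pointwise bound $|w|\le t\|q\|_\infty$. The penalty contributes $\epsilon^{-1}(\psi-w)_+w\le0$, since $w\le0$. Together these give the stated form.

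For stability we subtract the equations for $w_a$ and $w_b$ and test with $w_a-w_b$. We rewrite
\begin{equation*}
 A(a+\nabla w_a)-A(a)-\bigl(A(b+\nabla w_b)-A(b)\bigr)=\bigl[A(a+\nabla w_a)-A(a+\nabla w_b)\bigr]+\mathcal E_{a,b}.
\end{equation*}
The obstacle reactions have the right sign: at the penalized level, $(\epsilon^{-1}(\psi-w_a)_+-\epsilon^{-1}(\psi-w_b)_+)(w_a-w_b)\le0$. The constants are therefore independent of $\psi$.

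For $p>2$, the first bracket tested against $\nabla(w_a-w_b)$ is at least $2^{2-p}|\nabla(w_a-w_b)|^p$. Young's inequality absorbs $\iint\mathcal E_{a,b}\cdot\nabla(w_a-w_b)$, which gives~\eqref{eq:r-common-degenerate}.

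The main obstacle is $1<p\le2$, where monotonicity only gives $\iint(|\xi|+|\eta|)^{p-2}|\xi-\eta|^2$. We plan to use the standard H\"older reverse: $\|\nabla(w_a-w_b)\|_p^2$ is bounded by this weighted integral times $\|(|a+\nabla w_a|+|a+\nabla w_b|)\|_p^{2-p}$. The last factor is bounded by $C(B,\|q\|_\infty,Q)$ through~\eqref{eq:r-obstacle-energy}. Absorbing $\|\mathcal E_{a,b}\|_{p'}\|\nabla(w_a-w_b)\|_p$ by Young's inequality then yields~\eqref{eq:r-common-singular}. All estimates pass to the penalization limit by weak lower semicontinuity.
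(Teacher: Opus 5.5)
Your estimates at a fixed penalty parameter are sound and match the paper's: positive-part comparisons for \eqref{eq:r-obstacle-bounds}, the test with $w$ for \eqref{eq:r-obstacle-energy}, and the weighted H\"older step for $p\le2$. The gap is the passage $\epsilon\to0$, and your claim that everything passes to the limit by weak lower semicontinuity is false. You only know $\nabla w_\epsilon\rightharpoonup\nabla w$ weakly in $L^p$, and this fails in two places.

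\emph{The differential inequality.} Weak convergence does not identify the weak limit of $A(a+\nabla w_\epsilon)$ where the penalty is active. So \eqref{eq:r-obstacle-super} does not follow from the sign of the penalty. On open sets where $\psi=-M$ the penalty vanishes for every $\epsilon$, and Lemma~\ref{lem:r-local} would give the equation there, but nothing covers the contact region.

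\emph{The stability estimate.} At level $\epsilon$ the right-hand side is $\|\mathcal E^\epsilon_{a,b}\|_{p'}$, where $\mathcal E^\epsilon_{a,b}$ depends nonlinearly on $\nabla w_{b,\epsilon}$. Lower semicontinuity acts only on the left-hand side. So you do not get \eqref{eq:r-common-singular}--\eqref{eq:r-common-degenerate} with $\mathcal E_{a,b}$ evaluated at the limit $\nabla w_b$. That is exactly the form used later, through Lemma~\ref{lem:r-two-limits} applied to $\mathcal E_j(\nabla w_m)$. Your sign argument for the penalty terms needs a common parameter. Comparing $w_\epsilon$ with $w_{\epsilon'}$ leaves the term $(\epsilon^{-1}+\epsilon'^{-1})\iint(\psi-w_\epsilon)_+(\psi-w_{\epsilon'})_+$. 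This tends to zero only if the reactions $\epsilon^{-1}(\psi-w_\epsilon)_+$ are bounded in $L^2(Q)$ uniformly in $\epsilon$.

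\emph{The missing bound.} Testing the penalized equation with $(\psi-w_\epsilon)_+$ gives
\begin{equation*}
\|\epsilon^{-1}(\psi-w_\epsilon)_+\|_2\le\|\psi_t-\div(A(a+\nabla\psi)-A(a))-q\|_2\,.
\end{equation*}
The right side is finite for smooth $a$ when $p\ge2$. For $p<2$ it is finite in general only after replacing $A$ by a regularized flux $A_\delta$, since $A$ is not $C^1$ at the origin. With this bound, $\nabla w_\epsilon$ is Cauchy in $L^p(Q)$. The remaining limits $\delta\to0$ and $a_\ell\to a$ are then strong, because the stability estimate itself, with flux errors small in $L^{p'}$, makes the approximations Cauchy.

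This is the paper's route. It takes the obstacle solution directly from the variational inequality of \cite[Theorem~2.2]{GMTV}. That theorem already supplies the reaction bound $0\le\mu_\delta\le(\psi_t-\div(A_\delta(a+\nabla\psi)-A_\delta(a))-q)_+\in L^\infty$. Stability is proved by testing two variational inequalities, so the only approximation parameters are $\delta$ and $a_\ell$, both controlled by the stability estimate.

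A lesser point: the cutoff mass bound from Step~4 of Lemma~\ref{lem:penalty} does not transfer verbatim. Off $\supp\psi$ the penalty equals $\epsilon^{-1}(-w_\epsilon)_+$ and is active wherever $w_\epsilon<0$; this is what forces $w=0$ there. So only interior mass bounds are available. Together with the uniform $L^\infty$ bound, these still give $w\ge\psi$ in the limit.
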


The variational theory of~\cite[Theorem~2.2]{GMTV} gives the solutions when~$a$ is smooth. In Appendix~\ref{sec:r-flux}, we check its hypotheses and compare two solutions by testing their variational inequalities. We then approximate~$a\in L^p(Q;\R^n)$ by smooth functions.

\begin{remark}\label{rem:r-reactions}
For~$a\in L^p(Q;\R^n)$ and~$w=W_q[a,\psi]$, the distribution
\begin{equation*}
 \mu:=w_t-\div\bigl(A(a+\nabla w)-A(a)\bigr)-q
\end{equation*}
is nonnegative. We use the identity~$\mu_\delta(w_\delta-\psi)=0$ from~\eqref{eq:r-regularized-contact} only for smooth~$a$, with the flux regularized when~$1<p\le2$; in this case~$\mu_\delta\in L^\infty(Q)$. The fluxes converge strongly, so we can pass to the limit in the differential inequality and in the equation where~$\psi=-M$ and~$M>T\|q\|_\infty$. We derive the comparison estimates from the variational inequalities, without pairing~$\mu$ with a difference of solutions.
\end{remark}

\subsection{Convergence of gradients and flux errors}\label{sec:r-limits}
We first obtain strong~$L^p$ convergence of the gradients on compact subsets where the obstacle constraints are inactive. We then control the flux error when the background fields satisfy~$a_j\to a$ almost everywhere and are bounded in~$L^p$. These estimates identify the limits of the negative corrections constructed using~$H_j$ and~$H$ by taking the obstacle limit first and the background limit second.

\paragraph{Estimates for~$A$.}
For~$1<p\le2$ and~$x,y\in\R^n$,
\begin{align}
 |A(x)-A(y)|&\le C_p|x-y|^{p-1},\label{eq:r-holder-unregularized}\\
 (A(x)-A(y))\cdot(x-y)
 &\ge c_p\frac{|x-y|^2}{(|x|+|y|)^{2-p}}.\label{eq:r-monotonicity-unregularized}
\end{align}
These are the unregularized cases of~\eqref{eq:r-holder-flux} and~\eqref{eq:r-weighted-monotonicity}, proved in Appendix~\ref{sec:r-flux}. Zero divided by zero is interpreted as zero; for~$p=2$ the denominator is one.

For~$a,b,\xi\in\R^n$, grouping the flux differences first by~$a,b$ and then by the increments~$\xi$ gives
\begin{align}
 |A(b+\xi)-A(b)-A(a+\xi)+A(a)|&\le C_p|b-a|^{p-1},\label{eq:r-singular-background}\\
 |A(b+\xi)-A(b)-A(a+\xi)+A(a)|&\le C_p|\xi|^{p-1}.\label{eq:r-singular-gradient}
\end{align}

For~$d\in L^p(Q;\R^n)$ and~$S\in L^p(Q)$ with~$S\ge|d|$, we use
\begin{equation}\label{eq:r-weighted-holder}
 \|d\|_p^2
 \le \left(\iint_Q\frac{|d|^2}{S^{2-p}}\,dx\,dt\right)\|S\|_p^{2-p}.
\end{equation}
Here zero divided by zero is interpreted as zero.
For $p<2$, apply H\"older with exponents $2/p$ and $2/(2-p)$ to
$|d|^p=(|d|^2/S^{2-p})^{\frac{p}{2}}S^{\frac{p(2-p)}{2}}$ and raise the result to $2/p$. For $p=2$ the inequality is an identity.

For~$p>2$ and~$a,b,\xi,\eta\in\R^n$, we instead use
\begin{align}
 (A(a+\xi)-A(a+\eta))\cdot(\xi-\eta)&\ge c_p|\xi-\eta|^p,\label{eq:r-deg-monotonicity}\\
 |A(a+\xi)-A(a)|&\le C_p|\xi|(|a|+|\xi|)^{p-2}.\label{eq:r-deg-growth}
\end{align}
These follow by integrating~$DA$ along a segment. In particular,~$\bigl(A(a+\xi)-A(a)\bigr)\cdot\xi\ge c_p|\xi|^p$.

Grouping the flux differences first by~$a,b$ and then by~$\xi$ gives
\begin{align}
 |A(b+\xi)-A(b)-A(a+\xi)+A(a)|&\le C_p|b-a|(|a|+|b|+|\xi|)^{p-2},\label{eq:r-deg-background}\\
 |A(b+\xi)-A(b)-A(a+\xi)+A(a)|&\le C_p|\xi|(|a|+|b|+|\xi|)^{p-2}.\label{eq:r-deg-gradient}
\end{align}

\begin{lemma}[Local convergence of gradients]\label{lem:r-local}
Let $U\subset Q$ be open, $a\in L^p(Q;\R^n)$ and $r\in L^\infty(Q)$. Suppose $v_k\to v$ in $L^p(Q)\cap L^2(Q)$ and $\nabla v_k$ is bounded in $L^p(Q)$. Assume that on every cylinder compactly contained in $U$, all sufficiently large $k$ satisfy
\begin{equation*}
 (v_k)_t-\div \bigl(A(a+\nabla v_k)-A(a)\bigr)=r.
\end{equation*}
Then $\nabla v_k\to \nabla v$ strongly in $L^p_{\rm loc}(U)$, and
$v_t-\div \bigl(A(a+\nabla v)-A(a)\bigr)=r$ in $U$.
\end{lemma}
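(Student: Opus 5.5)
The plan is the standard local Minty/Boccardo--Murat argument for parabolic equations with the shifted monotone flux $\mathcal A(x,t,\xi)=A(a+\xi)-A(a)$. This flux is monotone in $\xi$, satisfies $\mathcal A(\cdot,0)=0$, and obeys the growth bounds \eqref{eq:r-holder-unregularized} and \eqref{eq:r-deg-growth}. Consequently $\mathcal A(\cdot,\nabla v_k)$ is bounded in $L^{p'}(Q)$; for $p>2$ this uses $|\xi|(|a|+|\xi|)^{p-2}\le C(|a|^{p-1}+|\xi|^{p-1})$. After passing to a subsequence we may assume $\nabla v_k\rightharpoonup\nabla v$ weakly in $L^p$ and $\mathcal A(\cdot,\nabla v_k)\rightharpoonup\chi$ weakly in $L^{p'}$.

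Fix a cylinder $Q'=D'\times(t_1,t_2)\Subset U$ and a cutoff $\zeta\in C_c^\infty(U)$, $0\le\zeta\le1$, equal to one on $Q'$. For large $k$ the equation holds on a cylinder containing $\supp\zeta$. Test the equation for $v_k$ with $\zeta(v_k-v)$, after Steklov time regularization, since $v$ has no a priori time derivative. The time term becomes
\begin{equation*}
 \tfrac12\iint\zeta_t\,|v_k-v|^2\,dx\,dt+\text{(pairing of }v_t\text{ with }\zeta(v_k-v)).
\end{equation*}
To control the $v_t$ pairing, first derive the limit equation $v_t-\div\chi=r$ in $U$ by passing to the limit in the distributional equation. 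This gives $v_t\in L^{p'}_{\rm loc}(W^{-1,p'})$, so the pairing is legitimate after mollification. Since $v_k\to v$ in $L^2$ and $\nabla v_k\rightharpoonup\nabla v$, the pairing tends to zero.

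The terms $\iint \mathcal A(\nabla v_k)\cdot\nabla\zeta\,(v_k-v)$ and $\iint r\zeta(v_k-v)$ tend to zero by strong $L^p$ convergence of $v_k$. The remaining content is
\begin{equation*}
 \limsup_k\iint\zeta\,\bigl(\mathcal A(\nabla v_k)-\mathcal A(\nabla v)\bigr)\cdot(\nabla v_k-\nabla v)\le0,
\end{equation*}
where we subtract the term involving $\mathcal A(\nabla v)$, which tends to zero by weak convergence. The main obstacle is making this time-derivative step rigorous, because $v$ is only a weak limit. I would handle it as in Boccardo--Dall'Aglio--Gallouët--Orsina: use the Steklov-averaged version of $v$ and the identity for $\tfrac12\partial_t|v_k-v|^2$. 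If that becomes delicate, an alternative is to apply Landes' time mollification $v^{(\nu)}$ in place of $v$ and let $\nu\to\infty$ after $k$.

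Having the limsup inequality, strong convergence follows from quantitative monotonicity. For $p>2$, \eqref{eq:r-deg-monotonicity} with $\xi=\nabla v_k$ and $\eta=\nabla v$ gives $\iint_{Q'}|\nabla v_k-\nabla v|^p\to0$ directly. For $p\le2$, \eqref{eq:r-monotonicity-unregularized} applied to $x=a+\nabla v_k$ and $y=a+\nabla v$ controls the weighted quantity $\iint\zeta|d|^2/S^{2-p}$, where $d=\nabla v_k-\nabla v$ and $S=|a+\nabla v_k|+|a+\nabla v|$. Then \eqref{eq:r-weighted-holder}, with $S$ bounded in $L^p$, gives $\|d\|_{L^p(Q')}\to0$. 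Finally, strong convergence of the gradients and continuity of $A$ give $\chi=\mathcal A(\cdot,\nabla v)$ almost everywhere on $U$, by Vitali and the $L^{p'}$ bound. This yields the limit equation. Since the limit is identified independently of the subsequence, the whole sequence converges.
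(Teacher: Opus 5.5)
Your argument is correct, but it takes a different route from the paper.

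\textbf{Your route.} You compare $v_k$ with the limit $v$, in the Minty/Boccardo--Murat style. This requires three preliminary steps:
\begin{enumerate}[(i)]
\item extract a subsequence with $A(a+\nabla v_k)-A(a)\rightharpoonup\chi$;
\item derive the auxiliary equation $v_t-\div\chi=r$, so that $v_t$ lies locally in $L^{p'}(W^{-1,p'})+L^\infty$;
\item control the pairing $\langle v_t,\zeta(v_k-v)\rangle$ by weak convergence of $\zeta(v_k-v)$ to zero in $L^p(W^{1,p}_0)$.
\end{enumerate}

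\textbf{The paper's route.} The paper shows directly that $(\nabla v_k)$ is Cauchy in $L^p_{\rm loc}$. For large $k,\ell$, both $v_k$ and $v_\ell$ solve the equation on the cylinder with the same $r$. Hence $(v_k-v_\ell)_t=\div J$ with $J=A(a+\nabla v_k)-A(a+\nabla v_\ell)$ and no source. Testing with $\eta^2(v_k-v_\ell)$, justified by time mollification since each $(v_k)_t$ already lies in the dual space by its own equation, gives
\[
\iint\eta^2J\cdot(\nabla v_k-\nabla v_\ell)=\iint\eta\eta_t(v_k-v_\ell)^2-2\iint\eta(v_k-v_\ell)J\cdot\nabla\eta.
\]
The right-hand side tends to zero because $v_k-v_\ell\to0$ in $L^2$ and $L^p$ while $J$ is bounded in $L^{p'}$. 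The same monotonicity inequalities then finish both proofs.

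\textbf{Comparison.} The Cauchy route is more economical. It needs no subsequence and no unidentified flux $\chi$, and it uses nothing about $v_t$ until the limit equation is read off at the end. So the step you flag as the main obstacle never arises there. It is not actually delicate in your setup either. Once $v_t=\div\chi+r$ holds, the chain rule for $\zeta(v_k-v)^2$ follows after Steklov averaging, exactly as for differences of two approximants. The Landes-mollification fallback is therefore unnecessary.

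Your route is the standard compactness template and identifies the limit flux as part of the argument. For this lemma, however, it is strictly more work than comparing two members of the sequence.
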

\begin{proof}
Work first in a compact subcylinder where both equations hold. For a smooth compactly supported cutoff~$\eta$ there, set~$J=A(a+\nabla v_k)-A(a+\nabla v_\ell)$. Subtracting the equations gives~$(v_k-v_\ell)_t-\div J=0$ in distributions. Testing with $\eta^2(v_k-v_\ell)$ gives
\begin{equation}\label{eq:r-local-test}
 \iint\eta^2J\cdot (\nabla v_k-\nabla v_\ell)\,dx\,dt
 =\iint\eta\eta_t (v_k-v_\ell)^2\,dx\,dt-2\iint\eta (v_k-v_\ell)J\cdot \nabla \eta\,dx\,dt.
\end{equation}
To justify the test, convolve~$v_k-v_\ell$ and~$J$ in time inside a slightly larger cylinder. Test the convolved equation with~$\eta^2(v_k-v_\ell)_h$. Since~$\partial_t(v_k-v_\ell)_h\in L^2_{\rm loc}$, we can integrate by parts in time. The time mollifications converge strongly in~$L^2$,~$L^p$ and~$L^{p'}$, so we can let~$h\to0$.

Estimates~\eqref{eq:r-holder-unregularized} and~\eqref{eq:r-deg-growth} give a uniform~$L^{p'}$ bound for~$J$, since~$a$ is fixed and~$(\nabla v_k)$ is bounded in~$L^p$. Hence the right side of \eqref{eq:r-local-test} tends to zero. For~$p>2$,~\eqref{eq:r-deg-monotonicity} shows that~$(\nabla v_k)$ is Cauchy in~$L^p$ locally. For $1<p\le2$, use \eqref{eq:r-monotonicity-unregularized} and then \eqref{eq:r-weighted-holder} on a compact set where $\eta=1$, with $S=|a+\nabla v_k|+|a+\nabla v_\ell|$. Its $L^p$ norm is uniformly bounded. In either case,~$\nabla v_k\to\nabla v$ in~$L^p_{\rm loc}(U)$, since~$\nabla v$ is the distributional limit. The fluxes converge in~$L^{p'}_{\rm loc}(U)$, so~$v$ satisfies the equation. Covering compact subsets of $U$ by finitely many such cylinders completes the proof.
\end{proof}

The sequences~$(a_j)$ and~$(\xi_m)$ in the flux error may both converge only almost everywhere, with bounded~$L^p$ norms. These bounds allow concentration. We first let~$m\to\infty$ with~$j$ fixed, then let~$j\to\infty$ with~$\xi$ fixed. At each step, a fixed integrable factor gives uniform integrability.

\begin{lemma}[Successive limits of the flux error]\label{lem:r-two-limits}
Let~$a_j\to a$ and~$\xi_m\to\xi$ almost everywhere in~$Q$, with both sequences bounded in~$L^p(Q;\R^n)$. Set
\begin{equation*}
 \mathcal E_j(\eta):=A(a_j+\eta)-A(a_j)-A(a+\eta)+A(a).
\end{equation*}
Then, in~$L^{p'}(Q)$,
\begin{equation*}
 \begin{aligned}
 \mathcal E_j(\xi_m)&\longrightarrow\mathcal E_j(\xi)
 &&\text{as }m\to\infty\quad\text{for each fixed }j,\\
 \mathcal E_j(\xi)&\longrightarrow0
 &&\text{as }j\to\infty.
 \end{aligned}
\end{equation*}
\end{lemma}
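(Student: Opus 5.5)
Both limits will follow from Vitali's convergence theorem on the finite-measure cylinder~$Q$. Since~$Q$ is bounded, convergence in~$L^{p'}(Q)$ follows from almost-everywhere convergence together with uniform integrability of the~$p'$-th powers. Pointwise convergence is immediate in both limits because~$A$ is continuous. For fixed~$j$, $\mathcal E_j(\xi_m)\to\mathcal E_j(\xi)$ almost everywhere since~$\xi_m\to\xi$ a.e. For fixed~$\xi$, $\mathcal E_j(\xi)\to A(a+\xi)-A(a)-A(a+\xi)+A(a)=0$ almost everywhere since~$a_j\to a$ a.e. The plan is therefore to produce, in each limit, a pointwise bound on~$|\mathcal E|^{p'}$ by a quantity that is uniformly integrable. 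Following the remark before the lemma, I will always choose the estimate whose dominating factor is a \emph{fixed} function, and never one whose bound involves the varying sequence alone.

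\emph{First limit ($j$ fixed, $m\to\infty$).} For~$1<p\le2$, use~\eqref{eq:r-singular-background} with background pair~$a_j,a$ and increment~$\xi_m$. This gives $|\mathcal E_j(\xi_m)|\le C_p|a_j-a|^{p-1}$, hence $|\mathcal E_j(\xi_m)|^{p'}\le C|a_j-a|^p$. The right side is a fixed integrable function, so dominated convergence applies directly. For~$p>2$, use~\eqref{eq:r-deg-background}:
\begin{equation*}
 |\mathcal E_j(\xi_m)|^{p'}\le C|a_j-a|^{p'}\bigl(|a|+|a_j|+|\xi_m|\bigr)^{(p-2)p'}\,.
\end{equation*}
Here~$|\xi_m|$ is only bounded in~$L^p$. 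I would apply H\"older's inequality with exponents~$p/p'=p-1$ and~$(p-1)/(p-2)$ on an arbitrary measurable set~$S$. Since~$(p-2)p'(p-1)/(p-2)=p$, this yields
\begin{equation*}
 \int_S|\mathcal E_j(\xi_m)|^{p'}\le C\Bigl(\int_S|a_j-a|^p\Bigr)^{\frac{1}{p-1}}\bigl(\|a\|_p+\|a_j\|_p+\|\xi_m\|_p\bigr)^{\frac{(p-2)p}{p-1}}\,.
\end{equation*}
The first factor is small uniformly in~$m$ when~$|S|$ is small, because~$|a_j-a|^p$ is a fixed integrable function. The second factor is bounded. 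This gives uniform integrability, and Vitali's theorem gives the first convergence.

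\emph{Second limit ($\xi$ fixed, $j\to\infty$).} The roles are swapped: the fixed function is now~$\xi$. For~$p\le2$, \eqref{eq:r-singular-gradient} gives $|\mathcal E_j(\xi)|^{p'}\le C|\xi|^p$, which is a fixed integrable majorant, so dominated convergence applies. For~$p>2$, use~\eqref{eq:r-deg-gradient} and the same H\"older split. The small factor is now~$(\int_S|\xi|^p)^{1/(p-1)}$, and the bounded factor involves~$\|a\|_p+\|a_j\|_p+\|\xi\|_p$. Uniform integrability and Vitali's theorem then finish the proof. Note that~$\xi\in L^p$ by Fatou's lemma, since~$\xi_m$ is bounded in~$L^p$ and converges to~$\xi$ almost everywhere.

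The only delicate point, and the expected obstacle, is the degenerate range. There, neither background nor increment alone dominates, so the mixed estimates and the H\"older split are needed. Uniform integrability would genuinely fail if the two limits were interchanged or taken jointly, because~$L^p$-bounded sequences may concentrate. This is why the order of limits matters.
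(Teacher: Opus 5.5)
Your proposal is correct and follows the paper's proof essentially line by line. You use the same background estimates \eqref{eq:r-singular-background}/\eqref{eq:r-deg-background} with the fixed factor $|a_j-a|^p$ for the first limit, the gradient estimates \eqref{eq:r-singular-gradient}/\eqref{eq:r-deg-gradient} with the fixed factor $|\xi|^p$ for the second, the same H\"older split with exponents $p-1$ and $(p-1)/(p-2)$ when $p>2$, and Vitali's theorem on the bounded cylinder. One point is left implicit: $|a_j-a|^p$ is integrable only because $a\in L^p(Q)$, and this follows from Fatou's lemma exactly as you observed for $\xi$.
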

\begin{proof}
Fix~$j$ and let~$S\subset Q$ be measurable. For~$1<p\le2$,~\eqref{eq:r-singular-background} gives
\begin{equation*}
 |\mathcal E_j(\xi_m)|^{p'}\le C_p|a_j-a|^p,
\end{equation*}
with an integrable right side independent of~$m$. For~$p>2$,~\eqref{eq:r-deg-background} and H\"older give
\begin{equation}\label{eq:r-vitali-first}
 \int_S|\mathcal E_j(\xi_m)|^{p'}
 \le C_p\left(\int_S|a_j-a|^p\right)^{\frac{1}{p-1}}
 \left(\int_S(|a_j|+|a|+|\xi_m|)^p\right)^{\frac{p-2}{p-1}}.
\end{equation}
The second factor is bounded uniformly in~$m$, and the first tends to zero as~$|S|\to0$. Thus~$|\mathcal E_j(\xi_m)|^{p'}$ is uniformly integrable in either range. Since~$\mathcal E_j(\xi_m)\to\mathcal E_j(\xi)$ almost everywhere by continuity of~$A$, Vitali gives the first limit.

For the second limit and~$1<p\le2$,~\eqref{eq:r-singular-gradient} gives
\begin{equation*}
 |\mathcal E_j(\xi)|^{p'}\le C_p|\xi|^p.
\end{equation*}
For~$p>2$,~\eqref{eq:r-deg-gradient} and H\"older give
\begin{equation}\label{eq:r-vitali-second}
 \int_S|\mathcal E_j(\xi)|^{p'}
 \le C_p\left(\int_S|\xi|^p\right)^{\frac{1}{p-1}}
 \left(\int_S(|a_j|+|a|+|\xi|)^p\right)^{\frac{p-2}{p-1}}.
\end{equation}
Here the second factor is uniformly bounded in~$j$, and the first tends to zero as~$|S|\to0$. Hence~$|\mathcal E_j(\xi)|^{p'}$ is uniformly integrable in either range. To see the almost-everywhere cancellation, write
\begin{equation*}
 \begin{aligned}
 \mathcal E_j(\xi)
 &=\bigl[A(a_j+\xi)-A(a_j)\bigr]-\bigl[A(a+\xi)-A(a)\bigr]\\
 &=\bigl[A(a_j+\xi)-A(a+\xi)\bigr]-\bigl[A(a_j)-A(a)\bigr].
 \end{aligned}
\end{equation*}
Since~$a_j\to a$ almost everywhere and~$\xi$ is fixed, continuity of~$A$ makes both brackets in the second line tend to zero almost everywhere. Vitali now gives the second limit. At~$p=2$,~$\mathcal E_j(\eta)=0$ for every~$\eta$.
\end{proof}

\subsection{Construction of the Perron competitors}\label{sec:r-perron-proof}
\begin{proof}[Proof of Theorem~\ref{thm:resolutivity}]
Both~$\upperH_\Theta$ and~$\lowerH_\Theta$ preserve order and commute with addition of constants. Hence
\begin{equation}\label{eq:r-perron-stability}
 \begin{aligned}
 \|\upperH_\Theta f-\upperH_\Theta g\|_{L^\infty(\Theta)}
 &\le\|f-g\|_{C(\partial\Theta)},\\
 \|\lowerH_\Theta f-\lowerH_\Theta g\|_{L^\infty(\Theta)}
 &\le\|f-g\|_{C(\partial\Theta)}.
 \end{aligned}
\end{equation}
By Lemma~\ref{lem:r-data}, it suffices to prove the theorem for~$f=F|_{\partial\Theta}$ with~$F$ satisfying~\eqref{eq:r-data-class} on the fixed cylinder~$Q$. Keep~$F$ fixed throughout the construction.

\emph{Step 1: Construction of~$H_j$.}
Set
\begin{equation}\label{eq:r-sources}
 h=1+\|F_t-\Dp F\|_{L^\infty(Q)},\qquad
 q=-F_t+\Dp F-h\le-1,
\end{equation}
and choose
\begin{equation*}
 M>1+T\max\{h,\|q\|_\infty\}.
\end{equation*}
Choose smooth cutoffs $0\le\rho_j\le\rho_{j+1}\le1$ with $\rho_j\in C_c^\infty(\Theta)$, such that every compact subset of $\Theta$ has a neighborhood where $\rho_j=1$ eventually. Explicitly, take a compact exhaustion $K_j$, choose $\eta_j\in C_c^\infty(\Theta)$ with $0\le\eta_j\le1$ and $\eta_j=1$ near $K_j$, and put
$\rho_j=1-\prod_{i=1}^j(1-\eta_i)$. Extend these cutoffs by zero to $Q$.

Since~$\nabla F\in L^p(Q;\R^n)$ and~$-M\rho_j\in C_c^\infty(Q)$ is nonpositive, Lemma~\ref{lem:r-obstacle} applies. Set
\begin{equation*}
 H_j=-W_{-h}[-\nabla F,-M\rho_j].
\end{equation*}
Since~$A$ is odd, the bounds and monotonicity of~$W_{-h}[-\nabla F,\psi]$ give
\begin{align}
 0\le H_j\le M\rho_j,\qquad H_j&\le ht,\qquad H_j\le H_{j+1},\label{eq:r-H-bounds}\\
 (H_j)_t-\div \bigl(A(\nabla F+\nabla H_j)-A(\nabla F)\bigr)&=h-\nu_j,\qquad \nu_j\ge0.\label{eq:r-H-equation}
\end{align}
Here $\nu_j$ is a positive distribution on $Q$. The sequence~$(\nabla H_j)$ is bounded in~$L^p(Q)$. Thus~$H_j\uparrow H$ almost everywhere and in every finite~$L^r(Q)$. By weak compactness,~$H\in L^p(0,T;W_0^{1,p}(D_0))$.

On each compact subcylinder of~$\Theta$,~$\rho_j=1$ for all sufficiently large~$j$. Since~$H_j\le hT<M$, we have~$-H_j>-M$ there. Lemma~\ref{lem:r-local} gives
\begin{equation}\label{eq:r-H-local}
 \nabla H_j\to \nabla H\text{ in }L^p_{\rm loc}(\Theta),\qquad
 H_t-\div \bigl(A(\nabla F+\nabla H)-A(\nabla F)\bigr)=h\text{ in }\Theta.
\end{equation}
Every $H_j$ vanishes almost everywhere outside a compact subset of $\Theta$, and $H=0$ almost everywhere on $Q\setminus\Theta$. The gradient of a Sobolev function vanishes almost everywhere on its zero set. Apply this to~$H_j(\cdot,t)$ and~$H(\cdot,t)$ for almost every~$t$, and take a diagonal subsequence in~\eqref{eq:r-H-local}. We obtain
\begin{equation}\label{eq:r-background-ae}
 a_j:=\nabla F+\nabla H_j\to a:=\nabla F+\nabla H\text{ a.e. in }Q,
 \qquad \sup_j\|a_j\|_p<\infty.
\end{equation}
Both~$\nabla H_j$ and~$\nabla H$ vanish almost everywhere on~$Q\setminus\Theta$, including any part of~$\partial\Theta$ with positive measure. Relabel this increasing subsequence of~$H_j$ and its associated cutoffs if necessary.

\emph{Step 2: Convergence of~$w_{j,m}$ and~$w_m$.}
For the fixed right-hand side~$q$ in~\eqref{eq:r-sources}, set
\begin{equation*}
 w_{j,m}=W_q[a_j,-M\rho_m],\qquad
 w_m=W_q[a,-M\rho_m].
\end{equation*}
For each~$j$,~$w_{j,m}$ decreases in~$m$, as does~$w_m$. Both satisfy~$-T\|q\|_\infty\le w_{j,m},w_m\le0$. By~\eqref{eq:r-background-ae},~$\|\nabla w_{j,m}\|_p$ and~$\|\nabla w_m\|_p$ are bounded uniformly in~$j,m$. Write~$\widetilde w_j=\lim_{m\to\infty}w_{j,m}$ and~$w=\lim_{m\to\infty}w_m$ almost everywhere. Here~$a_j$ remains fixed in the definition of~$\widetilde w_j$. For each fixed~$j$, $w_{j,m}\to \widetilde w_j$ and~$w_m\to w$ in every finite~$L^r(Q)$. Weak convergence of the gradients gives~$\widetilde w_j,w\in L^p(0,T;W_0^{1,p}(D_0))$.

For fixed~$m$, the second limit in Lemma~\ref{lem:r-two-limits}, applied with~$\xi=\nabla w_m$, and the stability estimates~\eqref{eq:r-common-singular}--\eqref{eq:r-common-degenerate} give~$w_{j,m}\to w_m$ in~$L^2(Q)$ as~$j\to\infty$.

On compact subcylinders of~$\Theta$,~$\rho_m=1$ for all sufficiently large~$m$, while~$w_{j,m},w_m\ge-T\|q\|_\infty>-M$. Lemma~\ref{lem:r-local} therefore gives
\begin{equation}\label{eq:r-w-equations}
 (\widetilde w_j)_t-\div \bigl(A(a_j+\nabla \widetilde w_j)-A(a_j)\bigr)=q,\qquad
 w_t-\div \bigl(A(a+\nabla w)-A(a)\bigr)=q
 \quad\text{in }\Theta,
\end{equation}
As~$m\to\infty$,~$\nabla w_m\to\nabla w$ in~$L^p_{\rm loc}(\Theta)$ and, for each fixed~$j$,~$\nabla w_{j,m}\to\nabla\widetilde w_j$ in that space.

On~$\{\rho_m=0\}$,~\eqref{eq:r-obstacle-bounds} gives~$w_m=w_{j,m}=0$ almost everywhere. Hence~$w,\widetilde w_j$ vanish almost everywhere on~$Q\setminus\Theta$. The gradients~$\nabla w_m,\nabla w$ also vanish almost everywhere on~$Q\setminus\Theta$, since a Sobolev function has zero gradient almost everywhere on its zero set. Taking a diagonal subsequence gives
\begin{equation}\label{eq:r-w-ae}
 \nabla w_m\to \nabla w\text{ a.e. in }Q,\qquad
 \sup_m\|\nabla w_m\|_p<\infty.
\end{equation}
For every fixed $j$, $\nabla w_{j,m}\rightharpoonup \nabla \widetilde w_j$ globally in $L^p(Q)$. Indeed,~$w_{j,m}\to\widetilde w_j$ in~$L^p(Q)$, so every weak limit of~$\nabla w_{j,m}$ equals~$\nabla\widetilde w_j$. The~$L^p(Q)$ bound then gives convergence of the full sequence.

\emph{Step 3: Convergence of~$\widetilde w_j$ to~$w$.}
We have~\eqref{eq:r-background-ae} for~$a_j$ and~\eqref{eq:r-w-ae} for~$\nabla w_m$. With~$\mathcal E_j$ as in Lemma~\ref{lem:r-two-limits}, the stability estimates~\eqref{eq:r-common-singular} and~\eqref{eq:r-common-degenerate}, followed by integration in~$t$, give
\begin{align}
 \|w_{j,m}-w_m\|_2^2+\|\nabla w_{j,m}-\nabla w_m\|_p^2
 &\le C\|\mathcal E_j(\nabla w_m)\|_{p'}^2,
 &&1<p\le2,\label{eq:r-paired-singular}\\
 \|w_{j,m}-w_m\|_2^2+\|\nabla w_{j,m}-\nabla w_m\|_p^p
 &\le C\|\mathcal E_j(\nabla w_m)\|_{p'}^{p'},
 &&p>2.\notag
\end{align}
The constants are independent of~$j,m$; for~$p>2$,~$C=C(p,T)$.

Fix~$j$ and let~$m\to\infty$ along~\eqref{eq:r-w-ae}. The first limit in Lemma~\ref{lem:r-two-limits}, applied with~$\xi_m=\nabla w_m$, gives convergence of the right-hand sides. Since~$w_{j,m}\to\widetilde w_j$ and~$w_m\to w$ in~$L^2(Q)$, weak lower semicontinuity of the gradient norms gives
\begin{equation}\label{eq:r-limit-singular}
 \begin{aligned}
 \|\widetilde w_j-w\|_2^2+\|\nabla\widetilde w_j-\nabla w\|_p^2
 &\le C\|\mathcal E_j(\nabla w)\|_{p'}^2,
 &&1<p\le2,\\
 \|\widetilde w_j-w\|_2^2+\|\nabla\widetilde w_j-\nabla w\|_p^p
 &\le C\|\mathcal E_j(\nabla w)\|_{p'}^{p'},
 &&p>2.
 \end{aligned}
\end{equation}
Now let~$j\to\infty$. The second limit in Lemma~\ref{lem:r-two-limits} gives, in both ranges,
\begin{equation}\label{eq:r-paired-convergence}
 \widetilde w_j\to w\text{ in }L^2(Q),\qquad \nabla \widetilde w_j\to \nabla w\text{ in }L^p(Q).
\end{equation}

\emph{Step 4: Perron inequalities on~$\partial\Theta$.}
Define~$U_m,V_j$ by~\eqref{eq:r-competitors} in~$\Theta$. Both are bounded, with~$\nabla U_m,\nabla V_j\in L^p_{\rm loc}(\Theta)$. Combining~\eqref{eq:r-sources},~\eqref{eq:r-H-local} and~\eqref{eq:r-obstacle-super} in distributions cancels the flux terms~$A(\nabla F)$ and~$A(\nabla F+\nabla H)$. Hence
\begin{equation}\label{eq:r-U-super}
 (U_m)_t-\Dp U_m\ge F_t-\Dp F+h+q=0.
\end{equation}
Likewise \eqref{eq:r-H-equation} and \eqref{eq:r-w-equations} give
\begin{equation}\label{eq:r-V-sub}
 (V_j)_t-\Dp V_j=F_t-\Dp F+h-\nu_j+q=-\nu_j\le0.
\end{equation}
For fixed $m$, $w_m=0$ almost everywhere in an open neighborhood of $\partial\Theta$, because $\operatorname{supp}\rho_m\Subset\Theta$. Since $H\ge0$, $U_m\ge F$ almost everywhere in that neighborhood. By Section~\ref{sec:preliminaries},~$(U_m)_*$ is~$p$-superparabolic and~$(U_m)_*\ge F$ throughout that neighborhood. Hence
\begin{equation*}
 \liminf_{\Theta\ni z\to\zeta}(U_m)_*(z)\ge F(\zeta)
 \quad(\zeta\in\partial\Theta).
\end{equation*}
Since~$(U_m)_*$ is bounded,~$(U_m)_*\in\mathcal U_f$ by Definition~\ref{def:perron}.

For fixed $j$, $H_j=0$ almost everywhere in an open neighborhood of $\partial\Theta$, while $\widetilde w_j\le0$. Thus~$(V_j)^*$ is bounded,~$p$-subparabolic, and satisfies
\begin{equation*}
 \limsup_{\Theta\ni z\to\zeta}(V_j)^*(z)\le F(\zeta)
 \quad(\zeta\in\partial\Theta).
\end{equation*}
Hence~$(V_j)^*\in\mathcal L_f$. The neighborhoods above contain~$\partial\Theta$, so both boundary inequalities hold at every~$\zeta\in\partial\Theta$.

We have~$U_m\to F+H+w$ almost everywhere. By~\eqref{eq:r-paired-convergence} and convergence of~$H_j$, a subsequence of~$V_j$ converges to~$F+H+w$ almost everywhere. Since only countably many functions are involved, replacing~$U_m,V_j$ by their canonical representatives preserves these almost-everywhere convergences. The definitions of~$\mathcal U_f,\mathcal L_f$ and comparison give
\begin{equation*}
 F+H+w\le\lowerH_\Theta f\le\upperH_\Theta f\le F+H+w
 \quad\text{a.e. in }\Theta.
\end{equation*}
Both~$\lowerH_\Theta f$ and~$\upperH_\Theta f$ are continuous, so equality holds throughout~$\Theta$. Lemma~\ref{lem:r-data} and~\eqref{eq:r-perron-stability} give the conclusion for every continuous boundary datum.
\end{proof}

\section{Geometric examples}\label{sec:g-examples}

We use the intrinsic formulation in Theorem~\ref{thm:main} for the geometric calculations below. The comparisons in Subsection~\ref{subsec:potential-proof} and Remark~\ref{cor:capacity-wiener} transfer the resulting all-height divergence conditions to the potential and capacity-difference formulations. For~$p>2$, the change of height from~$a$ to~$Aa$ in~\eqref{eq:potential-degenerate-comparison} does not alter a condition that tests every positive height.

We first prove Petrovski\u{\i}-type criteria, including the singular endpoint at the critical power~$1/p$. Critical domains and time-dependent graphs show why divergence is required at every obstacle height. Stationary cylinders connect the intrinsic series with the elliptic Wiener criterion without a thickness assumption on the complement. We finish with soda-can domains, where an upper capacity bound gives convergence.

Throughout this section,~$n\ge2$. The degenerate calculations assume~$2<p<n$, and the singular calculations assume~$\pc<p<2$. We use~$K_{j,q}$,~$Q_{j,q}$ and~$\omega_a(j)$ from Definition~\ref{def:wiener}, centered at~$(0,0)$. The degenerate estimates hold for every fixed~$L>4$ and~$C_*>0$; applications of Theorem~\ref{thm:main} use its prescribed constants. In the singular range, we use~$L=8$ and the threshold fixed before Definition~\ref{def:wiener}. We verify the capacity test for each chosen~$q$, since both~$K_{j,q}$ and~$Q_{j,q}$ depend on it.

\subsection{Capacity of a ball over a time interval}\label{sec:g-common}

A bounded source supported on a ball and a time interval gives the following lower bound. For~$p<2$, we use the capacity and energy conventions of Subsection~\ref{subsec:penalization}.

\begin{lemma}\label{lem:g-ball-band}
Let~$n\ge2$,~$\pc<p<n$,~$p\ne2$,~$L>4$,~$r,s>0$,~$D=B_{8Lr}$, and~$0<\sigma<1$. Suppose~$\overline B_{\sigma r}(y)\subset B_r$ and~$J\subset(-5s,s)$ is a closed interval of positive length. For~$a>0$,
\begin{equation}\label{eq:g-ball-band}
 \capc_a\bigl(\overline B_{\sigma r}(y)\times J,
                    D\times(-5s,s)\bigr)
 \ge c_\sigma a^{p-1}|J|r^{n-p}\,.
\end{equation}
Here~$c_\sigma>0$ depends only on~$n,p,\sigma$.
\end{lemma}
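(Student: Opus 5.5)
The plan is to exhibit an admissible measure in the dual characterization of $\capc_a$ after Definition~\ref{def:parabolic-capacity}. The source will be a constant-in-time bounded source on the ball, whose solution stays below $a$; its total mass then bounds the capacity from below. For the source I take
\begin{equation*}
 \nu=\theta\,\ind_{\overline B_{\sigma r}(y)}(x)\ind_J(t)\,dx\,dt,
 \qquad
 \theta=c\,a^{p-1}(\sigma r)^{-p}.
\end{equation*}
It has mass $c\,a^{p-1}|J|\sigma^{n-p}r^{n-p}$, which is the right-hand side of \eqref{eq:g-ball-band}. By scaling $x=ry$, $t=a^{2-p}r^p\tau$, $u=av$, which multiplies Riesz mass by $ar^n$, one may normalize $r=1$ and $a=1$. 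The reference cylinder then becomes $B_{8L}\times(\dots)$, the duration of $J$ becomes arbitrary, and $\theta=c\sigma^{-p}$. The estimate is uniform in the duration, so it suffices to prove the normalized statement.

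The key step is a stationary supersolution that does not depend on the duration of $J$. Let $w$ be the elliptic solution of $-\Dp w=\theta\ind_{B_\sigma(y)}$ in $\R^n$ vanishing at infinity; this is available since $p<n$. Alternatively, take $w$ to be the solution in the larger ball $D$ with zero boundary values. By radial symmetry about $y$, $w$ is explicit:
\begin{equation*}
 |w'(\rho)|^{p-1}=\frac{\theta\rho}{n}\ \ (\rho<\sigma),\qquad
 |w'(\rho)|^{p-1}=\frac{\theta\sigma^n}{n}\rho^{1-n}\ \ (\rho>\sigma).
\end{equation*}
Integrating, $\sup w=w(y)\le C(n,p)\,\theta^{1/(p-1)}\sigma^{p/(p-1)}$. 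This is because $p<n$ makes the exterior integral $\int_\sigma^\infty\rho^{(1-n)/(p-1)}d\rho$ converge to $C\sigma^{(p-n)/(p-1)}$. Choosing $c=c(n,p)$ small gives $\sup w\le1$. Restricted to $D$, the function $w$ is a nonnegative time-independent supersolution of $u_t-\Dp u=\nu$ on $D\times\R$, with nonnegative lateral values.

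Next, let $u_\nu$ be the solution with source $\nu$ and zero initial and lateral values on $D\times(-5,\infty)$ (normalized times), continued homogeneously. Since $\nu$ is bounded, it exists by the monotone evolution theorem used in Lemma~\ref{lem:penalty}. Comparison of energy solutions with bounded sources gives $0\le u_\nu\le w\le1$ throughout. The argument tests the difference with $(u_\nu-w)_+$, using that $w\ge0$ on the lateral boundary and at the initial time, that the sources agree on the support of $\nu$, and that $w$'s source dominates elsewhere. After the support of $J$ ends, the source of $u_\nu$ vanishes while $w$ remains a supersolution, so the bound persists. Thus $\nu$ is admissible and $\capc_1\ge\nu(\overline B_\sigma(y)\times J)$. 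Scaling back gives \eqref{eq:g-ball-band} with $c_\sigma=c\,\sigma^{n-p}$.

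The main obstacle I expect is the comparison with the stationary profile. In the singular range, the energy framework requires the $L^2$ setting, and $W^{1,p}_0(D)\hookrightarrow L^2$ holds because $p>\pc>2n/(n+2)$. The stationary $w$ is Lipschitz and bounded, hence in the right class. If needed, the monotone-evolution comparison can be done with $w$ as a lifting, as in the proof of Lemma~\ref{s:source-restriction}. The requirement $\overline B_{\sigma r}(y)\subset B_r\Subset D$ ensures the source sits inside the reference domain, so $c_\sigma$ is independent of $L$.
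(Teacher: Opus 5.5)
Your proposal is correct and is essentially the paper's proof. The paper also takes the explicit whole-space radial solution of $-\Dp V=gr^{-p}\ind_{B_{\sigma r}(y)}$ (finite because $p<n$), fixes $g=g(n,p,\sigma)$ so that $V\le1$, compares the evolution with bounded source $a^{p-1}gr^{-p}\ind_{B_{\sigma r}(y)}\ind_J$ and zero data against the stationary supersolution $aV$, and counts the total source mass as an admissible measure. The differences are cosmetic: you normalize $a=r=1$ by intrinsic scaling first, whereas the paper stays at scale $r$ and uses the homogeneity $-\Dp(aV)=a^{p-1}(-\Dp V)$; the paper also spells out the global energy test and Poincar\'e's inequality that put the solution in $L^p(-5s,\infty;W^{1,p}_0(D))$, a step you leave implicit.
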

\begin{proof}
Put~$\rho=\sigma r$. Choose~$g>0$ and define the radial function
\begin{equation*}
 V(x)=\left(\frac{g r^{-p}}n\right)^{\frac{1}{p-1}}
 \int_{|x-y|}^{\infty}
 \left(\frac{\min\{\xi,\rho\}^n}{\xi^{n-1}}\right)^{\frac{1}{p-1}}\dd\xi\,.
\end{equation*}
Since~$p<n$, the integral is finite. Differentiating its radial flux gives
\begin{equation*}
 -\Dp V=g r^{-p}\mathbf1_{B_\rho(y)}\quad\text{in }\R^n\,.
\end{equation*}
The flux is continuous across~$|x-y|=\rho$, so there is no surface measure there. The maximum occurs at~$y$, where
\begin{equation*}
 V(y)=\left(\frac gn\right)^{\frac{1}{p-1}}
 \sigma^{\frac{p}{p-1}}(p-1)
 \left(\frac1p+\frac1{n-p}\right)\,.
\end{equation*}
Fix~$g=g(n,p,\sigma)>0$ small enough that~$V\le1$.

Let~$u$ solve
\begin{equation*}
 u_t-\Dp u=a^{p-1}g r^{-p}
       \mathbf1_{B_\rho(y)}\mathbf1_J(t)
\end{equation*}
with zero initial data at~$-5s$ and zero lateral values on~$\partial D$, continued for all later times. The source is bounded and compactly supported, so the evolution result recalled in Section~\ref{sec:preliminaries} gives a unique energy solution on each finite time interval; see~\cite[Theorem~1.1 and Remark~1.2(2)]{Liu}. Uniqueness gives consistent continuation to all later times.

The stationary function~$aV$ has a source at least as large, nonnegative lateral values, and nonnegative initial values. The positive-part energy test for~$u-aV$, and the corresponding test against~$0$, give~$0\le u\le aV\le a$ for every later time.

Testing the equation with~$u$ and discarding the nonnegative final~$L^2$ term gives
\begin{equation*}
 \int_{-5s}^{\infty}\!\int_D|\nabla u|^p\,dx\,dt
 \le a^{p}g r^{-p}|B_\rho||J|\,.
\end{equation*}
Poincar\'e's inequality puts~$u$ in~$L^p(-5s,\infty;W^{1,p}_0(D))$. The initial trace is zero, and~$u$ vanishes before the first source time. Its source is therefore an admissible measure in the definition of~$\capc_a$ (Definition~\ref{def:parabolic-capacity}). Its total mass is
\begin{equation*}
 a^{p-1}g r^{-p}|B_\rho||J|
 =g|B_1|\sigma^n a^{p-1}|J|r^{n-p}\,.
\end{equation*}
This proves~\eqref{eq:g-ball-band} with~$c_\sigma=g|B_1|\sigma^n$.
\end{proof}

For~$J=[-4s_j,-2s_j]$ and~$r=r_j(q)$, the intrinsic relation gives
\begin{equation}\label{eq:g-normalized-ball-band}
 \frac{\capc_a(\overline B_{\sigma r}(y)\times J,Q_{j,q})}
      {q r^n}
 \ge 2c_\sigma\left(\frac aq\right)^{p-1}\,.
\end{equation}
Set
\begin{equation}\label{eq:g-singular-ratio}
 \eta_\sigma=\min\left\{\frac12,\left(\frac{c_\sigma}{C_*}\right)^{\frac{1}{p-1}}\right\}\,.
\end{equation}
If~$\overline B_{\sigma r_j(q)}(y)\times[-4s_j,-2s_j]\subset K_{j,q}$ and~$0<q\le\eta_\sigma a$, then~\eqref{eq:g-normalized-ball-band} and capacity monotonicity make~$q$ admissible in~\eqref{eq:omega}. In particular, such an inclusion for one fixed~$q>0$ at every sufficiently large~$j$ gives divergence of the series. This holds in both ranges of~$p$. For fixed~$q$, we have~$r_j(q)\to0$; for fixed~$j$, the radius increases with~$q$ when~$p>2$ and decreases with~$q$ when~$p<2$.

\begin{remark}[Flat boundaries and exterior cones]\label{subsec:g-flat}
For~$\Theta=B_1\times(0,1)$, every past test cylinder lies in the complement, so~$\omega_a(j)\ge\eta_{1/2}a$ for all~$j$. For~$\Theta=(B_1\cap\{x_n>0\})\times(-1,1)$, the ball~$\overline B_{r/4}(-re_n/2)$ lies in the complement throughout each sufficiently late time interval, giving~$\omega_a(j)\ge\eta_{1/4}a$. More generally, let~$D\subset\R^n$ be bounded and open, with~$0\in\partial D$. If~$\R^n\setminus D$ contains a circular cone near zero, with half-angle~$\alpha\in(0,\pi/2]$ and axis~$e$, then for every sufficiently small~$r$,
\begin{equation*}
 \overline B_{r\sin\alpha/4}((r/2)e)\subset B_r\cap(\R^n\setminus D)\,.
\end{equation*}
For~$\Theta=D\times(-1,1)$, this gives~$\omega_a(j)\ge\eta_{\sin\alpha/4}a$ on sufficiently late time intervals. Theorem~\ref{thm:main} proves regularity in each case, in agreement with~\cite[Lemma~3.10, Theorem~3.9 and Proposition~5.3]{BBGP}. A plane moving at constant speed is included in the graph condition of Subsection~\ref{sec:g-parabolic-lipschitz}.
\end{remark}

\begin{remark}[Initial and terminal time slices]\label{sec:g-terminal-section}
The preceding initial-boundary example contrasts with~$\Theta=B_1\times(-1,0)$ at~$(0,0)$. For~$p>2$ and fixed~$a>0$, all~$q\le a$ have~$r_j(q)\le r_j(a)<1$ on sufficiently late time intervals, so~$K_{j,q}=\varnothing$ and~$\omega_a(j)=0$. For~$\pc<p<2$ and~$j\ge3$, every~$q>s_j^{1/(2-p)}$ has~$r_j(q)<1$ and is inadmissible. Hence
\begin{equation}\label{eq:g-singular-top}
 0\le\omega_a(j)\le\min\{a,2^{-\frac{j}{2-p}}\}\qquad(a>0,\ j\ge3)\,.
\end{equation}
Thus the series converges at every height in both ranges; in the singular range the test sets need not have a zero tail. This is the terminal-slice example discussed in~\cite[Section~4]{BBGP}.

A direct Perron argument applies for every~$p>1$. Prescribe the continuous boundary datum~$g(x,0)=(1-4|x|^2)_+$ and~$g=0$ on the boundary with~$t<0$. For~$0<\varepsilon<1$, the function~$u_\varepsilon(x,t)=(t+\varepsilon)_+/\varepsilon$ is a continuous weak supersolution with terminal limit~$1$ and nonnegative limits elsewhere, so it belongs to the upper class by~\cite[Lemma~2.7]{BBGP}. Zero belongs to the lower class. At each fixed~$(x,t)\in\Theta$, choosing~$\varepsilon<-t$ gives~$0\le\lowerH_\Theta g\le\upperH_\Theta g\le u_\varepsilon(x,t)=0$. Both Perron solutions are zero, although~$g(0,0)=1$, so the origin is irregular.
\end{remark}

\subsection{Petrovski\u{\i}-type criteria}
\label{sec:g-power-section}

Fix~$C,\beta>0$ and consider the domain
\begin{equation}\label{eq:g-power-domain}
 \Theta=\{(x,t)\in\R^{n+1}:|x|<C(-t)^\beta,\ -1<t<0\}\,.
\end{equation}
% Retain the labels of the two former domain definitions.
\begingroup\makeatletter
\protected@edef\@currentlabel{\theequation}
\label{eq:g-tube-irregular-domain}\label{eq:g-singular-power-domain}
\makeatother\endgroup
This is a bounded open set, and~$(0,0)\in\partial\Theta$. The relation between the spatial radius and the intrinsic spatial scale determines regularity as summarized in Table~\ref{tab:g-power-classification}. At the critical power~$\beta=1/p$, the origin is irregular for~$2<p<n$ and regular for~$\pc<p<2$. The degenerate calculation also distinguishes small and large obstacle heights.

\begin{table}[htbp]
\centering
\caption{Regularity of the origin for the domain~\eqref{eq:g-power-domain}, for every~$C>0$.}
\label{tab:g-power-classification}
\begin{tabular}{c|ccc}
 & $0<\beta<1/p$ & $\beta=1/p$ & $\beta>1/p$ \\
\hline
$2<p<n$ & Irregular & Irregular & Regular \\
$\pc<p<2$, $n\ge2$ & Irregular & Regular & Regular
\end{tabular}
\end{table}

The degenerate classification is~\cite[Theorem~1.1(a)]{BBG}. In the singular range, Theorem~1.1(c) of that paper gives the cases~$\beta\ne1/p$ and leaves~$\beta=1/p$ undecided. Proposition~\ref{prop:g-singular-power} below includes this endpoint by applying the singular Wiener characterization.

\paragraph{The singular range~$\pc<p<2$.}
The comparison of the spatial radius with~$r_j(q)$ gives upper and lower bounds for the singular Wiener terms. At the critical power~$1/p$, a sufficiently small fixed increment detects a ball outside the domain at every late scale.

\begin{proposition}[Petrovski\u{\i}-type criterion for~$\pc<p<2$]\label{prop:g-singular-power}
Let~$n\ge2$,~$\pc<p<2$, and let~$\Theta$ be given by~\eqref{eq:g-power-domain}, with~$C,\beta>0$.
There are~$c_1,c_2>0$, depending only on~$p,C,\beta$, such that, for every~$a>0$ and~$j\ge3$,
\begin{equation}\label{eq:g-singular-power-terms}
 \min\left\{\eta_{1/8}a,c_1s_j^{\frac{1-p\beta}{2-p}}\right\}
 \le\omega_a(j)
 \le\min\left\{a,c_2s_j^{\frac{1-p\beta}{2-p}}\right\}\,.
\end{equation}
The origin is regular if and only if~$\beta\ge1/p$.
\end{proposition}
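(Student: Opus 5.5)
The plan is to compare the spatial radius of the complement at the test times with the intrinsic radius $r=r_j(q)=(q^{p-2}s_j)^{1/p}$. For $\pc<p<2$ this radius decreases as $q$ increases. Put $\gamma=(1-p\beta)/(2-p)$. For $j\ge3$ the test interval $[-4s_j,-2s_j]$ lies in $(-1/2,0)$. There the complement $E$ consists exactly of the points with $|x|\ge C(-t)^\beta$. Consequently,
\begin{equation*}
 \{|x|\ge C(4s_j)^\beta\}\times[-4s_j,-2s_j]\subset E,
 \qquad
 E\cap\bigl(\R^n\times[-4s_j,-2s_j]\bigr)\subset\{|x|\ge C(2s_j)^\beta\}\times[-4s_j,-2s_j]\,.
\end{equation*}
All the estimates below come from these two inclusions together with Definition~\ref{def:wiener}.

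\emph{Lower bound.} Take $y=(r/2)e_1$ and $\sigma=1/8$. Then $\overline B_{r/8}(y)\subset B_r$. This ball avoids $B_{C(4s_j)^\beta}$ as soon as $3r/8\ge C4^\beta s_j^\beta$. Substituting $r=q^{(p-2)/p}s_j^{1/p}$, this condition is equivalent to $q\le c_1 s_j^{\gamma}$, with $c_1=(3/(8C4^\beta))^{p/(2-p)}$, which depends only on $p,C,\beta$. Now choose $q=\min\{\eta_{1/8}a,\,c_1s_j^\gamma\}$. Then $q\le a$, and $\overline B_{r/8}(y)\times[-4s_j,-2s_j]\subset K_{j,q}$. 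Lemma~\ref{lem:g-ball-band} applies, since $\pc<p<2\le n$. Together with \eqref{eq:g-normalized-ball-band}, the choice \eqref{eq:g-singular-ratio} and capacity monotonicity, it shows that $q$ is admissible. This gives the left inequality in \eqref{eq:g-singular-power-terms}.

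\emph{Upper bound.} The bound $\omega_a(j)\le a$ holds by definition. Now suppose $q>c_2s_j^\gamma$ with $c_2=C^{-p/(2-p)}2^{-p\beta/(2-p)}$. Then $r<C(2s_j)^\beta$. By the second inclusion, $K_{j,q}=E\cap(\overline B_r\times[-4s_j,-2s_j])$ is empty, so its capacity is zero and $q$ is not admissible. Hence every admissible $q$ satisfies $q\le c_2s_j^\gamma$. Strictly, the boundary case $r=C(2s_j)^\beta$ may leave a null set; it is harmless after slightly enlarging $c_2$.

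\emph{Classification.} Suppose $\beta\ge1/p$. Then $\gamma\le0$, so $s_j^\gamma\ge1$. The lower bound gives $\omega_a(j)\ge\min\{\eta_{1/8}a,c_1\}>0$ for every $j\ge3$ and every $a>0$, so the series diverges at every height. Theorem~\ref{thm:main} then gives regularity. Suppose instead $\beta<1/p$. Then $\gamma>0$, and the upper bound $c_22^{-j\gamma}$ is a summable geometric series. The series therefore converges at every height, and Theorem~\ref{thm:main} gives irregularity.

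No step seems genuinely hard. The points needing care are that the test interval lies inside $(-1,0)$, which is why $j\ge3$ is assumed; that the constants $c_1,c_2$ depend only on $p,C,\beta$; and the boundary case $r=C(2s_j)^\beta$ in the upper bound.
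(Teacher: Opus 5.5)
Your proposal is correct and follows essentially the same route as the paper. Both compare $r_j(q)$ with the boundary radius on $[-4s_j,-2s_j]$, use the ball $\overline B_{r/8}((r/2)e_1)$ with \eqref{eq:g-normalized-ball-band} for the lower bound, and use emptiness of $K_{j,q}$ with the same $c_2$ for the upper bound before applying Theorem~\ref{thm:main}; your $c_1$ differs only by the choice $3r/8$ instead of $r/4$. Your caveat about the boundary case is unnecessary, since $q>c_2s_j^{\frac{1-p\beta}{2-p}}$ already gives the strict inequality $r_j(q)<C(2s_j)^\beta$, so no enlargement of $c_2$ is needed.
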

\begin{proof}
Choose
\begin{equation*}
 c_1=(4^pC^p4^{p\beta})^{-\frac{1}{2-p}},\qquad
 c_2=(C^p2^{p\beta})^{-\frac{1}{2-p}}\,.
\end{equation*}
If~$q>c_2s_j^{\frac{1-p\beta}{2-p}}$, then~$r_j(q)<C(2s_j)^\beta$. The corresponding closed cylinder lies in~$\Theta$, so~$K_{j,q}=\varnothing$. Every admissible increment is therefore at most this bound, proving the upper inequality in~\eqref{eq:g-singular-power-terms}.

For the lower bound, take
\begin{equation*}
 q=\min\left\{\eta_{1/8}a,c_1s_j^{\frac{1-p\beta}{2-p}}\right\},
 \qquad r=r_j(q)\,.
\end{equation*}
The choice of~$c_1$ gives~$C(4s_j)^\beta\le r/4$. Every point of~$\overline B_{r/8}((r/2)e_1)$ has distance between~$3r/8$ and~$5r/8$ from zero, so this ball lies strictly inside~$B_r$ and strictly outside the domain throughout the time interval. Since~$q\le\eta_{1/8}a$,~\eqref{eq:g-normalized-ball-band} makes~$q$ admissible. Taking the supremum proves the lower inequality.

If~$\beta<1/p$, the upper bound is a summable geometric sequence, so the origin is irregular by Theorem~\ref{thm:main}. If~$\beta\ge1/p$, the lower bound stays positive for each fixed~$a>0$, and the series diverges at every height. The same theorem proves regularity.
\end{proof}

The same lower-bound argument proves regularity whenever the spatial radius is at most~$C(-t)^{\frac{1}{p}}$ near zero.

\paragraph{The degenerate range~$2<p<n$.}\label{sec:g-tube-irregular-sec}
For~$\beta>1/p$, a ball of fixed relative radius remains outside the domain throughout every sufficiently late test interval. For~$\beta<1/p$, all tested balls eventually lie inside the domain. At~$\beta=1/p$, their position relative to the domain depends on the obstacle height.

\begin{proposition}[Wiener terms for domains with power-law spatial radius]\label{prop:g-power-divergence}\label{prop:g-tube-irregular-prop}
Let~$2<p<n$ and let~$\Theta$ be given by~\eqref{eq:g-power-domain}, with~$C,\beta>0$.
\begin{enumerate}[(i)]
\item If~$\beta>1/p$, there is~$\eta\in(0,1)$, depending only on~$n,p,C_*$, such that for every~$a>0$ there is~$N=N(n,p,C_*,C,\beta,a)$ with
\begin{equation*}
 \omega_a(j)\ge\eta a\qquad\text{for all }j\ge N\,.
\end{equation*}
Consequently,~$\sum_{j=1}^\infty\omega_a(j)=\infty$ for every~$a>0$.
\item If~$\beta<1/p$, then for every~$a>0$ there is~$j_0$ such that
\begin{equation}\label{eq:g-tube-irregular-empty}
 K_{j,q}=\varnothing
 \quad\text{for every }j\ge j_0\text{ and every }0<q\le a\,.
\end{equation}
Thus~$\omega_a(j)=0$ for all~$j\ge j_0$ and~$\sum_{j=1}^\infty\omega_a(j)<\infty$.
\item If~$\beta=1/p$, the same conclusions hold for every height satisfying
\begin{equation}\label{eq:g-tube-irregular-height}
 0<a<(2C^p)^{\frac{1}{p-2}}\,.
\end{equation}
For every~$q_*>0$ with~$q_*^{(p-2)/p}\ge4C4^{1/p}$ and every~$a\ge q_*/\eta_{1/8}$, one has~$\omega_a(j)\ge q_*$ for all~$j\ge3$.
\end{enumerate}
\end{proposition}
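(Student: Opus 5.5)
The proof is a direct comparison of the intrinsic radius $r_j(q)=(q^{p-2}s_j)^{1/p}$ with the spatial radius $C(-t)^\beta$ of $\Theta$ on the test interval $[-4s_j,-2s_j]$, together with the ball--band lower bound \eqref{eq:g-normalized-ball-band}. For $p>2$ the radius $r_j(q)$ increases with $q$, so $r_j(q)\le r_j(a)=a^{(p-2)/p}s_j^{1/p}$ for $q\le a$. On the test interval the domain's radius lies between $C(2s_j)^\beta$ and $C(4s_j)^\beta$.

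\emph{Part (i).} Here $\beta>1/p$. Take $q=\eta_{1/8}a$ with $\eta=\eta_{1/8}$ from \eqref{eq:g-singular-ratio}, which depends only on $n,p,C_*$, and set $r=r_j(q)$. Then $r\sim c(a)s_j^{1/p}$, while $C(4s_j)^\beta=o(s_j^{1/p})$. So for $j\ge N(a)$ we have $C(4s_j)^\beta\le r/4$. As in the proof of Proposition~\ref{prop:g-singular-power}, the ball $\overline B_{r/8}((r/2)e_1)$ then lies in $B_r$ and in the complement throughout $[-4s_j,-2s_j]$. By \eqref{eq:g-normalized-ball-band} the increment $q$ is admissible, so $\omega_a(j)\ge\eta a$ for $j\ge N$, and the series diverges.

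\emph{Part (ii).} Here $\beta<1/p$. For every $q\le a$ we have $r_j(q)\le r_j(a)=a^{(p-2)/p}s_j^{1/p}$. Since $s_j^{1/p}=o(s_j^\beta)$, there is $j_0$ with $r_j(a)<C(2s_j)^\beta$ for $j\ge j_0$, and also with $4s_j<1$ and the ball inside $B_1$ automatically. Then the closed cylinder $\overline B_{r_j(q)}\times[-4s_j,-2s_j]$ lies in $\Theta$, because the domain's radius is increasing in $-t$ and at least $C(2s_j)^\beta$ there. Hence $K_{j,q}=\varnothing$, no $q$ is admissible, and $\omega_a(j)=0$.

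\emph{Part (iii).} Here $\beta=1/p$. The same comparison reads $r_j(a)^p=a^{p-2}s_j<C^p\,2s_j$, which is exactly \eqref{eq:g-tube-irregular-height}. So the conclusion of (ii) holds for all $j$ with $4s_j<1$, that is $j\ge3$; the strictness handles the closed ball. For the large-height statement, take $q=q_*$ and $r=r_j(q_*)=q_*^{(p-2)/p}s_j^{1/p}$. The hypothesis $q_*^{(p-2)/p}\ge4C4^{1/p}$ gives $C(4s_j)^{1/p}\le r/4$, so the ball $\overline B_{r/8}((r/2)e_1)$ again lies in the complement and in $B_r$. The remaining check is that the ball stays inside $B_1$, or in the complement outside $B_1$, which holds anyway since it is outside $\Theta$. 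Since $q_*\le\eta_{1/8}a$, \eqref{eq:g-normalized-ball-band} makes $q_*$ admissible, and $q_*\le a$ because $\eta_{1/8}\le1/2$. Therefore $\omega_a(j)\ge q_*$.

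The main care point is bookkeeping rather than a real obstacle. One must verify that the tested ball lies in $E=\R^{n+1}\setminus\Theta$ even when it leaves $B_1$, which is automatic, and that the monotonicity of $r_j(q)$ in $q$ for $p>2$ is used in the correct direction in (ii) and (iii). The constants in (iii) should also be traced exactly, in particular the factor $4^{1/p}$ coming from $C(4s_j)^{1/p}$.
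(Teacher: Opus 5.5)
Your proposal is correct and matches the paper's proof essentially step for step. The paper also obtains the empty test sets in (ii) and the small-height part of (iii) by comparing $r_j(q)\le r_j(a)$ with $C(2s_j)^\beta$, and the lower bounds from the ball $\overline B_{r/8}((r/2)e_1)$ and \eqref{eq:g-normalized-ball-band} at $q=\eta_{1/8}a$ or $q=q_*$. Your remarks about $B_1$ are unnecessary but harmless, since \eqref{eq:g-power-domain} has no spatial constraint other than $|x|<C(-t)^\beta$.
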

\begin{proof}
\emph{Case \textup{(i)}.}
Fix~$a>0$ and put~$q=\eta_{1/8}a$. On~$[-4s_j,-2s_j]$, the spatial radius is at most~$C(4s_j)^\beta$, and
\begin{equation}\label{eq:g-power-radius-ratio}
 \frac{C(4s_j)^\beta}{r_j(q)}
 =C4^\beta q^{-\frac{p-2}{p}}s_j^{\beta-\frac1p}\longrightarrow0\,.
\end{equation}
For every sufficiently large~$j$, we have~$j\ge3$ and~$C(4s_j)^\beta\le r_j(q)/4$. Writing~$r=r_j(q)$, the ball~$\overline B_{r/8}((r/2)e_1)$ has~$3r/8\le|x|\le5r/8$ and therefore lies inside~$B_r$ and outside the domain throughout the time interval. The fixed increment~$q$ is admissible by~\eqref{eq:g-normalized-ball-band}.

\emph{Cases \textup{(ii)--(iii)}: vanishing of the test sets.}
For~$j\ge3$, the spatial radius on the tested time interval is at least~$C(2s_j)^\beta$. Since~$p>2$, every~$q\le a$ satisfies
\begin{equation}\label{eq:g-tube-irregular-ratio}
 \frac{r_j(q)}{C(2s_j)^\beta}
 \le\frac{a^{\frac{p-2}{p}}}{C2^\beta}s_j^{\frac1p-\beta}\,.
\end{equation}
If~$\beta<1/p$, this tends to zero uniformly in~$q\le a$. If~$\beta=1/p$, it is strictly less than one under~\eqref{eq:g-tube-irregular-height}. Thus every tested closed ball lies strictly inside the domain throughout the time interval, giving~\eqref{eq:g-tube-irregular-empty} and~$\omega_a(j)=0$ for all~$j\ge j_0$. The remaining terms are bounded by~$a$.

\emph{Case \textup{(iii)}, large heights.}\label{par:g-tube-irregular-large-height}
At~$\beta=1/p$, choose~$q_*>0$ with~$q_*^{(p-2)/p}\ge4C4^{1/p}$. For~$j\ge3$, the spatial radius throughout the time interval is at most~$C(4s_j)^{1/p}\le r_j(q_*)/4$. With~$r=r_j(q_*)$, we have~$\overline B_{r/8}((r/2)e_1)\times[-4s_j,-2s_j]\subset K_{j,q_*}$. For every~$a\ge q_*/\eta_{1/8}$, the increment~$q_*$ is admissible by~\eqref{eq:g-normalized-ball-band}, giving~$\omega_a(j)\ge q_*$ for all~$j\ge3$.
\end{proof}

Proposition~\ref{prop:g-tube-irregular-prop}\textup{(ii)--(iii)} and Theorem~\ref{thm:main} show that the origin is irregular for~\eqref{eq:g-power-domain} when~$0<\beta\le1/p$, in agreement with~\cite[Theorem~1.1(a)]{BBG}. The empty sets in these cases are the intrinsic sets~$K_{j,q}$. For the critical domain, the prescribed sets~$K_j$ in~\eqref{eq:potential-test-sets} are nonempty for all sufficiently large~$j$, since~$R_j/s_j^{1/p}\to\infty$.

\begin{remark}[Failure of a non-intrinsic capacity series]\label{rem:g-nonintrinsic-capacity}
Let~$2<p<n$, and take~$\beta=1/p$ and~$C>0$ with~$C4^{1/p}\le1/4$ in~\eqref{eq:g-power-domain}. The origin is irregular by Proposition~\ref{prop:g-power-divergence}\textup{(iii)} and Theorem~\ref{thm:main}. Take~$K_{j,1}$ and~$Q_{j,1}$ from the height-one tests and keep them fixed as the obstacle height~$a>0$ varies. For~$j\ge3$, put~$r=r_j(1)=s_j^{1/p}$. The domain has spatial radius at most~$r/4$ throughout~$[-4s_j,-2s_j]$, so
\begin{equation*}
 \overline B_{r/8}((r/2)e_1)\times[-4s_j,-2s_j]\subset K_{j,1}\,.
\end{equation*}
Lemma~\ref{lem:g-ball-band} and capacity monotonicity give, for every~$a>0$,
\begin{equation*}
 \capc_a(K_{j,1},Q_{j,1})\ge c a^{p-1}s_j^{n/p},
 \qquad c=c(n,p)>0\,.
\end{equation*}
Consequently, for every~$a>0$ and~$\theta>0$,
\begin{equation*}
 \sum_{j=3}^{\infty}
 \left(\frac{\capc_a(K_{j,1},Q_{j,1})}{a s_j^{n/p}}\right)^\theta
 =\infty\,.
\end{equation*}
\end{remark}

\paragraph{Comparison with Petrovski\u{\i}'s criterion.}
For the heat equation~$u_t=\Delta u$, the classical criterion of Petrovski\u{\i}~\cite{Petrovskii35}, recalled in~\cite[Section~1]{BBG}, concerns the domains
\begin{equation*}
 \left\{(x,t):|x|<C\sqrt{(-t)\log\log(1/(-t))},
 \ -e^{-e}<t<0\right\}\,.
\end{equation*}
The origin is regular precisely when~$C\le2$. The initial time is chosen so that both logarithms are positive. 
For the domains~\eqref{eq:g-power-domain}, the corresponding linear threshold is~$\beta\ge1/2$, for every~$C>0$; see~\cite[Theorem~1.1(b)]{BBG}. For~$p\ne2$, the classifications in Table~\ref{tab:g-power-classification} are also independent of~$C$, but the critical power is~$1/p$. It is excluded from the regular range when~$p>2$ and included when~$\pc<p<2$. Proposition~\ref{prop:g-singular-power} supplies the singular endpoint. The logarithmic example below illustrates the sufficient condition~$\zeta(t)=o((-t)^{\frac{1}{p}})$ of~\cite[Theorem~6.1]{BBG}.

\begin{remark}[Logarithmically shrinking tubes]\label{sec:g-log-section}
For~$p>2$, the proof of Proposition~\ref{prop:g-power-divergence}\textup{(i)} applies to any bounded tube with positive continuous radius~$\zeta$ satisfying~$\zeta(t)=o((-t)^{1/p})$ as~$t\uparrow0$. Indeed, for fixed~$q=\eta_{1/8}a$, this gives~$\sup_{[-4s_j,-2s_j]}\zeta=o(s_j^{1/p})$, so the same ball lies in the complement on every sufficiently late time interval. The series diverges at every height, recovering~\cite[Theorem~6.1]{BBG} in the present range. In particular, for every~$\gamma>0$, the origin is regular for
\begin{equation}\label{eq:g-log-domain}
 \Theta=\{(x,t):|x|<(-t)^{\frac1p}[\log(e/(-t))]^{-\gamma},\ -1<t<0\}\,.
\end{equation}
For~$\pc<p<2$, the radius is at most~$(-t)^{1/p}$, so the lower-bound argument of Proposition~\ref{prop:g-singular-power} also proves regularity.
\end{remark}

\subsection{Parabolic Lipschitz domains}
\label{sec:g-parabolic-lipschitz}

The time exponent~$1/p$ matches the intrinsic scaling. A graph with a larger time exponent moves by less than the test radius on sufficiently late time intervals. At the exponent~$1/p$, the graph can move by a fixed multiple of that radius. For~$2<p<n$, the series can then converge at small heights, whereas for~$\pc<p<2$ it diverges at every height.

Write~$x=(x',x_n)\in\R^{n-1}\times\R$. Let~$\Theta$ be bounded and open, with~$(0,0)\in\partial\Theta$. Suppose that, after a spatial rotation, there are~$\rho,T>0$ and a function~$f:B'_\rho\times(-T,T)\to\R$ such that
\begin{equation}\label{eq:g-lipschitz-graph}
 \Theta\cap\bigl(B_\rho\times(-T,T)\bigr)
 =\{(x,t)\in B_\rho\times(-T,T):x_n>f(x',t)\}\,.
\end{equation}
Here~$B'_\rho$ is the ball in~$\R^{n-1}$. We assume~$f(0,0)=0$ and
\begin{equation}\label{eq:g-lipschitz-modulus}
 |f(x',t)-f(y',\tau)|
 \le M\bigl(|x'-y'|+|t-\tau|^\gamma\bigr)\,,
 \qquad M\ge0,\quad 1/p\le\gamma\le1\,.
\end{equation}
This is the local~$\operatorname{Lip}(1,\gamma)$ graph condition. The usual parabolic Lipschitz condition has~$\gamma=1/2$; the intrinsic exponent is~$\gamma=1/p$.

For comparison, Avelin~\cite{A16} obtains boundary H\"older estimates for nonnegative solutions in time-dependent domains under intrinsic parabolic NTA and exterior conditions. Here we prove Perron regularity from a lower bound for the capacity of a ball below the graph.

\begin{proposition}\label{prop:g-lipschitz-series}
Assume~\eqref{eq:g-lipschitz-graph}--\eqref{eq:g-lipschitz-modulus}, with~$2<p<n$.
\begin{enumerate}[(i)]
\item If~$\gamma>1/p$, there is~$\eta=\eta(n,p,M,C_*)\in(0,1]$ such that, for every~$a>0$, we have~$\omega_a(j)\ge\eta a$ for every sufficiently large~$j$.
\item If~$\gamma=1/p$, there are~$q_*>0$ and~$a_0>0$, depending only on~$n,p,M,C_*$, such that, for every~$a\ge a_0$, we have~$\omega_a(j)\ge q_*$ for every sufficiently large~$j$.
\end{enumerate}
For the heights specified in each item,~$\sum_j\omega_a(j)=\infty$.
\end{proposition}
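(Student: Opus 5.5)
The plan is to repeat the power-domain argument of Proposition~\ref{prop:g-power-divergence}(i) and (iii). At each late scale we exhibit a closed ball of relative radius~$1/8$ that lies below the graph throughout the tested time interval. Then~\eqref{eq:g-normalized-ball-band} with~$\sigma$ fixed makes a chosen increment admissible.

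Fix~$q>0$ and put~$r=r_j(q)$, so~$r^p=q^{p-2}s_j$. Let~$j$ be large enough that~$B_r\times[-4s_j,-2s_j]\subset B_\rho\times(-T,T)$. For~$|x'|\le r$ and~$t\in[-4s_j,-2s_j]$,~\eqref{eq:g-lipschitz-modulus} with~$f(0,0)=0$ gives
\begin{equation*}
 f(x',t)\ge -M\bigl(|x'|+(4s_j)^\gamma\bigr).
\end{equation*}
By~\eqref{eq:g-lipschitz-graph}, every point of~$B_\rho\times(-T,T)$ with~$x_n\le f(x',t)$ lies in~$E$. So it suffices to find a ball~$\overline B_{\sigma r}(y)\subset B_r$ on which~$x_n< -M|x'|-M(4s_j)^\gamma$. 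The natural choice is a ball centered on the negative~$x_n$-axis, inside the downward cone~$\{x_n<-2M|x'|\}$. Take~$y=-(r/2)e_n$ and~$\sigma=\sigma(M)=\min\{1/8,\,1/(8(1+2M))\}$. For~$x$ in this ball,~$|x'|\le\sigma r$ and~$x_n\le -r/2+\sigma r\le-3r/8$. Hence
\begin{equation*}
 x_n+M|x'|\le -3r/8+M\sigma r\le -r/4.
\end{equation*}
It therefore suffices to have~$M(4s_j)^\gamma\le r/8$. Then~$\overline B_{\sigma r}(y)\times[-4s_j,-2s_j]\subset K_{j,q}$.

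Case (i), with~$\gamma>1/p$: choose~$q=\eta_\sigma a$. Then
\begin{equation*}
 M(4s_j)^\gamma/r=M4^\gamma q^{-(p-2)/p}s_j^{\gamma-1/p}\to0,
\end{equation*}
so the condition holds for all large~$j$. Then~\eqref{eq:g-normalized-ball-band} and the definition of~$\eta_\sigma$ make~$q$ admissible, giving~$\omega_a(j)\ge\eta_\sigma a$ with~$\eta=\eta_\sigma$, which depends on~$n,p,M,C_*$.

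Case (ii), with~$\gamma=1/p$: the same ratio equals~$M4^{1/p}q^{-(p-2)/p}$, which is independent of~$j$. Choose~$q_*$ with
\begin{equation*}
 q_*^{(p-2)/p}\ge 8M4^{1/p},
\end{equation*}
and put~$a_0=q_*/\eta_\sigma$. For~$a\ge a_0$ we have~$q_*\le\eta_\sigma a$, so~$q_*$ is admissible at every sufficiently large~$j$. This gives~$\omega_a(j)\ge q_*$. In both cases the terms are bounded below by a positive constant on a tail, so the series diverges.

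The main point to check is geometric: the ball must stay below the graph uniformly over the whole time interval and over~$|x'|\le r$. This is why the slope~$M$ enters~$\sigma$, and~$\sigma$ in turn enters~$\eta_\sigma$ through~$c_\sigma$. Lemma~\ref{lem:g-ball-band} only requires~$\overline B_{\sigma r}(y)\subset B_r$ and~$p<n$, both of which hold here.
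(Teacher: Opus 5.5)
Your proposal is correct and follows the paper's proof essentially step for step. It uses the same ball centered at $-(r/2)e_n$ lying below the graph, the same displacement ratio $4^\gamma Mq^{-(p-2)/p}s_j^{\gamma-1/p}$, and the same choices $q=\eta_\sigma a$ in case (i) and a fixed $q_*$ with $a_0=q_*/\eta_\sigma$ in case (ii). The only differences are cosmetic: you take $\sigma=\min\{1/8,1/(8(1+2M))\}$ with threshold $r/8$, where the paper takes $\sigma=1/(8(1+M))$ with threshold $r/4$.
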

\begin{proof}
Set~$\sigma=1/(8(1+M))$ and~$\eta=\eta_\sigma$ from~\eqref{eq:g-singular-ratio}. For fixed~$q>0$, put~$r=r_j(q)$ and~$s=s_j$. Take~$j$ so large that~$r<\rho$ and~$4s<T$. On the closed ball~$\overline B_{\sigma r}(-re_n/2)$, we have
\begin{equation*}
 |x|\le(1/2+\sigma)r<r\,,\qquad
 x_n+M|x'|\le-r/2+(1+M)\sigma r=-3r/8\,.
\end{equation*}
By~\eqref{eq:g-lipschitz-modulus}, throughout~$[-4s,-2s]$,
\begin{equation*}
 f(x',t)\ge-M|x'|-M(4s)^\gamma\,.
\end{equation*}
Thus, whenever~$M(4s)^\gamma\le r/4$, the ball lies strictly below the graph throughout the time interval, and
\begin{equation}\label{eq:g-lipschitz-conductor}
 \overline B_{\sigma r}(-re_n/2)\times[-4s,-2s]
 \subset K_{j,q}\,.
\end{equation}
The bound for the graph displacement relative to the test radius satisfies
\begin{equation}\label{eq:g-lipschitz-displacement}
 \frac{M(4s_j)^\gamma}{r_j(q)}
 =4^\gamma M q^{-\frac{p-2}{p}}s_j^{\gamma-\frac{1}{p}}\,.
\end{equation}

If~$\gamma>1/p$, choose~$q=\eta a$. The right-hand side of~\eqref{eq:g-lipschitz-displacement} tends to zero, so~\eqref{eq:g-lipschitz-conductor} holds for all sufficiently large~$j$. Capacity monotonicity and~\eqref{eq:g-normalized-ball-band} give
\begin{equation}\label{eq:g-lipschitz-capacity}
 \frac{\capc_a(K_{j,q},Q_{j,q})}{q r_j(q)^n}
 \ge2c_\sigma(a/q)^{p-1}\ge2C_*>C_*\,.
\end{equation}
Hence~$\omega_a(j)\ge q=\eta a$ there.

If~$\gamma=1/p$, fix~$q_*>0$ satisfying
\begin{equation*}
 q_*^{\frac{p-2}{p}}\ge4^{1+\frac{1}{p}}M\,.
\end{equation*}
Then~\eqref{eq:g-lipschitz-conductor} holds with~$q=q_*$ whenever the corresponding cylinder lies in the graph neighborhood. Set~$a_0=q_*/\eta$. For every~$a\ge a_0$, we have~$q_*\le a$, and~\eqref{eq:g-lipschitz-capacity} again holds. This proves~$\omega_a(j)\ge q_*$ for all sufficiently large~$j$.
\end{proof}

Since~$p>2$, we have~$1/2>1/p$. Proposition~\ref{prop:g-lipschitz-series}\textup{(i)} and Theorem~\ref{thm:main} therefore prove regularity of the origin for every domain locally given by a~$\operatorname{Lip}(1,1/2)$ graph as in~\eqref{eq:g-lipschitz-graph}. At~$\gamma=1/p$, the displacement relative to the test radius is independent of~$j$ and depends on~$q$. For the next domain, the origin is irregular although the series diverges at every sufficiently large obstacle height.

\begin{proposition}\label{prop:g-lipschitz-critical-irregular}
Let~$M>0$ and~$2<p<n$, and set
\begin{equation}\label{eq:g-lipschitz-critical-domain}
 \Theta_M=(B_1\times(-1,1))\cap
 \{(x,t):x_n>-M|t|^{\frac{1}{p}}\}\,.
\end{equation}
The origin is irregular for~$\Theta_M$. For every height satisfying
\begin{equation}\label{eq:g-lipschitz-critical-small-height}
 0<a<(2M^p)^{\frac{1}{p-2}}\,,
\end{equation}
we have~$\omega_a(j)=0$ for every sufficiently large~$j$, and~$\sum_j\omega_a(j)<\infty$. For every sufficiently large height, the series diverges.
\end{proposition}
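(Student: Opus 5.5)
The plan is to prove all three assertions from the intrinsic Wiener criterion, Theorem~\ref{thm:main}, together with the geometric computations already carried out for the critical power domain in Proposition~\ref{prop:g-power-divergence}. Only a height comparison with the empty-test-set argument is needed, applied near the origin in the past.

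\emph{Small heights.} Fix~$a$ satisfying~\eqref{eq:g-lipschitz-critical-small-height} and~$0<q\le a$. For~$j\ge3$ and~$t\in[-4s_j,-2s_j]$, every point of~$\overline B_{r_j(q)}$ has~$x_n\ge-r_j(q)$, while the graph lies at height~$-M|t|^{1/p}\le-M(2s_j)^{1/p}$. Since~$p>2$, we have~$r_j(q)\le r_j(a)=a^{(p-2)/p}s_j^{1/p}$, and~\eqref{eq:g-lipschitz-critical-small-height} gives~$a^{(p-2)/p}<M2^{1/p}$. Hence~$r_j(q)<M(2s_j)^{1/p}$ uniformly in~$q\le a$. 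For~$j$ large we also have~$r_j(a)<1$ and~$4s_j<1$. Then every point of~$\overline B_{r_j(q)}\times[-4s_j,-2s_j]$ satisfies~$|x|<1$, $|t|<1$ and~$x_n\ge-r_j(q)>-M|t|^{1/p}$, so it lies in~$\Theta_M$. Thus~$K_{j,q}=\varnothing$, which gives~$\omega_a(j)=0$ on a tail and convergence of the series. Theorem~\ref{thm:main}, in the form (i)$\Rightarrow$(ii), shows that convergence at one height forces irregularity, so the origin is irregular. This mirrors case~(iii) of Proposition~\ref{prop:g-power-divergence}, with the half-space geometry replacing the tube.

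\emph{Large heights.} Here I will apply Proposition~\ref{prop:g-lipschitz-series}\textup{(ii)}. Near the origin, $\Theta_M$ is the supergraph of~$f(x',t)=-M|t|^{1/p}$. The elementary inequality~$\bigl||t|^{1/p}-|\tau|^{1/p}\bigr|\le|t-\tau|^{1/p}$ shows that~\eqref{eq:g-lipschitz-modulus} holds with~$\gamma=1/p$ and the same~$M$. The graph representation~\eqref{eq:g-lipschitz-graph} holds in~$B_\rho\times(-T,T)$ with~$\rho=1$ and~$T=1$. Proposition~\ref{prop:g-lipschitz-series} then yields~$a_0$ such that~$\omega_a(j)\ge q_*$ on a tail for every~$a\ge a_0$, so the series diverges at every such height. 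Alternatively, one can argue directly as in case~(iii) of Proposition~\ref{prop:g-power-divergence}: choose~$q_*$ with~$M(4s_j)^{1/p}\le r_j(q_*)/4$. The ball~$\overline B_{r/8}(-(r/2)e_n)$ then lies below the graph, and~\eqref{eq:g-normalized-ball-band} gives admissibility once~$a\ge q_*/\eta_{1/8}$.

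\emph{Expected obstacle.} The only delicate point is the uniformity in~$q$ of the small-height inclusion, which relies on monotonicity of~$r_j(q)$ in~$q$ for~$p>2$. I must also check that the boundedness cut~$B_1\times(-1,1)$ does not reintroduce complement points into the test cylinders. Both are routine for large~$j$. The irregularity claim is therefore immediate from the criterion and requires no barrier construction.
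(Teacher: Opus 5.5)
Your proposal is correct and follows the paper's proof essentially verbatim. At small heights you obtain empty intrinsic test sets from $r_j(q)\le r_j(a)<M(2s_j)^{1/p}$. At large heights you get divergence from Proposition~\ref{prop:g-lipschitz-series}\textup{(ii)}, after checking the $\operatorname{Lip}(1,1/p)$ graph condition via $\bigl||t|^{1/p}-|\tau|^{1/p}\bigr|\le|t-\tau|^{1/p}$. Irregularity then follows from convergence at one height by Theorem~\ref{thm:main}.
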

\begin{proof}
The inequality~$\bigl||t|^{\frac{1}{p}}-|\tau|^{\frac{1}{p}}\bigr|\le|t-\tau|^{\frac{1}{p}}$ shows that the defining graph belongs to~$\operatorname{Lip}(1,1/p)$. Fix~$a$ satisfying~\eqref{eq:g-lipschitz-critical-small-height}. For~$j\ge3$,~$0<q\le a$, and~$t\in[-4s_j,-2s_j]$, we have
\begin{equation*}
 r_j(q)\le a^{\frac{p-2}{p}}s_j^{\frac{1}{p}}
 <M(2s_j)^{\frac{1}{p}}\le M(-t)^{\frac{1}{p}}\,.
\end{equation*}
Once~$r_j(a)<1$, every point of~$\overline B_{r_j(q)}$ therefore satisfies~$x_n\ge-r_j(q)>-M(-t)^{\frac{1}{p}}$. The corresponding closed cylinder lies in~$\Theta_M$, so~$K_{j,q}=\varnothing$ for every~$q\le a$. Its capacity is zero, and hence~$\omega_a(j)=0$ for all sufficiently large~$j$. The remaining finitely many terms are at most~$a$. The large-height divergence follows from Proposition~\ref{prop:g-lipschitz-series}(ii).

Since the series converges at the positive height~$a$ fixed above, Theorem~\ref{thm:main} implies that the origin is irregular for~$\Theta_M$.
\end{proof}

\paragraph{Singular parabolic Lipschitz graphs.}
For the graph~\eqref{eq:g-lipschitz-graph} satisfying~\eqref{eq:g-lipschitz-modulus}, let~$\gamma\ge1/p$ and~$\sigma=1/[8(1+M)]$. The spatial comparison used in~\eqref{eq:g-lipschitz-conductor} applies to the same ball~$\overline B_{\sigma r}(-re_n/2)$. The bound for the graph displacement relative to the test radius is
\begin{equation*}
 \frac{M(4s_j)^\gamma}{r_j(q)}
 =4^\gamma Mq^{\frac{2-p}{p}}s_j^{\gamma-\frac{1}{p}}\,.
\end{equation*}
If~$\gamma>1/p$, fix~$q=\eta_\sigma a$ and then choose a tail on which this ratio is at most~$1/4$ and the ball lies in the graph neighborhood. If~$\gamma=1/p$, first choose~$0<q\le\eta_\sigma a$ so small that~$4^{\frac{1}{p}}Mq^{\frac{2-p}{p}}\le1/4$, and then choose the tail. In either case, the corresponding closed cylinder is contained in~$K_{j,q}$, so~$\omega_a(j)\ge q$ for all sufficiently large~$j$. Theorem~\ref{thm:main} therefore proves regularity of the origin for every domain satisfying these graph conditions. In particular, for the domain~\eqref{eq:g-lipschitz-critical-domain}, the origin is regular in the present range and irregular for~$p>2$.

\subsection{Stationary cylinders}
\label{sec:g-rough-sec}

For elliptic-capacity Wiener criteria in cylinders under structural and dimension restrictions, see~\cite{Sk00,Sk04s,Sk04d,Sk04n}. Here we compare the intrinsic series with the elliptic Wiener series. A stationary complement retains its elliptic capacity throughout each time interval. The intrinsic radius depends on the increment, so several spatial scales may correspond to the same time index. We bound the sum of their increments by one Wiener term. This transfers divergence of the elliptic Wiener series to~$\sum_j\omega_a(j)$.

Let~$2<p<n$, let~$D\subset\R^n$ be bounded and open, and let~$0\in\partial D$ be regular for the elliptic~$p$-Laplace equation. Put
\begin{equation*}
 \Theta=D\times(-1,1),\qquad F=\R^n\setminus D\,.
\end{equation*}
No thickness bound uniform in the radius is assumed. $(0,0)$ is parabolically regular by~\cite[Theorem~3.9]{BBGP}, after translating time by~$1$.

The following estimate applies even when the spatial compact set has empty interior.

\begin{lemma}[Capacity of a spatial set over a time interval]\label{lem:g-rough-product}
Let~$G$ be a bounded smooth spatial domain and let~$H\Subset G$ be compact. Let~$a>0$,~$b<c$, and let~$J\subset(b,c)$ be a closed interval. There is~$c_0=c_0(n,p)>0$ such that
\begin{equation}\label{eq:g-rough-product-bound}
 \capc_a(H\times J,G\times(b,c))
 \ge c_0a^{p-1}|J|\capce(H,G)\,.
\end{equation}
The finite reference cylinder is interpreted with the homogeneous continuation used in the test.
\end{lemma}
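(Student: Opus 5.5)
Following the proof of Lemma~\ref{lem:g-ball-band}, we will build an explicit admissible source from a stationary elliptic supersolution. Let~$\varphi$ be the elliptic $p$-capacitary potential of~$H$ in~$G$: $0\le\varphi\le1$, $\varphi=1$ on~$H$ (quasi-everywhere), $\varphi\in W^{1,p}_0(G)$, and $-\Dp\varphi=\mu_H\ge0$ with~$\mu_H(H)=\mu_H(G)=\capce(H,G)$. The equilibrium measure~$\mu_H$ may be singular with respect to Lebesgue measure, and~$H$ may have empty interior, so it is not a bounded source. We therefore first take~$H$ regular for the elliptic Dirichlet problem and then handle general~$H$ by outer approximation. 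Capacity is continuous on decreasing compact sets, both elliptically and parabolically (\cite[Lemma~5.8]{KKKP}, \cite[Lemma~3.3]{MS}), so it suffices to prove the estimate for compact neighbourhoods~$H_k\downarrow H$ with uniform~$c_0$. We may take~$H_k$ to be finite unions of closed balls; these are regular, and their equilibrium measures~$\mu_k$ are finite and supported in~$H_k$. This requires~$\capce(H_k,G)\to\capce(H,G)$, which is standard.

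For such~$H_k$, consider the measure source~$\nu=a^{p-1}\mu_k\otimes\mathbf1_J(t)\,dt$, supported in~$H_k\times J$. Let~$u$ be the solution with source~$\nu$, zero initial data at~$b$, zero lateral values on~$\partial G$, and homogeneous continuation. Comparison with the stationary supersolution~$a\varphi_k$ gives~$0\le u\le a\varphi_k\le a$: indeed $(a\varphi_k)_t-\Dp(a\varphi_k)=a^{p-1}\mu_k\ge\nu$, and its boundary and initial data are nonnegative. The measure form of the capacity in Definition~\ref{def:parabolic-capacity} then admits~$\nu$, so
\[
 \capc_a(H_k\times J,G\times(b,c))\ge\nu(H_k\times J)=a^{p-1}|J|\capce(H_k,G),
\]
which is the claim with~$c_0=1$. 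The statement allows any~$c_0(n,p)$, so there is slack for approximation losses.

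The main obstacle is justifying this comparison and admissibility for a measure source in~$x$. The comparison principle for nonnegative source measures is~\cite[Lemma~3.4]{KKKP}, but we also need existence of~$u$ and the zero-trace energy condition. To stay within the bounded-source framework of Lemma~\ref{lem:penalty} and Lemma~\ref{s:source-restriction}, we mollify instead. Replace~$\mu_k$ by~$\mu_k*\rho_\varepsilon$, supported in a slightly larger neighbourhood~$H_k^\varepsilon\Subset G$, and replace~$\varphi_k$ by the solution~$\varphi_{k,\varepsilon}$ of~$-\Dp\varphi_{k,\varepsilon}=\mu_k*\rho_\varepsilon$ in~$G$. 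By stability of the elliptic problem, $\varphi_{k,\varepsilon}\le1+o(1)$. Rescaling by~$(1+o(1))^{-(p-1)}$ keeps the parabolic solution below~$a$. Testing the equation with~$u$ and using Poincaré gives~$u\in L^p(b,\infty;W^{1,p}_0(G))$, exactly as in Lemma~\ref{lem:g-ball-band}. This yields the bound for~$H_k^\varepsilon\times J$. Letting~$\varepsilon\to0$ and then~$k\to\infty$, continuity of both capacities on decreasing compact sets gives~\eqref{eq:g-rough-product-bound} with, for example,~$c_0=1/2$.
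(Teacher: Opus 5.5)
Your route differs from the paper's. The paper bounds the variational parabolic capacity of \cite{AKP} from below at unit height: every competitor $v\ge\ind_{H\times J}$ has $\int_J\int_G|\nabla v|^p\ge|J|\capce(H,G)$. It transfers this to the measure capacity by the comparability theorem \cite[Theorem~4.9]{AKP}, which needs $p\ge2$ and is the source of the unspecified $c_0$. It then obtains the factor $a^{p-1}$ by the time change $\tau=a^{p-2}(t-b)$. You instead exhibit an admissible measure directly: $a^{p-1}$ times the elliptic equilibrium measure, spread over $J$, with the stationary supersolution $a\varphi$ as majorant. This avoids the variational capacity, its comparability theorem and the restriction $p\ge2$. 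It also produces $a^{p-1}$ without rescaling and gives $c_0=1$. The comparison $u\le a\varphi$ and the mass count are correct.

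The gap is in the mollification step you use to make this rigorous. The claim that $\varphi_{k,\varepsilon}\le1+o(1)$ by stability of the elliptic problem does not follow. Stability gives $\mu_k*\rho_\varepsilon\to\mu_k$ in $W^{-1,p'}(G)$, hence $\varphi_{k,\varepsilon}\to\varphi_k$ in $W^{1,p}_0(G)$. For $p\le n$, in particular in the range $2<p<n$ where this lemma is applied, that gives no $L^\infty$ control. Since $\varphi_{k,\varepsilon}$ is $p$-harmonic off the support of $\mu_k*\rho_\varepsilon$ and vanishes on $\partial G$, its supremum is attained on that support. That is exactly where interior estimates give no uniform convergence. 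Because $\Dp$ is nonlinear, you also cannot write $\varphi_{k,\varepsilon}$ as an average of translates of $\varphi_k$. Even a bound $\sup\varphi_{k,\varepsilon}\le C(n,p)$ would need something like Wolff-potential estimates for the mollified measures, together with boundary gradient information for $\varphi_k$.

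The easiest repair is to drop both the mollification and the outer approximation. For any compact $H$, $\mu_H=-\Dp\varphi_H\in W^{-1,p'}(G)$, so $\nu=a^{p-1}\mu_H\otimes\ind_J\,dt$ lies in $L^{p'}(b,T;V^*)$. The monotone evolution theorem the paper already cites \cite{Liu} then gives the energy solution on $G\times(b,\infty)$, and uniqueness in that class identifies it as the solution with source $\nu$.

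\begin{itemize}
\item Testing with $(u-a\varphi_H)_+$ uses only $\langle\mu_H,\phi\rangle\ge0$ for $0\le\phi\in W^{1,p}_0(G)$, and gives $0\le u\le a\varphi_H\le a$.
\item Testing with $u$ gives the zero-trace energy condition.
\item $\capce(H,G)=\int|\nabla\varphi_H|^p=\langle\mu_H,\varphi_H\rangle\le\mu_H(G)$ gives the mass bound.
\end{itemize}

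If you prefer to stay with bounded sources, generate them by elliptic penalization. Solve $-\Dp\varphi_N=N(\psi-\varphi_N)_+$ with $\psi\in C_c^\infty(G)$, $0\le\psi\le1$ and $\psi=1$ near $H$.

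\begin{itemize}
\item Comparison with constants gives $0\le\varphi_N\le1$ exactly.
\item $\int N(\psi-\varphi_N)_+\ge\int|\nabla\varphi_N|^p$, and the liminf of the right-hand side is at least $\capce(H,G)$.
\item Finish by parabolic continuity on decreasing compact sets.
\end{itemize}

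Finally, the convergence $\capce(H_k,G)\to\capce(H,G)$ is not needed; the monotonicity $\capce(H_k,G)\ge\capce(H,G)$ suffices.
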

\begin{proof}
The infinite-time variational and measure capacities are comparable for every~$p\ge2$. At height~$1$, use the variational capacity on~$G\times(b,\infty)$ from~\cite[Definition~2.6 and Theorem~4.9]{AKP}. The variational cost contains the integral of~$|\nabla v|^p$. For each smooth competitor~$v\ge\mathbf1_{H\times J}$,
\begin{equation*}
 \int_J\!\int_G|\nabla v|^p\,dx\,dt
 \ge |J|\capce(H,G)\,.
\end{equation*}
Taking the infimum and applying the comparison between variational and measure capacity gives~\eqref{eq:g-rough-product-bound} with~$a=1$. Here the variational capacity is taken on the full continuation cylinder.

For height~$a$, set~$\tau=a^{p-2}(t-b)$ and~$u(x,t)=a v(x,\tau)$. The Riesz measure of~$u$ has total mass~$a$ times that of~$v$. The transformed interval has length~$a^{p-2}|J|$, whereas the spatial domain and elliptic capacity are unchanged. The measure definition of capacity therefore gives~\eqref{eq:g-rough-product-bound}. Homogeneous continuation adds no source mass, as in~\cite[Remark~5.2(2)]{KKKP}.
\end{proof}

Write~$\mathcal C(H)=\capce(H,\R^n)$ for the homogeneous elliptic capacity and set
\begin{equation}\label{eq:g-rough-dyadic-density}
 R_k=2^{-k},\qquad F_k=F\cap\overline B_{R_k},\qquad
 b_k=\left(\frac{\mathcal C(F_k)}{R_k^{n-p}}\right)^{\frac{1}{p-1}}\,.
\end{equation}
Here~$0\le b_k\le B$, where~$B=\mathcal C(\overline B_1)^{\frac{1}{p-1}}$. Elliptic regularity implies
\begin{equation}\label{eq:g-rough-elliptic-sum}
 \sum_{k=1}^{\infty}b_k=\infty\,.
\end{equation}
To obtain~\eqref{eq:g-rough-elliptic-sum} from the elliptic Wiener criterion~\cite[Theorem~1.1, p.~138]{KM}, first compare homogeneous and relative capacity. For compact~$H\subset\overline B_r$, multiplying a global competitor~$\phi$ by a cutoff equal to one on~$B_r$, supported in~$B_{2r}$, and with gradient bounded by~$C/r$, gives
\begin{equation*}
 \mathcal C(H)\le\capce(H,B_{2r})\le C\mathcal C(H)\,.
\end{equation*}
Indeed, the extra term is bounded by~$r^{-p}\int_{B_{2r}}|\phi|^p\le C\|\phi\|_{L^{np/(n-p)}(\R^n)}^p\le C\int_{\R^n}|\nabla\phi|^p$ by H\"older's and Sobolev's inequalities. The denominator in the relative Wiener ratio is a constant times~$r^{n-p}$. For~$R_{k+1}\le r\le R_k$, monotonicity gives
\begin{equation*}
 2^{-\frac{n-p}{p-1}}b_{k+1}
 \le\left(\frac{\mathcal C(F\cap\overline B_r)}{r^{n-p}}\right)^{\frac1{p-1}}
 \le2^{\frac{n-p}{p-1}}b_k\,.
\end{equation*}
Open and closed inner balls give the same divergence by comparison at~$r/2$ and~$r$. Integration with~$dr/r$ therefore gives~\eqref{eq:g-rough-elliptic-sum}.

The proof assigns an increment proportional to~$ab_k$ at radius~$R_k$ and groups equal time indices.

\begin{proposition}\label{prop:g-rough-divergence}
For every~$a>0$, the terms defined in~\eqref{eq:omega} satisfy
\begin{equation*}
 \sum_{j=1}^{\infty}\omega_a(j)=\infty\,.
\end{equation*}
\end{proposition}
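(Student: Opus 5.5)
For each dyadic spatial radius $R_k=2^{-k}$ with $b_k>0$, I will produce an admissible increment at a suitable time index. The increment will be proportional to $ab_k$, or at least to a fixed fraction of it. Divergence of $\sum_k b_k$ in~\eqref{eq:g-rough-elliptic-sum} then transfers to $\sum_j\omega_a(j)$, once I control how many radii share one time index.

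\emph{Assigning a time index to each radius.} Fix $a>0$ and put $\theta=\min\{1,\eta\}$ with a small constant $\eta=\eta(n,p,C_*)$ chosen below. For $k$ with $b_k>0$, set $q_k=\theta a\min\{1,b_k/B\}\le a$. Let $j=j(k)$ be the index with $r_j(q_k)\in[R_k/2,R_k)$. Such a $j$ exists because $r_j(q)=(q^{p-2}s_j)^{1/p}$ halves after $p$ steps in $j$. More precisely, choose $j$ minimal with $r_j(q_k)\le R_k$, so that $r_j(q_k)>2^{-1/p}R_k\ge R_k/2$. Then
\begin{equation*}
 K_{j,q_k}=F_k'\times[-4s_j,-2s_j],\qquad F_k'=F\cap\overline B_{r_j(q_k)}\supset F_{k+1},
\end{equation*}
and $j\to\infty$ as $k\to\infty$ because $q_k\le a$ is bounded. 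For large $j$ the time interval lies in $(-1,1)$. Apply Lemma~\ref{lem:g-rough-product} with $G=B_{8Lr}$ and $r=r_j(q_k)$. Combined with the comparison $\capce(H,B_{8Lr})\ge c\,\mathcal C(H)$ from the cutoff argument above, and with $\mathcal C(F_{k+1})=b_{k+1}^{p-1}R_{k+1}^{n-p}$, this gives
\begin{equation*}
 \frac{\capc_a(K_{j,q_k},Q_{j,q_k})}{q_kr^n}\ge c\,\frac{a^{p-1}s_j\,b_{k+1}^{p-1}r^{n-p}}{q_k r^n}=c\Bigl(\frac{a\,b_{k+1}}{q_k}\Bigr)^{p-1}.
\end{equation*}
Here I used $s_j/r^p=q_k^{2-p}$.

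\emph{The mismatch between $b_{k+1}$ and $b_k$.} To make $q_k$ admissible I need $b_{k+1}\gtrsim q_k/a$. I will therefore define the increment from $b_{k+1}$, taking $q_k=\theta a\min\{1,b_{k+1}/B\}$. Since the constants here are $(n,p)$-dependent and $\sum_kb_{k+1}=\infty$, choosing $\theta^{p-1}\le c/(2C_*)$ makes every such $q_k$ admissible. It follows that $\omega_a(j(k))\ge q_k\ge c'a\,b_{k+1}$.

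\emph{Grouping equal time indices; the main obstacle.} Several $k$ may map to the same $j$, so $\sum_j\omega_a(j)\ge\sum_j\max_{k:j(k)=j}q_k$, and I must show that this loses only a constant fraction. If $j(k)=j(k')=j$ with $k<k'$, then $R_k/2\le r_j(q_k)$ and $r_j(q_{k'})<R_{k'}$. This forces $(q_k/q_{k'})^{(p-2)/p}\ge 2^{k'-k-1}$, since $p>2$ means $r_j$ increases in $q$. Hence the increments sharing one index decay geometrically as $k$ increases:
\begin{equation*}
 q_{k'}\le 2^{-(k'-k-1)p/(p-2)}q_k .
\end{equation*}
Summing this geometric series, the total over a group is at most $C(p)$ times its largest element. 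Consequently
\begin{equation*}
 \sum_j\omega_a(j)\ge C^{-1}\sum_k q_k\ge c''a\sum_k\min\{b_{k+1},B\}=\infty .
\end{equation*}
The main care, which I expect to be the delicate point, is verifying that the geometric decay holds uniformly with $p$-dependent constants. One must also handle the truncation $\min\{1,\cdot\}$, which is harmless because $b_k\le B$.
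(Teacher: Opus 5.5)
Your route is the paper's: an increment proportional to $ab$ at an intrinsic radius comparable to $R_k$, Lemma~\ref{lem:g-rough-product} combined with $\capce(H,B_{8Lr})\ge\mathcal C(H)$, and geometric decay inside each group of equal time indices. Two remarks on this part.
\begin{itemize}
\item The comparison $\capce(H,B_{8Lr})\ge\mathcal C(H)$ holds with constant one, since $C_c^\infty(B_{8Lr})\subset C_c^\infty(\R^n)$; no cutoff is needed.
\item Your window $r_{j(k)}(q_k)\in(2^{-1/p}R_k,R_k]$, with the shift to $b_{k+1}$, is fine. Your condition on $\theta$ should absorb the factor $B^{p-1}$. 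The paper instead takes $j_k=\lfloor pk+(p-2)\log_2q_k\rfloor$, so that $r\in[R_k,2^{1/p}R_k)$ and $F_k$ itself lies in the test set.
\end{itemize}

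There is, however, a gap in the index assignment. The assertion that $j(k)\to\infty$ ``because $q_k\le a$ is bounded'' points the wrong way. Since $r_j(q)\le R_k$ iff $j\ge pk+(p-2)\log_2q$, your index is $j(k)=\lceil pk+(p-2)\log_2q_k\rceil$. The bound $q_k\le a$ therefore only bounds $j(k)$ from above.

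Regularity gives only $\sum_kb_k=\infty$, and individual values $b_{k+1}$ can be arbitrarily small. For example, let the complement consist of thick dyadic shells separated by very long empty ranges of scales. Then $b$ is of order one at the inner radius of a shell, but exponentially small in the gap length one scale further in. For such $k$, $j(k)$ can be $\le0$, and then there is no Wiener term $\omega_a(j(k))$ at all. If you instead restrict the minimum to $j\ge1$, the lower bound $r_{j(k)}(q_k)>R_k/2$ fails. With it fail the inclusion $F_{k+1}\subset\overline B_r$ and your capacity estimate.

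The missing step is to discard these indices. If $j(k)<3$, then $pk+(p-2)\log_2q_k\le2$, that is, $q_k\le2^{(2-pk)/(p-2)}$. This is a summable sequence, so the retained $k$ still satisfy $\sum q_k=\infty$, and all retained $j(k)\ge3$ are legitimate indices. This is exactly how the paper handles it. With this repair, the capacity bound and your grouping argument go through unchanged.

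Note also that only the inclusion $K_{j,q}\supset(F\cap\overline B_r)\times[-4s_j,-2s_j]$ is needed, not the equality you wrote. It holds for every $j$ because $E\supset F\times\R$.
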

\begin{proof}
Fix~$a>0$. Choose~$\varepsilon>0$, depending only on~$n,p,C_*$, so small that
\begin{equation*}
 \varepsilon B\le1,\qquad
 2c_0\varepsilon^{-(p-1)}2^{-\frac{n-p}{p}}>C_*\,.
\end{equation*}
For each~$k$ with~$b_k>0$, define
\begin{equation}\label{eq:g-rough-time-assignment}
 q_k=\varepsilon a b_k,\qquad
 \theta_k=pk+(p-2)\log_2q_k,\qquad j_k=\lfloor\theta_k\rfloor\,.
\end{equation}
Then~$0<q_k\le a$. For~$j_k\ge3$, the exact intrinsic radius obeys
\begin{equation*}
 r_{j_k}(q_k)=R_k2^{\frac{\theta_k-j_k}{p}},
 \qquad R_k\le r_{j_k}(q_k)<2^{\frac{1}{p}}R_k\,.
\end{equation*}
The tested time interval then lies in~$(-1,1)$, and~$F_k\times[-4s_{j_k},-2s_{j_k}]\subset K_{j_k,q_k}$. Set~$r=r_{j_k}(q_k)$. Capacity monotonicity,~\eqref{eq:g-rough-product-bound}, and~$\capce(F_k,B_{8Lr})\ge\mathcal C(F_k)$ give
\begin{align*}
 \frac{\capc_a(K_{j_k,q_k},Q_{j_k,q_k})}{q_kr^n}
 &\ge 2c_0\left(\frac a{q_k}\right)^{p-1}
              \frac{\mathcal C(F_k)}{r^{n-p}}\\
 &\ge 2c_0\varepsilon^{-(p-1)}2^{-\frac{n-p}{p}}>C_*\,.
\end{align*}
Here we used~$r^p=q_k^{p-2}s_{j_k}$ exactly. Consequently,~$\omega_a(j_k)\ge q_k$.

We now control the repeated indices. If~$j_k<3$, then~$\theta_k<3$, so
\begin{equation*}
 q_k<2^{\frac{3-pk}{p-2}}\,.
\end{equation*}
The sum of these discarded terms is finite. On the other hand,~\eqref{eq:g-rough-elliptic-sum} implies~$\sum_kq_k=\infty$. For each nonempty group with~$j_k=j\ge3$, let~$k_j$ be its least spatial index. From~\eqref{eq:g-rough-time-assignment},
\begin{equation*}
 2^{\frac{j-pk}{p-2}}\le q_k<2^{\frac{j+1-pk}{p-2}}\,.
\end{equation*}
It follows that
\begin{equation*}
 \sum_{k:j_k=j}q_k
 \le\frac{2^{\frac{1}{p-2}}}{1-2^{-\frac{p}{p-2}}}q_{k_j}
 \le\frac{2^{\frac{1}{p-2}}}{1-2^{-\frac{p}{p-2}}}\omega_a(j)\,.
\end{equation*}
Summing over~$j\ge3$ proves divergence. 
\end{proof}

\paragraph{Singular stationary cylinders.}
Let~$D\subset\R^n$ be bounded and open, let~$0\in\partial D$, and put~$\Theta=D\times(-1,1)$. By~\cite[Theorem~3.9]{BBGP}, after translating time,~$(0,0)$ is parabolically regular if and only if~$0$ is elliptically~$p$-regular for~$D$. Theorem~\ref{thm:main} therefore gives
\begin{equation*}
 0\text{ is elliptically regular for }D
 \quad\Longleftrightarrow\quad
 \sum_j\omega_a(j)=\infty\text{ for every }a>0\,.
\end{equation*}
If~$0$ is elliptically irregular, the series converges for at least one positive height. No thickness or smoothness assumption on~$\partial D$ is required.

\subsection{Soda-can domains}\label{sec:g-soda-regular-section}\label{sec:g-soda-irregular-section}

Let~$2<p<n$,~$l,\theta>0$, and put
\begin{equation}\label{eq:g-soda-regular-domain}
 \Theta_{l,\theta}=\{(x,t):0<-t<\theta|x|^l<\theta\}\,.
\end{equation}
At time~$t<0$, the complement contains~$B_{(-t/\theta)^{1/l}}$. The case~$l<p$ illustrates the upper capacity bound, which gives convergence at every height. For~$l\ge p$, a ball in the complement gives divergence. These calculations recover the classification in~\cite[Theorem~1.1]{AP26}; that paper treats all~$p>1$ and constructs traditional barriers at irregular points.

\begin{proposition}\label{prop:g-soda-regular-proposition}\label{prop:g-soda-irregular-series}
For every~$a>0$, the following hold.
\begin{enumerate}[(i)]
\item If~$l\ge p$, there are~$q_0\in(0,a]$ and~$j_0$ such that~$\omega_a(j)\ge q_0$ for all~$j\ge j_0$.
\item If~$l<p$, there is~$j_0=j_0(a,p,l,\theta)$ such that, for~$j\ge j_0$,
\begin{equation}\label{eq:g-soda-irregular-decay}
 \omega_a(j)\le C a^{\frac{p(p-1)}{\lambda}}
 \left(\frac4\theta\right)^{\frac{p(n-p)}{l\lambda}}
 2^{-\frac{j(n-p)(p-l)}{l\lambda}}\,.
\end{equation}
Here~$\lambda$ is defined in~\eqref{eq:lambda}, and~$C$ depends only on~$n,p,C_*$, for every fixed~$L>4$.
\end{enumerate}
The series diverges at every height if~$l\ge p$ and converges at every height if~$l<p$. The origin is regular if and only if~$l\ge p$.
\end{proposition}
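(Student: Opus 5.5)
For part~(i), with $l\ge p$, I would place a whole intrinsic ball inside the complement. For $t\in[-4s_j,-2s_j]$, the complement of $\Theta_{l,\theta}$ contains $\overline B_{\rho_j}$ with $\rho_j=(2s_j/\theta)^{1/l}$. Since $l\ge p$, the ratio $\rho_j/s_j^{1/p}$ is either bounded below by $(2/\theta)^{1/p}$ (when $l=p$) or tends to infinity (when $l>p$).

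I would therefore fix $q_0\le\eta_{1/2}a$, with $\eta_{1/2}$ from~\eqref{eq:g-singular-ratio}, small enough that
\begin{equation*}
 r_j(q_0)=q_0^{\frac{p-2}{p}}s_j^{\frac1p}\le\rho_j
\end{equation*}
for all $j\ge j_0$, taking $j_0$ large so that everything lies in $B_1\times(-1,0)$. Then $\overline B_{r/2}\times[-4s_j,-2s_j]\subset K_{j,q_0}$ with $r=r_j(q_0)$. By~\eqref{eq:g-normalized-ball-band} with $\sigma=1/2$, $y=0$, and capacity monotonicity, $q_0$ is admissible, so $\omega_a(j)\ge q_0$. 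This gives divergence at every height, and Theorem~\ref{thm:main} gives regularity.

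For part~(ii), with $l<p$, I would bound every admissible $q$ from above. For large $j$, inside $B_1$ and on $[-4s_j,-2s_j]$, the complement lies in $\overline B_{\rho'_j}$ with $\rho'_j=(4s_j/\theta)^{1/l}$. Put $R=\min\{\rho'_j,r\}$ with $r=r_j(q)$. Then
\begin{equation*}
 K_{j,q}\subset\overline B_R\times[-4s_j,-2s_j].
\end{equation*}
Since $B_{2R}\subset B_{8Lr}$, Lemma~\ref{lem:cylinder-capacity} together with~\eqref{eq:domain-capacity} gives
\begin{equation*}
 \capc_a(K_{j,q},Q_{j,q})\le C\bigl(aR^n+a^{p-1}s_jR^{n-p}\bigr).
\end{equation*}
I would then split into two cases.

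\emph{Case $r\le\rho'_j$.} Then $q^{(p-2)/p}\le\rho'_j s_j^{-1/p}$, so
\begin{equation*}
 q\le\bigl(\rho_j'^{\,p}/s_j\bigr)^{\frac1{p-2}}\sim s_j^{\frac{p-l}{l(p-2)}}.
\end{equation*}

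\emph{Case $r>\rho'_j$.} Then $R=\rho'_j$. Because $l<p$, we have $\rho_j'^{\,p}\ll a^{p-2}s_j$ for $j\ge j_0(a,p,l,\theta)$, so the second term of the capacity bound dominates. Admissibility requires the capacity to exceed $C_*qr^n$, and $qr^n=q^{\lambda/p}s_j^{n/p}$, so admissibility forces
\begin{equation*}
 q^{\lambda/p}\le Ca^{p-1}s_j^{1-\frac np}\rho_j'^{\,n-p}.
\end{equation*}
Solving for $q$ and substituting $\rho'_j=(4/\theta)^{1/l}s_j^{1/l}$ gives exactly the right-hand side of~\eqref{eq:g-soda-irregular-decay}. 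The power of $s_j$ is $(n-p)(p-l)/(l\lambda)$.

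The bound from the first case must then be absorbed into~\eqref{eq:g-soda-irregular-decay}. Since $\lambda>(n-p)(p-2)$, its exponent $(p-l)/(l(p-2))$ is larger, so that bound decays faster and is dominated for $j\ge j_0$. The series then converges at every height, and Theorem~\ref{thm:main} gives irregularity.

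I expect the main bookkeeping obstacle to be the case analysis in~(ii). One has to check that the term $aR^n$ never dominates in the admissibility inequality once $j$ is large. One also has to check that the bound from the first case, where $r\le\rho'_j$, is absorbed into~\eqref{eq:g-soda-irregular-decay} with a constant depending only on $n,p,C_*$, with the threshold $j_0$ carrying the dependence on $a,\theta$.
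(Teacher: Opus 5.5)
Your proposal is correct and follows essentially the paper's proof. In (i), an entire intrinsic ball lies in the complement and \eqref{eq:g-normalized-ball-band} with $\sigma=1/2$ makes a fixed $q_0\le\eta_{1/2}a$ admissible; in (ii), you split according to whether $r_j(q)$ exceeds $\rho'_j=(4s_j/\theta)^{1/l}$, using the trivial radius bound in one case and Lemma~\ref{lem:cylinder-capacity} in the other, as the paper does (it splits at $2\rho'_j$, and your $R=\min\{\rho'_j,r\}$ works equally well). For the absorption, with $d_j=\rho'_j s_j^{-1/p}$, the identity $d_j^{p/(p-2)}=d_j^{p(n-p)/\lambda}(d_j^p)^{p(p-1)/(\lambda(p-2))}$ shows that the $n$-independent threshold $d_j^p\le a^{p-2}$ from your second case suffices, and $j_0$ should also ensure $r_j(a)<1$ so that the tested balls avoid $\{|x|\ge1\}$.
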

\begin{proof}
For~$l=p$, choose~$q_0=\min\{\eta_{1/2}a,\theta^{-1/(p-2)}\}$. Then~$\theta r_j(q_0)^p\le s_j$. For~$l>p$, choose~$q_0=\eta_{1/2}a$; now
\begin{equation}\label{eq:g-soda-regular-supercritical-geometry}
 \frac{\theta r_j(q_0)^l}{s_j}
 =\theta q_0^{\frac{l(p-2)}p}s_j^{\frac{l-p}p}\longrightarrow0\,.
\end{equation}
Thus in both cases~$\theta r_j(q_0)^l\le s_j$ for all sufficiently large~$j$. For~$|x|\le r_j(q_0)$ and~$t\in[-4s_j,-2s_j]$, this gives~$\theta|x|^l\le s_j<-t$, so the tested ball lies in the complement. Its concentric ball of half the radius makes~$q_0$ admissible by~\eqref{eq:g-normalized-ball-band}, proving~(i).

For~$l<p$, write
\begin{equation*}
 b_j=(4s_j/\theta)^{\frac1l},\qquad
 d_j=b_j/s_j^{\frac1p}=(4/\theta)^{\frac1l}s_j^{\frac1l-\frac1p}\,.
\end{equation*}
Choose~$j_0$ so that~$r_j(a)<1$ and~$d_j^p\le a^{p-2}$ for every~$j\ge j_0$. For~$q\le a$, the tested ball does not meet~$\{|x|=1\}$, and the remaining complement on the tested time interval is contained in~$\overline B_{b_j}$. Hence~$K_{j,q}\subset\overline B_{b_j}\times[-4s_j,-2s_j]$.

If~$r_j(q)\le2b_j$, the intrinsic relation gives
\begin{equation}\label{eq:g-soda-irregular-small-radius}
 q\le2^{\frac{p}{p-2}}d_j^{\frac{p}{p-2}}\,.
\end{equation}
If~$r_j(q)>2b_j$, then~$B_{2b_j}\subset B_{8Lr_j(q)}$. Monotonicity in the set and spatial domain, with initial time~$-5s_j$, and Lemma~\ref{lem:cylinder-capacity} give
\begin{equation}\label{eq:g-soda-irregular-cylinder-upper}
 \begin{aligned}
 \capc_a(K_{j,q},Q_{j,q})
 &\le\capc_a(\overline B_{b_j}\times[-4s_j,-2s_j],B_{2b_j}\times(-5s_j,s_j))\\
 &\le C(ab_j^n+a^{p-1}s_jb_j^{n-p})\,.
 \end{aligned}
\end{equation}
If~$q$ is admissible, then~$qr_j(q)^n=q^{\lambda/p}s_j^{n/p}$ yields
\begin{equation}\label{eq:g-soda-irregular-large-radius}
 q^{\frac{\lambda}{p}}<\frac{C}{C_*}(ad_j^n+a^{p-1}d_j^{n-p})
 \le\frac{2C}{C_*}a^{p-1}d_j^{n-p}\,.
\end{equation}
Here we used~$d_j^p\le a^{p-2}$. To put the small-radius estimate~\eqref{eq:g-soda-irregular-small-radius} into the same form, use this inequality and~$\lambda=n(p-2)+p$ to obtain
\begin{equation*}
 \begin{aligned}
 q&\le C d_j^{\frac p{p-2}}\\
  &=C d_j^{\frac{p(n-p)}\lambda}
       (d_j^p)^{\frac{p(p-1)}{\lambda(p-2)}}\\
  &\le C a^{\frac{p(p-1)}\lambda}d_j^{\frac{p(n-p)}\lambda}\,.
 \end{aligned}
\end{equation*}
In either radius range, every admissible~$q$ is at most~$Ca^{p(p-1)/\lambda}d_j^{p(n-p)/\lambda}$. Taking the supremum proves~\eqref{eq:g-soda-irregular-decay}. Its decay exponent is positive, and the earlier terms are bounded by~$a$. The regularity classification follows from Theorem~\ref{thm:main}, in agreement with~\cite[Theorems~3.1 and~4.1]{AP26}.
\end{proof}

\begin{remark}[Exterior balls at a temporal pole]
For~$l=2$ and~$0<R<\min\{1,1/(2\theta)\}$, the exterior ball~$B_R^{n+1}(0,-R)$ touches~$\partial\Theta_{2,\theta}$ at the origin, since its upper boundary satisfies
\begin{equation*}
 t=-R+\sqrt{R^2-|x|^2}\le-|x|^2/(2R)\le-\theta|x|^2\,.
\end{equation*}
The contact is at the north pole. Nevertheless,~$2<p$ makes the point irregular, so the exclusion of this pole in~\cite[Proposition~4.1]{BBGP} is necessary; see~\cite{AP26}.
\end{remark}

\appendix
\section{Existence and stability for shifted obstacle problems}\label{sec:r-flux}
We prove Lemma~\ref{lem:r-obstacle}. For smooth~$a$, we first construct solutions, regularizing the flux when~$1<p\le2$; these approximating solutions have a bounded nonnegative reaction supported on the contact set. Comparison gives monotonicity in~$\psi$, and testing two variational inequalities gives stability in~$a$ with constants independent of~$\psi$ and its derivatives. This stability estimate permits strong approximation of~$a$ in~$L^p(Q;\R^n)$.

Throughout this appendix,~$n\ge1$,~$1<p<\infty$, and~$Q=D_0\times(0,T)$, where~$D_0\subset\R^n$ is a ball and~$T>0$. Norms without a displayed domain are over~$Q$, or over~$D_0$ at a fixed time. The constants~$C_p,c_p>0$ depend only on~$p$.

\subsection{Estimates for the regularized flux}
Assume~$1<p\le2$. Set~$A_0=A$ and, for~$\delta>0$, define
\begin{equation*}
 A_\delta(x)=(\delta^2+|x|^2)^{\frac{p-2}{2}}x,
 \qquad x\in\R^n.
\end{equation*}
For~$x,y\in\R^n$ and~$\delta\ge0$,
\begin{align}
 |A_\delta(x)-A_\delta(y)|&\le C_p|x-y|^{p-1},\label{eq:r-holder-flux}\\
 (A_\delta(x)-A_\delta(y))\cdot(x-y)
 &\ge c_p\frac{|x-y|^2}{(\delta+|x|+|y|)^{2-p}},\label{eq:r-weighted-monotonicity}\\
 \sup_{x\in\R^n}|A_\delta(x)-A(x)|&\le C_p\delta^{p-1}.\label{eq:r-uniform-regularization}
\end{align}
Zero divided by zero in \eqref{eq:r-weighted-monotonicity} is interpreted as zero; for $p=2$ its denominator is one.

Write~$\widetilde A(X)=|X|^{p-2}X$ for~$X\in\R^{n+1}$. Then~$A_\delta(x)$ is the projection of~$\widetilde A(x,\delta)$ onto its first~$n$ coordinates. Apply the vector inequalities in~\cite[Chapter~12, inequalities~(VII)--(VIII), p.~98]{Lindqvist17} to~$(x,\delta),(y,\delta)$ to obtain~\eqref{eq:r-holder-flux}--\eqref{eq:r-weighted-monotonicity}; homogeneity removes the additive constant in~(VII). Applying~(VIII) to~$(x,\delta),(x,0)$ gives~\eqref{eq:r-uniform-regularization}. The case~$p=2$ is immediate.

For~$a,\xi\in\R^n$, these estimates imply
\begin{align}
 |A_\delta(a+\xi)-A_\delta(a)|&\le C_p|\xi|^{p-1},\label{eq:r-singular-growth}\\
 \bigl(A_\delta(a+\xi)-A_\delta(a)\bigr)\cdot\xi
 &\ge c_p|\xi|^p-C_p(|a|^p+\delta^p).\label{eq:r-singular-energy}
\end{align}
For \eqref{eq:r-singular-energy}, expand the pairing with $x=a+\xi$, use
$A_\delta(x)\cdot x\ge c_p|x|^p-C_p\delta^p$, and absorb the terms $|x|^{p-1}|a|$ and $|a|^{p-1}|x|$ by Young's inequality.

\subsection{Construction and comparison}
\begin{proof}[Proof of Lemma~\ref{lem:r-obstacle}]
\emph{Step 1: Existence for smooth~$a$.}
Suppose that~$a$ is smooth on~$\overline Q$. For~$1<p\le2$, take~$0<\delta\le1$; for~$p>2$, take~$\delta=0$ and~$A_0=A$. Set~$V=W_0^{1,p}(D_0)\cap L^2(D_0)$, with the graph norm.

The flux is Carath\'eodory and strictly monotone. Its growth and coercivity follow from~\eqref{eq:r-singular-growth}--\eqref{eq:r-singular-energy} for~$1<p\le2$, and from~\eqref{eq:r-deg-monotonicity}--\eqref{eq:r-deg-growth} for~$p>2$. Smoothness of~$a$ and compact support of~$\psi$ give the initial and boundary compatibility conditions and
\begin{equation*}
 \psi_t-\div\bigl(A_\delta(a+\nabla\psi)-A_\delta(a)\bigr)-q\in L^\infty(Q).
\end{equation*}
The positive and negative parts of this residual belong to~$L^{p'}(0,T;V^*)$, verifying the order-dual hypothesis. Thus~\textup{(H$_1$)--(H$_4$)} of~\cite{GMTV} hold.

Theorem~2.2 of~\cite{GMTV} therefore gives
\begin{equation*}
 w_\delta\in L^p(0,T;V)\cap C([0,T];L^2(D_0)),
 \qquad (w_\delta)_t\in L^{p'}(0,T;V^*).
\end{equation*}
We have~$w_\delta(0)=0$ and~$w_\delta\ge\psi$. For every~$v\in L^p(0,T;V)$ with~$v\ge\psi$,
\begin{equation}\label{eq:r-obstacle-vi}
 \int_0^T\langle(w_\delta)_t,v-w_\delta\rangle\,dt
 +\iint_Q\bigl(A_\delta(a+\nabla w_\delta)-A_\delta(a)\bigr)\cdot \nabla (v-w_\delta)\,dx\,dt
 \ge\iint_Q q(v-w_\delta)\,dx\,dt.
\end{equation}
The same theorem gives
\begin{gather*}
 \mu_\delta:=(w_\delta)_t-\div\bigl(A_\delta(a+\nabla w_\delta)-A_\delta(a)\bigr)-q,\\
 0\le\mu_\delta
 \le\bigl(\psi_t-\div\bigl(A_\delta(a+\nabla \psi)-A_\delta(a)\bigr)-q\bigr)_+.
\end{gather*}
Taking~$v=\psi$ in~\eqref{eq:r-obstacle-vi} and using~$\mu_\delta\ge0$ and~$w_\delta\ge\psi$ gives~$\mu_\delta(w_\delta-\psi)=0$ almost everywhere. Hence
\begin{equation}\label{eq:r-regularized-contact}
 (w_\delta)_t-\div\bigl(A_\delta(a+\nabla w_\delta)-A_\delta(a)\bigr)=q+\mu_\delta,\qquad
 \mu_\delta\ge0,\quad \mu_\delta(w_\delta-\psi)=0.
\end{equation}

\smallskip
\emph{Step 2: Bounds and monotonicity in~$\psi$.}
Compare~$w_\delta$ with~$0$ and~$-t\|q\|_\infty$ by testing positive parts. This gives~$-t\|q\|_\infty\le w_\delta\le0$. These tests have zero lateral trace, and the time chain rule applies in~$V\subset L^2(D_0)\subset V^*$.

Testing the equation by~$w_\delta$, using~$\mu_\delta w_\delta\le0$, yields
\begin{equation}\label{eq:r-regularized-energy}
 \sup_t\|w_\delta(t)\|_2^2+\|\nabla w_\delta\|_p^p
 \le C_p\bigl(\|a\|_p^p+\delta^p|Q|+T\|q\|_\infty^2|Q|\bigr).
\end{equation}
For~$p>2$, strong monotonicity gives~\eqref{eq:r-regularized-energy} without the term~$\|a\|_p^p$.

For ordered obstacles~$\psi_1\le\psi_2$, use~$\min\{w_1,w_2\}$ and~$\max\{w_1,w_2\}$ as competitors in the respective variational inequalities. Adding the inequalities and applying the chain rule in time gives~$w_1\le w_2$.

\smallskip
\emph{Step 3: Stability with respect to~$a$.}
For smooth~$a,b$, let~$w_1,w_2$ solve~\eqref{eq:r-obstacle-vi} with background fields~$a,b$ and flux regularization parameters~$\delta,\sigma$, respectively, and with a common right-hand side~$q$ and obstacle~$\psi$. We derive an estimate independent of~$\psi$ and its derivatives, as needed when applying Lemma~\ref{lem:r-two-limits}. Put~$d=w_1-w_2$ and
\begin{equation*}
 \mathcal E_{\delta,\sigma}
 :=A_\delta(a+\nabla w_2)-A_\delta(a)-A_\sigma(b+\nabla w_2)+A_\sigma(b).
\end{equation*}
To compare the solutions on~$(0,t)$, take~$v_1=w_2$ before~$t$ and~$v_1=w_1$ afterwards in the inequality for~$w_1$. Interchange~$w_1,w_2$ to define~$v_2$ for the other inequality. Both~$v_1,v_2$ belong to~$L^p(0,T;V)$ and satisfy~$v_1,v_2\ge\psi$.

Add the inequalities and apply the time chain rule. Both initial values are zero, so splitting the flux difference gives
\begin{equation}\label{eq:r-common-before-conversion}
 \frac12\|d(t)\|_2^2
 +\iint_{D_0\times(0,t)}\bigl(A_\delta(a+\nabla w_1)-A_\delta(a+\nabla w_2)\bigr)\cdot\nabla d\,dx\,ds
 \le\iint_{D_0\times(0,t)}
 |\mathcal E_{\delta,\sigma}|\,|\nabla d|\,dx\,ds.
\end{equation}
For~$1<p\le2$ and~$\|a\|_p+\|b\|_p\le B$, set
\begin{equation*}
 S:=\delta+|a+\nabla w_1|+|a+\nabla w_2|.
\end{equation*}
The energy bound controls~$\|S\|_p$ independently of~$\psi$. Estimates~\eqref{eq:r-weighted-monotonicity} and~\eqref{eq:r-weighted-holder} give
\begin{equation*}
 \|\nabla d\|_p^2
 \le C\iint_Q\frac{|\nabla d|^2}{S^{2-p}}\,dx\,dt
 \le C\|\mathcal E_{\delta,\sigma}\|_{p'}\|\nabla d\|_p,
\end{equation*}
where~$C=C(p,Q,\|q\|_\infty,B)$. For~$p>2$,~$\delta=\sigma=0$, and~\eqref{eq:r-deg-monotonicity} gives instead
\begin{equation*}
 \|\nabla d\|_p^p
 \le C_p\|\mathcal E_{\delta,\sigma}\|_{p'}\|\nabla d\|_p.
\end{equation*}
In either case, Young's inequality and~\eqref{eq:r-common-before-conversion} give
\begin{align}
 \sup_t\|d(t)\|_2^2+\|\nabla d\|_p^2
 &\le C\|\mathcal E_{\delta,\sigma}\|_{p'}^2,
 &&1<p\le2,\label{eq:r-regularized-comparison}\\
 \sup_t\|d(t)\|_2^2+\|\nabla d\|_p^p
 &\le C_p\|\mathcal E_{\delta,\sigma}\|_{p'}^{p'},
 &&p>2.\notag
\end{align}
The second bound is~\eqref{eq:r-common-degenerate} for smooth~$a,b$.

\smallskip
\emph{Step 4: Approximation of~$a$ in~$L^p$.}
Fix smooth~$a$. For~$1<p\le2$,~\eqref{eq:r-uniform-regularization} bounds the flux error in~\eqref{eq:r-regularized-comparison} by~$C_p(\delta^{p-1}+\sigma^{p-1})$ pointwise. Thus~$w_\delta$ converges in~$C([0,T];L^2(D_0))$ and~$\nabla w_\delta$ converges in~$L^p(Q)$ as~$\delta\to0$. Set~$W_q[a,\psi]=\lim_{\delta\to0}w_\delta$. The fluxes converge strongly, so we can pass to the limit in the differential inequality and the energy estimate. Letting~$\delta=\sigma\to0$ also gives~\eqref{eq:r-common-singular} for smooth~$a,b$. For~$p>2$, set~$W_q[a,\psi]=w_0$.

For general~$a\in L^p(Q;\R^n)$, choose smooth~$a_\ell\to a$ in~$L^p(Q)$ and put~$w_\ell=W_q[a_\ell,\psi]$. Estimates~\eqref{eq:r-singular-background} and~\eqref{eq:r-deg-background} give
\begin{equation*}
 \begin{aligned}
 &\|A(a_\ell+\nabla w_k)-A(a_\ell)-A(a_k+\nabla w_k)+A(a_k)\|_{p'}\\
 &\quad\le
 \begin{cases}
 C_p\|a_\ell-a_k\|_p^{p-1},&1<p\le2,\\
 C_p\|a_\ell-a_k\|_p\bigl(\|a_\ell\|_p+\|a_k\|_p+\|\nabla w_k\|_p\bigr)^{p-2},&p>2.
 \end{cases}
 \end{aligned}
\end{equation*}
Combining this flux error bound with the stability estimates~\eqref{eq:r-common-singular}--\eqref{eq:r-common-degenerate} shows that~$(w_\ell)$ is Cauchy in~$C([0,T];L^2(D_0))$ and~$(\nabla w_\ell)$ is Cauchy in~$L^p(Q)$. Set~$W_q[a,\psi]=\lim_{\ell\to\infty}w_\ell$. Interleaving two choices of~$(a_\ell)$ shows that the limit is independent of the approximation. These convergences preserve the bounds, energy estimate, initial and lateral values, and monotonicity in~$\psi$. The fluxes converge strongly, so the differential inequality also passes to the limit. Approximating~$a,b$ simultaneously gives both comparison estimates for general~$a,b$.

Finally, on any open set where~$\psi=-M$ and~$M>T\|q\|_\infty$, every solution constructed for smooth~$a$, with flux regularization when~$p\le2$, satisfies
\begin{equation*}
 w_\delta-\psi\ge M-T\|q\|_\infty>0.
\end{equation*}
Thus~\eqref{eq:r-regularized-contact} gives~$\mu_\delta=0$ there. The fluxes converge strongly in both limits, so~$W_q[a,\psi]$ satisfies the equation on this open set.
\end{proof}

\section{Interpolation and gradient estimates}\label{app:upper-auxiliary}
We prove the two analytic estimates used in Section~\ref{sec:upper}. Lemma~\ref{lem:interpolation} supplies spatial interpolation at and above~$p=n$; Lemma~\ref{lem:forced-difference} bounds the gradient of a difference with zero initial and lateral trace. We use~$\lambda=n(p-2)+p$ and the constant convention from Section~\ref{sec:preliminaries}.

\subsection{Zero-trace interpolation}\label{app:interpolation-proof}
\begin{proof}[Proof of Lemma~\ref{lem:interpolation}]
This is the Gagliardo--Nirenberg inequality; its time-integrated form is recorded in~\cite[equation~(16), pp.~1196--1197]{KLPSh}. We give a short proof to track uniformity in~$\gamma$. Put~$s=p(1+\gamma/n)$ and~$\alpha=(n-1)s/n$. Since~$p\ge n\ge2$, we have~$\alpha>1$,~$\alpha-s/p\ge0$, and~$\alpha-1=(\alpha-s/p)+\gamma/n$. The~$W^{1,1}$ Sobolev inequality applied to~$|z|^\alpha$, followed by H\"older's inequality with exponents~$p$,~$s/(\alpha-s/p)$ and~$n$, gives
\begin{equation*}
 \|z\|_s^\alpha
 \le C_n\alpha\int |z|^{\alpha-1}|\nabla z|
 \le C_n\alpha\|\nabla z\|_p
       \|z\|_s^{\alpha-s/p}\|z\|_\gamma^{\gamma/n}.
\end{equation*}
When~$n=p=2$, the middle H\"older factor is omitted. Cancel~$\|z\|_s^{\alpha-s/p}$ for~$z\ne0$ and raise to the power~$p$ to obtain~\eqref{eq:zero-trace-interpolation}. The bound~$\alpha\le (n-1)p(1+p/n)/n$ makes the constant uniform for~$1\le\gamma\le p$. Approximation and zero extension give the asserted zero-trace version with a constant independent of the spatial domain.
\end{proof}

\subsection{Gradient estimate for a difference}\label{app:forced-difference-proof}
Throughout this subsection,~$p>2$.

\begin{proof}[Proof of Lemma~\ref{lem:forced-difference}]
We use the difference-testing argument of~\cite[Lemma~4.1, equations~(4.5) and~(4.9)]{KuMi14}, retaining the level estimates needed for the domain-independent bound~\eqref{eq:forced-difference-flux}. We estimate the gradient separately on the level bands of~$w$. Testing with approximations of the positive sign function gives~$\sup_t\int w\le M$. To control the energy below a given level~$k$, test with~$T_kw=\min\{w,k\}$, retaining the nonnegative time primitive at the final time. By~\eqref{eq:monotonicity},
\begin{equation}\label{eq:truncation-energy}
 \iint_{\{0<w<k\}}|\nabla w|^p\,dx\,dt\le CkM\,.
\end{equation}
For $z=T_kw$, Lemma~\ref{lem:interpolation} with $\gamma=1$ when $p\ge n$, and Sobolev interpolation when $p<n$, give
\begin{equation*}
 \int_D z^{p+\frac{p}{n}}\le
 C\left(\int_D z\right)^{\frac{p}{n}}\int_D|\nabla z|^p\,.
\end{equation*}
Integrating in time, using~$\sup\int z\le M$, and applying~\eqref{eq:truncation-energy}, we obtain
\begin{equation*}
 \iint z^{p+\frac{p}{n}}\,dx\,dt\le CM^{1+\frac{p}{n}}k\,.
\end{equation*}
On~$\{w>k\}$ we have~$z=k$. The bound for~$\iint z^{p+p/n}\,dx\,dt$ therefore bounds the measure of~$\{w>k\}$; the~$L^1$ bound gives a second estimate, useful at low levels. Together they give
\begin{equation}\label{eq:distribution-w}
 |\{w>k\}|\le
 \min\left\{\frac{TM}{k},\,
       CM^{1+\frac{p}{n}}k^{-(p-1+\frac{p}{n})}\right\}\,.
\end{equation}
The measure here is space-time Lebesgue measure on~$D\times I$.

On~$\{k<w\le2k\}$, H\"older's inequality and~\eqref{eq:truncation-energy} at level~$2k$ give
\begin{equation}\label{eq:level-flux}
 \iint_{\{k<w\le2k\}}|\nabla w|^{p-1}\,dx\,dt \le C(kM)^{\frac{p-1}{p}}|\{w>k\}|^{\frac{1}{p}} \le C\min\left\{ MT^{\frac{1}{p}}k^{\frac{p-2}{p}},\, M^{1+\frac{1}{n}}k^{-\frac{1}{n}}\right\}\,.
\end{equation}
If~$M=0$, comparison gives~$w=0$. Otherwise split the dyadic levels at
\begin{equation*}
 k_0=M^{\frac{p}{\lambda}}T^{-\frac{n}{\lambda}}\,.
\end{equation*}
At this level the two bounds in~\eqref{eq:level-flux} have the same scale. For levels below~$k_0$, sum the first term; for levels above it, sum the second. Both are convergent geometric series, since~$p>2$. Their values are bounded by~$CT^{\frac{1}{\lambda}}M^{1+\frac{p-2}{\lambda}}$. Since~$\nabla w=0$ almost everywhere on~$\{w=0\}$, summing over the bands accounts for the whole integral. This proves~\eqref{eq:forced-difference-flux}.
\end{proof}

\section*{Acknowledgements}
MP acknowledges funding from the Research Council of Finland (project~360185). The proof development and the preparation of this paper were assisted by ChatGPT. The authors take full responsibility for the contents of the paper.

\end{document}